  \newtheorem{theorem}{Theorem}
  \newtheorem{lemma}[theorem]{Lemma}
  \newtheorem{corollary}[theorem]{Corollary}
  \newtheorem{remark}[theorem]{Remark}
  \newtheorem{example}[theorem]{Example}
  \DeclareMathOperator{\divergence}{div}
  \DeclareMathOperator{\support}{supp}
  \DeclareMathOperator{\distance}{dist}
  \providecommand{\sym}{{\ensuremath{\mathrm{sym}}}}
 \providecommand{\ve}{\varepsilon}
  \providecommand{\setN}{\ensuremath{\mathbb{N}}}
  \providecommand{\setR}{\ensuremath{\mathbb{R}}}
  \providecommand{\setZ}{\ensuremath{\mathbb{Z}}}
  \providecommand{\M}{\mathcal M}
\providecommand{\Mpl}{\mathcal M_+}
  \providecommand{\Xint}[1]{\mathchoice
    {\XXint\displaystyle\textstyle{#1}}%
    {\XXint\textstyle\scriptstyle{#1}}%
    {\XXint\scriptstyle\scriptscriptstyle{#1}}%
    {\XXint\scriptscriptstyle\scriptscriptstyle{#1}}%
    \!\int}
  \providecommand{\XXint}[3]{{\setbox0=\hbox{$#1{#2#3}{\int}$}
      \vcenter{\hbox{$#2#3$}}\kern-.5\wd0}}
  \providecommand{\dashint}{\mathop{\Xint-}}
  \providecommand{\bfb}{{\bf b}}
  \providecommand{\bfu}{{\bf u}} 
  \providecommand{\bfv}{{\bf v}}
  \providecommand{\bfB}{{\bf B}}
  \providecommand{\bfE}{{\bf E}}
  \providecommand{\bfF}{{\bf F}}
  \providecommand{\bfI}{{\bf I}}
  \providecommand{\bfQ}{{\bf Q}}
  \providecommand{\bfR}{{\bf R}}
  \providecommand{\bfU}{{\bf U}}
  \providecommand{\bfV}{{\bf V}}
  \providecommand{\by}{{\bf y}}
  \def\novargreektmp{1}
    \def\phi{\varphi}
    \def\Phi{\varPhi}
    \def\epsilon{\varepsilon}
  \providecommand{\bfvarepsilon}{{\boldsymbol{\varepsilon}}}
  \providecommand{\beta}{{\boldsymbol{\eta}}}
  \providecommand{\bfvarepsilon}{{\boldsymbol{\varepsilon}}}
  \providecommand{\normtmp}[2]{{#1\lVert{#2}#1\rVert}}
  \providecommand{\norm}[1]{\normtmp{}{#1}}
  \providecommand{\abstmp}[2]{{#1\lvert{#2}#1\rvert}}
  \providecommand{\abs}[1]{\abstmp{}{#1}}
  \providecommand{\bigabs}[1]{\abstmp{\big}{#1}}
  \providecommand{\settmp}[2]{{#1\{{#2}#1\}}}
  \providecommand{\set}[1]{\settmp{}{#1}}
\long\def\unmarkedfootnote#1{{\long\def\@makefntext##1{##1}\footnotetext{#1}}}
\newcommand{\dz}{\,\mathrm{d}z}
\newcommand{\dy}{\,\mathrm{d}y}
\newcommand{\dx}{\,\mathrm{d}x}
\newcommand{\ds}{\,\mathrm{d}s}
\newcommand{\dt}{\,\mathrm{d}t}
\newcommand{\dr}{\,\mathrm{d}r}
\newcommand{\Div}{\divergence}
\newcommand{\eps}{\bfvarepsilon}
\newcommand{\ep}{\bfvarepsilon}
\newcommand{\R}{\mathbb R}
\newcommand{\N}{\mathbb N}
\newcommand{\rn}{\mathbb R^n}
\newtheorem{defs}{Definition}[section]
\newtheorem{theorem}[defs]{Theorem}
\newtheorem{example}[defs]{Example}
\newtheorem{lemma}[defs]{Lemma}
\newtheorem{corollary}[defs]{Corollary}
\newtheorem{remark}[defs]{Remark}
\title{%Sobolev  embeddings for the symmetric and full gradient always agree
%$K$-functionals and optimal Sobolev embeddings  \\ for symmetric gradients
%Sobolev spaces for symmetric gradients with rearrangement-invariant norms
Symmetric gradient Sobolev spaces  endowed with rearrangement-invariant norms
}
\author{
  Dominic Breit\\
 {\it Heriot-Watt University, Department of Mathematics}\\ {\it EH14 4AS Riccarton, Edinburgh, UK}
  \bigskip
  \\
 Andrea Cianchi \\
 {\it Dipartimento di Matematica e Informatica \lq\lq U. Dini", Universit\`a di Firenze}\\ {\it Piazza Ghiberti
27, 50122 Firenze, Italy}
%\\{\it  e-mail: cianchi@unifi.it}
%\bigskip
%\\
%Andrea Cianchi \\
%{\it Dipartimento di Matematica e Informatica \lq\lq U. Dini",
%Universit\`a di Firenze}\\ {\it Piazza Ghiberti 27, 50122 Firenze,
%Italy}
}
\author {Dominic Breit, Andrea Cianchi}
\address{Dominic Breit, 
  Heriot-Watt University, Department of Mathematics\\
EH14 4AS Riccarton, Edinburgh, UK} \email{db13@hw.ac.uk}
\address{Andrea Cianchi, Dipartimento di Matematica e Informatica \lq\lq U. Dini"\\
Universit\`a di Firenze,
Viale Morgagni 67/a,
50134 Firenze,
Italy} \email{andrea.cianchi@unifi.it}
\urladdr{}
\date{}
\numberwithin{equation}{section}
\begin{document}
\maketitle

%
%\begin{center}
%        \Huge\mdseries Negative  Orlicz-Sobolev norms  and strongly nonlinear  systems in fluid mechanics\\[4ex]
%        \Large Dominic Breit and Andrea Cianchi
%        \\[0.5ex]
%        2013\normalsize\\[10ex]
%Universit\`{a} degli Studi di Firenze\\
%Dipartimento di Matematica " U. Dini "\\
%Viale Morgagni 67/a\\
%50134 Firenze - Italy\\
%Dominic.Breit@math.uni-sb.de\\
%Cianchi@unifi.it\\[3ex]
%    \end{center}

%\newpage

%
%\title{Negative  Orlicz-Sobolev norms  and strongly nonlinear elliptic systems in fluid mechanics
%}
%
%%\frenchspacing \numberwithin{equation}{section}
%
%
%\author{
%  Dominic Breit\\
% {\it Dipartimento di Matematica e Informatica \lq\lq U. Dini", Universit\`a di Firenze}\\ {\it Piazza Ghiberti
%27, 50122 Firenze, Italy}
%%\\{\it  e-mail: cianchi@unifi.it}
%\bigskip
%\\
%Andrea Cianchi \\
%{\it Dipartimento di Matematica e Informatica \lq\lq U. Dini",
%Universit\`a di Firenze}\\ {\it Piazza Ghiberti 27, 50122 Firenze,
%Italy} }
%
%
\date{}
%
%
%
%\numberwithin{equation}{section}

\begin{abstract} A unified approach to embedding theorems for Sobolev type spaces of vector-valued  functions, defined via their symmetric gradient, is proposed. The Sobolev spaces in question are built upon general rearrangement-invariant norms. Optimal target spaces in the relevant embeddings are determined  within the class of all rearrangement-invariant  spaces.  In particular, all  symmetric gradient Sobolev embeddings into rearrangement-invariant target spaces are shown to be equivalent to the corresponding embeddings for the full gradient  built upon the same   spaces.
A sharp condition for embeddings into spaces of uniformly continuous functions, and their optimal targets, are also exhibited. By contrast, these embeddings may be weaker than the corresponding ones for the full gradient.
Related results, of independent interest in the theory of symmetric gradient Sobolev spaces, are   established. They include  global approximation and extension theorems under minimal assumptions on the domain. A formula for the $K$-functional, which is pivotal for our  method based on reduction to one-dimensional inequalities, is provided as well. The case of symmetric gradient Orlicz-Sobolev spaces, of use in mathematical models in continuum mechanics driven by nonlinearities of non-power type, is especially focused.
\end{abstract}

  %\begin{keyword}

 % MSC 2010: 46E30, 46E35, 35J57, 76D03, 65N30
  %\end{keyword}

\setcounter{tocdepth}{1}
\tableofcontents

\unmarkedfootnote{
\par\noindent {\it Mathematics Subject
Classification: 46E35, 46E30.}
\par\noindent {\it Keywords: Symmetric gradient, Sobolev inequality,  Rearrangement-invariant norms, Orlicz-Sobolev spaces, Smooth approximation, Extension operator, $K$-functional} 
}
%
%\smallskip
%\par\noindent {\it
%This research was partly supported by the Research project of MIUR
%(Italian Ministry of Education, University and Research) Prin 2012
%``Elliptic and parabolic partial differential equations: geometric
%aspects, related inequalities, and applications" (grant number
%2012TC7588), and by GNAMPA of the Italian INdAM (National Institute
%of High Mathematics).}
%}

\section{Introduction}\label{intro}

In the present paper we offer a theory of Sobolev type spaces, defined in terms of the symmetric gradient of vector-valued functions, and equipped with arbitrary rearrangement-invariant norms on open sets $\Omega \subset \rn$, with $n\geq 2$. Recall that the symmetric gradient $\ep(\bfu)$ of a function $\bfu :\Omega \to \rn$  is defined as the symmetric part of the gradient $\nabla \bfu$ of $\bfu$. Namely,
\begin{equation}\label{symmgrad}
\ep(\bfu)=\frac{1}{2}\big(\nabla\bfu+\nabla\bfu^T\big),
\end{equation}
where $\nabla\bfu^T$ stands for the transpose of $\nabla \bfu$.
\par Mathematical models for a number of physical systems are shaped by the symmetric gradient of a function that describes the   system under consideration. Relevant instances include 
the theory of (generalized)
Newtonian fluids, and the classical theories of
plasticity and nonlinear elasticity. 
\par 
Sobolev type spaces built upon the symmetric gradient provide a natural functional framework for the analysis of systems of partial differential equations which govern these models. These spaces will be called symmetric gradient Sobolev spaces, and consist of those functions $\bfu$ in some  space $X(\Omega)$ such that $\ep(\bfu) \in X(\Omega)$ as well. A space of this kind will be denoted by $E^1X(\Omega)$ in what follows. A corresponding space 
%Its subspace of those 
of functions which vanish, in a suitable sense, on $\partial \Omega$, is denoted by $E^1_0X(\Omega)$.
\par Diverse aspects and properties of the spaces $E^1X(\Omega)$ and $E^1_0X(\Omega)$ have been investigated in the literature, especially in the classical case when $X(\Omega)=L^p(\Omega)$ for some $p \in [1, \infty]$, see e.g. \cite{AG,Ba,Fu1,FuS,GmRa2,Haj,Ne,Resh,ts,St,Temam}.
%\todo[inline]{A: we should add some references here}
A weakness
%drawback
 of the existing theory is, however,
that certain results of the same nature rest upon distinct proofs, depending on the space $X(\Omega)$ at hand.
%
% that certain  %same kind of 
%results of the name nature rest upon different proofs, depending on the space $X(\Omega)$ at hand. 
Hence, they are not well suited for extensions  encompassing less customary spaces $X(\Omega)$.
%flexible enough to be extended to cover the case of less customary spaces $X(\Omega)$.
%situations, such as  Orlicz type spaces $X(\Omega)$. 
\par A paradigmatic example of this situation is supplied by the Sobolev inequality in $E^1_0L^p(\Omega)$. This inequality is well known to involve the same target norm as the  classical Sobolev inequality. For instance, if $\Omega$ is bounded and $p\in [1,n)$, then there exists a constant $C$ such that
\begin{equation}\label{intro1}
\|\bfu\|_{L^{p^*}(\Omega)} \leq C \|\ep(\bfu)\|_{L^p(\Omega)}
\end{equation}
for every $\bfu \in E^1_0L^p(\Omega)$. Here, $p^*=\frac {np}{n-p}$, the usual Sobolev conjugate of $p$. Yet, the available proofs of inequality \eqref{intro1} for $p\in (1,n)$ and for $p=1$ are substantially different. The former consists in reducing inequality \eqref{intro1} to its counterpart for the space $W^{1,p}_0(\Omega)$. \textcolor{black}{This is possible thanks to Korn's inequality, which implies the equivalence of the  norms of $\ep(\bfu)$ and $\nabla \bfu$ in $L^p(\Omega)$ for $p\in (1,\infty)$.} Alternatively, 
representation formulas   for functions in terms of integral operators applied to their symmetric gradient can be exploited.
Neither of these approaches applies to the case when $p=1$, which requires 
 a direct argument in the spirit of the proofs by Gagliardo and by Nirenberg of the Sobolev inequality for $W^{1,1}_0(\Omega)$. 
\par Our original motivation for this reasearch was an optimal Sobolev inequality, in the spirit of  \eqref{intro1}, for symmetric gradient Orlicz-Sobolev spaces $E^1_0L^A(\Omega)$ built upon an arbitrary Young function $A$. These spaces come into play in certain models in continuum mechanics ruled by non-polynomial type nonlinearities. For instance,   the
nonlinearities appearing in the Prandt-Eyring model for non-Newtonian fluids
\cite{BrDF,E,FuS}, and in models for plastic materials with
logarithmic hardening \cite{FrS}  are described  by a Young function
$A(t)$ that grows like $t\log(1+t)$ near infinity. Young functions with exponential growth are well suited to account for the behavior of fluids in certain liquid body armors
\cite{HRJ, SMB, W}, 
and also play a role in the study of functionals with linear growth in the symmetric gradient \cite{Gm}. 
\textcolor{black}{Further results on non-Newtonian fluids, whose mathematical models are patterned via Young functions, can be found in \cite{AcMi, buli1,buli2}.}
\par
Inequalities with an optimal target for the standard Orlicz-Sobolev space $W^1_0L^A(\Omega)$, defined through the full gradient, are available \cite{Ci1, Cianchi_CPDE, Ci3, cianchi-randolfi IUMJ}. Still,   none of the methods  mentioned above for $E^1_0L^p(\Omega)$ can be adapted to deal with the whole family of spaces $E^1_0L^A(\Omega)$, which, in particular, embraces the entire scale of spaces $E^1_0L^p(\Omega)$ with $p \in [1, \infty]$. In this connection, an obstruction is the failure of a Korn type inequality in $E^1_0L^A(\Omega)$, which affects not only the space $E^1_0L^1(\Omega)$, but also any other space associated with a Young function $A$ that does not simultaneously satisfy both the $\Delta_2$-condition and the $\nabla_2$-condition \cite{BrD, DRS,Fu2}. When these conditions are dropped, sharp versions of the Korn inequality, with possibly slightly different Orlicz norms for $\ep(\bfu)$ and $\nabla \bfu$, are still at our disposal \cite{BCD, Ckorn}. However, in general they do not enable one to deduce optimal Sobolev inequalities for the space $E^1_0L^A(\Omega)$ from those in $W^1_0L^A(\Omega)$. Optimal inequalities do not even follow  via representation formulas, since the integral operators coming into play are only of weak type if the Young function $A$ does not satisfy proper additional assumptions.
\par One main contribution of this paper is a comprehensive treatment of Sobolev inequalities for the spaces $E^1_0X(\Omega)$ and $E^1X(\Omega)$, where $X(\Omega)$ is an arbitrary rearrangement-invariant space, and not just an Orlicz space. {\color{black} Loosely speaking, a rearrangement-invariant space is a Banach space of measurable functions endowed with a norm depending only their   integrability properties.}
 As will be clear, the accomplishment of this project has  required sheding light on diverse aspects and developing alternate methods,  of independent interest, in the theory of these Sobolev type spaces. 
%the completion of this project  has led us to the development of other aspects, of independent interest, of the theory of these spaces. 
\par
A sharp criterion for the validity of Sobolev inequalities of the form
\begin{equation}\label{intro2}
\|\bfu\|_{Y(\Omega)} \leq C \|\ep(\bfu)\|_{X(\Omega)}
\end{equation}
for every $\bfu \in E^1_0X(\Omega)$ is presented, when $\Omega$ is a domain of finite Lebesgue measure $|\Omega|$, and $X(\Omega)$ and $Y(\Omega)$ are rearrangement-invariant spaces. {\color{black} A remarkable property  of this class of spaces is that, unlike its subclass of Lebesgue spaces,  it is closed under the operation of associating an optimal target $Y(\Omega)$ with a given domain $E^1_0X(\Omega)$  in the Sobolev inequality. Namely, for each space $E^1_0X(\Omega)$, there exists a smallest possible rearrangement-invariant space  $Y(\Omega)$ which renders inequality \eqref{intro2} true. }
As a consequence of our criterion, the optimal target space $Y(\Omega)$ in \eqref{intro2} is characterized for any given domain space $X(\Omega)$. In particular, a threshold condition on $X(\Omega)$ for inequality \eqref{intro2} to hold with $Y(\Omega)=L^\infty(\Omega)$ is determined. Under this condition, functions  in $E^1_0X(\Omega)$ turn out to be also continuous. On the other hand, the existence of an uniform bound for their modulus of continuity, depending only on $E^1_0X(\Omega)$, is shown to be equivalent  to  a slightly stronger condition on $X(\Omega)$.
For spaces $X(\Omega)$  fulfilling this strengthened  condition, we identify   the optimal modulus of continuity $\sigma (\cdot)$ in the inequality
\begin{equation}\label{intro3}
\|\bfu\|_{C^\sigma (\Omega)} \leq C \|\ep(\bfu)\|_{X(\Omega)}
\end{equation}
for every $\bfu \in E^1_0X(\Omega)$. Here, $C^\sigma (\Omega)$ denotes the space of functions in $\Omega$  whose modulus of continuity does not exceed the function $\sigma$.
\par An overall trait of our results on inequalities \eqref{intro2} and \eqref{intro3} is that, whereas the former holds if and only if an inequality with  the same spaces $X(\Omega)$ and $Y(\Omega)$ holds with $\ep(\bfu)$ replaced by $\nabla \bfu$, this is not always the case for the latter. Indeed, there exist rearrangement-invariant spaces $X(\Omega)$ for which the optimal space $C^\sigma (\Omega)$ in a counterpart of inequality \eqref{intro3}, with $\ep(\bfu)$ replaced by $\nabla \bfu$, is strictly smaller. In other words:
\begin{itemize}
\item Symmetric gradient Sobolev embeddings    into \emph{rearrangement-invariant} target spaces are \emph{always equivalent} to the corresponding embeddings for the full gradient  built upon the same domain spaces.
\item
 Symmetric gradient Sobolev embeddings  into target  spaces of \emph{uniformly continuous} functions \emph{may be  weaker}  than the corresponding embeddings for the full gradient built upon the same domain spaces.
\end{itemize}
%
% all symmetric gradient Sobolev embeddings    into a rearrangement-invariant space are equivalent to the corresponding embedding for the full gradient, 
%but an embedding of a symmetric gradient Sobolev space into a space of uniformly continuous functions can be  weaker  than the embedding for the full gradient with the same domain space.
 \par Sobolev  inequalities for functions with unrestricted boundary values, namely inequalities of the form
\begin{equation}\label{intro4}
\|\bfu\|_{Y(\Omega)} \leq C \|\bfu\|_{E^1X(\Omega)}
\end{equation}
for every $\bfu \in E^1X(\Omega)$,  as well as  their complement
\begin{equation}\label{intro5}
\|\bfu\|_{C^\sigma (\Omega)} \leq \|\bfu\|_{E^1X(\Omega)}
\end{equation}
for every $\bfu \in E^1X(\Omega)$, are also discussed. 
Their characterizations, and hence the optimal target spaces, are the same as for $E^1_0X(\Omega)$, but, as in the case of standard Sobolev inequalities, some regularity of the domain $\Omega$ is now required. With this regard, we are able to allow for minimally regular domains, introduced by Jones \cite{Jo}, and called $(\varepsilon, \delta)$-domains in the literature. A fundamental property of these domains, established in \cite{Jo},  is that they admit a bounded extension operator in classical Sobolev spaces.
\par A result of ours, which is critical in view of our Sobolev inequalities in $E^1X(\Omega)$,  tells us that the extension property of $(\varepsilon, \delta)$-domains carries over to symmetric gradient Sobolev spaces. We emphasize that this property is new even for the customary spaces $E^1L^p(\Omega)$. The proof of the existence of a linear bounded extension operator in this class of domains makes use, in its turn, of the density of the space $C^\infty(\overline \Omega)$ in  $E^1L^1(\Omega)$. This is a further novel result that will be proved.
\par The case when $\Omega = \rn$ in inequalities \eqref{intro2} and \eqref{intro4} is also dealt with, and presents distinct features in the two cases. 
%The presence, as in  \eqref{intro4}, or the absence, as in \eqref{intro2} , of $\bfu$ 
%As is the case of ordinary Sobolev spaces built upon arbitrary rearrangement-invariant spaces, 
Indeed, the criteria and the optimal target spaces  for the inequality 
\begin{equation}\label{intro6}
\|\bfu\|_{Y(\rn)} \leq C \|\ep(\bfu)\|_{X(\rn)}
\end{equation}
for every $\bfu \in E^1_0X(\rn)$, and for 
\begin{equation}\label{intro7}
\|\bfu\|_{Y(\rn)} \leq C \|\bfu\|_{E^1X(\rn)}
\end{equation}
for every $\bfu \in E^1X(\rn)$, take a different form. For instance, if $X(\rn)=L^p(\rn)$ for some $p> n$, then inequality \eqref{intro6} fails whatever $Y(\rn)$ is (even if $\ep(\bfu)$ is replaced by $\nabla \bfu$), whereas inequality \eqref{intro7} classically holds with $Y(\rn)=L^\infty(\rn)$.
%\todo[inline]{A: example $X=L^p$}
\par Our characterizations of the Sobolev inequalities with rearrangement-invariant target norms amount to their equivalence to one-dimensional Hardy type inequalities involving the same kind of norms. The relevant Hardy inequalities agree with those characterizing parallel Sobolev inequalities for the full gradient \cite{Ci1, Ci3, EKP}. For inequalities of the latter type, the reduction to one-dimensional inequalities is classically performed via Schwarz symmetrization, at least in the case of functions vanishing on the boundary of their domain, and relies upon the classical isoperimetric inequality in $\mathbb{R}^{n}$. The equivalence of
 isoperimetric and Sobolev inequalities  in a general framework was discovered some sixty years ago in seminal researches by Maz'ya, that were initiated in the paper  \cite{Ma1960}, and whose full developments can be found in the monograph \cite{Mazya}. Special cases are also contained in the work of Federer-Fleming \cite{FF}.
Schwarz symmetrization has proved to be of crucial use in detecting sharp constants in classical Sobolev type inequalities, starting with the papers by 
 Moser \cite{Moser}, Aubin
\cite{Aubin} and Talenti \cite{Ta}, and has been successfully exploited
in the solution to a number of related optimization  problems.
Remarkably, it is also critical in the analysis  of affine invariant Sobolev inequalities introduced in the frames of the Brunn-Minkowski  convexity theory by Zhang \cite{Z} and Lutwak-Yang-Zhang \cite{LYZ1, LYZ2}.
The approach to   Sobolev type inequalities by symmetrization can be adjusted to deal with functions with unrestricted boundary values on regular bounded domains, via a preliminary extension argument to compactly supported functions in $\rn$. Although information on sharp constants  is lost under the action of an extension operator, that on optimal spaces is preserved.
\par When symmetric gradient Sobolev spaces are in question, symmetrization methods do not apply. We have instead to resort to an approach based on interpolation inequalities, which are derived via optimal endpoint embeddings and $K$-functional inequalities. Such an approach has also been exploited in the proof of reduction principles for Sobolev inequalities for the full gradient in situations where symmetrization fails. This is the case, for instance, when higher-order derivatives \cite{CPS_advances, KermanPick} and/or traces of functions \cite{CP_TAMS, CPS_Frostman} come into play. An adaptation of this method to the symmetric gradient realm is however not straightforward.
\par A first issue in this connection concerns the endpoint embeddings.  Whereas one of them can be   deduced via quite standard techniques based on representation formulas, the other one rests upon 
a Sobolev inequality in $E^1L^1(\rn)$, with optimal Lorentz target norm, which has been an open problem for some time  and has only recently been settled in \cite{SV} (see also \cite{VSh} for earlier partial results in this direction). A second point regards a formula for the $K$-functional  for symmetric gradient Sobolev spaces, namely a parallel of the celebrated result for ordinary Sobolev spaces of \cite{DeSc}. This seems to be missing in the literature, and constitutes another main  achievement  of this paper.
%
%Another crucial step consists in a formula for the $K$-functional  for symmetric gradient Sobolev spaces. This formula is an analogue of that for ordinary Sobolev spaces exhibited in \cite{DeSc}, and constitutes another main  achievement  of this paper. 
In particular, in its proof we recourse to  ad hoc   truncation operators which are reminiscent of those exploited in the context of Lipschitz truncation. The latter has been  successfully employed  in the existence and regularity theory of nonlinear systems of partial differential equations \cite{BuSc,DMS,DRW,DuMi}, as well as in the analysis of semicontinuity problems in the calculus of variations \cite{AcFu1,23}. An overview of the use of this tool can be found in \cite[Chapter 1.3]{Br}.
%\todo[inline]{A: I would also refer here to some papers on Lipschitz truncation that we quote later}
\par Different techniques are employed in the proof of Sobolev inequalities with target norms in spaces of uniformly continuous functions. This proof relies on the representation formulas mentioned above.  As is well known from the case of ordinary Sobolev spaces, this approach need not yield optimal Sobolev embeddings with rearrangement-invariant target spaces. Interestingly, our results demonstrate that this limitation does not occur when target spaces of uniformly continuous functions enter the game. 
\par The results for general rearrangement-invariant spaces outlined above are implemented for the family of Orlicz-Sobolev spaces $E^1_0L^A(\Omega)$ and $E^1L^A(\Omega)$. The optimal target spaces in the resultant Sobolev inequalities take an even more explicit form in this ambient. {\color{black} Importantly, an optimal (smallest possible) Orlicz target space exists for any Young function $A$. This means that also the class of Orlicz spaces is rich enough to be  closed under   the operation  of associating an optimal target  in symmetric gradient Sobolev embeddings.}
\par The paper is organized as follows. Section \ref{sec2} contains  definitions and background material on rearrangement-invariant spaces and their associated Sobolev type spaces. Suitable representation formulas  and ensuing Sobolev-Poincar\'e type inequalities of special form and on specific domains are provided in Section \ref{poincareE1}.
Truncation operators in symmetric gradient Sobolev spaces are introduced in Section \ref{truncation}, where their properties are also established. Section \ref{smoothapprox} is devoted to the approximation of functions in symmetric gradient Sobolev spaces on $(\varepsilon, \delta)$-domains $\Omega$ by smooth functions on $\overline \Omega$. An extension operator for functions in these spaces, for the same class of domains, is constructed in Section \ref{extension}. $K$-functionals for diverse couples of symmetric gradient Sobolev spaces are computed in Section \ref{Kfunct}. The analysis of Sobolev inequalities begins with Section \ref{sobemb}, where rearrangement-invariant target spaces are considered. Sobolev inequalities into target spaces of uniformly continuous functions are the subject of Section \ref{cont}. In the final Section \ref{secorlicz},  inequalities for symmetric gradient Orlicz-Sobolev spaces are derived via the general results of the preceding sections.

\section{Function spaces}\label{sec2}
  In this section we collect the necessary background on function spaces of measurable functions and of weakly differentiable functions. Some notations and definitions about symmetric gradient Sobolev type spaces are also introduced.
\par
Throughout the paper, the relation $\lq\lq \lesssim "$ between two positive expressions means that the former is bounded by the latter, up to a multiplicative constant depending on quantities to be specified.
The relations  $\lq\lq \gtrsim "$ and $\lq\lq \approx "$ are defined accordingly. 
%have  between two expressions means that   they are bounded by each
%other  up to multiplicative constants depending on quantities to be specified.

\subsection{Rearrangement invariant spaces}\label{ri}
% Let $(\R , \nu )$ be a
%measure space, which we assume to be positive, $\sigma$-finite and
%non-atomic throughout, and
We denote by $|E|$ the Lebsegue measure of a set $E\subset \rn$. Assume that $\Omega $ is a measurable subset of $\rn$, with $n \in \mathbb N$, and let $m \in \mathbb N$.  The notation 
$\M(\Omega)$ is adopted for
 the space of all Lebesgue-measurable functions
$\bfu : \Omega \to \mathbb R^m$. If $m>1$, functions in  $\M(\Omega)$ will usually denoted in bold fonts, whereas normal fonts will be used if $m=1$. We denote by $u_i$, $i=1, \dots, m$, the $i$-th component of a function $\bfu : \Omega \to \mathbb R^m$; namely $\bfu =(u_1, \dots , u_m)$.  Similar notations are employed for matrix-valued functions, that will usually be denoted in bold fonts and by upper case letters. 
Also, if $m=1$ we  define $\Mpl(\Omega)=\{u\in\M(\Omega)\colon u\geq 0 \
\textup{a.e. in}\ \Omega\}$.
% and
%$\M_0(\Omega)=\{u\in\M(\Omega)\colon u\ \textup{is finite}\
%\nu\textup{-a.e. on}\ \Omega\}$.  
\\
 %such that
%\begin{equation}\label{mu}
%|\{x\in \Omega : |u(x)|>t \}| < \infty \quad \hbox{for $t>0$.}
%\end{equation}
The \emph{decreasing rearrangement}  $\bfu^{\ast}:[0, \infty )\to
[0,\infty]$ of a function $\bfu \in \M(\Omega)$ is   defined as
\begin{equation*}
\bfu^{\ast}(s) = \inf \{t\geq 0: |\{x\in \Omega : |\bfu(x)|>t \}|\leq s \}
\qquad \hbox{for $s \in [0,\infty)$}.
\end{equation*}
The function  $\bfu^{**}: (0, \infty ) \to [0, \infty )$ is given by
\begin{equation}\label{c15}
\bfu^{**}(s)=\frac{1}{s}\int_0^s \bfu^*(r)\dr \qquad \hbox{for $s>0$}.
\end{equation}
One has that
\begin{equation}\label{c15bis}
\bfu^{**}(s) = \sup_{|E|=s} \frac 1{s} \int _E |\bfu(x)|\dx \qquad
\hbox{for $s
>0$,}
\end{equation}
where the supremum is taken over all measurable sets $E\subset\Omega$.
Moreover,
%the subadditivity property
\begin{equation}\label{subadd}
(|\bfu|+|\bfv|)^{**}(s) \leq \bfu^{**}(s) + \bfv^{**}(s) \qquad \hbox{for $s>0$,}
\end{equation}
 for every  $\bfu, \bfv \in  \M(\Omega)$.
%fulfilling \eqref{mu}.
%t is easily verified that $u^{\ast}$ . Notice that
%$[0,|G|]\supseteq \textrm{sprt}\,u^{\ast}$, the support of $
%u^{\ast}$.
%\par\noindent In particular, if $U:\mathbb R^n
%\rightarrow \mathbb R$, $f:[0,\infty)\rightarrow \mathbb R$ and
%\begin{equation}\label{rad}
%\textrm{if} \qquad U(x)=f(\omega_n |x|^n) \qquad \textrm{for} \,\,
%x\in \mathbb R^n\,, \qquad \textrm{then}\qquad U^{\ast}=f^{\ast}.
%\end{equation}
%Here, $\omega_n$ is the measure of the unit ball in $\mathbb R^n$.
%then
%\begin{equation}\label{rad1}
%U^{\ast}=f^{\ast}.
%\end{equation}
%Moreover, we set $u^{\ast \ast}(s) =
%\frac{1}{s}\int_{0}^{s}u^{\ast}(r) dr $ for $s > 0$.
A basic  property of the operation of decreasing rearrangement is
the \emph{Hardy-Littlewood inequality}, which states that
\begin{equation}\label{B.0}
\int_{\Omega }|\bfu(x) \cdot \bfv(x)| \dx \leq \int_{0}^{\infty
}\bfu^{\ast}(s)\bfv^{\ast}(s)\ds
\end{equation}
for every  $\bfu, \bfv \in  \M(\Omega)$. Here, the dot $\lq\lq \cdot \lq\lq$ stands for scalar product in $\rn$.
\\ Another property ensures that,    given functions $\bfu , \bfv \in \M(\Omega)$, 
\begin{equation}\label{ALT}
 \bfu^{**}(t )\leq \bfv^{**}(t)\,\,\text{for $t>0$}  \quad  \text{if and only  if}\quad \int _\Omega A(|\bfu|) \dx \leq  \int _\Omega A(|\bfv|) \dx \,\, \text{for every Young function $A$,}
\end{equation}
see e.g. \cite[Proof of Proposition 2.1]{ALT}. Recall that $A: [0, \infty ) \to [0, \infty ]$ is called a Young
function if it is convex (non trivial), left-continuous and
vanishes at $0$. 
\\
%\todo[inline]{A: define r.i. space via function norms}
Let $L\in (0,\infty]$. A functional
 $\|\cdot\|_{X(0,L)}{:\Mpl(0,L)\to[0,\infty]}$ is called a
\textit{function norm} if, for all functions $f, g\in \Mpl(0,L)$, all sequences
$\{f_k\} \subset {\Mpl(0,L)}$, and every $\lambda {\in[0,\infty)}$:
\begin{itemize}
\item[(P1)]\quad $\|f\|_{X(0,L)}=0$ if and only if $f=0$ a.e.;
$\|\lambda f\|_{X(0,L)}= \lambda \|f\|_{X(0,L)}$; \par\noindent \quad
$\|f+g\|_{X(0,L)}\leq \|f\|_{X(0,L)}+ \|g\|_{X(0,L)}$;
\item[(P2)]\quad $ f \le g$ a.e.\  implies $\|f\|_{X(0,L)}
\le \|g\|_{X(0,L)}$;
\item[(P3)]\quad $f_k \nearrow f$ a.e.\
implies $\|f_k\|_{X(0,L)} \nearrow \|f\|_{X(0,L)}$;
\item[(P4)]\quad $\|\chi _E\|_{X(0,L)}<\infty$ if $|E| < \infty$;
\item[(P5)]\quad if $|E|< \infty$, there exists a constant
 $C$ depending on $E$  such that $\int_E f(t)\,dt \le C
\|f\|_{X(0,L)}$.
\end{itemize}
Here, $E$ denotes a measurable set in $(0,L)$, and  $\chi_E$ stands for its characteristic function.
If, in addition,
\begin{itemize}
\item[(P6)]\quad $\|f\|_{X(0,L)} = \|g\|_{X(0,L)}$ whenever $f\sp* = g\sp *$,
\end{itemize}
we say that $\|\cdot\|_{X(0,L)}$ is a
\textit{rearrangement-invariant function norm}.
\\
The \textit{associate function norm}  $\|\cdot\|_{X'(0,L)}$ of a function norm $\|\cdot\|_{X(0,L)}$ is  defined as
$$
\|f\|_{X'(0,L)}=\sup_{\begin{tiny}
                        \begin{array}{c}
                       {g\in{\Mpl(0,L)}}\\
                        \|g\|_{X(0,L)}\leq 1
                        \end{array}
                      \end{tiny}}
\int_0\sp{L}f(s)g(s)\ds
$$
for $ f\in\Mpl(0,L)$.
Note that
\begin{equation}\label{X''}
\|\cdot \|_{(X')'(0,L)} = \|\cdot \|_{X(0,L)}.
\end{equation}
\par\noindent
A property of function norms tells us that if $f , g \in \Mpl(0,L)$, then
\begin{equation}\label{hardy} f^{**}(s) \leq g^{**}(s) \quad \hbox{for $s>0$ implies that}
\quad \|f\|_{X(0,L)} \leq \|g\|_{X(0,L)}.
\end{equation}
Let $\Omega$ be a measurable set in $\rn$, and let
 $\|\cdot\|_{X(0,|\Omega|)}$ be   a rearrangement-invariant function norm.  Then the space $X(\Omega)$ is
defined as the collection of all  functions  $\bfu \in\M(\Omega)$
such that the quantity
\begin{equation}\label{norm}
\|\bfu\|_{X(\Omega)}= \|\bfu^*\|_{X(0,|\Omega|)} 
%
%begin{cases}  \|u\sp*_\nu\|_{X(0,\infty)} & \quad \hbox{if $\nu (\mathcal R)= \infty$}
%\\
%\|\phi\sp*_\nu(\nu(\mathcal R)t)\|_{X(0,1)} & \quad \hbox{if $\nu (\mathcal R)< \infty$,}
%\end{cases}
\end{equation}
is finite. The space $X(\Omega)$ is a Banach space, endowed
with the norm given by \eqref{norm}. 
 The space $X(0,|\Omega|)$ is called the
\textit{representation space} of $X(\Omega)$.
\\
The \textit{associate
space}  $X'(\Omega)$
of   a~rearrangement-invariant~space $X(\Omega)$ is
 the
rearrangement-invariant space   built upon the
function norm $\|\cdot\|_{X'(0,|\Omega|)}$. By property \eqref{X''},
$X''(\Omega)=X(\Omega)$.  %$Hence, any rearrangement-invariant
%space $X(\Omega)$ is always the associate space of another
%rearrangement-invariant space, namely $X'(\Omega)$.
\\
The \textit{H\"older type inequality}
%\[
%\int_{0}\sp1f(t)g(t)\,dt\leq\|f\|_{X(0,L)}\|g\|_{X'(0,L)},
%\]
%holds for every $f,g\in\Mpl(0,L)$, and hence
\begin{equation}\label{holder}
\int_{\Omega}|\bfu(x)\cdot \bfv(x)|\dx\leq\|\bfu\|_{X(\Omega)}\|\bfv\|_{X'(\Omega)}
\end{equation}
holds  for every $\bfu, \bfv \in \M(\Omega)$.
\\
If  $|\Omega|< \infty$, then
\begin{equation}\label{l1linf}
L^\infty (\Omega) \to X(\Omega) \to L^1(\Omega)
\end{equation}
for every rearrangement-invariant space
$X(\Omega)$.
\\ The fundamental function $\varphi_X : [0, \infty) \to [0, \infty)$ of a rearrangement-invariant space $X(\Omega)$ is defined as
\begin{equation}\label{fund}
\varphi_X(s) = \|\chi_E\|_{X(\Omega)} \quad \text{for $s \geq 0$,}
\end{equation}
where $E$ is any measurable subset of $\Omega$ such that $|E|=s$. The function $\varphi_X(s)/s$ is non-increasing. Furthermore, 
\begin{equation}\label{fund1}
\varphi_X(s)  \varphi_{X'}(s) =s \quad \text{for $s \geq 0$,}
\end{equation}
for every rearrangement-invariant space $X(\Omega)$.
 \\We shall need to extend
rearrangement-invariant spaces defined on a set $\Omega$ to rearrangement-invariant spaces
defined on the whole of $\rn$, and, conversely, to restrict rearrangement-invariant
spaces defined on the whole of $\rn$ to rearrangement-invariant spaces
defined on a subset $\Omega$, in some canonical way.  
\\  If $\bfu : \Omega \to \R^m$, we call $\bfu_e: \R^n \to \R^m$ its extension to the whole of $\R^n$ defined by $0$ outside $\Omega$. Moreover, if $\bfu : \R^n \to \R^m$, we call $\bfu_r: \Omega \to \R^m$ its restriction to $\Omega$.
\\ Given $L\in (0, \infty)$ and a function norm $\|\cdot\|_{X(0, \infty)}$, we define the  function norm 
 $\|\cdot\|_{X_r(0, L)}$ as 
\begin{equation}\label{feb80}
\|f\|_{X_r(0, L)} = \|f^*\|_{X(0, \infty)}
\end{equation}
for $f \in \mathcal M_+(0,L)$. An analogous definition holds if $L<L'$ and a function norm  $\|\cdot\|_{X(0, L')}$ is given.  If $\Omega$ is a measurable set in $\R^n$, we denote by $X_r(\Omega)$ the rearrangement-invariant space built upon the function norm $\|\cdot\|_{X_r(0, |\Omega|)}$. Notice that
\begin{equation}\label{feb81}
\|\bfu\|_{X_r(\Omega)}=  \|\bfu^*\|_{X(0, \infty)}= \|\bfu _e\|_{X(\R^n)}
\end{equation}
for every $\bfu \in \mathcal M(\Omega)$.
\\ On the other hand, given $L\in (0, \infty)$ and a function norm $\|\cdot\|_{X(0, L)}$, we define the  function norm 
 $\|\cdot\|_{X_e(0, \infty)}$ as 
\begin{equation}\label{feb82}
\|f\|_{X_e(0, \infty)} = \|f^*\|_{X(0, L)}
\end{equation}
for $f \in \mathcal M_+(0,\infty)$. We denote by $X_e(\R^n)$ the rearrangement-invariant space associated with the function norm $\|\cdot\|_{X_e(0, \infty)}$. One has that 
\begin{equation}\label{feb83}
\|\bfu\|_{X_e(\R^n)}=  \|\bfu^*\|_{X(0, |\Omega|)}
\end{equation}
for every $\bfu \in \mathcal M(\R^n)$. In particular, if $\bfu =0$ a.e. in $\rn \setminus \Omega$, then 
\begin{equation}\label{extbis}
\|\bfu\|_{X_e(\R^n)}=  \|\bfu_r\|_{X(\Omega)}.
\end{equation}
 More generally, a function norm  $\|\cdot\|_{\widehat X(0, \infty)}$ will be called an extension of  the function norm $\|\cdot\|_{X(0, L)}$ if 
\begin{equation}\label{mar170}
\|f\|_{\widehat X(0, \infty)}= \|f_r\|_{X(0, L)}
\end{equation}
for every $f\in \Mpl (0,\infty)$ such that $f=0$ a.e. in $(L, \infty)$. Clearly,  
\begin{equation}\label{mar171}
\|\bfu\|_{\widehat X(\rn)}= \|\bfu _r\|_{X(\Omega)}
\end{equation}
for every $\bfu\in \mathcal M(\rn)$ such that $\bfu = 0$ a.e. in $\rn \setminus \Omega$.
\subsection{Orlicz, Lorentz, Lorentz-Zygmund and Orlicz-Lorentz spaces}  \label{orlicz&lorentz}

% Besides the Lebesgue spaces, their generalizations given by the
%Lorentz spaces and the Orlicz spaces
%%and their combination,
%%the Orlicz-Lorentz spaces,
%are  special instances of r.i. spaces that will play a role in our
%discussion.
 Assume that $0<p,q\le\infty$. Let $L\in (0, \infty]$. We define the functional
$\|\cdot\|_{L\sp{p,q}(0,L)}$  by
$$
\|f\|_{L\sp{p,q}(0,L)}=
\left\|s\sp{\frac{1}{p}-\frac{1}{q}}f^*(s)\right\|_{L\sp q(0,L)}
$$
for  $f \in {\Mpl(0,L)}$. Here, and in what follows,  we use the convention that $\frac1{\infty}=0$.
If either $1<p<\infty$
and $1\leq q\leq\infty$, or $p=q=1$, or $p=q=\infty$,
 then $\|\cdot\|_{L\sp{p,q}(0,L)}$ is equivalent to a~rearrangement-invariant function norm.
The norm
$\|\cdot\|_{L\sp{p,q}(0,L)}$
% and  $\|\cdot\|_{L\sp{(p,q)}(0,L)}$ are
is called  \textit{Lorentz function norm}, and the corresponding space
$L\sp{p,q}(\Omega)$ 
%and $L\sp{(p,q)}(\Omega)$ are 
is called 
\textit{Lorentz space}.
\iffalse
\\ The following inclusion relations between Lorentz spaces hold:
\begin{equation}\label{i}
L\sp{p,p}(0,L)=L\sp p(0,L)\quad \text{for $p\in[1,\infty]$;}
\end{equation}
\begin{equation}\label{ii}
L\sp{p,q}(0,L)\to L\sp {p,r}(0,L)\quad \text{for $p\in[1,\infty]$ and $1\leq q\leq r\leq
\infty$;}
\end{equation}
\begin{equation}\label{iii}
L\sp{(p,q)}(0,L)\to L\sp{p,q}(0,L)\quad \text{for $p,q\in[1,\infty]$;}
\end{equation}
\begin{align}
&\textup{if either}\ p\in(1,\infty)\ \textup{and}\ 1\leq q\leq\infty,\
\textup{or}\ p=q=\infty,\label{iv}\\
&\textup{then}\
L\sp{(p,q)}(0,L)=L\sp{p,q}(0,L)\ \textup{up to equivalent norms. \notag}
\end{align}
\fi
\\
Suppose now that $0<p,q\le\infty$ and $\alpha \in\R$. Let $L\in (0, \infty)$. We define the
functional $\|\cdot\|_{L\sp{p,q;\alpha}(0,L)}$ by
\begin{equation}\label{E:1.18}
\|f\|_{L\sp{p,q;\alpha}(0,L)}=
\left\|s\sp{\frac{1}{p}-\frac{1}{q}}\log \sp
\alpha(\tfrac{eL}{s}) f^*(s)\right\|_{L\sp q(0,L)}
\end{equation}
for  $f \in {\Mpl(0,L)}$. For suitable choices of the parameters  $p,q, \alpha$, the functional
$\|\cdot\|_{L\sp{p,q;\alpha}(0,L)}$ is equivalent to a~rearrangement-invariant function
norm. If this is the  case, $\|\cdot\|_{L\sp{p,q;\alpha}(0,L)}$  is
called  \textit{Lorentz--Zygmund function norm}, and the
corresponding space $L\sp{p,q;\alpha}(\Omega)$ is called
\textit{Lorentz--Zygmund space}.  A four-parameter space $L^{\infty,p; -\frac 1p, -1}(\Omega)$, built upon the function norm $\|\cdot\|_{L^{\infty,p; -\frac 1p, -1}(0,L)}$ will also play a role in our applications. The latter is defined, for $p\in (1, \infty)$, as
\begin{equation}\label{GLZ}
\|f\|_{L^{\infty,p;-\frac{1}p,-1}(0,L)}  =
\left\|s^{-\frac{1}{p}}\log^{-\frac{1}{p}}\left(\tfrac{eL}{s}\right) (\log (1+\log \left(\tfrac{eL}{s}\right)))^{-1}f^{*}(s) \right\|_{L^{p}(0, L)}
\end{equation}
for  $f \in {\Mpl(0,L)}$.
A detailed study of
Lorentz-Zygmund spaces can be found in \cite{EOP} and \cite[Chapter~9]{PKJF}.
\par
The notion of Orlicz space relies upon that of Young function.
% A
%function $A: [0, \infty ) \to [0, \infty ]$ is called a Young
%function if it is convex (non trivial), left-continuous and
%vanishes at $0$. 
Any such function takes the form
\begin{equation}\label{young}
A(t) = \int _0^t a(\tau ) \,\mathrm{d}\tau \qquad \hbox{for $t \geq
0$},
\end{equation}
for some non-decreasing, left-continuous function $a: [0, \infty )
\to [0, \infty ]$ which is neither identically equal to $0$ nor to
$\infty$. 
 \\ Given $L\in (0, \infty]$, the
\textit{Luxemburg function norm} $\|\cdot \|_{L^A (0,L)}$ is defined
by
\begin{equation}\label{lux}
\|f\|_{L^A(0,L)}= \inf \left\{ \lambda >0 :  \int_{0}^LA \left(
\frac{f(s)}{\lambda} \right) \dt \leq 1 \right\} 
\end{equation}
for  $f \in {\Mpl(0,L)}$.
The corresponding
 rearrangement-invariant  space
$L\sp A(\Omega)$ is called \textit{Orlicz space}. 
In particular,
$L^A (0,L)= L^p (0,L)$ if $A(t)= t^p$ for some $p \in [1, \infty )$,
and $L^A (0,L)= L^\infty (0,L)$ if $A(t)=0$ for $t\in [0, 1]$ and
$A(t) = \infty$ for $t>1$.
\\
Let  $A$ and $B$ be Young functions. If $L\in (0, \infty)$, the function norms $\|\cdot \|_{L^A
(0,L)}$ and $\|\cdot \|_{L^B (0,L)}$ are equivalent if and only if
$A$ and $B$ are equivalent near infinity, in the sense that there
exist constants $c\in[1,\infty)$ and $t_0\in[0,\infty)$ such that
\begin{equation}\label{equivorlicz}
A(t/c) \leq B(t) \leq A(ct) \quad \text{for $t\in[t_0,\infty)$.}
\end{equation}
The function norms $\|\cdot \|_{L^A
(0,\infty)}$ and $\|\cdot \|_{L^B (0,\infty)}$ are equivalent if and only if
$A$ and $B$ are globally equivalent, in the sense that  inequality 
\eqref{equivorlicz} holds for some constant $c>1$ and every $t\geq 0$.
\\ When $|\Omega|<\infty$, the alternate notation $A(L)(\Omega)$  will also be adopted, when convenient, to denote the Orlicz space built upon a Young function (equivalent near infinity to) $A$.
\\
The associate function norm  $\|\cdot \|_{(L^A)^{\prime}
(0,L)}$ of the function norm $\|\cdot \|_{L^A
(0,L)}$  satisfies the inequalities
\begin{equation}\label{assorlicz}
 \|f \|_{L^{\tilde A}(0,L)} \leq \|f\|_{(L^A)^{\prime}(0,L)}\leq 2  \|f \|_{L^{\tilde A}(0,L)}
\end{equation}
for $f \in {\Mpl(0,L)}$, 
where
%
% The
%%representation space of $L^A (\Omega  )$ is $L^A(0,\nu (\Omega  ))$,
%%and its
%associate space of $L^{A}(\Omega  )$ is $L^{\widetilde{A}}(\Omega
%)$, up to equivalent norms, where
$\widetilde{A}$ is the
\emph{Young conjugate} of $A$  defined by
%\begin{equation}\label{2.8}
$$\widetilde{A}(t) = \sup \{\tau t-A(\tau ):\,\tau \geq 0\} \qquad \text{for $t\geq 0.$}$$
The following observation will be of use in our applications. Assume that $f,g \in {\Mpl(0,L)}$. Then
\begin{align}\label{new20}
&\text{$\|f\|_{L^A(0,L)}\leq  \|g\|_{L^A(0,L))}$ for every Young function $A$}\\ \nonumber & \text{if and only if  $\int_0^LA(f(s))\ds \leq 
\int_0^LA(g(s))\ds$ for every Young function $A$.}
\end{align}
The latter property follows from the former, applied with $A(t)$ replaced by the Young function $A(t)/M$, where $M=\int_0^LA(g(s))\ds$. The former follows from the latter, applied with $A(t)$ replaced by the Young function $A(t/\lambda)$, where $\lambda =  \|g\|_{L^A(0,L))}$.
%Notice that, if   $\mathcal A : (0, \infty) \to (0, \infty)$ is a function such that the function $t \mapsto \mathcal A(t)/t$ is non-decreasing, then
%\begin{equation*}%\label{equivE}
%\mathcal A(t) \leq \int _0^t\frac{\mathcal A(s)}{s}\, ds \leq \mathcal A(2t) \quad \text{for $t\in[0,\infty)$,}
%\end{equation*}
%and hence
%\begin{equation}
%\label{youngequiv}
%\text{$\mathcal A(t)$ \, is equivalent to the Young function \,  $\int _0^t\frac{\mathcal A(s)}{s}\, ds$.}
%\end{equation}
\\ A comprehensive treatment of Orlicz spaces can be found in \cite{RR1, RR2}.
\\
An extension of the  class of Orlicz spaces, that also includes 
 various instances of Lorentz and Lorentz-Zygmund spaces,
 is provided by the
family of \emph{Orlicz-Lorentz spaces}.
Let $L\in (0,\infty]$. Given  a Young function $A$ and a number $q\in \R$,  we denote
 by $\|\cdot \|_{L(A,q)(0,L)}$ the \emph{Orlicz-Lorentz
functional} defined as
\begin{equation}\label{016}
\|f\|_{L(A,q)(0,L)} = \left\|s^{-\frac 1q} f^* (s)\right\|_{L^A(0, L)}  
\end{equation}
for $f\in \Mpl(0,L)$.
 Under suitable assumptions on $A$ and $q$, this quantity is (equivalent to) a rearrangement-invariant function norm.   This is certainly the case when $q>1$ and
\begin{equation}\label{aug310}
\int^\infty \frac{A(t)}{t^{1+q}}\dt < \infty\,,
\end{equation}
see \cite[Proposition 2.1]{Ci3}.
\\ The Lorentz  functional $\|\cdot\|_{L^{p,q}(0,L)}$, the Lorentz-Zygmund functional $\|\cdot\|_{L^{p,q;\alpha}(0,L)}$ and the Orlicz-Lorentz functional  $\|\cdot\|_{L(A,q)(0,L)}$ are in their turn special instances of the functionals $\|\cdot\|_{\Lambda ^A(\nu)}$, associated with a Young function $A$ and a locally integrable function $\nu: (0, L) \to [0, \infty)$, and defined  as 
\begin{equation}\label{feb85}
\|f\|_{\Lambda ^A(\nu)}= \|\nu (s) f^*(s)\|_{L^A(0, L)}
\end{equation}
for $f \in \mathcal \Mpl (0,L)$. A complete characterization of those Young
functions $A$ and weights $\nu$ for which the functional
$\|\cdot\|_{\Lambda\sp{A}(\nu)(0,L)}$ is (equivalent to) a
rearrangement-invariant function norm seems not to be available in
the literature. However, this will always be the case in all our applications.
%
%
%this functional is certainly equivalent to
%a rearrangement-invariant function norm whenever the weight $\omega$ is
%equivalent, up to multiplicative constants, to a non-increasing
%function. This is the only case that will be needed in our
%applications. 
For such a choice of weights $\nu$ and Young functions $A$, we shall denote by
$\Lambda\sp{A}(\nu)(\Omega)$ the corresponding
rearrangement-invariant space on a measurable set
$\Omega\subset \rn$.

\subsection{Sobolev-type spaces}\label{sobolevtype}
Assume that $\Omega\subset\R^n$ is an open set and $X(\Omega)$ is a rearrangement-invariant space.  
Given a   weakly differentiable function $\bfu :\Omega \to \R^m$, we denote by  $\nabla \bfu : \Omega \to \R^{m\times n}$ its gradient, and set 
\begin{equation}\label{Du}
\mathcal D \bfu = (\bfu, \nabla \bfu) \in \rn\times \setR^{m\times n}.
\end{equation}
The  \emph{Sobolev space}
$W^{1}X(\Omega)$ is defined as 
$$W^{1}X(\Omega) = \{\bfu \in X(\Omega) : \nabla \bfu \in X(\Omega)\}.$$
%
%the collection of all weakly differentiable functions $\bfu :\Omega \to \R^m$
%in $X(\Omega)$
% whose  gradient  $\nabla \bfu : \Omega \to \R^{m\times n}$ also belongs
%to $X(\Omega)$.
It is a Banach space endowed with the norm
\begin{align*}
\|\bfu\|_{W^{1}X(\Omega)}=\|\bfu\|_{X(\Omega)}+\|\nabla
\bfu\|_{X(\Omega)}.
\end{align*}
In the special case when $X(\Omega)=L^p(\Omega)$, with $1\leq p\leq \infty$, or, more generally, $X(\Omega)=L^A(\Omega)$, where $A$ is a Young function, this definition recovers the standard Sobolev space $W^{1,p}(\Omega)$ and the Orlicz-Sobolev space $W^{1}L^A(\Omega)$,  respectively.
We also define the subspace
\begin{equation}\label{apr100}
W^1_bX(\Omega) = \{\bfu \in
\mathcal M(\Omega) :\,\, \{|\bfu|>0\}\, \text{is bounded and}\, \, \bfu_e \in E^{1}X(\rn) \},
\end{equation}
equipped with the norm
\begin{equation}\label{feb44}
\|\nabla \bfu\|_{X(\Omega)}.
\end{equation}
Moreover, we set
\begin{align}\label{apr101}
W^1_0X(\Omega) = \text{closure of $W^1_bX(\Omega)$ with respect to the norm \eqref{feb44}}.
%
%\overline{ W^1_bX(\Omega) }^{\| \nabla \bfu\|_{X(\rn)}}.
\end{align}
The space $W^1_0X(\Omega)$ is a subspace of $W^1X(\Omega)$ of those functions which vanish on $\partial \Omega$ and near infinity in the most general   suitable sense in the context of our paper.
\\
Plainly, if $\Omega$ is bounded, then
$$W^1_bX(\Omega) = W^1_0X(\Omega).$$
%
%
%
% \begin{align*} W^{1}_0X(\Omega) = \overline{ \{\bfu \in
%W^1X(\Omega) :\,\, \{|\bfu|>0\}\, \text{is bounded and}\, \bfu_e \in W^{1}X(\rn) \}}^{\|\nabla \bfu\|_{X(\rn)}}.
%\end{align*}
 Our main concern is about function spaces involving the weak symmetric gradient $\ep (\bfu)$ of a function 
 $\bfu : \Omega \to \rn$ defined by \eqref{symmgrad}.
%
%
%is denoted by $\ep(\bfu)$ and defined as
%$$\ep(\bfu)=\frac{1}{2}\big(\nabla\bfu+\nabla\bfu^T\big).$$
 We shall also employ the notation
\begin{equation}\label{new23} 
\mathcal E(\bfu) = (\bfu,  \ep (\bfu))\in \R^{n}\times \R^{n\times n}.
\end{equation}
The kernel of the operator $\ep$ is denoted by $\mathcal R$, and will be referred to as the \emph{space of rigid displacements} on $\rn$. Recall that
\begin{equation}\label{ker1}
\mathcal R = \{\bfb + \bfQ x: \bfb \in \rn, \bfQ\in {\mathbb R}^{n \times n}_{\rm skew}\},
\end{equation}
where ${\mathbb R}^{n \times n}_{\rm skew}$ stands for the space of skew symmetric $n\times n$ matrices.
\\
Notice that $\mathcal R$ is a finite-dimensional space.  As a consequence, if $\Omega$ is either a cube or a ball, then 
\begin{equation}\label{norm-equiv3}
  \quad \|\bfv\|_{L^\infty(\Omega)} \approx \frac 1{|\Omega_1|}\|\bfv\|_{L^1(\Omega)},
\end{equation} 
for every $\bfv \in \mathcal R$, with equivalence constants depending on $n$. 
\\ Moreover,  if $\Omega_1$ and $\Omega_2$ are either cubes or balls,  whose diameters are bounded by each other up to multiplicative constants $c_1$ and $c_2$ and whose distance is bounded by their diameters up to a multiplicative constant $c_3$, then
\begin{equation}\label{norm-equiv4}
\|\bfv\|_{X(\Omega_1)} \approx \|\bfv\|_{X(\Omega_2)},
\end{equation}
 where $X(\Omega_1)$ and $X(\Omega_2)$ are  rearrangement-invariant spaces built upon the same function norm (suitably restricted, if $|\Omega_1|\neq |\Omega_2|$, as described in Subsection \ref{ri}). After verifying equation \eqref{norm-equiv4} when $X=L^1$, the general case follows, on making  also use of  equations \eqref{norm-equiv3}, \eqref{norm-equiv4}, \eqref{holder} and \eqref{fund1}, via the chain
\begin{align}\label{new40}
\|\bfv\|_{X(\Omega_1)} &\leq \|\bfv\|_{L^\infty(\Omega_1)} \|1\|_{X(\Omega_1)} \lesssim \frac {\|1\|_{X(\Omega_1)} }{|\Omega_1|}\int_{\Omega_1}|\bfv|\dx 	\\ \nonumber &	 \lesssim \frac {\|1\|_{X(\Omega_1)}}{|\Omega_1|}\int_{\Omega_2}|\bfv|\dx  
\lesssim \frac {\|1\|_{X(\Omega_1)} \|1\|_{X'(\Omega_2)} }{|\Omega_1|} \|\bfv\|_{X(\Omega_2)}\lesssim  \|\bfv\|_{X(\Omega_2)},
\end{align}
up to mutiplicative constants depending on $c_1$, $c_2$, $c_3$ and $n$, and via an analogous chain  with the roles of $\Omega_1$ and $\Omega_2$ exchanged.
\\
Given a rearrangement-invariant space $X(\Omega)$ we define the \emph{symmetric gradient Sobolev space} $E^1X(\Omega)$ as
\begin{align*}
E^1X(\Omega) =\set{\bfu \in X(\Omega):  \ep(\bfu) \in X(\Omega)}.
\end{align*}
One has that  $E^1X(\Omega)$ is a Banach space equipped with the norm
\begin{equation}\label{normE1X}
\|\bfu\|_{E^1X(\Omega)} = \|\bfu\|_{X(\Omega)} + \|\ep(\bfu)\|_{X(\Omega)}.
\end{equation}
By $E^1_{\rm loc}X(\Omega)$ we denote the space of all functions $\bfu : \Omega \to \rn$ such that $\bfu \in E^1X(G)$ for every bounded open set $G$ such that $\overline  G \subset \Omega$.
%In particular, if $X(\Omega) =L^p(\Omega)$ for some $p \in [1, \infty]$, we set $E^{1,p}(\Omega) = E^1L^p(\Omega)$. Moreover, $\Omega$ will be omitted if it agrees with $\R^n$.
%
\\ 
%A subspace $E^1_0X(\Omega)$  of   $E^1X(\Omega)$ of functions which vanish, in the most general suitable sense in the present setting, on $\partial \Omega$, can be defined as follows.
In analogy with \eqref{apr100} and \eqref{apr101},
we   denote by $E^1_bX(\Omega)$ the space 
\begin{equation}\label{apr102}
E^1_bX(\Omega) = \{\bfu \in
\mathcal M(\Omega) :\,\, \{|\bfu|>0\}\, \text{is bounded and}\, \, \bfu_e \in E^{1}X(\rn) \},
\end{equation}
equipped with the norm
\begin{equation}\label{feb44bis}
\|\ep(\bfu)\|_{X(\Omega)},
\end{equation}
and
define
\begin{align}\label{apr103}
E^1_0X(\Omega) =  \text{closure of $E^1_bX(\Omega)$ with respect to the norm \eqref{feb44bis}}.
%
%\overline{ E^1_bX(\Omega) }^{\| \ep(\bfu)\|_{X(\rn)}}.
\end{align}
%\todo[inline]{A: to be checked} 
{\color{black} Owing to \cite[Propositions 1.1 and 1.3, Chapter II, Section 1]{Temam}, 
\begin{equation}\label{feb87}
\text{if $\bfu\in E^1_0X(\Omega)$, then $\bfu\in L^1_{\rm loc}(\Omega)$
%, $\bfu =0$ in $\rn \setminus \Omega$, 
and $\ep(\bfu)\in X(\Omega)$.}
\end{equation}}
If $\Omega$ is bounded, then
$$E^1_bX(\Omega) = E^1_0X(\Omega).$$
It is well known that the space $C^\infty_0(\rn)$ is dense in $E^1L^p(\rn)$ for every $p \in [1,\infty)$. A parallel density result, with analogous proof, holds for the space  $E^1L^{p,q}(\rn)$ for every $p,q \in [1,\infty)$. Similarly,  $C^\infty_0(\rn)$ is dense in $E^1_bL^p(\rn)$, and, more generally, in $E^1_bL^{p,q}(\rn)$, for every $p,q \in [1,\infty)$.

\section{Representation formulas and basic Poincar\'e type inequalities}\label{poincareE1}

Here we exhibit a few representation formulas for functions in open sets $\Omega \subset \rn$, with $n \geq 2$, in terms of their symmetric gradient. Ensuing Poincar\'e type inequalities in $E^1L^1(\Omega)$ are also presented. The emphasis is on the dependence of the constants for sets $\Omega$ of special form.
\\ We begin by considering functions defined on the whole of $\rn$.

 \begin{lemma}\label{reprrn} Let $\bfu \in E^1_bL^1(\rn)$. Then 
\begin{align}\label{reprrn1}
\bfu(x)= \frac{1}{n\omega_n}\int_{\R^n}\mathcal A\ep(\bfu)(y)\frac{x-y}{|x-y|^{n}}\dy \quad \text{for a.e. $x\in \rn$,}
\end{align}
where $\omega_n$ denotes the Lebesgue measure of the unit ball in $\rn$,  $\mathcal A:\R^{n\times n}\rightarrow \R^{n\times n}$ denotes the linear map defined as
$\mathcal A \bfQ=2\bfQ-\Big(\frac{1}{2}+\frac{1}{n}\Big)\mathrm{tr}\bfQ\,\bfI$ for $\bfQ \in  \R^{n\times n}$, $\mathrm{tr}\bfQ$ stands for the trace of $\bfQ$ and $\bfI \in \R^{n\times n}$  for the identity matrix.
\end{lemma}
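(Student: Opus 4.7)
My strategy would be to establish the formula first for smooth, compactly supported test functions and then to extend it to $E_b^1L^1(\rn)$ by density.

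For $\bfu \in C_c^\infty(\rn;\rn)$, I work componentwise. Each $u_i$ can be recovered from its Laplacian via the Newton-potential identity
\begin{equation*}
u_i(x) = -\int_{\rn}\Gamma(x-y)\,\Delta u_i(y)\dy, \qquad x\in\rn,
\end{equation*}
where $\Gamma$ is the fundamental solution of $-\Delta$, for which $\partial_{y_j}\Gamma(x-y) = \frac{1}{n\omega_n}\frac{x_j - y_j}{|x-y|^n}$. The elementary identity $\Delta u_i = \partial_j\bigl[2\epsilon_{ij}(\bfu)-\delta_{ij}\divergence\bfu\bigr]$ expresses $\Delta u_i$ as a $y_j$-divergence of a specific linear combination of $\ep(\bfu)$ and $\tr\ep(\bfu)\,\bfI$; substituting this into the Newton-potential identity and integrating by parts in $y_j$ produces a representation of the desired type. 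The precise coefficient $\tfrac12+\tfrac1n$ in $\mathcal A$ is then extracted by a subsequent reparametrization, which relies on a distributional identity for the rank-three kernel $\frac{(x-y)^{\otimes 3}}{|x-y|^{n+2}}$: this kernel is divergence-free off the diagonal in $y$, but produces a Dirac mass under distributional $\divergence_y$, as a sphere-integration argument about $x$ shows. Exploiting this freedom tunes the constant in front of the trace term to exactly $\tfrac12+\tfrac1n$.

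The density extension is standard. Since $\bfu \in E_b^1L^1(\rn)$ has bounded support and $C_c^\infty(\rn;\rn)$ is dense in $E_b^1L^1(\rn)$ by the density result recalled at the end of Section~\ref{sec2}, I pick $\bfu_m \in C_c^\infty(\rn;\rn)$ with $\bfu_m\to\bfu$ and $\ep(\bfu_m)\to\ep(\bfu)$ in $L^1(\rn)$, all supported in a common bounded set. Applying the smooth-case formula to each $\bfu_m$ and letting $m\to\infty$, the left-hand side converges in $L^1$, hence almost everywhere along a subsequence, whereas the right-hand side converges a.e.\ because it is a linear combination of vector-valued Riesz potentials of order~$1$ applied to the compactly-supported function $\mathcal A\ep(\bfu_m)$, and such Riesz potentials map bounded-support $L^1$ functions continuously into $L^{n/(n-1),\infty}_{\mathrm{loc}}(\rn)$.

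The main obstacle is pinning down the specific coefficient $\tfrac12+\tfrac1n$. The straightforward route via $\Delta u_i = \partial_j[2\epsilon_{ij}(\bfu)-\delta_{ij}\divergence\bfu]$ already yields a representation of the stated form but with the \emph{different} operator $\bfQ\mapsto 2\bfQ-\tr(\bfQ)\bfI$; passing from this operator to the specific $\mathcal A$ of the statement requires the additional distributional manipulation outlined above. This is the only delicate point, the rest of the argument being routine convergence bookkeeping.
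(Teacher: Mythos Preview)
Your overall structure---smooth case via the Newton potential and one integration by parts, then density---is exactly the paper's route. The paper simply asserts, citing \cite{Ne}, that $\Delta\bfu=\Div\bigl(\mathcal A\ep(\bfu)\bigr)$ with the stated $\mathcal A$, applies the fundamental solution of $-\Delta$, and integrates by parts once; for the density step it uses that the Riesz-type operator is bounded from $L^1(B)$ to $L^1(B)$ on a fixed ball $B$ containing the supports (your weak-$L^{n/(n-1)}$ argument works equally well).

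The coefficient discrepancy you flag is genuine, but your proposed resolution does not work. A direct check gives
\[
\Div\bigl(\mathcal A\ep(\bfu)\bigr)=\Delta\bfu+\Bigl(\tfrac12-\tfrac1n\Bigr)\nabla(\divergence\bfu),
\]
so the identity cited in the paper is exact only for $n=2$; the operator $\bfQ\mapsto 2\bfQ-(\tr\bfQ)\bfI$ that you obtain is the one for which the divergence identity holds in every dimension. Your ``tuning'' step cannot bridge this gap: the two candidate representations differ by
\[
\Bigl(\tfrac12-\tfrac1n\Bigr)\frac{1}{n\omega_n}\int_{\rn}\divergence\bfu(y)\,\frac{x-y}{|x-y|^{n}}\dy,
\]
which is (a constant multiple of) $\nabla_x$ of the Newton potential of $\divergence\bfu$ and is generically nonzero for $n\geq 3$. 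The distributional identity you invoke for the rank-three kernel $\frac{(x-y)^{\otimes 3}}{|x-y|^{n+2}}$ produces, after pairing with derivatives of a scalar, a Dirac contribution---not a term of the form $g(y)\frac{x-y}{|x-y|^n}$ with scalar $g$---so it cannot furnish the required cancellation. In short, your argument through the coefficient-$1$ formula is a complete and correct proof of the representation with $2\bfQ-(\tr\bfQ)\bfI$ in place of $\mathcal A$ (and this is all that is ever used downstream, via the bound $|K(x,y)|\lesssim|x-y|^{1-n}$); the subsequent tuning step should be dropped rather than elaborated.
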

\begin{proof}
Assume that $\bfu \in C^\infty_0(\rn)$. 
 A direct computation shows that
%\begin{align*}
$\Delta\bfu =\Div \big(\mathcal A\ep(\bfu)\big)$ --
%\end{align*}
see e.g. \cite{Ne}.
A classical representation formula via either the Newtonian or the logarithmic potential, depending on whether $n=2$ or  $n\geq 3$, tells us that
\begin{align}\label{mar20}
\bfu(x)=\begin{cases}
\displaystyle -\frac{1}{2\pi}\int_{\R^2}\log \frac 1{|x-y|} \Div \big(\mathcal A\ep(\bfu)\big)(y)\dy & \text{if $n=2$,}
\\ \\
\displaystyle  -\frac{1}{n(n-2)\omega_n}\int_{\R^n}\frac{1}{|x-y|^{n-2}}\Div \big(\mathcal A\ep(\bfu)\big)(y)\dy & \text{if $n \geq 3$,}
\end{cases}
\end{align}
for $x \in \rn$. Equation \eqref{reprrn1} hence follows, via an integration by parts. Suppose now that $\bfu \in E^1_bL^1(\rn)$. Then $\bfu$ can be approximated in $E^1_bL^1(\rn)$, and a.e.,  by a sequence of functions  $\{\bfu_k\} \subset C^\infty_0(\rn)$ with uniformly bounded supports contained in some ball $B\subset \rn$. Since  
there exists a constant $c=c(n)$ such
\begin{equation}\label{mar21}
\bigg|\mathcal A\bfQ\frac{x-y}{|x-y|^{n}}\bigg| \leq \  \frac{c|\bfQ|}{ |x-y|^{n-1}} \quad \hbox{for $x \neq y$,}
\end{equation}
the integral operator on the right-hand side of equation \eqref{reprrn1} is bounded from $L^1(B)$ into $L^1(B)$.  Thus, making use of equation \eqref{reprrn1} for $\bfu_k$ and passing to the limit as $k \to \infty$ shows that the equation  also  holds for $\bfu$. 
%
%
% This equation continues to hold for every $\bfu \in E^1_bL^1(\rn)$, since $C^\infty_0(\rn)$ is dense in the latter space and the integral 
%
%
%Consequently,
%\begin{align*}
%\bfu(x)= \frac{1}{n\omega_n}\int_{\R^n}\mathcal A\ep(\bfu)(y)\frac{x-y}{|x-y|^{n}}\dy \quad \text{for $x \in \rn$.
%\end{align*}
%Since 
%
%Consequently, we have
%\begin{equation}\label{july26}
%\bfu (x) = \int _{\R^n} K(x,y) \ep (\bfu)(y)\dy \quad \hbox{for $x \in \R^n$,}
%\end{equation}
%where $K(x,y)$ is a kernel with values in $\mathcal L(\R^{n\times n},\R^n)$ such that
%$$ |K(x,y)|\leq C |x-y|^{1-n} \quad \hbox{for $x \neq y$},$$
%and
%$$ |K (x+h,y)-K(x,y)|\leq C
%|h||x-y|^{-n} \quad \hbox{for $h \in \rn$ and $|x-y|\geq
%3|h|$.}$$
\end{proof}

Recall that the Riesz potential operator $I_1$, of order $1$, of a function $\bfu \in L^1(\rn)$ with bounded support is defined as 
$$I_1\bfu (x) = \int _{\rn} \frac{\bfu(y)}{|x-y|^{n-1}}\dy \quad \hbox {for $x\in \rn$.}$$
As a consequence of O'Neil's rearrangement inequality for convolutions \cite[Lemma 1.8.8]{Ziemer}, there exists a constant $C=C(n)$ such that
\begin{equation}\label{potrearr}
(I_1\bfu)^*(s) \leq C\bigg(s^{-\frac 1{n'}}\int _0^s\bfu^*(r)\dr + \int_s^\infty \bfu^*(r) r^{-\frac 1{n'}}\dr\bigg) \quad \hbox{for $s>0$.}
\end{equation}
Here, $n'=\tfrac n{n-1}$ is the H\"older conjugate of $n$.
From equations \eqref{reprrn1}, \eqref{mar21} and \eqref{potrearr}, one can deduce the following pointwise rearrangement inequality for $\bfu$ in terms of $\ep(\bfu)$.

\begin{corollary}\label{rearrepu} There exists a constant $C=C(n)$ such that
\begin{equation}\label{rearrepu1}
\bfu^*(s) \leq C\bigg(s^{-\frac 1{n'}}\int _0^s\ep(\bfu)^*(r)\dr + \int_s^\infty \ep(\bfu)^*(r) r^{-\frac 1{n'}}\dr\bigg) \quad \hbox{for $s>0$,}
\end{equation}
for every $\bfu \in E^1_bL^1(\rn)$.
\end{corollary}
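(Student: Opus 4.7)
The plan is to combine the integral representation \eqref{reprrn1} with the pointwise bound \eqref{mar21} in order to control $|\bfu(x)|$ by a Riesz potential of $|\ep(\bfu)|$, and then invoke the O'Neil rearrangement inequality \eqref{potrearr}.

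First, fix $\bfu \in E^1_bL^1(\rn)$. Since $\{|\bfu|>0\}$ is bounded, the distributional identity for $\ep(\bfu)$ forces $\ep(\bfu)$ to vanish a.e.\ outside a bounded set, so $|\ep(\bfu)| \in L^1(\rn)$ has bounded support and the Riesz potential $I_1(|\ep(\bfu)|)$ is well defined. Applying \eqref{mar21} inside the integral in \eqref{reprrn1} yields
\begin{equation*}
|\bfu(x)| \;\leq\; \frac{c}{n\omega_n} \int_{\rn} \frac{|\ep(\bfu)(y)|}{|x-y|^{n-1}} \dy \;=\; C_1\, I_1(|\ep(\bfu)|)(x)
\end{equation*}
for a.e.\ $x \in \rn$, with $C_1 = C_1(n)$.

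Next, since the decreasing rearrangement is monotone with respect to the pointwise order of absolute values, the previous inequality gives
\begin{equation*}
\bfu^*(s) \;\leq\; C_1 \bigl(I_1(|\ep(\bfu)|)\bigr)^*(s) \quad \text{for $s>0$.}
\end{equation*}
Combining this with O'Neil's inequality \eqref{potrearr} applied to the scalar function $|\ep(\bfu)|$, and observing that $(|\ep(\bfu)|)^* = \ep(\bfu)^*$ by the very definition of decreasing rearrangement, yields \eqref{rearrepu1} with a constant $C=C(n)$.

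I do not foresee any serious obstacle: the only small point that requires care is ensuring that \eqref{reprrn1} and \eqref{potrearr} are both applicable to $\bfu \in E^1_bL^1(\rn)$, which is exactly the class for which Lemma~\ref{reprrn} was established, and for which $\ep(\bfu)$ has bounded support, placing us within the scope of O'Neil's inequality.
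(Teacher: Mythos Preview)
Your proof is correct and follows exactly the route the paper indicates: combine the representation formula \eqref{reprrn1} with the kernel bound \eqref{mar21} to dominate $|\bfu|$ by $I_1(|\ep(\bfu)|)$, then apply O'Neil's rearrangement inequality \eqref{potrearr}. The paper does not spell out a separate proof for this corollary beyond precisely these three ingredients, so your argument matches it in full.
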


The next lemma provides us with a  a representation formula for  functions on  balls and cubes
%domains starshaped with respect to a ball 
in terms of a projection operator on   $\mathcal R$  and of a Riesz type operator of their symmetric gradient. The punctum in the lemma is the dependence of the norms of the relevant operators.
% in the case when the domain is either a ball or a cube. Recall that an open set $\Omega \subset \rn$ is said to be starshaped with respect to a ball if it is starshaped with respect to all points of some ball contained in $\Omega$.
\\ In what follows, the notation $T: Z \to W$ is employed for   a linear bounded operator $T$ between the normed spaces $Z$ and $W$.
%
%
%It can be found in \cite[Lemma 4.3]{Ckorn} and is a consequence of \cite{Ka}. 

\begin{lemma}\label{lem:repr}
%Suppose that $\Omega\subset\R^n$ is a bounded set which is star-shaped with respect to a ball  {\color{red} centered at $x_0\in \rn$}.
Let $\Omega\subset\R^n$  be either a cube or ball  centered at $x_0$.
Then there exists a linear bounded operator
$$\mathcal R_\Omega : L^1(\Omega) \to L^\infty(\Omega)$$
of the form 
\begin{equation}\label{mar95}\mathcal R_\Omega (\bfu)(x)=\bfb_\Omega (\bfu) +\bfR_\Omega (\bfu)(x-x_0) \quad \hbox{for $x \in \Omega$,}
\end{equation}
for $\bfu \in L^1(\Omega)$,
where $\bfb_\Omega$ and $\bfR_\Omega$ are linear bounded operators
\begin{equation}\label{mar92} \bfb_\Omega : L^1(\Omega) \to \rn \quad \text{and} \quad \bfR_\Omega : L^1(\Omega) \to \mathbb R^{n\times n}_{\rm skew},
\end{equation}
and there exists a linear bounded operator 
\begin{equation}\label{mar93}
\mathscr L_\Omega : X(\Omega) \to X(\Omega)
\end{equation}
for every rearrangement-invariant space $ X(\Omega)$,
 such that
\begin{align}\label{eq:repr0}
\bfu(x)=\mathcal R_\Omega(\bfu)(x)+\mathscr L_\Omega(\ep(\bfu))(x) \quad \hbox{for a.e. $x \in \Omega$,}
\end{align}
for every $\bfu\in E^1L^1(\Omega)$.
%{\color{red}
%\begin{equation}\label{mar93}
%\mathscr L_\Omega : L^A(\Omega) \to L^A(\Omega)
%\end{equation}
%for every Young function $A$}, 
%and
%\begin{align}\label{eq:repr0}
%\bfu(x)=\mathcal R_\Omega(\bfu)(x)+\mathscr L_\Omega(\ep(\bfu))(x) \quad \hbox{for a.e. $x \in \Omega$,}
%\end{align}
%for every $\bfu\in E^1L^1(\Omega)$.
%
%Here $\mathcal R_\Omega(\bfu)(x)=\bfb_\Omega(\bfu)+\bfR_\Omega(\bfu)x$, where  $\bfb_\Omega(\bfu)\in\R^n$ and $\bfR_\Omega(\bfu)\in \R^{n\times n}$ is skew-symmetric, and both are linear functionals of $\bfu$. Furthermore, we have
% \begin{align}\label{eq:extE1}
% |\bfb_\Omega(\bfu)|+|\bfR_\Omega(\bfu)|\leq\,C(\Omega)\|\bfu\|_{L^1(\Omega)}\quad \forall \bfu\in L^1(\Omega).
% \end{align}
\\
In particular,
 %if $\Omega$ is either a cube or ball {\color{red} centered at $x_0$}, then
 there exists a constant $c=c(n)$ such that
 \begin{align}\label{eq:extE1}
 \|\mathcal R_\Omega   (\bfu)\|_{L^\infty(\Omega)}  \leq \frac{c}{|\Omega|}
\|\bfu\|_{L^1(\Omega)}
 \end{align}
for every  $\bfu\in L^1(\Omega)$, and 
 \begin{align}\label{eq:extgrad}
|\bfR_\Omega   (\bfu)|  \leq c
\|\nabla \bfu\|_{L^\infty(\Omega)}
 \end{align}
for every $\bfu \in W^{1,\infty}(\Omega)$.
Moreover, 
%under the same assumptions on $\Omega$, 
there exists a constant $c=c(n)$ such that, 
 for every  rearrangement-invariant space $X(\Omega)$,
 \begin{align}\label{eq:extE_A}
\|\mathcal R_\Omega   (\bfu)\|_{X(\Omega)} \leq c \|\bfu\|_{X(\Omega)}
 \end{align}
for every   $\bfu\in X(\Omega)$, and
%
% we have $C(\Omega)=\frac{c}{|\Omega|}$.\\
% Moreover,  $\mathscr L_\Omega$ is a linear operator, such that
% %$\mathscr L_\Omega:L^p(\Omega)\rightarrow L^p(\Omega)$ satisfies 
%for all $p\in[1,\infty]$
  \begin{align}\label{eq:extE2}
\| \mathscr L_\Omega (\bfE)\|_{X(\Omega)}\leq c|\Omega|^{\frac{1}{n}}\|\bfE\|_{X(\Omega)}
 \end{align}
for every  $\bfE \in X(\Omega)$.
%
%
%
% for every  Young function $A$,
%{\color{red}
% \begin{align}\label{eq:extE_A}
%\int_{\Omega}A\big( |\mathcal R_\Omega   (\bfu)|\big)\, dx  \leq \int_\Omega A(c|\bfu|)\, dx
% \end{align}
%for every   $\bfu\in L^A(\Omega)$, and
%%
%% we have $C(\Omega)=\frac{c}{|\Omega|}$.\\
%% Moreover,  $\mathscr L_\Omega$ is a linear operator, such that
%% %$\mathscr L_\Omega:L^p(\Omega)\rightarrow L^p(\Omega)$ satisfies 
%%for all $p\in[1,\infty]$
%  \begin{align}\label{eq:extE2}
%\int_{\Omega}A\big(| \mathscr L_\Omega (\bfE)|)\big)\, dx  \leq \int_\Omega A\big(c|\Omega|^{\frac{1}{n}}|\bfE|\big)\, dx
% \end{align}
%for every  $\bfE \in L^A(\Omega)$.}
%  \begin{align}\label{eq:extE2}
% \|\mathscr L_\Omega (\bfE)\|_{L^p(\Omega)}\leq c |\Omega|^{\frac{1}{n}} \|\bfE\|_{L^p(\Omega)}
% \end{align}
%for every $p\in[1,\infty]$ and every $\bfE \in L^p(\Omega)$.
%
%  If $\Omega$ is a cube (or ball) we have $C(p,\Omega)=c(p)|\Omega|^{\frac{1}{n}}$.
 \end{lemma}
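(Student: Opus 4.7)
My plan is to split the statement into three pieces: (1) construct the projection $\mathcal R_\Omega$ onto $\mathcal R$ and verify \eqref{eq:extE1}, \eqref{eq:extgrad}, \eqref{eq:extE_A} directly; (2) build the integral operator $\mathscr L_\Omega$ via a localized Reshetnyak-type representation with kernel controlled by the Riesz kernel $|x-y|^{-(n-1)}$; (3) derive \eqref{eq:extE2} from the rearrangement estimate \eqref{potrearr}.

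\textbf{Step 1.} Define $\mathcal R_\Omega\bfu$ as the unique element of the finite-dimensional space $\mathcal R$ characterized by $\int_\Omega(\bfu-\mathcal R_\Omega\bfu)\cdot\bfw\dx=0$ for every $\bfw\in\mathcal R$. Because $\Omega$ is a cube or ball centered at $x_0$, the basis $\{\bfe_i,\,(x_k-x_{0,k})\bfe_j-(x_j-x_{0,j})\bfe_k\}$ of $\mathcal R$ is orthogonal in $L^2(\Omega)$ with diagonal Gram entries of order $|\Omega|$ (translations) and $|\Omega|^{1+2/n}$ (rotations). Solving yields
\[
\bfb_\Omega(\bfu)=\fint_\Omega\bfu\dx,\qquad \bfR_\Omega(\bfu)=\alpha(\Omega)\int_\Omega\big[\bfu\otimes(x-x_0)\big]_{\rm skew}\dx,
\]
with $\alpha(\Omega)\approx|\Omega|^{-1-2/n}$. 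Since $|x-x_0|\leq c|\Omega|^{1/n}$ on $\Omega$, inequality \eqref{eq:extE1} follows by the triangle inequality. Inequality \eqref{eq:extE_A} then comes from combining \eqref{eq:extE1} with the H\"older-type inequality \eqref{holder} and $\varphi_X(|\Omega|)\varphi_{X'}(|\Omega|)=|\Omega|$ via \eqref{fund1}, as already done in the chain \eqref{new40}. Finally, \eqref{eq:extgrad} is obtained by integration by parts in the moment defining $\bfR_\Omega(\bfu)$: moving a derivative from $(x-x_0)$ to $\bfu$ shows $|\bfR_\Omega(\bfu)|\lesssim\|\nabla\bfu\|_{L^\infty(\Omega)}$, with boundary terms handled by $|x-x_0|\lesssim|\Omega|^{1/n}$ and $|\partial\Omega|\lesssim|\Omega|^{1-1/n}$.

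\textbf{Step 2.} For smooth $\bfu\in C^\infty(\overline\Omega)$, use the identity $\partial_k\omega_{ij}(\bfu)=\partial_i\ep_{jk}(\bfu)-\partial_j\ep_{ik}(\bfu)$ for the skew part $\bfomega(\bfu)$ of $\nabla\bfu$. Integrating this identity along rays based at points $y$ of a small concentric ball $B_0\Subset\Omega$, and then averaging $y$ against a smooth weight $\eta\in C^\infty_0(B_0)$ with $\int\eta=1$, expresses $\bfomega(\bfu)(x)$ as a linear functional of $\bfu|_{B_0}$ plus $\int_\Omega K_1(x,y)\ep(\bfu)(y)\dy$ for a kernel with $|K_1(x,y)|\leq c|x-y|^{-(n-1)}$. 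Plugging this into the fundamental theorem identity $\bfu(x)=\bfu(y)+\int_y^x(\ep(\bfu)+\bfomega(\bfu))(z)\,dz$ and again averaging over $y$ against $\eta$, one obtains
\[
\bfu(x)=\mathcal R_{B_0}(\bfu)(x)+\int_\Omega K(x,y)\ep(\bfu)(y)\dy,
\]
with $|K(x,y)|\leq c|x-y|^{-(n-1)}$; this is the local counterpart on $\Omega$ of Lemma \ref{reprrn}. Since $\mathcal R_{B_0}(\bfu)-\mathcal R_\Omega(\bfu)$ is itself a rigid motion depending linearly on $\bfu$, it can be rewritten, modulo a rigid motion depending linearly on $\int_\Omega K(\cdot,y)\ep(\bfu)(y)\dy$, as the value of $(\mathcal R_{B_0}-\mathcal R_\Omega)$ applied to that integral, giving \eqref{eq:repr0} with
\[
\mathscr L_\Omega(\bfE)(x)=\int_\Omega K(x,y)\bfE(y)\dy-(\mathcal R_{B_0}-\mathcal R_\Omega)\!\left(\int_\Omega K(\cdot,y)\bfE(y)\dy\right)\!(x).
\]
The identity extends from $C^\infty(\overline\Omega)$ to $E^1L^1(\Omega)$ by mollification in the interior, using the uniform $L^1$-to-$L^1$ bound of the Riesz-type kernel $K$ to pass to the limit.

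\textbf{Step 3.} The pointwise bound $|K(x,y)|\leq c|x-y|^{-(n-1)}$ yields $|\mathscr L_\Omega(\bfE)(x)|\leq c\,I_1(|\bfE|\chi_\Omega)(x)$ modulo the rigid-motion correction, which by \eqref{eq:extE_A} is itself controlled by the same quantity. Apply the O'Neil-type rearrangement inequality \eqref{potrearr}, using $\bfE^*\equiv 0$ on $(|\Omega|,\infty)$ to cap the outer integral at $|\Omega|$. An application of \eqref{hardy} together with H\"older's inequality \eqref{holder} and the identity \eqref{fund1} bounds the resulting $X(0,|\Omega|)$-norm by $c|\Omega|^{1/n}\|\bfE\|_{X(\Omega)}$, which is exactly \eqref{eq:extE2}.

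\textbf{Main obstacle.} The technical heart of the proof is Step 2: producing a \emph{single} convolution-type kernel bounded pointwise by $c|x-y|^{-(n-1)}$ that recovers $\bfu$ from $\ep(\bfu)$ modulo $\mathcal R$. This requires carefully composing the ray-integration that reconstructs $\bfomega(\bfu)$ from $\ep(\bfu)$ with the fundamental-theorem integration that reconstructs $\bfu$ from $\nabla\bfu=\ep(\bfu)+\bfomega(\bfu)$, and checking that the iterated kernel remains dominated by the Riesz kernel uniformly on $\Omega$ (as opposed to only being weakly integrable in $y$). Once this kernel bound is in hand, the rearrangement-invariant estimates are essentially automatic.
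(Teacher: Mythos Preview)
Your overall architecture is sound and is a genuine alternative to the paper's route. The paper does \emph{not} build the kernel by iterating ray-integrations as you propose; instead it quotes a ready-made representation formula of Kalamajska (\cite[Lemma 4.3]{Ckorn}, based on \cite{Ka}) that directly produces an explicit kernel $H_{kl}(x,y)$ depending on a weight $\omega\in C_0^\infty(\Omega)$ with $\int\omega=1$, and then verifies the pointwise Riesz bound $|\partial_{y_m}H_{kl}|\lesssim |x-y|^{1-n}$ by inspection. Correspondingly, the paper's $\mathcal R_\Omega$ is \emph{not} the $L^2$-projection but is defined through $\omega$ (so that integration by parts in \eqref{eq:extgrad} produces no boundary terms because $\omega$ is compactly supported), and \eqref{eq:extE2} is obtained by proving $L^1\to L^1$ and $L^\infty\to L^\infty$ bounds with constant $c|\Omega|^{1/n}$ and invoking Calder\'on interpolation, rather than your direct use of \eqref{potrearr} and \eqref{hardy}. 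Your Step~3 does work (the $**$-comparison with $c|\Omega|^{1/n}\bfE^{**}$ goes through via Fubini), but \eqref{holder} and \eqref{fund1} are not actually needed there.

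There is a concrete error in Step~1. For your $L^2$-orthogonal $\bfR_\Omega$, ``moving a derivative from $(x-x_0)$ to $\bfu$'' by integration by parts leaves boundary terms of size $\alpha(\Omega)\|\bfu\|_{L^\infty(\partial\Omega)}|\Omega|^{2/n}|\partial\Omega|\approx |\Omega|^{-1/n}\|\bfu\|_{L^\infty}$, which is \emph{not} controlled by $\|\nabla\bfu\|_{L^\infty}$ alone; your basis functions do not vanish on $\partial\Omega$. The bound \eqref{eq:extgrad} is nevertheless true for your projection: since $\int_\Omega(x_j-x_{0,j})\,dx=0$ one has $\bfR_\Omega(\bfu)=\bfR_\Omega(\bfu-\bar\bfu)$, and then Poincar\'e on the convex set $\Omega$ gives $|\bfR_\Omega(\bfu)|\lesssim|\Omega|^{-1-2/n}\cdot|\Omega|^{1/n}\|\nabla\bfu\|_{L^\infty}\cdot|\Omega|^{1/n}\cdot|\Omega|=c\|\nabla\bfu\|_{L^\infty}$. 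Replace the integration-by-parts sentence by this.

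Your Step~2 is where the real content lies, and you correctly flag it. The issue is that your first ray-integration recovers $\bfomega(\bfu)$ from $\nabla\ep(\bfu)$; after averaging in $y$ and integrating by parts to remove the derivative from $\ep$, the resulting kernel for $\bfomega$ has Calder\'on--Zygmund singularity $|x-z|^{-n}$, and composing it with the second ray-integration kernel $|x-z|^{1-n}$ does not \emph{obviously} give a pointwise Riesz bound on the iterated kernel. This is exactly what the Kalamajska/Reshetnyak formulas are designed to circumvent; the paper sidesteps the difficulty by citing the formula rather than re-deriving it. If you want a self-contained argument you will need to carry out that iterated-kernel estimate explicitly (or mimic the construction of $H_{kl}$ via the weighted integral $\int_{|y-x|}^\infty\omega(x+t\frac{y-x}{|y-x|})t^{n-1}\,dt$), rather than assert the bound.
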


 \begin{proof}[Proof of Lemma \ref{lem:repr}] Suppose, for the time being, that $x_0=0$. 
%Let $B$ be a ball, centered at $0$, such that $\overline
%{B }\subset \Omega$.}
% Assume that $\Omega$ 
% is
%starshaped with respect to the ball $B$, with $\overline
%{B }\subset \Omega$. 
By \cite[Lemma 4.3]{Ckorn} -- a consequence of
\cite[Theorem 4]{Ka} -- if  $\omega$ is any function such that
$\omega \in C^\infty _0(\Omega)$ and $\int _{\Omega} \omega
(x)\dx =1$, 
then, for every $\bfu \in E^1L^1(\Omega)$, 
\begin{align}\label{repr2}
u_i(x) &= v_i (x) + \sum _{j=1}^n x_j\int _\Omega \omega(y) \ep_{ij}(\bfu)(y) \dy - \sum _{k,l=1}^n \int _\Omega \frac{\partial
H_{kl}(x,y)}{\partial y_l} \ep_{ik}(\bfu)(y) \dy \\ \nonumber &
\,\,+ \sum _{k,l=1}^n \int _\Omega \frac{\partial
H_{kl}(x,y)}{\partial y_i}\ep_{kl}(\bfu)(y) \dy - \sum _{k,l=1}^n
\int _\Omega \frac{\partial H_{kl}(x,y)}{\partial y_k} \ep_{il}(\bfu)(y) \dy \quad\hbox{for a.e. $x \in \Omega$,}
\end{align}
 $i=1, \dots , n$, where
\begin{align}\label{repr10}
v_i(x)= \int _\Omega [(n+1)\omega (y) + y \cdot \nabla \omega
(y)]u_i(y) \dy - \frac 12 \sum _{j=1}^n x_j
 \int _\Omega \Big( \frac{\partial \omega}{\partial y_i}u_j(y)
- \frac{\partial \omega}{\partial y_j} u_i(y)\Big)\dy \quad
\hbox{for $x \in \rn$,}
\end{align}
$i=1, \dots , n$, and
\begin{equation}\label{repr5}
H_{kl}(x,y) = 2 \frac{(y_k - x_k)(y_l - x_l)}{|y-x|^n} \int
_{|y-x|}^\infty \omega \Big(x + t \frac {y-x}{|y-x|}\Big) t^{n-1}\dt
\quad \hbox{for $x \neq y$,}
\end{equation}
$k, l =1, \dots n$. 
Moreover, the function $\bfv :\rn \to \rn$ given by $\bfv = (v_1, \dots v_n)$ is such that
$\bfv  \in \mathcal R$.
\\
Now, consider the vector $\bfb_\Omega(\bfu) \in \rn$ and the matrix $\bfR_\Omega(\bfu) \in {\mathbb R}^{n \times n}_{\rm skew}$ whose components are given by
\begin{equation}\label{mar90}
\bfb_\Omega(\bfu)_i = \int _\Omega [(n+1)\omega (y) + y \cdot \nabla \omega
(y)]u_i(y) \dy,
\end{equation}
and
\begin{equation}\label{mar91}
\bfR_\Omega(\bfu)_{ij} = 
- \frac 12 
 \int _\Omega \Big( \frac{\partial \omega}{\partial y_i}u_j(y)
- \frac{\partial \omega}{\partial y_j} u_i(y)\Big)\dy,
%
%- \frac 12 \sum _{j=1}^n x_j
% \int _\Omega \Big( \frac{\partial \omega}{\partial y_i}w_j(y)
%- \frac{\partial \omega}{\partial y_j} w_i(y)\Big)\dy,$$
\end{equation}
for $i=1, \dots , n$ and $j=1, \dots , n$. Also, define the operator $\mathscr L_\Omega $, mapping  matrix-valued functions  into vector-valued functions, as 
$$ \mathscr L_\Omega (\bfE)= \mathscr L_\Omega ^1(\bfE) + \mathscr L_\Omega ^2(\bfE)$$
for $\bfE\in L^1(\Omega)$,
where
\begin{equation}\label{mar96} \mathscr L_\Omega ^1(\bfE)_i(x) = 
\sum _{j=1}^n x_j\int _\Omega \omega(y) 
E_{ij}(y) \dy \quad \hbox{for $x \in \Omega$,}
\end{equation}
and 
\begin{align}\label{aug100} \mathscr L_\Omega ^2(\bfE)_i (x)& =
- \sum _{k,l=1}^n \int _\Omega \frac{\partial
H_{kl}(x,y)}{\partial y_l}  E_{ik}(y) \dy \\ \nonumber &
\,\,+ \sum _{k,l=1}^n \int _\Omega \frac{\partial
H_{kl}(x,y)}{\partial y_i} E_{kl}(y) \dy - \sum _{k,l=1}^n
\int _\Omega \frac{\partial H_{kl}(x,y)}{\partial y_k} 
E_{il}(y) \dy  \quad \hbox{for $x \in \Omega$,}
\end{align}
for $i=1, \dots , n$. Then equation \eqref{eq:repr0} holds owing to \eqref{repr2}.
Property \eqref{mar95} is an easy consequence of equations \eqref{mar90} and \eqref{mar91}. As for property \eqref{mar93}, one can verify via equations \eqref{repr5}, \eqref{mar96} and \eqref{aug100} that
there exists a constant $C=C(\Omega)$ such that
\begin{align}\label{aug101'}
| \mathscr L_\Omega (\bfE) (x)| \leq C  \int
_\Omega \frac{|\bfE (y)|}{|x-y|^{n-1}}\dy \quad
\hbox{for a.e. $x \in \Omega$.}
\end{align}
Since $|\Omega|<\infty$, the Riesz potential operator $I_1$ is bounded on $L^1(\Omega)$ and on $L^\infty(\Omega)$. Hence, the operator $ \mathscr L_\Omega$ enjoys the same boundedness properties. Property \eqref{mar93} hence follows, 
via an interpolation theorem by Calder\'on \cite[Chapter 3, Theorem 2.12]{BS}.
%
%The boundedness of the operator {\color{red}$\mathscr L_\Omega : L^A(\Omega) \to  L^A(\Omega)$ thus follows from the boundedness of  the Riesz potential operator $I_1$ on $L^A(\Omega)$.
\\
%It remains to prove the dependence of the norms of $\mathcal R_\Omega$ and $ \mathscr L_\Omega$ when  $\Omega$ 
% is either a cube, or a ball, namely to establish inequalities \eqref{eq:extE1} and \eqref{eq:extE2}.
%Assume now that  $\Omega$ is  either a cube, or a ball centered at $0$. 
Now, set  $\lambda= |\Omega|^{\frac 1n}$. Hence, $\Omega= \lambda \Omega_1$, where $\Omega_1$ is either the  cube or the ball, centered at $0$ with $|\Omega_1|=1$.
%Then there exists  another cube or ball $\Omega_1$ centered at $0$ with $|\Omega_1|=1$ such that $\Omega= \lambda \Omega_1$ for some $\lambda>0$. 
If $\omega$ is a function as in \eqref{repr2}, with $\Omega$ replaced by $\Omega_1$, then the function $\omega_\lambda$ given by $\omega_\lambda (x) = \lambda ^{-n}\omega(\lambda ^{-1} x)$ 
is such that $\omega _\lambda \in C^\infty_0(\Omega)$ and $\int_\Omega \omega _\lambda (x)\dx=1$.
%
% satisfies the assumptions for $\lambda \Omega_1$. 
Given a function $\bfu \in L^1(\Omega)$, define $\bfu^1 \in L^1(\Omega_1)$ as $\bfu^1 (z) = \bfu (\lambda z)$ for $z \in \Omega_1$. Thus, via a change of variables in equations \eqref{mar90} and \eqref{mar91} one obtains that
\begin{align}\label{aug101}
\mathcal R_{\Omega}(\bfu)_i (x)& =  \int _{\Omega} [(n+1)\omega_\lambda (y) + y \cdot \nabla \omega_\lambda
(y)]u_i(y) \dy   - \frac 12 \sum _{j=1}^n x_j
 \int _{\Omega} \Big( \frac{\partial \omega_\lambda }{\partial y_i}u_j(y)
- \frac{\partial \omega_\lambda }{\partial y_j} u_i(y)\Big)\dy
\\ \nonumber
&  =
\int _{\Omega_1} [(n+1)\omega (z) + y \cdot \nabla \omega
(z)]u_i^1(z) \dz   - \frac 12 \sum _{j=1}^n \frac{x_j}\lambda 
 \int _{\Omega_1} \Big( \frac{\partial \omega }{\partial z_i}u_j^1(z)
- \frac{\partial \omega}{\partial z_j} u_i^1(z)\Big)\dz
\\ \nonumber &  =  \mathcal R_{\Omega_1}(\bfu^1)_i (x/\lambda)\quad \text{for $x\in \Omega$,}
\end{align}
for $i=1, \dots , n$. Therefore, 
\begin{align}\label{aug102}
\sup_{x \in \Omega} |\mathcal R_{\Omega}(\bfu)(x) | = \sup_{x \in \Omega} |\mathcal R_{\Omega_1}(\bfu^1)(x/\lambda)|  = \sup_{z\in  \Omega_1} |\mathcal R_{\Omega_1}(\bfu^1)(z)|,
\end{align}
whence, by property \eqref{mar92} for $\Omega_1$, there exists a constant $c=c(n)$ such that
\begin{align}\label{aug103}
\|\mathcal R_{\Omega}(\bfu)\|_{L^\infty(\Omega)} & = \|\mathcal R_{ \Omega_1}(\bfu^1)\|_{L^\infty(\Omega_1)}
\leq c \int _{\Omega_1} |\bfu^1(z)|\dz \\ \nonumber & = \frac {c}{\lambda ^n} \int  _{\Omega} |\bfu(x)|\dx =  \frac {c}{|\Omega|} \int  _{\Omega} |\bfu(x)|\dx.
\end{align}
Inequality \eqref{eq:extE1} is thus established. 
\\ Inequality \eqref{eq:extgrad} follows from the fact that, if $\bfu \in W^{1,\infty}(\Omega)$, then
\begin{equation}\label{new25}
\bfR_\Omega(\bfu)_{ij} = 
\frac 12 
 \int _\Omega \Big( \frac{\partial u_j}{\partial y_i}(y)
- \frac{\partial u_i}{\partial y_j} (y)\Big)\omega(y)\dy
\end{equation}
for $i=1, \dots , n$ and $j=1, \dots , n$,
as shown via  an integration by parts in the integral on the right-hand side of \eqref{mar91}.
\\  Inequality \eqref{eq:extE_A} is a consequence of the chain
\begin{align}
\| \mathcal R_\Omega   (\bfu)\|_{X(\Omega)}& \leq \|\mathcal R_{\Omega}(\bfu)\|_{L^\infty(\Omega)} \| 1\|_{X(\Omega)} \leq \frac c{|\Omega|}   \| 1\|_{X(\Omega)} \int  _{\Omega} |\bfu(x)|\dx \\ \nonumber & \leq \frac c{|\Omega|} \| 1\|_{X(\Omega)}  \|1\|_{X'(\Omega)} \|\bfu\|_{X(\Omega)} = c \|\bfu\|_{X(\Omega)},
\end{align}
where the second inequality holds owing to \eqref{eq:extE1} and the last equality is a consequence of identity \eqref{fund1}.
%
%
%\\ {\color{red}  Inequality \eqref{eq:extE_A} is a consequence of the chain
%\begin{align}
%\int_{\Omega}A\big( \mathcal R_\Omega   (\bfu)\big)\dx  & \leq \int_\Omega A\big(\|\mathcal R_{\Omega}(\bfu)\|_{L^\infty(\Omega)}\big)\dx = |\Omega|A\big(\|\mathcal R_{\Omega}(\bfu)\|_{L^\infty\Omega)}\big) \\ \nonumber & \leq |\Omega|A \bigg(\frac c{|\Omega|} \int  _{\Omega} |\bfu(x)|\dx\bigg) \leq  \int  _{\Omega} A(c|\bfu(x)|)\dx,
%\end{align}
%where the second inequality holds owing to \eqref{eq:extE1} and the third one by Jensen inequality.
%}
\\  Consider next   inequality \eqref{eq:extE2}. If $\bfE \in L^1(\Omega)$, then
\begin{align}\label{mar98}
\|\mathscr L_{\Omega} ^1(\bfE)\|_{L^1(\Omega)} & \leq \int_{\Omega}  |x|\int _{\Omega}\omega_\lambda (y) |\bfE (y)|\dy \,\dx  =
\int_{\Omega} \omega_\lambda (y) |\bfE (y)|\dy \,\int_{\Omega} |x|\dx  
\\ \nonumber & = \int_{\Omega} \lambda  \omega(y/\lambda) |\bfE (y)|\dy  \int_{\Omega_1} |z|\dz 
 \leq c \|\lambda  \omega(\cdot/\lambda)\|_{L^\infty(\Omega)} \|\bfE\|_{L^1(\Omega)}
\\ \nonumber & = c \lambda  \|\omega\|_{L^\infty(\Omega_1)} \|\bfE\|_{L^1(\Omega)}
  =  c' |\Omega|^{\frac 1n}  \|\bfE\|_{L^1( \Omega)}.
\end{align}
A similar chain yields 
\begin{align}\label{mar98inf}
\|\mathscr L_{\Omega} ^1(\bfE)\|_{L^\infty(\Omega)}  \leq 
   c |\Omega|^{\frac 1n}  \|\bfE\|_{L^\infty( \Omega)}.
\end{align}
for some constant $c=c(n)$ and for every $\bfE \in L^\infty( \Omega)$.
\\ On the other hand, observe that any derivative $\frac{\partial H_{kl}}{\partial y_m}$ admits an estimate of the form
\begin{align}\label{aug107}
\bigg|\frac{\partial H_{kl}}{\partial y_m}(x,y)\bigg| & \leq \frac{c}{|x-y|^{n-1}} \int
_{|y-x|}^\infty \omega \Big(x + t \frac {y-x}{|y-x|}\Big) t^{n-1}\dt \\
\nonumber & \quad + c\, \omega (y) |x-y| + \frac{c}{|x-y|^{n-1}}\int
_{|y-x|}^\infty \Big|\omega ' \Big(x + t \frac {y-x}{|y-x|}\Big)\Big|t^{n}\dt \quad \text{for $x\neq y$,}
\end{align}
for some constant $c=c(n)$.
Let us choose $\omega$ such that it is radially symmetric with respect to $0$ and vanishes outside a ball of radius $R= c|\Omega|^{\frac 1n}$ for some  constant $c=c(n)$. Consequently, we can assume that $\omega \leq c_1  |\Omega|^{-1}$, and $|\omega '|\leq c_1  |\Omega|^{-1-\frac 1n}$ for some constant $c_1=c_1(n)$. Thus, there exists a constant $c_2=c_2(n)$ such that
$\omega \Big(x + t \frac {y-x}{|y-x|}\Big)= \omega ' \Big(x + t \frac {y-x}{|y-x|}\Big) =0$ if $t>c_2|\Omega|^{\frac 1n}$ and $x,y \in \Omega$, $x\neq y$.
Hence, 
\begin{align}\label{aug108}
\bigg|\frac{\partial H_{kl}}{\partial y_m}(x,y)\bigg|  &\leq \frac{c}{|x-y|^{n-1}}\frac 1{ |\Omega|} \int
_0^{ c_2|\Omega|^{\frac 1n}}    t^{n-1}\dt 
%\\
%\nonumber & \quad 
+ c \frac{ |x-y|}{ |\Omega|} + \frac{c}{|x-y|^{n-1}}\frac{1}{ |\Omega|^{1+\frac 1n}}\int
_0^{ c_2|\Omega|^{\frac 1n}}   t^{n}\dt
\\ \nonumber & \leq 
\frac{c'}{|x-y|^{n-1}}   + c' \frac{ |x-y|}{ |\Omega|} \leq \frac{c''}{|x-y|^{n-1}} \quad \text{$x,y \in \Omega$, $x\neq y$,}
\end{align}
for suitable constants $c,c',c''$ depending on $n$.
Notice that the last inequality holds since $|x-y|^n \leq  c |\Omega|$ for some constant $c=c(n)$ and for $x, y \in \Omega$. 
From equations \eqref{aug100}, \eqref{aug108} and \eqref{potrearr} we deduce that there exists a constant $c=c(n)$ such that
\begin{equation}\label{mar99}
\mathscr L_{\Omega}^2(\bfE)^*(s) \leq c\bigg(s^{-\frac 1{n'}}\int _0^s\bfE^*(r)\dr + \int_s^{|\Omega|} \bfE^*(r) r^{-\frac 1{n'}}\dr\bigg) \quad \hbox{for $s>0$.}
\end{equation}
From inequality \eqref{mar99} one can deduce that there exists constants $c=c(n)$, $c'=c'(n)$  such that
\begin{align}\label{mar100}
\|\mathscr L_{ \Omega}^2(\bfE)\|_{L^1(\Omega)} & = \|\mathscr L_{\Omega}^2(\bfE)^*\|_{L^1(0,|\Omega|)} \leq  c \bigg\|s^{-\frac 1{n'}}\int _0^s\bfE^*(r)\dr\bigg\|_{L^1(0,|\Omega|)} +
c \bigg\|\int_s^{|\Omega|} \bfE^*(r) r^{-\frac 1{n'}}\dr\bigg\|_{L^1(0,|\Omega|)}
\\ \nonumber & \leq c' |\Omega|^{\frac 1n} \|\bfE^*\|_{L^1(0,|\Omega|)} = c' |\Omega|^{\frac 1n} \|\bfE\|_{L^1(\Omega)},
\end{align}
for every $\bfE\in L^1(\Omega)$, and
\begin{align}\label{mar100bis}
\|\mathscr L_{ \Omega}^2(\bfE)\|_{L^\infty(\Omega)} & = \|\mathscr L_{\Omega}^2(\bfE)^*\|_{L^\infty(0,|\Omega|)} \leq  c \bigg\|s^{-\frac 1{n'}}\int _0^s\bfE^*(r)\dr\bigg\|_{L^\infty(0,|\Omega|)} +
c \bigg\|\int_s^{|\Omega|} \bfE^*(r) r^{-\frac 1{n'}}\dr\bigg\|_{L^\infty(0,|\Omega|)}
\\ \nonumber & \leq c' |\Omega|^{\frac 1n} \|\bfE^*\|_{L^\infty(0,|\Omega|)} = c' |\Omega|^{\frac 1n} \|\bfE\|_{L^\infty(\Omega)},
\end{align}
%
%The use of   Hardy type inequalities -- see e.g. \cite[Section 1.3.2]{Mazya} -- enables us to deduce from \eqref{mar99} that
%\begin{align}\label{mar100}
%\|\mathscr L_{ \Omega}^2(\bfE)\|_{L^p(\Omega)} & = \|\mathscr L_{\Omega}^2(\bfE)^*\|_{L^p(0,|\Omega|)} \leq  c \bigg\|s^{-\frac 1{n'}}\int _0^s\bfE^*(r)\dr\bigg\|_{L^p(0,|\Omega|)} +
%c \bigg\|\int_s^{|\Omega|} \bfE^*(r) r^{-\frac 1{n'}}\dr\bigg\|_{L^p(0,|\Omega|)}
%\\ \nonumber & \leq c' |\Omega|^{\frac 1n} \|\bfE^*\|_{L^p(0,|\Omega|)} = c' |\Omega|^{\frac 1n} \|\bfE\|_{L^p(\Omega)},
%\end{align}
for every $\bfE\in L^\infty(\Omega)$.
\\ As a consequence of inequalities \eqref{mar98}, \eqref{mar98inf}, \eqref{mar100} and \eqref{mar100bis}, the linear operator $\mathscr L_{\Omega}$ fulfills:
\begin{equation}\label{new1}
\mathscr L_{\Omega} : L^1(\Omega) \to L^1(\Omega), \quad \text{with norm not exceeding $c |\Omega|^{\frac 1n}$,}
\end{equation}
and
\begin{equation}\label{new2}
\mathscr L_{\Omega} : L^\infty(\Omega) \to L^\infty(\Omega), \quad \text{with  norm not exceeding $c |\Omega|^{\frac 1n}$,}
\end{equation}
for some constant $c=c(n)$.  Inequality  \eqref{eq:extE2}  follows from \eqref{new1} and \eqref{new2}, since every rearrangement-invariant space $X(\Omega)$ is an exact interpolation space between $L^1(\Omega)$ and $L^\infty(\Omega)$, see
 \cite[Chapter 3, Theorem 2.12]{BS}.
\\
The proof of the lemma is complete in the case when $x_0=0$. The general case follows from this one applied to  the set $\Omega_0=\Omega - x_0$, and to the function $\bfu_0 : \Omega_0 \to \R$ defined as $\bfu_0(x)= \bfu(x+x_0)$ for $x \in \Omega_0$. Indeed, on setting
\begin{align*}
\mathcal R_\Omega(\bfu)(x) =\mathcal R_{\Omega_0}(\bfu_0)(x-x_0)\quad \text{and} \quad \mathscr L_\Omega(\bfu)(x)=\mathscr L_{\Omega_0}(\bfu_0)(x-x_0) \quad \text{ for  $x\in\Omega$,}
\end{align*}
one has that
$$ \bfu(x)=\mathcal R_\Omega(\bfu)(x)+\mathscr L_\Omega(\bfu)(x) \quad \text{ for a.e. $x\in\Omega$.}$$ 
Moreover, a change of varibales shows that properties \eqref{mar92} and \eqref{mar93}, and inequalities \eqref{eq:extE1}--\eqref{eq:extE2} continue to hold in this case.
  \end{proof}

 \begin{corollary}\label{equivnorm}
 Let $\Omega_1$ and $\Omega_2$ be either cubes or balls,  whose diameters are bounded by each other up to multiplicative constants $c_1$ and $c_2$ and whose distance is bounded by their diameters up to a multiplicative constant $c_3$. Then there exists a constant $c=c(c_1,c_2,c_3,n)$ such that
\begin{equation}\label{new35}
\|\mathcal R_{\Omega_2}(\bfu)\|_{X(\Omega_1)}\leq c \|\bfu\|_{X(\Omega_2)}
\end{equation}
for every $\bfu \in X(\Omega_2)$. Here, $\mathcal R_{\Omega_2}(\bfu)$ is defined as in \eqref{mar95}, and $X(\Omega_1)$ and $X(\Omega_2)$ are  rearrangement-invariant spaces built upon the same function norm (suitably restricted, if $|\Omega_1|\neq |\Omega_2|$, as described in Subsection \ref{ri}).
  \end{corollary}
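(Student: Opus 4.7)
The plan is simply to combine the rigid-displacement norm equivalence \eqref{norm-equiv4} with the boundedness estimate \eqref{eq:extE_A} of the projection operator $\mathcal R_{\Omega_2}$ furnished by Lemma \ref{lem:repr}.

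First, observe that by \eqref{mar95} the function $\mathcal R_{\Omega_2}(\bfu)$ takes the form $\bfb_{\Omega_2}(\bfu) + \bfR_{\Omega_2}(\bfu)(x-x_{0,2})$, where $x_{0,2}$ is the center of $\Omega_2$. Since $\bfR_{\Omega_2}(\bfu)\in \mathbb R^{n\times n}_{\rm skew}$ by \eqref{mar92}, we conclude that $\mathcal R_{\Omega_2}(\bfu)\in\mathcal R$, the space of rigid displacements on $\rn$. Under the geometric hypotheses on $\Omega_1$ and $\Omega_2$, the equivalence \eqref{norm-equiv4} applies to any rigid displacement, whence
\begin{equation*}
\|\mathcal R_{\Omega_2}(\bfu)\|_{X(\Omega_1)} \lesssim \|\mathcal R_{\Omega_2}(\bfu)\|_{X(\Omega_2)},
\end{equation*}
with a multiplicative constant depending only on $c_1$, $c_2$, $c_3$ and $n$.

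Next, inequality \eqref{eq:extE_A} of Lemma \ref{lem:repr}, applied on the set $\Omega_2$, yields
\begin{equation*}
\|\mathcal R_{\Omega_2}(\bfu)\|_{X(\Omega_2)} \leq c \|\bfu\|_{X(\Omega_2)},
\end{equation*}
for some constant $c=c(n)$. Chaining the two inequalities delivers \eqref{new35} with a constant depending on $c_1,c_2,c_3,n$, as required. There is no genuine obstacle here: all the work has been done in Lemma \ref{lem:repr} and in the verification of \eqref{norm-equiv4}, and the corollary is a direct amalgamation of these two facts.
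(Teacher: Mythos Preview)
Your proof is correct and follows essentially the same route as the paper's. The paper expands the argument inline by passing through $L^\infty$ (using \eqref{eq:extE1}, H\"older, and \eqref{fund1}), whereas you invoke the already-packaged results \eqref{norm-equiv4} and \eqref{eq:extE_A} directly; this is a cosmetic difference only, since the chain \eqref{new36} in the paper is precisely the concatenation of the arguments establishing those two facts.
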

\begin{proof}
Inequality \eqref{new35} is a consequence of the following chain:
\begin{align}\label{new36}
\|\mathcal R_{\Omega_2}(\bfu)\|_{X(\Omega_1)} &\leq \|\mathcal R_{\Omega_2}(\bfu)\|_{L^\infty(\Omega_1)} \|1\|_{X(\Omega_1)} \leq c \|\mathcal R_{\Omega_2}(\bfu)\|_{L^\infty(\Omega_2)}  \|1\|_{X(\Omega_1)} \leq c' \frac { \|1\|_{X(\Omega_1)}} {|\Omega_2|} \int_{\Omega_2}|\bfu|\dx 
\\ \nonumber &  \leq \frac { \|1\|_{X(\Omega_1)} \|1\|_{X'(\Omega_2)}} {|\Omega_2|}\|\bfu\|_{X(\Omega_2)}
\leq c'' \|\bfu\|_{X(\Omega_2)},
\end{align}
where the constants $c,c',c''$ depend only on   $c_1,c_2,c_3,n$. Notice that the second inequality holds owing to \eqref{norm-equiv4}, the third one by \eqref{eq:extE1} and the last follows from \eqref{fund1}.
\end{proof}

  As a straightforward consequence of Lemma  \ref{lem:repr} we obtain a Poincar\'e type inequality in $E^1L^1(\Omega)$, when $\Omega$ is either  a ball or a  cube in $\rn$, with a dependence of the constant just on their measure and on $n$.

  \begin{corollary}\label{Poinc}
 Assume that $\Omega\subset\R^n$ is either a cube or a ball. Then  there exists a constant $c=c(n)$ such that 
     \begin{align}\label{itm:meanw1}
      \|\bfu-\mathcal{R}_{\Omega}(\bfu)\|_{L^1(\Omega)}&\leq
      c\,|\Omega|^{\frac{1}{n}} \|\ep(\bfu)\|_{L^1(\Omega)}
    \end{align} 
for every $\bfu\in E^1L^1(\Omega)$. Here,  $\mathcal{R}_{\Omega}(\bfu)$ is defined as  in \eqref{mar95}.
  \end{corollary}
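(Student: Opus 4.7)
The proof is essentially immediate from Lemma \ref{lem:repr}. The plan is to rearrange the representation formula \eqref{eq:repr0} so that the rigid-displacement-like term $\mathcal R_\Omega(\bfu)$ moves to the left-hand side, and then estimate the remaining term $\mathscr L_\Omega(\ep(\bfu))$ in $L^1(\Omega)$ using the boundedness of the operator $\mathscr L_\Omega$ provided by \eqref{eq:extE2}.

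More precisely, I would argue as follows. Given $\bfu \in E^1L^1(\Omega)$, equation \eqref{eq:repr0} yields, for a.e. $x \in \Omega$,
\begin{equation*}
\bfu(x) - \mathcal R_\Omega(\bfu)(x) = \mathscr L_\Omega(\ep(\bfu))(x).
\end{equation*}
Taking the $L^1(\Omega)$-norm of both sides and using inequality \eqref{eq:extE2} with $X(\Omega) = L^1(\Omega)$ and $\bfE = \ep(\bfu)$, we obtain
\begin{equation*}
\|\bfu - \mathcal R_\Omega(\bfu)\|_{L^1(\Omega)} = \|\mathscr L_\Omega(\ep(\bfu))\|_{L^1(\Omega)} \leq c\,|\Omega|^{\frac{1}{n}} \|\ep(\bfu)\|_{L^1(\Omega)},
\end{equation*}
with a constant $c = c(n)$, which is exactly \eqref{itm:meanw1}.

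There is no real obstacle here: all the hard work has been done in Lemma \ref{lem:repr}, which both produces the splitting $\bfu = \mathcal R_\Omega(\bfu) + \mathscr L_\Omega(\ep(\bfu))$ and supplies the sharp $|\Omega|^{1/n}$ scaling of the norm of $\mathscr L_\Omega$. The only thing worth checking is that the constant in \eqref{eq:extE2} depends only on $n$ (not on $\Omega$) when $X = L^1$, which is indeed explicitly stated in Lemma \ref{lem:repr}.
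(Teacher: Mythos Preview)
Your proof is correct and matches the paper's approach exactly: the paper states this corollary as a ``straightforward consequence of Lemma \ref{lem:repr}'' without giving further details, and the argument you have written out—rearranging \eqref{eq:repr0} and applying \eqref{eq:extE2} with $X=L^1$—is precisely that straightforward consequence.
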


 A Poincar\'e type inequality in $E^1L^1(\Omega)$, with a different normalization condition, on a Lipschitz domain $\Omega$ is the subject of the next lemma.

\begin{lemma}\label{poinc-meas0} Let $\Omega$ be a bounded   connected Lipschitz domain in $\R^n$ and let $\gamma  \in (0, |\Omega|)$. Then there exists a constant $C=C(\Omega, \gamma)$ such that
\begin{equation}\label{jan200}
\|\bfu \|_{L^1(\Omega)} \leq C \|\ep (\bfu)\|_{L^1(\Omega)}
\end{equation}
for every function $\bfu \in E^1L^1(\Omega)$ such that $|\{\bfu =0\}|\geq \gamma$.
\end{lemma}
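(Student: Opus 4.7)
The plan is to argue by contradiction. Assume no such constant $C$ exists. Then there is a sequence $\{\bfu_k\}\subset E^1L^1(\Omega)$ with $|\{\bfu_k=0\}|\geq \gamma$, $\|\bfu_k\|_{L^1(\Omega)}=1$, and $\|\ep(\bfu_k)\|_{L^1(\Omega)}\to 0$.

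The first main step would be to invoke a Korn--Poincar\'e inequality on the Lipschitz domain $\Omega$ in its $L^1$ form: there exists a constant $C=C(\Omega)$ and rigid displacements $\bfr_k\in\mathcal R$ such that
\begin{equation}\label{eq:KornPoinc}
\|\bfu_k-\bfr_k\|_{L^1(\Omega)}\leq C\,\|\ep(\bfu_k)\|_{L^1(\Omega)}.
\end{equation}
This version is standard in the theory of $BD$ and, in the context of the present paper, can be assembled from Corollary \ref{Poinc} by a chain argument: cover $\Omega$ with a finite family of balls (possible because $\Omega$ is Lipschitz and bounded), apply \eqref{itm:meanw1} on each ball, and control the jumps between the local rigid displacements on overlapping balls via Corollary \ref{equivnorm} and the equivalence of norms \eqref{norm-equiv3}--\eqref{norm-equiv4} on the finite-dimensional space $\mathcal R$.

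Granted \eqref{eq:KornPoinc}, from $\|\bfu_k\|_{L^1(\Omega)}=1$ and $\|\ep(\bfu_k)\|_{L^1(\Omega)}\to 0$ we get that $\|\bfr_k\|_{L^1(\Omega)}$ is bounded. Since $\mathcal R$ is a finite-dimensional subspace of $L^1(\Omega)$, on which the $L^1$-norm and the $L^\infty$-norm are equivalent by \eqref{norm-equiv3}, the sequence $\{\bfr_k\}$ is bounded in $L^\infty(\Omega)$, and a subsequence converges uniformly to some $\bfr\in\mathcal R$. Combined with \eqref{eq:KornPoinc}, this yields $\bfu_k\to \bfr$ in $L^1(\Omega)$.

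To conclude, one has to show that $\bfr\equiv 0$, contradicting $\|\bfr\|_{L^1(\Omega)}=\lim_k\|\bfu_k\|_{L^1(\Omega)}=1$. Set $E_k=\{\bfu_k=0\}$ and pass, along a further subsequence, to a weak-$\ast$ limit $\chi_{E_k}\rightharpoonup^\ast f$ in $L^\infty(\Omega)$, with $0\leq f\leq 1$ and $\int_\Omega f\,\dx\geq \gamma$. Since $|\bfu_k|\chi_{E_k}\equiv 0$ and $|\bfu_k|\to|\bfr|$ strongly in $L^1(\Omega)$, passing to the limit in the duality between $L^1$ and $L^\infty$ yields
\begin{equation*}
\int_\Omega|\bfr(x)|f(x)\,\dx=\lim_{k\to\infty}\int_\Omega|\bfu_k(x)|\chi_{E_k}(x)\,\dx=0,
\end{equation*}
so that $\bfr$ vanishes on the set $\{f>0\}$, which has positive measure. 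Being affine by \eqref{ker1}, $\bfr$ must be identically $0$, giving the desired contradiction.

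The main obstacle is the $L^1$ Korn--Poincar\'e inequality \eqref{eq:KornPoinc} on a general bounded connected Lipschitz domain. Unlike the $L^p$ case with $p>1$, it cannot be reduced to a gradient estimate via Korn's inequality (which fails in $L^1$). The cleanest route here is the chain of balls indicated above, resting on Corollary \ref{Poinc} and the finite-dimensionality of $\mathcal R$; once this is in hand the rest of the argument is a standard compactness-and-contradiction scheme.
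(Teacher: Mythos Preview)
Your argument is correct and follows the same contradiction scheme as the paper, but the paper organizes the compactness step differently. Instead of establishing the $L^1$ Korn--Poincar\'e inequality \eqref{eq:KornPoinc} via a chain argument and then using finite-dimensionality of $\mathcal R$, the paper simply observes that the contradicting sequence $\{\bfu_k\}$ is bounded in $E^1L^1(\Omega)$ and invokes the compactness of the embedding $E^1L^1(\Omega)\to L^1(\Omega)$ (citing \cite[Chapter 2, Proposition 1.4]{Temam}) to obtain, directly, a limit $\bfu\in L^1(\Omega)$ with $\bfu_k\to\bfu$ in $L^1$ and a.e.; since $\ep(\bfu_k)\to 0$ in $L^1$, the limit is a rigid displacement. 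For the zero-set part, the paper uses the limsup set $E=\bigcap_{k}\bigcup_{h\geq k}E_h$ (which has measure $\geq\gamma$) together with a.e.\ convergence, whereas you use weak-$\ast$ compactness of $\chi_{E_k}$ in $L^\infty$; both are valid.

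Your route has the merit of being more self-contained relative to the tools developed in the paper (Corollary~\ref{Poinc} and the finite-dimensionality of $\mathcal R$), but the chain argument you sketch for a general bounded connected Lipschitz domain needs some care: $\Omega$ cannot be covered by finitely many balls contained in it, so one must either pass through a finite decomposition into star-shaped-with-respect-to-a-ball subdomains (on each of which the representation of Lemma~\ref{lem:repr} applies) or control a Whitney-type chain. The paper sidesteps this by quoting the compact embedding as a black box, which makes the proof shorter.
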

\begin{proof} We follow the outline of the proof of \cite[Lemma 4.1.3]{Ziemer}.
Assume, by contradiction, that inequality \eqref{jan200} fails. Then   there exists a sequence of  functions $\{\bfu_k\}\subset  E^1L^1(\Omega)$ and a sequence of measurable sets $\{E_k\}$ such that $E_k\subset \Omega$, $|E_k| \geq \gamma$, $\bfu_k=0$ in $E_k$, and 
\begin{equation}\label{jan201}
\|\bfu_k \|_{L^1(\Omega)}\geq k \|\ep (\bfu_k)\|_{L^1(\Omega)}
\end{equation}
for every $k \in \setN$. On replacing $\bfu_k$ by $\bfu_k/\|\bfu_k\|_{L^1(\Omega)}$, we may also assume that 
\begin{equation}\label{jan202}
\|\bfu_k\|_{L^1(\Omega)}=1
\end{equation}
 for $k \in \setN$. The sequence $\{\bfu_k\}$ is thus bounded in $E^1L^1(\Omega)$ and  hence, by the compactness of the embedding $E^1L^1(\Omega) \to L^1(\Omega)$ \cite[Chapter 2, Proposition 1.4]{Temam}, there exist a subsequence, still denoted by  $\{\bfu_k\}$,  and a function $\bfu \in L^1(\Omega)$, such that
$\bfu_k \to  \bfu$ in  $L^1(\Omega)$, and $\bfu_k \to \bfu$ a.e. in $\Omega$. Equations \eqref{jan201} and \eqref{jan202} tell us that  $\ep (\bfu_k) \to 0$ in $L^1(\Omega)$. Thus, in fact, $\bfu \in E^1L^1(\Omega)$, and $\ep(\bfu)=0$. As a consequence, $\bfu = \bfb + \bfB x$ for some vector $\bfb \in \R^n$ and some matrix $\bfB \in \R^{n\times n}_{\rm skew}$. On the other hand, owing to equation \eqref{jan202}, $\|\bfu\|_{L^1(\Omega)}=1$. Therefore, 
\begin{equation}\label{jan203}
\text{either 
$\bfb \neq 0$ or $\bfB \neq 0$.}
\end{equation}
However,  since $\bfu_k \to \bfu$ a.e. in $\Omega$, we have that $\bfu =0$ in the set $E=\cap_{k=1}^\infty \cup_{h=k}^\infty E_h$. Since $|E|\geq \gamma >0$, this contradicts \eqref{jan203}.
\end{proof}

In what follows, the term \emph{annulus} denotes a subset of $\rn$ obtained as the difference between an open  ball and a closed concentric ball with smaller radius.

\begin{lemma}\label{poinc-cubes} Let $Q_r$ be an open cube in $\R^n$ with sidelength $r$, let $\lambda >1$ and let $Q_{\lambda r}$ denote the cube, concentric with $Q_r$,  whose sidelength is $\lambda r$.  Let 
$G\subset \R^n$  be any annulus satisfying $\overline {Q_r} \setminus G \neq \emptyset$.
Then there exists a constant $C=C(n, \lambda)$ such that
\begin{equation}\label{jan210}
\|\bfu \|_{L^1(Q_{\lambda r})} \leq r C \|\ep (\bfu)\|_{L^1(Q_{\lambda r})}
\end{equation}
for every function $\bfu \in E^1L^1(Q_{\lambda r})$ such that $\bfu =0$ in $Q_{\lambda r}\setminus G$.
\end{lemma}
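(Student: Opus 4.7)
The approach combines a scaling reduction, the representation formula of Lemma \ref{lem:repr}, and the finite dimensionality of the space $\mathcal R$ of rigid displacements. By the change of variables $\bfv(y)=\bfu(ry)$ the inequality \eqref{jan210} is scale-invariant (since the extra factor $r$ on the right matches $\|\ep(\bfv)\|_{L^1(Q_\lambda)}=r^{1-n}\|\ep(\bfu)\|_{L^1(Q_{\lambda r})}$), and the condition that $\bfu$ vanish on $Q_{\lambda r}\setminus G$ is preserved with the rescaled annulus $G/r$; accordingly it suffices to prove the case $r=1$ with $Q_1$ centred at the origin. Applying Lemma \ref{lem:repr} on $Q_\lambda$, I decompose $\bfu=\bfp+\bfw$ with $\bfp=\mathcal R_{Q_\lambda}(\bfu)\in\mathcal R$ and $\bfw=\mathscr L_{Q_\lambda}(\ep(\bfu))$; the bound \eqref{eq:extE2} gives $\|\bfw\|_{L^1(Q_\lambda)}\leq c(n,\lambda)\|\ep(\bfu)\|_{L^1(Q_\lambda)}$. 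Since $\bfu=0$ on $Q_\lambda\setminus G$, one has $\bfp=-\bfw$ there, whence
\[
\|\bfp\|_{L^1(Q_\lambda\setminus G)}\leq c(n,\lambda)\|\ep(\bfu)\|_{L^1(Q_\lambda)}.
\]
By the triangle inequality $\|\bfu\|_{L^1(Q_\lambda)}\leq \|\bfp\|_{L^1(Q_\lambda)}+\|\bfw\|_{L^1(Q_\lambda)}$, the proof will then be complete once I establish the rigid-displacement estimate
\[
\|\bfp\|_{L^1(Q_\lambda)}\leq C(n,\lambda)\,\|\bfp\|_{L^1(Q_\lambda\setminus G)}\qquad\text{for every }\bfp\in\mathcal R.
\]

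The heart of the matter is a geometric construction producing, inside $Q_\lambda\setminus G$, a half-ball $H$ of radius $(\lambda-1)/2$ whose position and orientation range over compact sets. Pick $x_0\in\overline{Q_1}\setminus G$; since $G^c=\overline{B(z,\rho)}\cup(\R^n\setminus B(z,R))$, either $|x_0-z|\geq R$ or $|x_0-z|\leq\rho$. In the former (``exterior'') case, set $v=(x_0-z)/|x_0-z|$ and define
\[
H=\bigl\{y\in\R^n:|y-x_0|\leq(\lambda-1)/2,\ (y-x_0)\cdot v\geq 0\bigr\}.
\]
The inclusion $H\subset Q_\lambda$ follows from $\mathrm{dist}(x_0,\partial Q_\lambda)\geq(\lambda-1)/2$ (as $x_0\in\overline{Q_1}$), and $H\cap B(z,R)=\emptyset$ follows from the identity $|y-z|^2=|y-x_0|^2+2(y-x_0)\cdot(x_0-z)+|x_0-z|^2\geq R^2$ valid for $y\in H$. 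Hence $H\subset Q_\lambda\setminus G$. Parameterising by $(x_0,v)$ over the compact set $\overline{Q_1}\times\mathbb S^{n-1}$, and noting that for each such $H$ the functional $\bfp\mapsto\|\bfp\|_{L^1(H)}$ is a norm on the finite-dimensional space $\mathcal R$ (since $H$ has nonempty interior, hence is not contained in any hyperplane), a routine compactness argument on $\mathcal R$ yields $\|\bfp\|_{L^1(Q_\lambda)}\leq C(n,\lambda)\|\bfp\|_{L^1(H)}$ uniformly in $H$.

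The interior case $|x_0-z|\leq\rho$ is the main obstacle. If any other witness in $\overline{Q_1}$ lies in the exterior of $B(z,R)$ one immediately reduces to the previous argument; otherwise $\overline{Q_1}\subset\overline{B(z,R)}$, so the useful part of $Q_\lambda\setminus G$ consists of $\overline{B(z,\rho)}\cap Q_\lambda$ together with $Q_\lambda\setminus B(z,R)$. An inward half-ball from $x_0$ toward $z$ works when $\rho$ is comparable to $(\lambda-1)/2$ (the same identity delivering $|y-z|^2\leq\rho^2$ with reversed sign), while the slice $Q_\lambda\setminus B(z,R)$, which has measure bounded below in terms of $\lambda$ whenever $R$ is bounded above, covers the regime in which $\rho$ is small but $R$ is not too large. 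Verifying that together these sub-cases exhaust every admissible configuration of $G$ compatible with $\overline{Q_r}\setminus G\neq\emptyset$, with geometric constants depending only on $n$ and $\lambda$, is the delicate step of the argument.
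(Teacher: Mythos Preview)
Your approach is genuinely different from the paper's. The paper invokes Lemma~\ref{poinc-meas0} and reduces everything to the purely geometric claim $\inf_{G\in\mathcal G}|Q_\lambda\setminus G|>0$, which it attacks by a compactness argument on sequences of annuli. You instead use the representation formula of Lemma~\ref{lem:repr} to split $\bfu=\bfp+\bfw$ and reduce to a uniform rigid-displacement estimate $\|\bfp\|_{L^1(Q_\lambda)}\leq C(n,\lambda)\|\bfp\|_{L^1(Q_\lambda\setminus G)}$. Your exterior case ($|x_0-z|\geq R$) is handled cleanly: the half-ball construction is correct, and the compactness argument on $\overline{Q_1}\times\mathbb S^{n-1}\times\{\bfp\in\mathcal R:\|\bfp\|=1\}$ is legitimate because the map $(x_0,v,\bfp)\mapsto\|\bfp\|_{L^1(H(x_0,v))}$ is continuous and strictly positive on a compact set.

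The interior case, however, is not merely ``delicate'' --- it is where the argument actually breaks. Take $z$ at the centre of $Q_1$, $\rho=\varepsilon$ small, and $R$ large enough that $Q_\lambda\subset B(z,R)$. Then $z\in\overline{Q_1}\setminus G$, so the hypothesis is satisfied, yet $Q_\lambda\setminus G=\overline{B(z,\varepsilon)}$. For $\bfp=\bfb$ a nonzero constant one has $\|\bfp\|_{L^1(Q_\lambda)}/\|\bfp\|_{L^1(Q_\lambda\setminus G)}\sim\varepsilon^{-n}$, so your key rigid-displacement estimate cannot hold with a constant independent of $G$. In fact the function $\bfu(x)=\phi(|x-z|/\varepsilon)\bfb$, with $\phi\in C^\infty$, $\phi=0$ on $[0,1]$, $\phi=1$ on $[2,\infty)$, vanishes on $Q_\lambda\setminus G$ and satisfies $\|\bfu\|_{L^1(Q_\lambda)}\sim 1$ while $\|\ep(\bfu)\|_{L^1(Q_\lambda)}\lesssim\varepsilon^{n-1}$, so inequality~\eqref{jan210} itself fails uniformly in this configuration. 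Note that the same scenario ($x_k$ bounded, $r_k\to 0$, $R_k\to\infty$) is not covered by the paper's case analysis either, and shows that the geometric claim~\eqref{jan220} is false as stated; the lemma evidently requires an additional hypothesis (e.g.\ that the witness point lie in the exterior component of $G^c$, which is what actually occurs in the application to Theorem~\ref{cor:Tlest}).
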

\begin{proof} Without loss of generality, we may assume that $Q_r$ is centered at $0$. Suppose, for the time being, that 
\begin{equation}\label{r=1}
r=1.
\end{equation}
Owing to Lemma \ref{poinc-meas0}, inequality \eqref{jan210} will follow if we show that 
\begin{equation}\label{jan220}
\inf_{G \in \mathcal G}|Q_{\lambda} \setminus G| >0,
\end{equation}
where $\mathcal G = \{ G \text{ is an annulus s.t. } \overline {Q_{1}} \setminus G \neq \emptyset\}$.
In order to prove property \eqref{jan220}, consider a sequence of annuli $\{G_k\}$ such that
\begin{equation}\label{jan230}
\overline {Q_1} \setminus G_k \neq \emptyset \quad \text{for $k \in \N$,}
\end{equation}
and 
\begin{equation}\label{jan240}
\lim_{k\to \infty} |Q_{\lambda} \setminus G_k| = \inf_{G \in \mathcal G}|Q_{\lambda } \setminus G|.
\end{equation}
Let $x_k\in \R^n$ and  $0<r_k < R_k$ be such that $G_k=B_{R_k}(x_k)\setminus \overline {B_{r_k}(x_k)}$ for $k \in \setN$.
\\ If there exist subsequences (still indexed by $k$) such that $\{x_k\}$ and $\{R_k\}$ are bounded, then there exists a  further subsequence of $\{G_k\}$, still denoted by  $\{G_k\}$, and a (possibly empty) annulus $G_\infty$, such that 
\begin{equation}\label{jan260}
\chi_{G_k} \to \chi_{G_\infty},
\end{equation}
in $L^1(\rn)$, and  
\begin{equation}\label{jan270}
 \overline {Q_1} \setminus G_\infty \neq \emptyset,
\end{equation}
 whence 
\begin{equation}\label{jan250}
\inf_{G \in \mathcal G}|Q_{\lambda } \setminus G| =  \lim_{k\to \infty} |Q_{\lambda} \setminus G_k| =  |Q_{\lambda} \setminus G_\infty|>0.
\end{equation}
Inequality \eqref{jan220} is thus established in this case. 
\\ If there exist subsequences such that $\{R_k\}$ is bounded and  $\{x_k\}$  is unbounded, then there   exists a  subsequence of $\{G_k\}$, such that
$Q_{\lambda} \setminus G_k = Q_{\lambda}$ for large $k$. This contradicts equation \eqref{jan240}, since 
\begin{equation}\label{jan280}
\inf_{G \in \mathcal G}|Q_{\lambda } \setminus G|< |Q_{\lambda }|.
\end{equation}
\\ If there exist subsequences such that $\{x_k\}$ and $\{R_k\}$ are unbounded, and  $\{r_k\}$ is bounded, 
then there exists a  subsequence $\{G_k\}$ such that equations \eqref{jan260}--\eqref{jan250} hold, where $G_\infty$ is either the whole of $\R^n$, or a half-space, or $\emptyset$. Only the second alternative is admissible, since
the first one is excluded, being not consistent with \eqref{jan270}, whereas the last one is excluded, owing to \eqref{jan280}. Inequality \eqref{jan220} thus follows also in this case. 
\\Finally, assume that there exist subsequences such that $|x_k|\to \infty$, $r_k \to \infty$ and  $R_k\to \infty$. Equation \eqref{jan230} ensures that, for each $k\in \N$, 
\begin{equation}\label{jan290}
\text{either}\,\, \overline {Q_1} \setminus B_{R_k}(x_k) \neq \emptyset \quad \text{or} \quad  \overline {Q_1} \cap \overline {B_{r_k}(x_k)} \neq \emptyset.
\end{equation}
Thus, there exists a further subsequence satisfying either the first or the second condition 
 in \eqref{jan290} for every $k$. If $\overline {Q_1} \setminus B_{R_k}(x_k) \neq \emptyset $ for every $k$, then there exists a half-space  $H_\infty$ such that $\overline {Q_1}\setminus H_\infty \neq \emptyset$, and (up to  subsequences) $B_{R_k}(x_k) \to H_\infty$  and 
\begin{equation}\label{jan300}
\inf_{G \in \mathcal G}|Q_{\lambda } \setminus G| =  \lim_{k\to \infty} |Q_{\lambda} \setminus G_k| \geq    \lim_{k\to \infty} |Q_{\lambda} \setminus B_{R_k}(x_k)|= |Q_{\lambda} \setminus H_\infty|>0.
\end{equation}
If, instead, $\ \overline {Q_1} \cap \overline {B_{r_k}(x_k)} \neq \emptyset$ for every $k$, then there exists again a half-space $H_\infty$ such that $\overline {Q_1}\cap  \overline{H_\infty} \neq \emptyset$, and (up to  subsequences) $B_{r_k}(x_k) \to H_\infty$ and 
\begin{equation}\label{jan310}
\inf_{G \in \mathcal G}|Q_{\lambda } \setminus G| =  \lim_{k\to \infty} |Q_{\lambda} \setminus G_k| \geq    \lim_{k\to \infty} |Q_{\lambda} \cap  \overline {B_{r_k}(x_k)}| =|Q_{\lambda} \cap  \overline{H_\infty}|>0.
\end{equation}
Equation \eqref{jan220} follows from either equation \eqref{jan300} or \eqref{jan310}.
\\ It remains to remove assumption \eqref{r=1}. This can be accomplished via a scaling argument. Given any annulus $G$ as in the statement and any function $\bfu$ as in the statement, consider the function $\bfu_1: Q_1 \to \R^n$, defined as
$$\bfu_1(x) = \bfu (rx) \quad 	\text{for $x \in Q_1$.}$$
Therefore, $\bfu_1 \in E^1L^1(Q_1)$, and $\bfu_1 =0$ in $Q_1 \setminus G_{1/r}$. Here, $G_{1/r}$ denotes the annulus $\{x: r x \in G\}$. Clearly, $\overline {Q_1} \setminus G_{1/r} \neq \emptyset$, since $\overline {Q_r} \setminus G \neq \emptyset$ . Inequality \eqref{jan210} thus follows via an application  to $\bfu_1$ of the same inequality, with $r=1$, and via  the change of variables $y=rx$.
\end{proof}

\section{Truncation operators}\label{truncation}
Our purpose in this section is to construct  truncation operators in symmetric gradient Sobolev spaces. 
%
%and an extension operator for the spaces $E^{1,p}$. Both are crucial tools for the remainder of the paper. The former will allow us to cut values of the symmetric gradient which are above a certain threshold and is essential to analyse the $K$-functional on the whole space, cf. \eqref{july15}. The latter\todo{rewrite this paragraph} enables us to extend a function defined on a bounded domain to the whole space. It is the main ingredient for the representation of the $K$-functional on bounded domains, cf. \eqref{july16}. Furthermore, it will be used to extend embeddings proven for the whole space to the case of bounded domains.
%This construction  relies upon the use of a Whitney type decomposition of open sets and of a projection operator onto the space $\mathcal R$ of rigid displacements.\\
 The following lemma from \cite{BrDF} and \cite{DRW} provides us with a Whitney type decomposition of an open set in $\rn$, with $n \geq 2$, into dyadic cubes. Such a decomposition
enjoys specific properties needed in the definition of the truncation operators in question.
%
%shows how to decompose
%an open set into dyadic cubes. It has been proved in \cite{BrDF} and \cite{DRW} by slightly modifying the family of closed dyadic
%cubes given in \cite{Gra04}. 
 \\ In what follows, we denote by $r(Q)$ the side-length of a cube $Q \subset \rn$. %\textcolor{black}{check bounded domain?}

\begin{lemma}\label{lems:whitneysteady}{\rm{\bf \cite{BrDF, DRW}}}
Let $\mathcal O$ be an open set in $\R^n$.
There exists  a   covering  of $\mathcal O$  by closed dyadic cubes $\set{Q_j}_{j\in\N}$  with the following properties:
\begin{enumerate}[label={(4.\arabic{*})}]
\item\label{itm:D1} $\bigcup_{j\in\N} Q_j = \mathcal{O}$ and  $\overset{\circ}{Q}_j \cap \overset{\circ}{Q}_k= \emptyset$  for $j\neq k$.
\item \label{itm:Wbnd} $8 \sqrt{n} r(Q_j) \leq \distance(Q_j,
  \partial\mathcal{O}) \leq 32\,\sqrt{n} r(Q_j)$.
%\todo[inline]{avoid $r$ as it is used later as index?} 
 In
  particular, on setting $c_n = 2+32 \sqrt{n}$, one has that  $(c_n Q_j) \cap
  (\setR^n \setminus \mathcal{O}) \not= \emptyset$.
\item\label{itm:W3'}  If $\partial Q_j \cap \partial Q_k\neq \emptyset$, then
  \begin{align*}
    \frac{1}{2} \leq \frac{r(Q_j)}{r(Q_k)} \leq 2.
  \end{align*}
\item\label{itm:D4} For each $j$, there exist at most $(3^n-1)2^n$ cubes $Q_k$ such that $\partial Q_j\cap \partial Q_k\neq \emptyset$.
\end{enumerate}
\end{lemma}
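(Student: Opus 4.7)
The plan is to carry out the classical Whitney decomposition of $\mathcal{O}$ by dyadic cubes, with constants tuned to the statement. First I fix the dyadic mesh $\mathcal{D}=\bigcup_{k\in\mathbb{Z}}\mathcal{D}_k$, where $\mathcal{D}_k$ consists of the closed dyadic cubes of side-length $2^{-k}$; the structural fact I rely on is that any two dyadic cubes are either interior-disjoint or nested. I then introduce the preliminary admissible family
\[
\mathcal{F}^0 \;=\; \bigl\{ Q\in\mathcal{D} \,:\, 8\sqrt{n}\,r(Q)\leq \distance(Q,\partial\mathcal{O})\leq 32\sqrt{n}\,r(Q)\bigr\},
\]
and check that it covers $\mathcal{O}$: for $x\in\mathcal{O}$ with $d=\distance(x,\partial\mathcal{O})>0$, the interval $[d/(32\sqrt{n}),\,d/(9\sqrt{n})]$ is non-empty (since $32/9>2$) and hence contains a dyadic scale $2^{-k}$; the element of $\mathcal{D}_k$ containing $x$ satisfies $\distance(Q,\partial\mathcal{O})\leq d\leq 32\sqrt{n}\,r(Q)$ and, from $\distance(Q,\partial\mathcal{O})\geq d-\sqrt{n}\,r(Q)$, also the lower bound.

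Next, to enforce (4.1), I keep only the maximal elements of $\mathcal{F}^0$ (those not strictly contained in any other element of $\mathcal{F}^0$) and enumerate them as $\{Q_j\}_{j\in\mathbb{N}}$. Nesting forces pairwise disjoint interiors, while the covering is preserved because every element of $\mathcal{F}^0$ sits inside some maximal one. The distance part of (4.2) is inherited from the definition of $\mathcal{F}^0$, and the ensuing claim $(c_nQ_j)\cap(\R^n\setminus\mathcal{O})\neq\emptyset$ reduces to a direct geometric check: a point $y\in\partial\mathcal{O}$ attaining $\distance(Q_j,\partial\mathcal{O})$ lies at $\ell^\infty$-distance at most $32\sqrt{n}\,r(Q_j)$ from $Q_j$ (since Euclidean dominates $\ell^\infty$), hence in a concentric dilate of $Q_j$ with the stated expansion factor depending only on $n$.

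For (4.3), if $z\in\partial Q_j\cap\partial Q_k$ then $z\in\overline{Q_j}\cap\overline{Q_k}$, so
\[
8\sqrt{n}\,r(Q_j)\;\leq\;\distance(Q_j,\partial\mathcal{O})\;\leq\;\distance(z,\partial\mathcal{O})\;\leq\;\distance(Q_k,\partial\mathcal{O})+\diameter(Q_k)\;\leq\;33\sqrt{n}\,r(Q_k),
\]
and the ratio $r(Q_j)/r(Q_k)$ is forced into $\{1/4,1/2,1,2,4\}$, being both at most $33/8$ and a power of $2$. The stated restriction to $\{1/2,1,2\}$ is where the maximality is invoked: if, say, $r(Q_j)=4\,r(Q_k)$ with $Q_j,Q_k$ touching, then the dyadic parent of $Q_k$ still satisfies the two-sided distance bound (the lower bound being lifted via the touching relation with the much larger $Q_j$), contradicting the maximality of $Q_k$ in $\mathcal{F}^0$. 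Property (4.4) then follows by combinatorics: any $Q_k$ touching $Q_j$ has $r(Q_k)\in\{r(Q_j)/2,\,r(Q_j),\,2r(Q_j)\}$ and is contained in a concentric dilate of $Q_j$ of bounded size depending only on $n$, and counting the dyadic cubes of these three sizes that fit in such a dilate and touch $Q_j$ yields at most $(3^n-1)2^n$ possibilities.

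The main obstacle is the fine-tuning in (4.3): the raw distance comparison only gives $r(Q_j)/r(Q_k)\leq 4$, and tightening it to $2$ requires a careful use of maximality, or alternatively dilating each admissible cube by a fixed factor before extracting the maximal ones, in the spirit of the Lipschitz-truncation construction of Diening--Ru\v{z}i\v{c}ka--Wolf.
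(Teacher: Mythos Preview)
The paper does not prove this lemma at all; it is quoted as a known result from \cite{BrDF} and \cite{DRW} (the text introducing it reads ``The following lemma from \cite{BrDF} and \cite{DRW} provides us with a Whitney type decomposition\ldots''). So there is nothing in the paper to compare against beyond the citation.

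Your argument is the standard Whitney construction and is sound. The covering step, the passage to maximal cubes for (4.1), and the distance inheritance for (4.2) are all fine. Your treatment of (4.3) is the one place where real care is needed, and your idea is correct: the raw comparison only gives $r(Q_j)/r(Q_k)\leq 33/8$, hence ratios in $\{1/4,1/2,1,2,4\}$, and the sharpening to $\{1/2,1,2\}$ does follow from maximality --- if $r(Q_j)=4r(Q_k)$ and the cubes touch, then the dyadic parent $P$ of $Q_k$ touches $Q_j$ as well (it cannot contain $Q_j$ since $r(P)<r(Q_j)$, and it cannot be contained in $Q_j$ since that would force $Q_k\subset Q_j$), and one checks $\distance(P,\partial\mathcal O)\geq \distance(Q_j,\partial\mathcal O)-\sqrt{n}\,r(P)\geq 16\sqrt{n}\,r(P)-\sqrt{n}\,r(P)=15\sqrt{n}\,r(P)$ together with $\distance(P,\partial\mathcal O)\leq\distance(Q_k,\partial\mathcal O)\leq 16\sqrt{n}\,r(P)$, so $P\in\mathcal F^0$, contradicting maximality of $Q_k$. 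Two minor loose ends: you do not actually verify the \emph{specific} constant $c_n=2+32\sqrt{n}$ in (4.2), only that some dimensional dilate works; and the exact value $(3^n-1)2^n$ in (4.4) is asserted rather than counted. Neither affects the substance, but if you want the statement with those particular constants you should either do the bookkeeping or cite the references as the paper does.
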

Let $\set{Q_j}$ be the covering of $\mathcal O$  provided by Lemma \ref{lems:whitneysteady}, and set
\setcounter{equation}{5}
\begin{equation}\label{Q*}
Q_j^* = \tfrac{9}{8} Q_j \quad  \text{and} \quad r_j = r(Q_j).
\end{equation}
%and {\color{black}  $r_j = r(Q_j)$}
%$r_j = r(Q_j^*)$ 
The following
properties of the family of cubes $\{Q_j^*\}$ are a consequence of Lemma \ref{lems:whitneysteady}.

\begin{corollary}\label{cor:whitney}
Under the assumptions of Lemma \ref{lems:whitneysteady} the following properties hold:
\begin{enumerate}[label={(4.\arabic{*})},start=6]
\item \label{itm:W1}  $\bigcup_{j\in\N} {Q_j^\ast} = \mathcal{O}$
%\todo[inline]{A:  $\bigcup_{j\in\N} {Q_j^\ast} = \mathcal{O}$ ? D: Yes!}
\item \label{itm:W2} If $Q_j^*\cap Q_k^*\neq \emptyset$, then $\partial Q_j \cap \partial Q_k\neq \emptyset$, and $Q_j^* \subset 5 Q_k^*$.
  Moreover, $r_j\approx r_k$ and $\abs{Q_j^* \cap
    Q_k^*} \approx \abs{Q_j^*} \approx \abs{Q_k^*}$, up to multiplicative constants depending only on $n$.
\item \label{itm:W3} The family $Q_j^*$ is  locally  finite, with a maximum number of overlaps depending only on $n$.
\item \label{itm:W4} There exists a constant $c=c(n)$ such that $\sum_{j\in\N} |Q_j^\ast|\leq
  c|\mathcal{O}|$.
\end{enumerate}
\end{corollary}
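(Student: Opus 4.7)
The plan is to verify properties \ref{itm:W1}--\ref{itm:W4} in order, exploiting the explicit choice $Q_j^\ast=\tfrac98 Q_j$ together with the Whitney properties \ref{itm:D1}--\ref{itm:D4}; the guiding observation is that the dilation factor $\tfrac98$ is small enough that the enlarged family inherits the combinatorial structure of the original.

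Property \ref{itm:W1} follows at once: from \ref{itm:D1} and $Q_j\subset Q_j^\ast$ we get $\mathcal O\subset\bigcup_j Q_j^\ast$, while \ref{itm:Wbnd} yields $\distance(Q_j^\ast,\partial\mathcal O)\geq 8\sqrt n\,r_j-\tfrac{\sqrt n}{16}r_j>0$, so $Q_j^\ast\subset\mathcal O$. The main work goes into \ref{itm:W2}. Assume $x\in Q_j^\ast\cap Q_k^\ast$. First I would show $r_j\approx r_k$: every point of $Q_j^\ast$ lies within distance $\tfrac{17}{16}\sqrt n\,r_j$ of $Q_j$, so \ref{itm:Wbnd} gives $\distance(x,\partial\mathcal O)\approx r_j$ and, symmetrically, $\distance(x,\partial\mathcal O)\approx r_k$, which forces $r_j\approx r_k$ with constants depending only on $n$. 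Next, $Q_j^\ast\cap Q_k^\ast\neq\emptyset$ implies $\distance(Q_j,Q_k)\leq\tfrac{r_j+r_k}{16}$; since the $Q_j$ are dyadic, two of them that fail to share boundary must lie at distance at least $\min(r_j,r_k)$ (the gap between faces of dyadic cubes is a multiple of the smaller sidelength). Combined with $r_j\approx r_k$, this forces $\partial Q_j\cap\partial Q_k\neq\emptyset$, after which \ref{itm:W3'} sharpens the comparison to $\tfrac12\leq r_j/r_k\leq 2$.

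With these two facts in hand, the inclusion $Q_j^\ast\subset 5Q_k^\ast$ becomes a coordinate-wise estimate: each coordinate of the centre-to-centre displacement of $Q_j$ and $Q_k$ is at most $(r_j+r_k)/2\leq\tfrac32 r_k$, and adding the half-sidelength $\tfrac{9}{16}r_j\leq\tfrac98 r_k$ of $Q_j^\ast$ yields an extremal coordinate $\tfrac{21}{8}r_k$, strictly below the half-sidelength $\tfrac{45}{16}r_k$ of $5Q_k^\ast$. The volume comparabilities are then immediate: $|Q_j^\ast|\approx|Q_k^\ast|$ from $r_j\approx r_k$, while $|Q_j^\ast\cap Q_k^\ast|\approx|Q_j^\ast|$ follows by noting that, near a point shared by the boundaries of $Q_j$ and $Q_k$, the $\tfrac98$-dilation pushes each cube a distance $r_j/16$ and $r_k/16$ into the complementary region of the other, producing a common subcube of sidelength $\approx\min(r_j,r_k)$. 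Finally, \ref{itm:W3} is a direct consequence of \ref{itm:W2} and \ref{itm:D4}, which together bound the number of $k$ with $Q_k^\ast\cap Q_j^\ast\neq\emptyset$ by $(3^n-1)2^n$; and \ref{itm:W4} reduces to the identity $\sum_j|Q_j^\ast|=(\tfrac98)^n\sum_j|Q_j|=(\tfrac98)^n|\mathcal O|$ via the essential disjointness in \ref{itm:D1}.

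The main obstacle is the boundary-sharing step inside \ref{itm:W2}: it is precisely the interplay between the small dilation factor and the dyadic alignment that makes the claim true, and loosening either ingredient would invalidate the argument. The numerical inequality behind the constant $5$ in $Q_j^\ast\subset 5Q_k^\ast$ is sensitive to the exact value $\tfrac98$ of the dilation factor but is otherwise routine.
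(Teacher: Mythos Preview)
The paper does not supply a proof of this corollary; it simply declares the properties to be consequences of Lemma~\ref{lems:whitneysteady} (they are standard facts about Whitney coverings, available in the references cited there). Your argument is correct and fills in precisely the details one would expect, with the crucial point---that the dilation factor $\tfrac98$ is small enough to force $\partial Q_j\cap\partial Q_k\neq\emptyset$ whenever $Q_j^*\cap Q_k^*\neq\emptyset$---handled properly via the dyadic gap observation. Two minor cosmetic remarks: the distance from a point of $Q_j^*$ to $Q_j$ is at most $\tfrac{\sqrt n}{16}r_j$ (not $\tfrac{17}{16}\sqrt n\,r_j$), and the gap argument for dyadic cubes is cleanest phrased in the $\ell^\infty$ metric, where both the extension $\tfrac{r_j}{16}+\tfrac{r_k}{16}$ and the minimal separation $\min(r_j,r_k)$ are exact; neither point affects the validity of your reasoning.
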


The construction of  a partition of unity with respect to the covering  $\{Q_j^*\}_{j\in\N}$, which satisfies the properties stated in the next lemma, is standard.

\begin{lemma}\label{lemma:partition}
Let $\mathcal O$ be open set in $\rn$ and let $\{Q_j\}$   and $\{Q_j^*\}_{j\in\N}$ be families of cubes as in Lemma \ref{lems:whitneysteady} and Corollary \ref{cor:whitney}. Then there exists a partition of unity $\set{\phi_j}_{j\in\N}$ with respect to the covering $\{Q_j^*\}_{j\in\N}$ and  \,\,a constant $c=c(n)$ such that:
\begin{enumerate}[label={(4.\arabic{*})},start=10]
\item \label{itm:U2} $\phi_j \in C^\infty_0(\R^n)$, $\support \phi_j \subset Q_j^*$ and $\sum_{j\in\N}\phi_j=1$ in $\mathcal O$.
\item \label{itm:U3} $\chi_{\frac{7}{9} Q_j^*} = \chi_{\frac{7}{8}
    Q_j} \leq \phi_j \leq \chi_{\frac{9}{8} Q_j} = \chi_{Q_j^*}$.
\item \label{itm:U4} $\abs{\nabla \phi_j} \leq \displaystyle{ \frac{c\,
      \chi_{Q_j^*}}{r_j}}$.
\end{enumerate}
\end{lemma}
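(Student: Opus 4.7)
The plan is to construct the partition of unity by the classical procedure of building smooth bumps adapted to the individual cubes and then normalizing them. First, I would fix a one-dimensional profile $\eta \in C^\infty_c(\R)$ taking values in $[0,1]$, with $\eta \equiv 1$ on $[-7/16, 7/16]$, $\support \eta \subset (-9/16, 9/16)$, and $|\eta'| \leq C$. For each $j \in \N$, denoting the centre of $Q_j$ by $x_j$, I would then take the tensor-product bump
\[
\psi_j(x) = \prod_{i=1}^n \eta\bigl(r_j^{-1}(x_i - (x_j)_i)\bigr) \qquad \text{for $x \in \rn$.}
\]
By design, $\psi_j \in C^\infty_c(\rn)$, $\chi_{\frac{7}{8}Q_j} \leq \psi_j \leq \chi_{Q_j^*}$, and $|\nabla \psi_j| \leq C(n)/r_j$.

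I would next set $\Psi = \sum_{k \in \N} \psi_k$. The local finiteness property \ref{itm:W3} turns this into a locally finite sum, so $\Psi \in C^\infty(\mathcal O)$ with $\Psi \leq N(n)$, where $N(n)$ is the maximum multiplicity of $\{Q_j^*\}$. To obtain a uniform positive lower bound, I would verify $\Psi \geq c(n) > 0$ on $\mathcal O$: for any $x \in \mathcal O$ there is some $Q_{j_0}$ containing $x$ by \ref{itm:D1}, and the comparability $r_k \approx r_{j_0}$ between touching cubes from \ref{itm:W3'}, combined with the bound on the number of neighbours from \ref{itm:D4}, guarantees that at least one $\psi_k(x) \geq c(n)$. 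I would then define $\phi_j = \psi_j / \Psi$. The support and partition conditions in \ref{itm:U2} are immediate. The lower bound in \ref{itm:U3}, namely $\phi_j \geq \chi_{\frac{7}{8}Q_j}$, would follow by checking that $\supp \psi_k \cap \frac{7}{8} Q_j = \emptyset$ for every $k \neq j$, a purely geometric consequence of the dyadic structure and the Whitney property \ref{itm:W2}; once this disjointness holds, $\phi_j = \psi_j/\psi_j \equiv 1$ on $\frac{7}{8} Q_j$.

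For the gradient estimate \ref{itm:U4}, on $Q_j^*$ the chain rule yields
\[
\nabla \phi_j = \frac{\nabla \psi_j}{\Psi} - \frac{\psi_j \, \nabla \Psi}{\Psi^2},
\]
and I would bound $|\nabla \Psi(x)| \leq \sum_{k:\, x \in Q_k^*} |\nabla \psi_k(x)| \leq C(n)/r_j$ by invoking the comparability of the side-lengths of intersecting cubes from \ref{itm:W2} and the fact that only $N(n)$ of them contribute, via \ref{itm:D4}. Combining this with $\Psi \geq c(n)$ then gives $|\nabla \phi_j| \leq C(n)/r_j$ on $Q_j^*$.

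The main obstacle is the simultaneous handling of (i) the uniform positive lower bound on $\Psi$, needed to control the denominator both in $\phi_j$ itself and in the chain-rule formula, and (ii) the geometric disjointness ensuring \ref{itm:U3}; both rest on an essential use of the Whitney structure, in particular the comparability \ref{itm:W3'} of side-lengths of touching dyadic cubes. The remaining statements, namely the partition property, the support condition, and the gradient bound, then reduce to a direct computation using the chain rule and the finite-overlap estimates provided by Corollary~\ref{cor:whitney}.
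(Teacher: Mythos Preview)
The paper gives no proof of this lemma; it simply remarks beforehand that the construction ``is standard''. Your normalization scheme $\phi_j=\psi_j/\Psi$ with tensor-product bumps is exactly that standard construction, and your arguments for \ref{itm:U2}, the upper bound in \ref{itm:U3}, and \ref{itm:U4} are correct. (A cleaner route to the uniform lower bound on $\Psi$ is to require in addition that $\eta>0$ on $[-\tfrac12,\tfrac12]$: then for $x\in Q_{j_0}$ one has $\psi_{j_0}(x)\ge(\min_{[-1/2,1/2]}\eta)^n>0$ directly, without invoking neighbour counts.)

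There is, however, a genuine gap in your verification of the \emph{lower} bound in \ref{itm:U3}. With $\support\eta\subset(-\tfrac{9}{16},\tfrac{9}{16})$ the asserted disjointness $\support\psi_k\cap\tfrac{7}{8}Q_j=\emptyset$ for $k\ne j$ fails when $r_k=2r_j$. Take for instance $Q_j=[-1,0]\times[0,1]^{n-1}$ and $Q_k=[0,2]^n$: in the first coordinate $\support\psi_k$ reaches down to $-\tfrac{1}{8}$ while $\tfrac{7}{8}Q_j$ extends up to $-\tfrac{1}{16}$, so the two sets overlap on a slab of width $\tfrac{1}{16}$. Hence $\Psi>1$ there and $\phi_j<1$, contradicting $\phi_j\ge\chi_{\frac{7}{8}Q_j}$. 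The fix is to tighten the support of the profile: if instead $\support\eta\subset(-\tfrac{17}{32},\tfrac{17}{32})$ (still with $\eta\equiv1$ on $[-\tfrac{7}{16},\tfrac{7}{16}]$ and $\eta>0$ on $[-\tfrac12,\tfrac12]$), a case check over the admissible ratios $r_k/r_j\in\{\tfrac12,1,2\}$ from \ref{itm:W3'} shows that $\support\psi_k\cap\tfrac{7}{8}Q_j=\emptyset$ for all neighbours $k\ne j$, while \ref{itm:W2} handles non-neighbours. With this adjustment $\Psi\equiv1$ on each $\tfrac{7}{8}Q_j$, and the rest of your argument goes through unchanged.
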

\setcounter{equation}{12}
 
 \textcolor{black}{The   truncation operator to be introduced  acts on a function through the level sets of the Hardy-Littlewood maximal operator applied to the relevant function and to its symmetric gradient.}
%
%maximal function of symmetric gradient Sobolev functiosna 
%
%the is based on cutting and truncating the function in question (and its derivatives) on level sets of the maximal function.}
Recall that the Hardy-Littlewood maximal operator $M$  is defined at any function $\bfu \in L^1_{\rm loc}(\rn)$ as 
  \begin{equation}\label{Max} M \bfu (x) = \sup _{Q\ni x} \dashint_{Q}|\bfu|\dy \qquad \hbox{for $x \in \rn$,}
   \end{equation}
where $\dashint_{Q}= \tfrac 1{|Q|}\int_Q\dots \dy$,  the averaged integral over $Q$. The operator $M$ is of weak type in any rearrangement-invariant space $X(\rn)$, in the sense that 
\begin{equation}\label{weakX}
t \varphi_X (|\{M\bfu > t\}|)  \leq 
 C_M \|\bfu\| _{X(\R^n)}   \quad \text{for $t>0$,}
\end{equation}
for some constant $C_M=C_M(n)$, and for every $\bfu \in X(\rn)$. 
This is a consequence of (the upper estimate in) a two-sided rearrangement inequality for the operator $M$, which tells us that 
\begin{equation}\label{riesz}
 c_M \bfu^{**} (s) \leq  (M\bfu)^*(s) \leq C_M \bfu^{**} (s) \quad \text{for $s>0$,}
\end{equation}
for some positive constants $c_M=c_M(n)$ and  $C_M=C_M(n)$  \cite[Chapter 3, Theorem 3.8]{BS}. To verify assertion \eqref{weakX}, notice that, thanks to property \eqref{riesz}, to  the H\"older type inequality \eqref{holder}, and to equation \eqref{fund1},
\begin{equation}\label{riesz1}
(M\bfu)^*(s) \leq \frac{C_M}s \|\bfu\| _{ X(0, \infty)} \|\chi_{(0,s)}\| _{X'(0, \infty)} =
 \frac{C_M}s \|\bfu\| _{X(\R^n)}\varphi_{X'}(s) = \frac{C_M}{\varphi_{X}(s)}\|\bfu\| _{X(\R^n)}\ \quad \text{for $s>0$.}
\end{equation}
By the definition of the decreasing rearrangement, inequality \eqref{riesz1} implies \eqref{weakX}.
\par\noindent
Given $\bfu \in E^1L^1(\R^n)$,   we define for $\theta,\lambda>0$ the  set
\begin{align}\label{mar106}
\mathcal O_{\theta,\lambda}&=\set{x\in\R^n:\,M(\bfu)>\theta}\cup\set{x\in\R^n:\,M(\ep(\bfu))>\lambda}.
\end{align}
Note that $\mathcal O_{\theta,\lambda}$ is an open set in $\rn$, thanks to the continuity of the maximal function.
It follows directly from the weak-type $L^1$-estimate for the maximal operator, namely from inequality \eqref{weakX} with $X(\rn)=L^1(\rn)$, that
\begin{align*}
    |\set{x\in\R^n:\,M(\bfu)>\theta}|\leq\frac{C_M}{\theta}\|\bfu\|_{L^1(\R^n)},\quad    |\set{x\in\R^n:\,M(\ep(\bfu))>\lambda}|\leq\frac{C_M}{\lambda}\|\ep(\bfu)\|_{L^1(\R^n)}.
\end{align*}
In particular, $|\mathcal O_{\theta,\lambda}| <     \infty$. More generally, if  $\bfu \in E^1X(\R^n)$ for some rearrangement-invariant space $X(\rn)$, then, by inequality \eqref{weakX} again,
\begin{align}\label{Mar101}
    \varphi_X(|\set{x\in\R^n:\,M(\bfu)>\theta}|)\leq\frac{C_M}{\theta}\|\bfu\|_{X(\R^n)}, \quad   \varphi_X(|\set{x\in\R^n:\,M(\ep(\bfu))>\lambda}|)\leq\frac{C_M}{\lambda}\|\ep(\bfu)\|_{X(\R^n)}.
\end{align}
Hence,  $|\mathcal O_{\theta,\lambda}| <     \infty$ if $\lim _{t \to \infty} \varphi_X(t)>\max\{\frac{C_M}{\theta}\|\bfu\|_{X(\R^n)}, \frac{C_M}{\lambda}\|\ep(\bfu)\|_{X(\R^n)}\}$, and, in particular, if $\lim _{t \to \infty} \varphi_X(t)=\infty$.
%
%
%
%\todo[inline]{A; by the weak type in $X(\rn)$, if $\lim _{t\to \infty}\varphi_X (t) = \infty$, this is also true if $\bfu \in E^{1}X(\R^n)$}  
\\ Let $\{Q_j\} $, $\{Q_j^*\} $ and $\{\phi_j\} $ be as in Lemmas  \ref{lems:whitneysteady}--\ref{lemma:partition}, with $\mathcal{O}= \mathcal{O}_{\theta,\lambda}$.
\\
Given a rearrangement-invariant space $X(\rn)$ and a function $\bfu \in E^1L^1(\R^n)$, 
 set   
\begin{equation}\label{mar105}
\bfu_j = \mathcal{R}_{Q_j^*}(\bfu) \quad \hbox{for $j \in \setN$,}
\end{equation}
where $\mathcal{R}_{Q_j^*}(\bfu)$  is defined as  in \eqref{mar95}, and 
the
function $T^{\theta,\lambda}\bfu : \rn \to \rn$ is given by
\begin{align}\label{eq:wlambda}
  T^{\theta,\lambda} \bfu &=
  \begin{cases}
    \bfu &\quad\text{in $\R^n \setminus \mathcal{O}_{\theta,\lambda}$}
    \\[2mm]
    \displaystyle{\sum_{j\in\N}} \phi_j \bfu_j&\quad\text{in
        $\mathcal{O}_{\theta,\lambda}$.}
  \end{cases}
\end{align}
\textcolor{black}{Our results about the operator $T^{\theta,\lambda}$ are collected in the following statement}.
\begin{theorem}
  \label{lem:Tlest}{\rm{\bf [Truncation operator acting on $\bfu$ and $\ep(\bfu)$]}} 
\\
 (i) Assume that $\bfu \in E^1L^1(\R^n)$. Then   $T^{\theta,\lambda} \bfu \in E^1L^1(\rn)\cap E^1L^\infty(\R^n)$. Moreover, there exists a constant  $c=c(n)$ such that:
  \begin{align}
      \label{itm:TlestLinfty2} \abs{T^{\theta,\lambda} \bfu} \leq c\,
    \theta \chi_{\mathcal{O}_{\theta,\lambda}} + \abs{\bfu} \chi_{\R^n
      \setminus \mathcal{O}_{\theta,\lambda}}\quad&\text{and}\quad \abs{T^{\theta,\lambda} \bfu}
    \leq c\, \theta \quad\text{a.e. in } \rn,\\
   \label{itm:TlestLinfty} \abs{\ep(T^{\theta,\lambda} \bfu)} \leq c\,
    \lambda \chi_{\mathcal{O}_{\theta,\lambda}} + \abs{\ep(\bfu)} \chi_{\R^n
      \setminus \mathcal{O}_{\theta,\lambda}}\quad&\text{and}\quad  \abs{\ep(T^{\theta,\lambda} \bfu)}
    \leq c\, \lambda \quad\text{a.e. in } \rn.
  \end{align}
 (ii) Assume that $\bfu \in E^{1}X(\R^n)$ for some rearrangement-invariant space $X(\R^n)$. Let $\lambda>0$ be such that
\begin{equation}\label{feb20}
\lim _{t \to \infty} \varphi_X(t) > \frac{2C_M \|\bfu\|_{ E^{1}X(\R^n)}} \lambda, 
\end{equation}
where $C_M$ is the constant appearing in \eqref{weakX}
(in particular,  $\lambda$ can be any positive number if the limit in \eqref{feb20} is infinity).
Then $T^{\lambda,\lambda} \bfu \in  E^{1}X(\R^n)$. Moreover, properties \eqref{itm:TlestLinfty2}  and \eqref{itm:TlestLinfty}  hold with $\theta=\lambda$,
%\todo[inline]{A: for properties (a) and (b) to be fulfilled, do we also need $\bfu \in E^1L^1(\R^n)$?}
 and there exists a constant $C=C(n, X)$ such that
\begin{equation}\label{jan50}
\| T^{\lambda,\lambda} \bfu \|_{E^{1}X(\R^n)} \leq C \|\bfu\|_{E^{1}X(\R^n)} .
\end{equation}
\end{theorem}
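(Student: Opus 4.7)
The plan is to combine the local Korn--Poincar\'e estimate of Corollary \ref{Poinc} with the Whitney covering properties provided by Lemmas \ref{lems:whitneysteady}--\ref{lemma:partition}. For the pointwise bound \eqref{itm:TlestLinfty2}, I first observe that it is trivial off $\mathcal O_{\theta,\lambda}$. On $\mathcal O_{\theta,\lambda}$, property \ref{itm:Wbnd} yields a point $x_j^\ast \in c_n Q_j \setminus \mathcal O_{\theta,\lambda}$ at which $M(\bfu)(x_j^\ast) \leq \theta$; since $Q_j^\ast \subset c_n Q_j$, this gives $|Q_j^\ast|^{-1}\|\bfu\|_{L^1(Q_j^\ast)} \leq c\theta$, whence \eqref{eq:extE1} forces $|\bfu_j| \leq c\theta$ throughout $Q_j^\ast$. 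Since $\sum_j \phi_j \equiv 1$ with locally finite support (property \ref{itm:W3}), \eqref{itm:TlestLinfty2} follows.

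The bound \eqref{itm:TlestLinfty} is the core of the argument. The decisive fact is that each $\bfu_j = \mathcal R_{Q_j^\ast}(\bfu)$ is a rigid displacement, by \eqref{mar95} and \eqref{ker1}, so $\ep(\bfu_j) \equiv 0$. Fixing $x \in Q_k^\ast$ and using $\sum_j \phi_j \equiv 1$ in $\mathcal O_{\theta,\lambda}$ (hence $\sum_j \nabla\phi_j \equiv 0$), I rewrite $T^{\theta,\lambda}\bfu = \bfu_k + \sum_j \phi_j(\bfu_j - \bfu_k)$, so that
\[
\ep(T^{\theta,\lambda}\bfu)(x) = \sum_{j :\, Q_j^\ast \cap Q_k^\ast \neq \emptyset} \tfrac{1}{2}\bigl(\nabla\phi_j(x) \otimes (\bfu_j - \bfu_k)(x) + (\bfu_j - \bfu_k)(x) \otimes \nabla\phi_j(x)\bigr).
\]
The sum has boundedly many nonzero terms by \ref{itm:W3}, and $|\nabla\phi_j| \leq c/r_k$ by \ref{itm:U4} combined with $r_j \approx r_k$. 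It remains to show $\|\bfu_j - \bfu_k\|_{L^\infty(Q_k^\ast)} \leq c\lambda r_k$. To this end, I introduce a single rigid displacement $\widetilde{\bfu} = \mathcal R_{5Q_k^\ast}(\bfu)$ on the enlarged cube containing both $Q_j^\ast$ and $Q_k^\ast$ (cf.\ \ref{itm:W2}), apply Corollary \ref{Poinc} on $5Q_k^\ast$, and use the maximal-function bound $\dashint_{c_n Q_j}|\ep(\bfu)|\,\dy \leq \lambda$ from \ref{itm:Wbnd} to conclude $\|\bfu - \widetilde{\bfu}\|_{L^1(5Q_k^\ast)} \leq c\,r_k\,|Q_k^\ast|\,\lambda$. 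Since $\mathcal R$ acts as the identity on rigid displacements, one has $\bfu_j - \widetilde{\bfu} = \mathcal R_{Q_j^\ast}(\bfu - \widetilde{\bfu})$, and inequality \eqref{eq:extE1} then gives $\|\bfu_j - \widetilde{\bfu}\|_{L^\infty(Q_j^\ast)} \leq c\lambda r_k$; the analogous estimate holds for $\bfu_k$ on $Q_k^\ast$, and \eqref{norm-equiv3}--\eqref{norm-equiv4} applied to the rigid displacement $\bfu_j - \widetilde{\bfu}$ transfer the $L^\infty$-bound from $Q_j^\ast$ to $Q_k^\ast$.

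The main technical obstacle is to verify that the piecewise formula for $\ep(T^{\theta,\lambda}\bfu)$ represents the distributional symmetric gradient across $\partial \mathcal O_{\theta,\lambda}$, with no singular boundary contribution. I would handle this by showing $T^{\theta,\lambda}\bfu$ is continuous at every Lebesgue point $x_0$ of $\bfu$ lying outside $\mathcal O_{\theta,\lambda}$: as $x \to x_0$ through $\mathcal O_{\theta,\lambda}$, the diameters of all Whitney cubes $Q_j^\ast \ni x$ shrink to zero by \ref{itm:Wbnd}, while \eqref{eq:extE1} forces $\bfu_j(x) \to \bfu(x_0)$. Combined with the $L^\infty$-bounds just obtained and a standard test-function computation (integration by parts on each $Q_j^\ast$ together with cancellation of boundary contributions across adjacent cubes), this identifies the a.e.\ formula with the weak symmetric gradient and gives $T^{\theta,\lambda}\bfu \in E^1L^1(\R^n) \cap E^1L^\infty(\R^n)$.

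For part (ii), the pointwise bounds of part (i) applied with $\theta = \lambda$ yield $|T^{\lambda,\lambda}\bfu| + |\ep(T^{\lambda,\lambda}\bfu)| \leq c\lambda\,\chi_{\mathcal O_{\lambda,\lambda}} + |\bfu|\chi_{\R^n\setminus\mathcal O_{\lambda,\lambda}} + |\ep(\bfu)|\chi_{\R^n\setminus\mathcal O_{\lambda,\lambda}}$. Taking the $X(\R^n)$-norm and using $\|\chi_E\|_{X(\R^n)} = \varphi_X(|E|)$ together with the subadditivity of the fundamental function $\varphi_X$ (which holds because $\varphi_X(s)/s$ is non-increasing) and the two weak estimates in \eqref{Mar101}, I obtain $\lambda\,\varphi_X(|\mathcal O_{\lambda,\lambda}|) \leq 2C_M\|\bfu\|_{E^1X(\R^n)}$. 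Condition \eqref{feb20} is precisely what guarantees that $|\mathcal O_{\lambda,\lambda}|$ is finite and that this bound determines $|\mathcal O_{\lambda,\lambda}|$ via the range of $\varphi_X$, and together with the trivial bounds of $\bfu$ and $\ep(\bfu)$ on the complement this gives \eqref{jan50}.
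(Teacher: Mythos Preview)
Your argument is correct and follows the same overall strategy as the paper. The pointwise bounds \eqref{itm:TlestLinfty2}--\eqref{itm:TlestLinfty} are obtained by the same mechanism (Whitney geometry plus the Korn--Poincar\'e estimate of Corollary~\ref{Poinc} plus a point in $c_nQ_j$ where the maximal function is controlled); your route to $\|\bfu_j-\bfu_k\|_{L^\infty(Q_k^\ast)}\le c\lambda r_k$ via a single reference $\widetilde\bfu=\mathcal R_{5Q_k^\ast}(\bfu)$ is a harmless variant of the paper's route, which instead passes through $\bfu$ on the overlap $Q_j^\ast\cap Q_k^\ast$ (see \eqref{eq:2008c}). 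Part~(ii) is handled identically in both.

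The one place where the paper is noticeably cleaner is the identification of the weak symmetric gradient across $\partial\mathcal O_{\theta,\lambda}$. Rather than a continuity-at-boundary-points argument plus integration by parts on each cube, the paper simply writes $T^{\theta,\lambda}\bfu-\bfu=\sum_j\phi_j(\bfu_j-\bfu)$ in $\mathcal O_{\theta,\lambda}$ and shows that both this series and $\sum_j\bigl(\nabla\phi_j\otimes^{\sym}(\bfu_j-\bfu)-\phi_j\ep(\bfu)\bigr)$ converge in $L^1(\mathcal O_{\theta,\lambda})$, using Corollary~\ref{Poinc} and the fact that $\chi_{\cup_{j\ge k}Q_j^\ast}\to 0$. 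Since each partial sum lies in $E^1_bL^1(\mathcal O_{\theta,\lambda})$, the limit is in $E^1_0L^1(\mathcal O_{\theta,\lambda})$, and adding back $\bfu\in E^1L^1(\rn)$ gives $T^{\theta,\lambda}\bfu\in E^1L^1(\rn)$ directly. This bypasses any discussion of boundary regularity of $\mathcal O_{\theta,\lambda}$ or pointwise continuity, and is the argument you should use in place of your ``standard test-function computation''.
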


\begin{proof}  Part (i). {\color{black} The outline of the proof of this part is reminiscent of that of   \cite[Lemmas 2.1-2.3]{BrDF}.}
We begin by showing that 
\begin{align}\label{eq:TE11}
T^{\theta,\lambda} \bfu\in E^1L^1(\R^n).
\end{align}
Since $\bfu \in  E^1L^1(\R^n)$ and $T^{\theta,\lambda} \bfu = \bfu$ in $\rn \setminus \mathcal{O}_{\theta,\lambda}$, it suffices to prove that $T^{\theta,\lambda} \bfu - \bfu \in
  E^1_0L^1(\mathcal{O}_{\theta,\lambda})$. Owing to property  \ref{itm:U2}, 
 \begin{align}
  \label{july101}
 T^{\theta,\lambda}\bfu - \bfu = {\color{black} \sum_{j =1}^\infty }\phi_j(\bfu_j - \bfu) \quad \text{in  $\mathcal{O}_{\theta,\lambda}$.}
\end{align}
By property \ref{itm:U2} again, each addend in the sum in \eqref{july101} belongs to $E^1_bL^1(\mathcal{O}_{\theta,\lambda})$, and the sum is locally finite by property \ref{itm:W3}. Therefore, $T^{\theta,\lambda}\bfu - \bfu \in E^1_{\rm loc}L^1(\mathcal{O}_{\theta,\lambda})$. Moreover,
\begin{align}\label{july102}
    \ep( T^{\theta,\lambda} \bfu - \bfu) &
%= \ep \Big(\sum_{j \in \setN}
%    \phi_j(\bfu_j - \bfu) \Big) 
=  \sum_{j =1}^\infty  \big( \nabla
    \phi_j\otimes^{sym}(\bfu_j - \bfu) -\phi_j\ep( \bfu)
    \big) \quad \text{in  $\mathcal{O}_{\theta,\lambda}$,}
  \end{align}
where $\otimes^{sym}$ denotes the symmetric part of the tensor product between two vectors.
In order to show that  $T^{\theta,\lambda} \bfu - \bfu \in
  E^1_0L^1(\mathcal{O}_{\theta,\lambda})$, it thus suffices to show that the sums in \eqref{july101} and \eqref{july102} converge in $L^1 (\mathcal{O}_{\theta,\lambda})$.
%
%
%
%
% \todo[inline]{A: do we need the next argument?} Recalling that $|\mathcal{O}_{\theta,\lambda}|< \infty$, the Poincar\'e-Korn inequality applies to functions whose extension by $0$ outside $\mathcal{O}_{\theta,\lambda}$ belongs to $L^1(\rn)$.  Thus, it suffices to show that $T^{\theta,\lambda} \bfu - \bfu \in
%  E^1_0L^1(\mathcal{O}_{\theta,\lambda})$. %Let $J$ be a finite subset of $\setN$. 
%  We have
%  pointwise in $\mathcal{O}_{\theta,\lambda}$
%  %\todo[inline]{A: it seems to me that we have also to show the $L^1$ convergence, since $\mathcal{O}_{\theta,\lambda}$ caan have infinite measure, and we cannot apply Poincar\'e inequality.}
% % \todo[inline]{D: From the weak-type estimate for the maximal function one can control the size of $\mathcal O_{\theta,\lambda}$. So, it is in particular bounded. Hence, control of the gradient is enough.}
%  \begin{align}
%  \label{july101}
% T^{\theta,\lambda}\bfu - \bfu &= \sum_{j \in \setN} \phi_j(\bfu_j - \bfu)
%,
%  \\ \label{july102}
%    \ep( T^{\theta,\lambda} \bfu - \bfu) &= \ep \Big(\sum_{j \in \setN}
%    \phi_j(\bfu_j - \bfu) \Big) = \sum_{j \in \setN} \big( \nabla
%    \phi_j\otimes^{sym}(\bfu_j - \bfu) -\phi_j\ep( \bfu)
%    \big)
%  \end{align}
%  as a consequence of \ref{itm:U2}.
%  Since every addend in the  sum in \eqref{july101} belongs to
%  $E^1_0L^1(\mathcal{O}_{\theta,\lambda})$ by \ref{itm:U2},
%  %and $\mathcal L^n (\mathcal{O}_{\theta,\lambda}) < \infty$, owing to Poincar\'e-Korn inequality 
%  it suffices to show that the 
%   sum in \eqref{july102} converges in $L^1$ . 
% Let $J \subset \setN$ be a finite set.
  Let $k\in \setN$. Thanks to equations \eqref{itm:meanw1} and \ref{itm:U4}, there exists a constant $c=c(n)$ such that
  \begin{align}\label{eq:2001b}
    \int_{\R^n} &{\color{black} \sum_{j=k}^\infty} \bigabs{
      \nabla \phi_j\otimes^{sym}(\bfu_j - \bfu) - \phi_j \ep( \bfu) } \dx
%\nonumber
%    \\
%    &
%\\   \nonumber &
\leq\sum_{j=k}^\infty \int_{Q_j^*} \bigabs{ \nabla
      \phi_j\otimes^{sym}(\bfu_j - \bfu)}\dx +  \sum_{j=k}^\infty\int_{Q_j^*} \abs{\ep(\bfu) } \dx%\nonumber
    \\ \nonumber
    &\leq\sum_{j=k}^\infty \int_{Q_j^*}
    \frac{\abs{\bfu_j - \bfu}}{r_j} \dx + \sum_{j=k}^\infty \int_{Q_j^*} \abs{\ep(\bfu)} \dx
%\\ \nonumber
%    \\ \nonumber
%    &
%&
\leq \,c\,\sum_{j=k}^\infty \int_{Q_j^*} \abs{\ep(\bfu)} \dx
    \leq\, c\, \int_{\mathcal{O}_{\theta,\lambda}} \chi_{\cup_{j=k}^\infty Q_j^*} \abs{\ep(\bfu)} \dx.
  \end{align}
  Since  $\ep(\bfu) \in   L^1(\R^n)$ and,  by \ref{itm:W4}, $\chi_{\cup_{j=k}^\infty Q_j^*} \to 0$ as  $k \to
  \infty$,  the sum in the leftmost side of equation \eqref{eq:2001b}
  converges  to $0$  in $L^{1}(\mathcal{O}_{\theta,\lambda})$. Thus, $ \ep( T^{\theta,\lambda} \bfu - \bfu)\in L^{1}(\mathcal{O}_{\theta,\lambda})$. An analogous -- in fact simpler -- argument  applied to the sum on the right-hand side of \eqref{july101} tells us that  $ T^{\theta,\lambda} \bfu - \bfu\in L^{1}(\mathcal{O}_{\theta,\lambda})$ as well.
%\todo[inline]{A: this is true also if $\ep(\bfu) \in   X(\R^n)$, for those $ X(\R^n)$ and $\theta$ and $\lambda$ such that $|\mathcal{O}_{\theta,\lambda}|<\infty$. If  $\ep(\bfu) \in   L^1(\R^n)$, where do we need that $|\mathcal{O}_{\theta,\lambda}|<\infty$? } 
%
% and since $\mathcal L^n (\mathcal{O}_{\theta,\lambda})< \infty$, by the Sobolev inequality into $L^{n'}(\rn)$, one has that $ T^{\theta,\lambda} \bfu - \bfu\in L^{1}(\mathcal{O}_{\theta,\lambda})$ as well.
%\todo[inline]{A: or we can repeat the argument for the sum on the right-hand side of \eqref{july101}}
 Property  \eqref{eq:TE11} is thus established.\\
  Le us next focus on properties \eqref{itm:TlestLinfty2} and \eqref{itm:TlestLinfty}.
Denote by $A_j$  the set of indices associated with cubes from the covering, which are neighbours of $Q_j^*$ (including $Q_j^*$ itself); namely
\begin{align}\label{Aj}
  A_j &= \set{k \in \setN \,:\, Q_j^* \cap Q_k^* \not= \emptyset}.
\end{align}
 Consider property \eqref{itm:TlestLinfty2}. Fix $j \in \setN$. Since
\begin{align*}
T^{\theta,\lambda} \bfu=\sum_{k\in A_j}\phi_k\bfu_k \quad \text{in $Q_j^\ast$,}
\end{align*}  
   we deduce from inequality \eqref{eq:extE1},  that
   \begin{align*}
    \abs{T^{\theta,\lambda} \bfu} &\leq c\,\sum_{k\in A_j} \norm{\bfu_k}_{L^\infty(Q_k^*)} \leq c\, \sum_{k \in A_j}
    \dashint_{Q_k^*} \abs{\bfu}\dx \quad \text{a.e. in $Q_j^\ast$,}
  \end{align*}
for some constant $c=c(n)$.
  %\eqref{eq:RW11stab}. 
  Hence, via properties \ref{itm:W2}, \ref{itm:W3} and the definition of the maximal operator, there exists a constant $c=c(n)$ such that
    \begin{align}\label{eq:new}
 \abs{T^{\theta,\lambda} \bfu} \leq\, \sum_{k \in A_j}
    \dashint_{c_n Q_k^*} \abs{\bfu}\dx\leq\,c\,\theta \quad \text{a.e. in $Q_j^\ast$,}
  \end{align}
where $c_n$ is the constant appearing in \ref{itm:Wbnd}. Note that here we have made use of the fact that   $c_n Q_k^*\cap (\R^n\setminus\mathcal O_{\theta,\lambda}) \neq \emptyset$.  Inequality \eqref{eq:new} implies that 
  $\abs{T^{\theta,\lambda} \bfu} \leq c\,
    \theta$ in  $\mathcal{O}_{\theta,\lambda}$. In $\R^n
      \setminus \mathcal{O}_{\theta,\lambda}$, one has that $\abs{T^{\theta,\lambda} \bfu} = |\bfu| \leq M(\bfu) \leq \theta$ by the very definition of $\mathcal O_{\theta,\lambda}$. Hence,
      $\abs{T^{\theta,\lambda} \bfu} \leq c\,
    \theta$ in $\rn$. Property \eqref{itm:TlestLinfty2} is thus fully established.
\\
Consider next ~\eqref{itm:TlestLinfty}. %It follows from \eqref{eq:TE11} that
We have that
\begin{align}
  \label{eq:Tlnabla}
  \ep(T^{\theta,\lambda} \bfu) &= \chi_{\R^d \setminus \mathcal{O}_{\theta,\lambda}}
  \ep(\bfu) + \chi_{\mathcal{O}_{\theta,\lambda}} \sum_{j=1}^\infty \ep(\phi_j
  \bfu_j).
\end{align}
Moreover, for each $j \in \setN$,
  \begin{align*}
    \eps (T^{\theta,\lambda} \bfu) &= \eps \Big( \sum_{k=1}^\infty \phi_k \bfu_k \Big) =
    \eps \Big(  \sum_{k=1}^\infty \phi_k (\bfu_k -\bfu_j)\Big)
    = \sum_{k=1}^\infty \nabla \phi_k \otimes^\sym (\bfu_k - \bfu_j) \quad \text{in
  $Q_j^*$,}
  \end{align*}
  where we have made use of the fact that,  by \ref{itm:U2}, $\sum_{k=1}^\infty \phi_k=1$ in
  $\mathcal{O}_{\theta,\lambda}$  and that  $\ep (\bfu_j) =0$.  Therefore, owing to the local finiteness of 
  the covering $\{Q_k^*\}$, % from \ref{itm:W3}
  \begin{align}\label{eq:neu0}
    \abs{\ep (T^{\theta,\lambda}\bfu)} &\leq c\,\sum_{k\in A_j} \frac{1}{r_j} \norm{\bfu_j -
        \bfu_k}_{L^\infty(Q_j^*)}\quad \text{in
  $Q_j^*$,}
  \end{align}
for some constant $c=c(n)$.
 % Now we estimate the right-hand side further. 
By property   \ref{itm:W2}, we have that $\abs{Q_j^* \cap Q_k^*} \approx \abs{Q_j^*} \approx \abs{Q_k^*}$.  Therefore, thanks to equations \eqref{norm-equiv3} and \eqref{norm-equiv4},
%
%    a   scaling
%  argument and  the fact that all norms on the finite
%  dimensional space $\mathcal{R}$ are equivalent tell us that
%  \begin{align*}
%   \int_{Q_j^*} \abs{\bfv}\dx \approx \int_{Q_j^* \cap Q_k^*}
%    \abs{\bfv}\dx \quad \text{for $ k\in A_j$,}
%  \end{align*}
% for every $\bfv \in \mathcal{R}$, with equivalence constants depending on $n$.
%  As a consequence, 
if $ k\in A_j$, then
  \begin{align}\label{eq:2008c}
    \frac{1}{r_j}\norm{\bfu_j -
        \bfu_k}_{L^\infty(Q_j^*)} &\lesssim \dashint_{Q_j^*} \frac{\abs{\bfu_j - \bfu_k}}{r_j}\dx \lesssim 
    \frac{1}{\abs{Q_j^*}} \int_{Q_j^* \cap Q_k^*} \frac{\abs{\bfu_j -
        \bfu_k}}{r_j}\dx
    \\ \nonumber
    &\lesssim \frac{1}{\abs{Q_j^*}} \int_{Q_j^* \cap Q_k^*} \frac{\abs{\bfu -
        \bfu_j}}{r_j}\dx + \frac{1}{\abs{Q_j^*}} \int_{Q_j^* \cap Q_k^*}
    \frac{\abs{\bfu - \bfu_k}}{r_j}\dx
    \\ \nonumber
    &\lesssim \dashint_{Q_j^*} \frac{\abs{\bfu - \bfu_j}}{r_j}\dx +
     \dashint_{Q_k^*} \frac{\abs{\bfu -
        \bfu_k}}{r_k}\dx,
  \end{align}
up to constants depending on $n$.
  Observe that  we have exploited  property ~\ref{itm:W2}
  twice in the last chain.
Combining equations \eqref{eq:2008c} and \eqref{eq:neu0} yields
 \begin{align}\label{eq:neu}
    \abs{\ep (T^{\theta,\lambda})} & \leq c\, \sum_{k \in A_j}
    \dashint_{Q_k^*} \frac{\abs{\bfu - \bfu_k}}{r_k}\dx
  \end{align}
for each $j \in \setN$.
  From inequality \eqref{itm:meanw1} and the  local finiteness of 
  the covering $\{Q_k^*\}$ we deduce, via the same argument as in the poof  \eqref{eq:new}, that 
      \begin{align}\label{eq:2001}
 \abs{\ep(T^{\theta,\lambda} \bfu)} \leq\, \sum_{k \in A_j}
    \dashint_{c_n Q_k^*} \abs{\ep(\bfu)}\dx\leq\,c\,\lambda \quad \text{a.e. in $Q_j^*$,}
 \end{align}
for some constant $c=c(n)$.
 Inasmuch as $\bigcup_j Q_j^* = \mathcal{O}_{\theta,\lambda}$, inequality \eqref{eq:2001}
 implies that $\abs{\eps (T^{\theta,\lambda} \bfu)} \leq c\,\lambda$ a.e. in
  $\mathcal{O}_{\theta,\lambda}$. As a consequence, $\abs{\ep(T^{\theta,\lambda} \bfu)}
  \leq c\, \lambda \chi_{\mathcal{O}_{\theta,\lambda}} + \abs{\ep(\bfu)}
  \chi_{\rn \setminus \mathcal{O}_{\theta,\lambda}}$.  On the other hand,  $\abs{\eps (T^{\theta,\lambda} \bfu)} =
  \abs{\eps(\bfu)} \leq M(\eps(\bfu)) \leq \lambda$  in $\R^n \setminus
  \mathcal{O}_{\theta,\lambda}$, by the definition of $\mathcal O_{\theta,\lambda}$. Altogether,
  $\abs{\eps (T^{\theta,\lambda} \bfu)} \leq c\,\lambda$ a.e. in $\rn$.  Property \eqref{itm:TlestLinfty} is thus established, and hence the proof of Part (i) is complete.
 \\  Part (ii). Assume that $\lambda >0$ fulfills condition \eqref{feb20}. Observe that
\begin{align}\label{jan51}
\varphi_X(|\mathcal O_{\lambda, \lambda}|) \leq \varphi_X(|\set{M(\bfu)>\lambda}|+ |\set{M(\ep(\bfu))>\lambda}|) 
%\\ \nonumber 
& 
\leq   \varphi_X(2|\set{M(\bfu)>\lambda}|)+ \varphi_X(2|\set{M(\ep(\bfu))>\lambda}|)\\ \nonumber &\leq     2\varphi_X(|\set{M(\bfu)>\lambda}|)+ 2\varphi_X(|\set{M(\ep(\bfu))>\lambda}|), 
\end{align}
for $\lambda >0$, where the last inequality holds since $\varphi_X(t)/t$ is non-increasing. Hence,   by the weak-type estimate \eqref{weakX} and assumption \eqref{feb20},  one has that $|\mathcal O_{\lambda, \lambda}|<\infty$. Since $\bfu \in E^1X(\rn)$ and $|\mathcal O_{\lambda, \lambda}|<\infty$, from property \eqref{l1linf} we infer  that $\bfu \in E^1L^1(\mathcal O_{\lambda, \lambda})$.
The same argument as in Part (i) then ensures that $T^{\lambda, \lambda} \bfu - \bfu \in E^1_0L^1(\mathcal O_{\lambda, \lambda})$. Since $\bfu \in E^1X(\rn)$, we thus conclude that $T^{\lambda, \lambda} \bfu \in E^1_{\rm loc}L^1(\rn)$. Properties \eqref{itm:TlestLinfty2} and \eqref{itm:TlestLinfty} continue to hold with the same proof. 
Via property \eqref{itm:TlestLinfty2}, inequality  \eqref{jan51} and the fact the the operator $M$ is of weak type in $X(\R^n)$,  one deduces that
\begin{align}\label{jan52}
\|T^{\lambda, \lambda} \bfu\|_{X(\R^n)}& \leq c \lambda \|\chi_{\mathcal O_{\lambda, \lambda}}\|_{X(\R^n)} + \| \bfu\|_{X(\R^n)} = c \lambda  \varphi_X(|\mathcal O_{\lambda, \lambda}|) + \| \bfu\|_{X(\R^n)}
\\ \nonumber & \leq 2 c \lambda \big[\varphi_X(|\set{M(\bfu)>\lambda}|)+ \varphi_X(|\set{M(\ep(\bfu))>\lambda}|)\big]+ \| \bfu\|_{X(\R^n)} \leq c' \| \bfu\|_{E^1X(\R^n)} 
\end{align}
for some constants $c=c(n)$ and  $c'=c'(n)$.
Property \eqref{itm:TlestLinfty}, via a chain analogous to \eqref{jan52},  implies that 
\begin{align}\label{jan53}
\|\ep (T^{\lambda, \lambda} \bfu)\|_{X(\R^n)}& \leq c  \| \bfu\|_{E^1X(\R^n)}
\end{align}
for some constant $c=c(n)$.  Inequality \eqref{jan50}, and hence the fact that $T^{\lambda, \lambda} \bfu \in E^1X(\rn)$, are  consequences of \eqref{jan52} and \eqref{jan53}.
\end{proof}

We now introduce  a variant of the truncation operator from Theorem \ref{lem:Tlest}, where the truncation only depends on the symmetric gradient, and  not on the function itself.
Let $X(\R^n)$ be a rearrangement-invariant space. Given $\bfu \in E^{1}X(\R^n)$,   define for $\lambda>0$ the  set
\begin{align}\label{Olambda}
\mathcal O_{\lambda}&= \set{x\in\R^n:\,M(\ep(\bfu))>\lambda},
\end{align}
where $M$ is the Hardy-Littlewood maximal operator. 
The weak-type estimate \eqref{weakX} for the maximal operator implies that
\begin{align}\label{feb29}
 \varphi_X\big(|\set{x\in\R^n:\,M(\ep(\bfu))>\lambda}|\big)\leq\frac{c}{\lambda}\|\ep(\bfu)\|_{X(\R^n)}.
\end{align}
%n particular, $\mathcal L^n (\mathcal O_{\theta,\lambda}) <     \infty$.
%\todo[inline]{A; by the weak type in $X(\rn)$, if $\lim _{t\to \infty}\varphi_X (t) = \infty$, this is also true if $\bfu \in E^{1}X(\R^n)$}  
Let $\{Q_j\}$, $\{Q_j^*\}$ and $\{\phi_j\}$ be as in Lemmas  \ref{lems:whitneysteady}--\ref{lemma:partition}, with $\mathcal{O}= \mathcal{O}_{\lambda}$.
\\
Define the function 
 $T^{\lambda}\bfu$ by
\begin{align}\label{eq:wlambda'}
  T^{\lambda} \bfu &=
  \begin{cases}
    \bfu &\quad\text{in $\R^n \setminus \mathcal{O}_{\lambda}$}
    \\[2mm]
    \displaystyle{\sum_{j=1}^\infty} \phi_j \bfu_j&\quad\text{in
        $\mathcal{O}_{\lambda}$},
  \end{cases}
\end{align}
where $\bfu_j = \mathcal{R}_{Q_j^*}(\bfu)$ for $j \in \setN$, and   $\mathcal{R}_{Q_j^*}(\bfu)$ is defined as  in \eqref{mar95}. \textcolor{black}{The next result provides us with a counterpart of Theorem  \ref{lem:Tlest} for the operator $T^{\lambda}$.}
%\\ If $\bfu \in E^{1}_0X(\Omega)$ for some open set $\Omega \subset \R^n$, then $T^\lambda \bf$ is defined as above, after replacing $\bfu$ with the function $\widehat \bfu$, which, by the definition of the space $ E^{1}_0X(\Omega)$, belongs to $E^1X(\R^n)$.

\begin{theorem}
  \label{lem:feb1}{\rm{\bf [Truncation operator acting on  $\ep(\bfu)$]}} 
\\
 (i) Assume that $\bfu \in E^{1}L^1(\R^n)$. Then $T^{\lambda} \bfu\in E^{1}L^1(\R^n)$ and $\ep (T^{\lambda} \bfu) \in L^\infty (\rn)$. Moreover, there exists a constant $c=c(n)$ such that 
  \begin{equation}
%      \item \label{itm:TlestLinfty2} $\abs{T^{\theta,\lambda} \bfu} \leq c\,
%    \theta \chi_{\mathcal{O}_{\theta,\lambda}} + \abs{\bfu} \chi_{\R^n
%      \setminus \mathcal{O}_{\theta,\lambda}}$ and $\abs{T^{\theta,\lambda} \bfu}
%    \leq c\, \theta$ almost everywhere.
\label{feb60'} \abs{\ep(T^{\lambda} \bfu)} \leq c\,
    \lambda \chi_{\mathcal{O}_{\lambda}} + \abs{\ep(\bfu)} \chi_{\R^n
      \setminus \mathcal{O}_{\lambda}}\quad\text{and}\quad \abs{\ep(T^{\lambda} \bfu)}
    \leq c\, \lambda\quad \text{a.e. in $\rn$.}
  \end{equation}
(ii) Assume   that $\bfu \in E^{1}_bX(\R^n)$ for some rearrangement-invariant space $X(\R^n)$.
% and
%\begin{equation}\label{supp}
%\{|\bfu| >0\} \,\,\text{is bounded.}
%\end{equation}
Then $T^{\lambda} \bfu \in E^{1}_bL^1(\R^n)\cap E^{1}_bL^\infty(\R^n)$. In particular,
 $T^{\lambda} \bfu \in  E^{1}_bX(\R^n)$. Moreover,
%$T^{\lambda} \bfu - \bfu \in  E^{1}X(\R^n)$, 
equation \eqref{feb60'} holds, and 
%\todo[inline]{A: for properties (a) and (b) to be fulfilled, do we also need $\bfu \in E^1L^1(\R^n)$?}
 there exists a constant $C=C(n)$ such that
\begin{equation}\label{jan50'}
\| \ep(T^{\lambda} \bfu )\|_{X(\R^n)} \leq C \|\ep(\bfu)\|_{X(\R^n)} .
\end{equation}
%The same statement holds even if condition \eqref{supp} is dropped, provided that
%\begin{equation}\label{feb42}
%\lim_{t\to \infty} \varphi_X(t) = \infty.
%\end{equation}

\iffalse
(ii) Assume   that $\bfu \in E^{1}_fX(\R^n)$ for some rearrangement-invariant space $X(\R^n)$.
Then $T^{\lambda} \bfu \in E^{1}_fL^1(\R^n)\cap E^{1}_fL^\infty(\R^n)$. Moreover,
 $T^{\lambda} \bfu \in  E^{1}_fX(\R^n)$, $T^{\lambda} \bfu - \bfu \in  E^{1}_fX(\R^n)$, equation \ref{itm:TlestLinfty} holds, and 
%\todo[inline]{A: for properties (a) and (b) to be fulfilled, do we also need $\bfu \in E^1L^1(\R^n)$?}
 there exists a constant $C=C(n, X)$ such that
\begin{equation}\label{jan50}
\| \ep(T^{\lambda} \bfu )\|_{X(\R^n)} \leq C \|\ep(\bfu)\|_{X(\R^n)} .
\end{equation}
The same statement holds with the space $E^{1}_fX(\R^n)$ replaced by $E^1X(\rn)$, provided that
\begin{equation}\label{feb42}
\lim_{t\to \infty} \varphi_X(t) = \infty.
\end{equation}
\fi
\iffalse
Assume either that $\bfu \in E^{1}X(\R^n)$ for some rearrangement-invariant space $X(\R^n)$ and that $\lambda>0$ satisfies condition \eqref{feb20}, 
or that 
$\bfu \in E^{1}_0X(\Omega)$ for some open set $\Omega \subset \R^n$ with $|\Omega|< \infty$ and some rearrangement-invariant space $X(\Omega)$.  
%\begin{equation}\label{feb20}
%\lim _{t \to \infty} \varphi_x(t) > \frac{2c \|\bfu\|_{ E^{1}X(\R^n)}} \lambda
%\end{equation}
%(in particular,  $\lambda$ can be any positive number if the limit in \eqref{feb20} is infinity).
Then $T^{\lambda} \bfu \in  E^{1}X(\R^n)$ or 
%\todo[inline]{A: for properties (a) and (b) to be fulfilled, do we also need $\bfu \in E^1L^1(\R^n)$?}
 and there exists a constant $C=C(n, X)$ such that
\begin{equation}\label{jan50}
\| T^{\lambda,\lambda} \bfu \|_{E^{1}X(\R^n)} \leq C \|\bfu\|_{E^{1}X(\R^n)} .
\end{equation}
\fi
\end{theorem}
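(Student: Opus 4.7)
The strategy for both parts follows the template of Theorem \ref{lem:Tlest}, with the essential difference that here only $\ep(\bfu)$ is used to form the good set $\R^n\setminus\mathcal O_\lambda$, so no pointwise control on $\bfu$ itself is available for free. For Part (i), the weak-type inequality \eqref{weakX} applied to $\ep(\bfu)\in L^1(\R^n)$ gives $|\mathcal O_\lambda|<\infty$. Since $T^\lambda\bfu=\bfu$ on $\R^n\setminus\mathcal O_\lambda$, it suffices to show $T^\lambda\bfu-\bfu\in E^1_0L^1(\mathcal O_\lambda)$, and this proceeds exactly as in the proof of \eqref{eq:TE11}: using the Whitney partition of unity together with the local finiteness property \ref{itm:W3}, both series for $T^\lambda\bfu-\bfu$ and $\ep(T^\lambda\bfu-\bfu)$ are controlled termwise by the Poincar\'e inequality \eqref{itm:meanw1}, reducing their tails to a bound of the form $\sum_{j\geq k}\int_{Q_j^*}|\ep(\bfu)|\dx\to 0$ as $k\to\infty$.

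The pointwise bound \eqref{feb60'} is then obtained following \eqref{eq:Tlnabla}--\eqref{eq:2001}. On each $Q_j^*$, exploiting $\ep(\bfu_j)=0$ and $\sum_k\phi_k=1$, one has
\begin{equation*}
\ep(T^\lambda\bfu)=\sum_{k\in A_j}\nabla\phi_k\otimes^{\mathrm{sym}}(\bfu_k-\bfu_j),
\end{equation*}
and Poincar\'e inequality \eqref{itm:meanw1} combined with property \ref{itm:W2} yields $|\ep(T^\lambda\bfu)|\leq c\sum_{k\in A_j}\dashint_{c_nQ_k^*}|\ep(\bfu)|\dx$; by \ref{itm:Wbnd} each dilated cube $c_nQ_k^*$ meets $\R^n\setminus\mathcal O_\lambda$, so the average is bounded by $\lambda$. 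On $\R^n\setminus\mathcal O_\lambda$ the bound is immediate from $|\ep(\bfu)|\leq M(\ep(\bfu))\leq\lambda$ a.e.\ by Lebesgue differentiation.

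For Part (ii), let $K=\{|\bfu|>0\}$, a bounded set. Since $\bfu,\ep(\bfu)\in X(\R^n)$ are supported in $\overline K$, property \eqref{l1linf} yields $\bfu\in E^1L^1(\R^n)$ and Part (i) applies. A crucial new observation is that $\mathcal O_\lambda$ is itself bounded: because $\ep(\bfu)$ has bounded support, $M(\ep(\bfu))(x)\leq c|x|^{-n}\|\ep(\bfu)\|_{L^1(\R^n)}$ for $|x|$ large, so $M(\ep(\bfu))<\lambda$ outside some ball. Consequently $\{|T^\lambda\bfu|>0\}\subseteq K\cup\mathcal O_\lambda$ is bounded and $\ep(T^\lambda\bfu)\in L^1(\R^n)\cap L^\infty(\R^n)$ by \eqref{feb60'}. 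The $L^\infty$ bound on $T^\lambda\bfu$ itself, not supplied by the truncation mechanism, is recovered via the rearrangement estimate of Corollary \ref{rearrepu} applied to $T^\lambda\bfu\in E^1_bL^1(\R^n)$: since $\ep(T^\lambda\bfu)^*\leq c\lambda$ is supported in a bounded interval, both integrals on the right-hand side of \eqref{rearrepu1} are uniformly bounded in $s$, giving $T^\lambda\bfu\in L^\infty(\R^n)$, hence $T^\lambda\bfu\in E^1_bL^1(\R^n)\cap E^1_bL^\infty(\R^n)$. Finally, combining \eqref{feb60'} with the weak-type bound \eqref{weakX} for $M$ in $X$ along the lines of \eqref{jan52}--\eqref{jan53} produces
\begin{equation*}
\|\ep(T^\lambda\bfu)\|_{X(\R^n)}\leq c\lambda\,\varphi_X(|\mathcal O_\lambda|)+\|\ep(\bfu)\|_{X(\R^n)}\leq C\|\ep(\bfu)\|_{X(\R^n)}.
\end{equation*}
I expect the main obstacle to be precisely this last $L^\infty$ upgrade: because no truncation has been imposed on $\bfu$, the route to $\|T^\lambda\bfu\|_{L^\infty}<\infty$ inevitably passes through the representation formula \eqref{reprrn1}, which in turn demands first securing bounded support \emph{and} an $L^1\cap L^\infty$ bound on $\ep(T^\lambda\bfu)$; the ordering of these steps has to be carefully respected.
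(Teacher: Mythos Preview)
Your proposal is correct and follows essentially the same route as the paper: Part (i) is a repetition of the argument for Theorem \ref{lem:Tlest} restricted to the set $\mathcal O_\lambda$, and Part (ii) uses exactly the paper's device of upgrading $T^\lambda\bfu$ to $L^\infty$ via Corollary \ref{rearrepu} after first securing bounded support and $\ep(T^\lambda\bfu)\in L^\infty$. Your explicit justification that $\mathcal O_\lambda$ is bounded (via the $|x|^{-n}$ decay of $M(\ep(\bfu))$) is a helpful detail that the paper leaves implicit when it asserts that $\{|T^\lambda\bfu|>0\}$ is bounded.
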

\begin{proof}[Proof of Theorem \ref{lem:feb1}] Part (i) can be proved along the same  lines as in the proof of Theorem \ref{lem:Tlest}. We omit the datails for brevity.
\\
Part (ii). If $\bfu \in E^1_bX(\rn)$, then $\bfu \in E^{1}L^1(\R^n)$ as well.  By part (i),  $T^{\lambda} \bfu \in E^{1}L^1(\R^n)$, $\ep (T^{\lambda} \bfu) \in L^\infty (\rn)$ and property \eqref{feb60'} holds. The fact that $\bfu \in E^{1}_bL^1(\R^n)$  ensures that  the set $ \{|T^\lambda \bfu| >0\}$ is bounded. 
From inequality \eqref{rearrepu1}, applied with $\bfu$ replaced by $T^\lambda \bfu$, one has that
$$\|T^\lambda \bfu\|_{L^\infty(\rn)} = (T^\lambda \bfu)^*(0)\leq \int_0^{|\{|T^\lambda \bfu| >0\}|}\ep(T^\lambda\bfu)^*(r)r^{-\frac 1{n'}}\dr \leq n  |\{|T^\lambda \bfu| >0\}|^{\frac 1n}|\ep( T^\lambda\bfu)\|_{L^\infty(\rn)}.$$ 
%
% Now, as a consequence of the classical Sobolev inequality and of the Korn inequality,  since $ \{|T^\lambda \bfu| >0\}$ is bounded and $\ep (T^{\lambda} \bfu) \in L^\infty (\rn)$, and hence $\ep (T^{\lambda} \bfu) \in L^p (\rn)$ for any $p>n$, then $T^{\lambda} \bfu \in L^\infty(\R^n)$ as well.  
Thus, $T^{\lambda} \bfu \in E^{1}_bL^1(\R^n)\cap E^{1}_bL^\infty(\R^n)$.
The proof of inequality \eqref{jan50'} is analogous to that of \eqref{jan50}.
%
%
%In particular, as a consequence of inequality \eqref{feb29} with $X=L^1$, we have that $|\mathcal O_\lambda|<\infty$. Thus, $T^{\lambda} \bfu \in  E^{1}X(\R^n)$ and $T^{\lambda} \bfu - \bfu \in  E^{1}X(\R^n)$.
%\\ If  $\bfu \in E^1X(\rn)$  and condition \eqref{feb42} holds, then, owing to \eqref{feb29},
%$|\mathcal O_\lambda|<\infty$. Thus.....
\end{proof}

We conclude this section with another variant of
 Theorem \ref{lem:Tlest}, that provides us with a truncation operator which acts on functions defined on bounded open sets in $\rn$,
%
%
%on special classes of domains,
%\todo[inline]{A: does it also hold for general domains, with constants depending on $\Omega$?}
 and preserves zero boundary conditions.  Let $\Omega\subset\R^n$ be a bounded open set and let  $\bfu\in E^1_0L^1(\Omega)$. Then
$\bfu_e\in E^1_bL^1(\R^n)$. For simplicity, the function $\bfu_e$ will be denoted by $\bfu$ in the remaining part of this section.  Let $\mathcal O_{\theta,\lambda}$ be defined as in \eqref{mar106}. We mimic a construction from   \cite{DKS}, and  set
\begin{align}\label{may3}
 \bfu_j ^0&=
  \begin{cases}
     \mathcal{R}_{Q_j^*}(\bfu)&\quad\text{if}\,\sqrt{\frac{9}{8}}Q_j\subset\Omega
    \\[2mm]
    0&\quad\text{elsewhere}.
  \end{cases}
\end{align}
Note that the definition of $ \bfu_j ^0$ differs from that of $ \bfu_j $, given in \eqref{mar105}, only  for those indices $j \in\setN$ such that  $\sqrt{\frac{9}{8}}Q_j\cap (\rn \setminus\Omega) \neq \emptyset$. In the latter case,  $ \bfu_j ^0=0$.
Next, 
 define $T_0^{\theta,\lambda}\bfu$ by
\begin{align}\label{eq:wlambda0}
  T^{\theta,\lambda}_0 \bfu &=
  \begin{cases}
    \bfu &\quad\text{in $\R^n \setminus \mathcal{O}_{\theta,\lambda}$}
    \\[2mm]
    \displaystyle{\sum_{j\in\N}} \phi_j  \bfu_j^0&\quad\text{in
        $\mathcal{O}_{\theta,\lambda}$}.
  \end{cases}
\end{align}
The crucial observation is now that, if  $\Omega$ is regular enough, e.g. a bounded Lipschitz domain, then, since  $\bfu=0$ in $\rn \setminus \Omega$,   there exists a constant $c=c(\Omega)$ such that
\begin{align}\label{eq:1301}
\int_{Q_j^\ast}\frac{|\bfu|}{r_j}\dx\leq \,c\,\int_{Q_j^\ast}|\ep(\bfu)|\dx
\end{align}
if    $\sqrt{\frac{9}{8}}Q_j\cap (\rn \setminus\Omega) \neq \emptyset$.
%\todo[inline]{A: recall that $\bfu=0$ outside $\Omega$}
Inequality \eqref{eq:1301} can be derived via Lemma \ref{poinc-meas0}. In particular, for sets of special form, the constant $c$ in inequality  \eqref{eq:1301} only depends on $n$. Lemma \ref{poinc-cubes}  ensures that this is true 
when $\Omega$ is an annulus, the case of interest in view of our applications.
\par On making use of inequality \eqref{eq:1301}, instead of \eqref{itm:meanw1},   in inequalities   \eqref{eq:2001b} and \eqref{eq:2001} for those indices $j$ such that $\sqrt{\frac{9}{8}}Q_j\cap (\rn \setminus\Omega) \neq \emptyset$,   the proof of Theorem \ref{lem:Tlest} carries over verbatim to establish the following result.
%
% for some $c$ independent of $j$. This follows from the zero boundary conditions of $\bfu$ by the standard Korn-Poincar\'e inequality (see), at least if $\Omega$ has a Lipschitz boundary. This replaces \eqref{itm:meanw1}, used in \eqref{eq:2001b} and \eqref{eq:2001}, and the proof of Theorem \ref{lem:Tlest} can be carried over line by line. We obtain the following.
\begin{theorem}
  \label{cor:Tlest}{\rm{\bf [Truncation operator preserving zero boundary values]}}
  Let  $\Omega$ be a bounded Lipschitz domain in $\R^n$. 
\\ (i) If $\bfu \in E^1_0L^1(\Omega)$,  then   $T^{\theta,\lambda}_0 \bfu \in E^1_0L^\infty(\Omega)$. Moreover, there exists a constant $c=c(\Omega)$ such that
  \begin{align}
      \label{itm:TlestLinfty20} \abs{T^{\theta,\lambda}_0 \bfu} \leq c\,
    \theta \chi_{\mathcal{O}_{\theta,\lambda}} + \abs{\bfu} \chi_{\Omega
      \setminus \mathcal{O}_{\theta,\lambda}}\quad&\text{and}\quad \abs{T^{\theta,\lambda}_0 \bfu}
    \leq c\, \theta \quad\text{a.e. in } \rn,\\
  \label{itm:TlestLinfty0} \abs{\ep(T^{\theta,\lambda}_0 \bfu)} \leq c\,
    \lambda \chi_{\mathcal{O}_{\theta,\lambda}} + \abs{\ep(\bfu)} \chi_{\Omega
      \setminus \mathcal{O}_{\theta,\lambda}}\quad&\text{and}\quad \abs{\ep(T^{\theta,\lambda}_0 \bfu)}
    \leq c\, \lambda\quad\text{a.e. in }\rn.
  \end{align}
(ii) Assume that $\bfu \in E^{1}_0X(\Omega)$ for some rearrangement-invariant space $X(\Omega)$. Then  $T^{\lambda,\lambda}_0 \in E^{1}_0X(\Omega)$. Moreover, properties  \eqref{itm:TlestLinfty20}  and \eqref{itm:TlestLinfty0} hold with $\theta=\lambda$, and
 there exists a constant $C=C(\Omega)$ such that
\begin{equation}\label{jan50bis}
\| T^{\lambda,\lambda}_0 \bfu \|_{E^{1}X(\Omega)} \leq C \|\bfu\|_{E^{1}X(\Omega)} .
\end{equation}
In particular, if $\Omega$ is an annulus, then the constant $c$ in \eqref{itm:TlestLinfty20} and \eqref{itm:TlestLinfty0}, and the constant $C$ in \eqref{jan50bis} depend only on $n$.
%The constants above only depend on the constant in \eqref{eq:1301} but are otherwise independent of $\Omega$.
\end{theorem}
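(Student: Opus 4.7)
The plan is to recycle the machinery of Theorem \ref{lem:Tlest} essentially verbatim, exploiting the crucial fact that the definition of $T^{\theta,\lambda}_0 \bfu$ in \eqref{eq:wlambda0} coincides with the one of $T^{\theta,\lambda} \bfu$ in \eqref{eq:wlambda} at every Whitney cube $Q_j$ satisfying $\sqrt{9/8}\,Q_j \subset \Omega$, and differs from it only at the boundary cubes, where $\bfu_j^0$ is forced to be zero. Zero boundary values are automatically preserved: if $\sqrt{9/8}\,Q_j \cap (\rn\setminus\Omega) \neq \emptyset$ then $\bfu_j^0 = 0$, so $T^{\theta,\lambda}_0 \bfu$ agrees with $\bfu$ (extended by zero) outside some compact subset of $\Omega$, yielding a function in $E^1_0L^\infty(\Omega)$.

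For part (i), I would trace step by step the argument leading to the pointwise bounds \eqref{itm:TlestLinfty2} and \eqref{itm:TlestLinfty}.
At every inner cube the identical argument of Theorem \ref{lem:Tlest} applies, since $\bfu_j^0 = \bfu_j = \mathcal R_{Q_j^*}(\bfu)$. At boundary cubes the terms $\bfu - \bfu_k^0$ reduce to $\bfu$ itself on $Q_k^*$, so that the mean-value Poincar\'e inequality \eqref{itm:meanw1} (used in \eqref{eq:2001b} and in the chain \eqref{eq:2008c}--\eqref{eq:2001}) must be replaced by the boundary Poincar\'e inequality \eqref{eq:1301}. This replacement yields exactly the same upper bound $\int_{Q_k^*}|\ep(\bfu)|\dx$ on the right-hand side, so all subsequent sums controlling $|T^{\theta,\lambda}_0 \bfu|$ by $\theta$, $|\ep(T^{\theta,\lambda}_0 \bfu)|$ by $\lambda$, and the $L^1$-convergence of the partial sums that make $T^{\theta,\lambda}_0 \bfu - \bfu \in E^1_0 L^1(\mathcal O_{\theta,\lambda})$, all carry over with no structural change. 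The $L^\infty$-bounds on $\ep(T^{\theta,\lambda}_0\bfu)$ in turn give $T^{\theta,\lambda}_0 \bfu \in E^1_0 L^\infty(\Omega)$ after recalling that $\{|T^{\theta,\lambda}_0\bfu|>0\}$ is bounded.

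For part (ii), I would argue exactly as in the proof of Theorem \ref{lem:Tlest}(ii): from the weak-type estimate \eqref{weakX} for the maximal operator and the subadditivity of $\varphi_X$ (using that $\varphi_X(t)/t$ is non-increasing), I obtain $\varphi_X(|\mathcal O_{\lambda,\lambda}|) \leq 4\, C_M\|\bfu\|_{E^1X(\Omega)}/\lambda$, so $|\mathcal O_{\lambda,\lambda}| < \infty$. Since $\bfu \in E^1_0 X(\Omega)$ embeds into $E^1 L^1$ locally, part (i) applies and gives the pointwise bounds \eqref{itm:TlestLinfty20}, \eqref{itm:TlestLinfty0} with $\theta = \lambda$. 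These bounds, combined with \eqref{weakX} and the identity $\|\chi_E\|_{X(\Omega)} = \varphi_X(|E|)$, yield the norm inequality \eqref{jan50bis} by the chain already displayed in \eqref{jan52}--\eqref{jan53}.

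The only step requiring care, and the main (though modest) obstacle, is tracking the dependence of constants. The general Lipschitz case of \eqref{eq:1301} relies on Lemma \ref{poinc-meas0} applied to the intersection $Q_j^* \cap \Omega$, whose geometric constant depends on $\Omega$ through its Lipschitz character. When $\Omega$ is an annulus, however, Lemma \ref{poinc-cubes} provides an inequality of the form $\int_{Q_j^*}|\bfu|\dx \leq c\, r_j \int_{Q_j^*}|\ep(\bfu)|\dx$ with $c = c(n)$ only, because the relative position of $Q_j^*$ and the annulus is governed purely by the Whitney property \ref{itm:Wbnd} (diameters comparable, up to dimensional factors, to distance to $\partial\Omega$). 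Propagating this dimensional constant through every estimate then gives the final sharpening in the annulus case announced at the end of the statement.
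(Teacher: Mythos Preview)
Your proposal is correct and matches the paper's approach essentially line by line: the paper's own ``proof'' is simply the remark preceding the theorem that the argument of Theorem~\ref{lem:Tlest} carries over verbatim once inequality~\eqref{eq:1301} replaces \eqref{itm:meanw1} in \eqref{eq:2001b} and \eqref{eq:2001} at the boundary cubes, with Lemma~\ref{poinc-cubes} supplying the dimensional constant in the annulus case. One minor point: \eqref{eq:1301} is obtained by applying Lemma~\ref{poinc-meas0} on the full cube $Q_j^*$ (with $\bfu$ extended by zero, so that $\{\bfu=0\}\supset Q_j^*\setminus\Omega$), not on the intersection $Q_j^*\cap\Omega$.
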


\section{Smooth approximation up to the boundary}\label{smoothapprox}
%\todo[inline]{A: can we use cubes here, as in the truncation and approximation results, instead of balls? D: Without thinking about it too carefully I'd say yes.}
%Following \cite{Jo} we assume that $\Omega$ is an $(\varepsilon,\delta)$-domain.\todo[inline]{rewrite this. Introduce $(\varepsilon,\delta)$ before?} 
% That this there are $\varepsilon,\delta>0$ such that for any $x,y\in\Omega$ with $|x-y|<\delta$ there is a curve $\gamma$ connecting $x$ and $y$ such that\todo[inline]{$r$ already used as index!}
%\begin{align}\label{eq:epsdelta1}
%r(\gamma)&\leq\frac{1}{\varepsilon}|x-y|,\\
%\mathrm{dist}(z,\partial\Omega)&\geq\varepsilon\frac{|x-z||y-z|}{|x-y|}\quad\forall z\in\gamma.\label{eq:epsdelta2}
%\end{align}
%\textcolor{black}{mention $n\geq2?$}\\
This section is devoted to an approximation result for functions 
in $E^1L^1(\Omega)$  by functions in $C^\infty(\overline\Omega)$  in any   $(\varepsilon,\delta)$-domain $\Omega$. Recall that
an open    set $\Omega \subset \rn$, {\color{black} with $n \geq 2$,} is called an
  $(\varepsilon,\delta)$-domain if 
 there exist $\varepsilon,\delta>0$ such that for any $x,y\in\Omega$, with $|x-y|<\delta$, there exists a rectifiable curve $\gamma \subset \Omega$ connecting $x$ and $y$, with length $\ell(\gamma)$, satisfying%\marginpar{notation $\ell$?}
\begin{align}\label{eq:epsdelta1}
\ell(\gamma)&\leq\frac{1}{\varepsilon}|x-y|,\\
\mathrm{dist}(z,\partial\Omega)&\geq\varepsilon\frac{|x-z||y-z|}{|x-y|}\quad\forall z\in\gamma.\label{eq:epsdelta2}
\end{align}
The notion of  $(\varepsilon,\delta)$-domain was introduced in \cite{Jo}, where it is shown that any domain of this kind is an extension domain for the Sobolev space $W^{m,p}(\Omega)$ for every $m\in \setN$ and $p \in [1, \infty]$. The class of $(\varepsilon,\delta)$-domains is known to be, in a sense, the largest one supporting the extension property for Sobolev spaces. It includes, the class of domains with \emph{minimally smooth boundary} as defined in \cite[Chapter 6, Section 4]{stein}, and, in particular, the class of bounded Lipschitz domains.
The extension property for symmetric gradient Sobolev space is established in the next section.

\begin{theorem}\label{thm:smooth}{\rm{\bf [Smooth approximation up to the boundary on $(\varepsilon,\delta)$-domains]}}
Let $\Omega$ be an $(\varepsilon,\delta)$-domain and let $\bfu\in E^1L^1(\Omega)$. Then there exists a sequence $\{\bfu_j\} \subset C^\infty(\overline\Omega)$ such that $\bfu_j\rightarrow\bfu$ in $E^1L^1(\Omega)$.
\end{theorem}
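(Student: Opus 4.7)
The plan is to construct a Jones-type extension of $\bfu$ to a small open neighborhood $U \supset \overline\Omega$, so that the extended function belongs to $E^1L^1(U)$, and then to obtain the approximating sequence by standard mollification followed by restriction to $\overline\Omega$. Since convolution with a smooth kernel produces functions of class $C^\infty$ on any compact subset strictly inside the set of definition, once we have such an extension, the restrictions to $\overline\Omega$ belong to $C^\infty(\overline\Omega)$, and $E^1L^1(\Omega)$-convergence of the mollifications to $\bfu$ follows from the standard density of smooth compactly supported functions in $E^1L^1(U)$-type spaces.

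I would begin by performing a Whitney decomposition of $\R^n\setminus\partial\Omega$ via Lemma~\ref{lems:whitneysteady}, separating the resulting family of cubes into inner cubes $\{F_k\}\subset\Omega$ and outer cubes $\{E_j\}\subset\R^n\setminus\overline\Omega$. The $(\varepsilon,\delta)$-hypothesis \eqref{eq:epsdelta1}--\eqref{eq:epsdelta2} is used precisely in Jones's manner to produce, for each outer cube $E_j$ close to $\partial\Omega$ (say with $r(E_j) \lesssim \delta$), a matched inner partner $F_{\sigma(j)}$ with comparable sidelength, $r(F_{\sigma(j)})\approx r(E_j)$, and comparable distance, $\mathrm{dist}(E_j,F_{\sigma(j)})\lesssim r(E_j)$, where the implicit constants depend on $\varepsilon$. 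This matching is classical and follows by chaining admissible curves through a bounded number of Whitney cubes.

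On each outer cube I would use, in place of Jones's polynomial averages, the rigid-displacement projection $\mathcal R_{F_{\sigma(j)}}(\bfu)$ of Lemma~\ref{lem:repr}; its crucial features for us are that $\ep(\mathcal R_{F_{\sigma(j)}}(\bfu))\equiv 0$ and that it is controlled in $L^\infty$ by the mean of $|\bfu|$ over $F_{\sigma(j)}$ via \eqref{eq:extE1}, with constants depending only on $n$. Patching by a partition of unity $\{\psi_j\}$ subordinate to the enlarged outer cubes $\{E_j^*\}$, I would set
\begin{align*}
E\bfu(x)=\begin{cases} \bfu(x) & x\in\Omega,\\ \sum_j \psi_j(x)\mathcal R_{F_{\sigma(j)}}(\bfu)(x) & x\in U\setminus\overline\Omega.\end{cases}
\end{align*}
The $L^1$-bound on $E\bfu$ over $U$ and the bulk $L^1$-bound on $\ep(E\bfu)$ over $U\setminus\partial\Omega$ follow by the same kind of telescoping argument that underlies Theorem~\ref{lem:Tlest}(i), applied via Corollary~\ref{Poinc}: the differences of rigid displacements $\mathcal R_{F_{\sigma(j)}}(\bfu)-\mathcal R_{F_{\sigma(k)}}(\bfu)$ on overlapping outer cubes are dominated by averages of $|\ep(\bfu)|$ over comparable inner cubes through \eqref{itm:meanw1}, and $|\nabla\psi_j|\lesssim r(E_j)^{-1}$.

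The main obstacle is to ensure that $\ep(E\bfu)$, initially defined only as a distribution on $U$, carries no singular part on $\partial\Omega$, i.e.\ that the traces of $E\bfu$ from $\Omega$ and from $U\setminus\overline\Omega$ match in the appropriate sense so that $\ep(E\bfu)\in L^1(U)$. This is the point at which the $(\varepsilon,\delta)$-geometry is used in an essential way: one must show that the chain of rigid displacements assigned to a sequence of outer cubes shrinking to a boundary point $x_0\in\partial\Omega$ converges, with summable jumps, to the value of $\bfu$ at $x_0$. I would handle this, as in Jones's original argument, by a telescoping sum over matched cube pairs along an admissible chain, controlling each jump by $\int_{F}|\ep(\bfu)|\,\mathrm{d}y$ over a suitable sibling $F$; summability of the chain follows from geometric decay of the cube sizes dictated by \ref{itm:Wbnd}. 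Once this estimate is in place, the conclusion of the theorem is routine: mollify $E\bfu$ with parameter $\eta\to 0$, restrict to $\overline\Omega$, and invoke standard mollification convergence in $L^1(U)$ and $L^1(U;\R^{n\times n})$ for $E\bfu$ and $\ep(E\bfu)$ respectively.
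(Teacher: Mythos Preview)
Your route is genuinely different from the paper's, and in fact inverts its logical order. You propose to prove approximation via extension: construct a Jones-type extension $E\bfu\in E^1L^1(U)$ on an open neighbourhood $U\supset\overline\Omega$, then mollify. The paper instead proves approximation \emph{without} extending, and only afterwards uses Theorem~\ref{thm:smooth} as an input to the extension result (Theorem~\ref{thm:extensionoperator}). Concretely, the paper defines an operator $T_j$ that replaces $\bfu$ on a thin inner collar $U_j=\{x\in\Omega:\mathrm{dist}(x,\partial\Omega)<2^{-j}\}$ by a partition-of-unity combination of rigid-displacement projections $\Pi_{B_{j,k}^\sharp}\bfu$ taken on reflected balls $B_{j,k}^\sharp\subset\Omega$ (Lemmas~\ref{itm:B1}--\ref{itm:B3}), while leaving $\bfu$ unchanged away from the boundary. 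Since on $\overline U_{j+1}$ the function $T_j\bfu$ is a locally finite sum of affine maps times smooth cutoffs, it is automatically in $C^\infty(\overline U_{j+1})$, and $T_j\bfu\to\bfu$ in $E^1L^1(\Omega)$ follows from \cite[Corollary~4.10]{BDG} (Lemma~\ref{lem:BDG}). A standard mollification of the interior part then yields the desired sequence in $C^\infty(\overline\Omega)$. No extension across $\partial\Omega$, hence no trace-matching issue.

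The gap in your proposal is precisely the step you label the ``main obstacle'': showing directly that $\ep(E\bfu)$ carries no singular part on $\partial\Omega$ for a general $\bfu\in E^1L^1(\Omega)$. Your telescoping sketch would require that the rigid displacements $\mathcal R_{F_{\sigma(j)}}(\bfu)$ on outer cubes shrinking to $x_0\in\partial\Omega$ converge, with summable jumps, to the boundary trace of $\bfu$; but $E^1L^1$-functions have only $L^1$-type traces, so this pointwise-flavoured statement needs a careful integrated formulation you have not supplied. Tellingly, in the paper's own proof of Theorem~\ref{thm:extensionoperator} this very step is handled by first proving that $\mathscr E_\Omega\bfu$ is Lipschitz across $\partial\Omega$ when $\bfu\in C^\infty(\overline\Omega)$---which uses $\|\nabla\bfu\|_{L^\infty(\Omega)}$---and then passing to general $\bfu\in E^1L^1(\Omega)$ by invoking Theorem~\ref{thm:smooth}. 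So your approach, to avoid circularity, must supply a genuinely new argument at exactly this point; the paper sidesteps it entirely by reversing the order and smoothing near the boundary from the inside.
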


The proof of Theorem \ref{thm:smooth} is split in several lemmas.  We begin by introducing a family of coverings of $\rn$ as follows.
For each~$j \in \setZ$, let $\{B_{j,k}\}_{k\in \setN}$ denote a  family of open balls,
with diameter~$r(B_{j,k})$   ,
 %$\mathsf{d}(B_{j,k})$
 such that:
%\begin{enumerate}[label={(5.\arabic{*})}]
%\item
\begin{equation}\label{B2} \text{The family $\{\tfrac 78 B_{j,k}\}_k$ covers~$\R^n$;}
\end{equation}
\begin{equation}
\label{B1} \tfrac 18 \cdot 2^{-j} \leq r(B_{j,k}) \leq \tfrac 14 \cdot
  2^{-j};
\end{equation}
\begin{equation}\label{B3} \text{Each family~$\{B_{j,k}\}_k$ is locally finite, with overlap %covering 
  constant  $c=c(n)$, i.e.
 % \begin{align*}
    $\sup_j \sum_k \chi_{B_{j,k}} \leq c.$}
\end{equation}
%  \end{align*}
%\end{enumerate}
%\setcounter{equation}{3}
For each~$j\in \setZ$, let~$\{\eta_{j,k}\}_k\subset C^\infty_0(\rn)$ be a partition of unity with
respect to the family $\{B_{j,k}\}_k$, such that 
\begin{align}
  \label{eq:rho}
  \norm{\eta_{j,k}}_{L^\infty(\rn)} + r(B_{j,k}) 
  \norm{\nabla \eta_{j,k}}_{L^\infty(\rn)} &\leq c,
\end{align}
for some constant $c=c(n)$.
\\
Given an open set $\Omega \subset \rn$, define the inner $2^{-j}$-neighbourhood~$U_j$ of~$\partial \Omega$   by
\begin{align}\label{Uj}
  U_j = \set{ x \in \Omega\,:\, {\rm dist}(x,\partial \Omega) < 2^{-j}}.
\end{align}
If  $\Omega$ is a connected $(\varepsilon,\delta)$-domain,  one can associate with each ball $B_{j,k}$ sufficiently close to $\partial \Omega$  a \lq\lq reflected
  ball" $B_{j,k}^\sharp \subset \Omega$ enjoying the properties described in the next lemma, which is inspired by   \cite[Lemma 2.4]{Jo}.
%close by. We will use these reflected balls later to
%define the local projections of our functions. More precisely:
\\ Throughout this section, the constants in the relations $\lq\lq \approx"$ and $\lq\lq \lesssim "$ depend only on $n$.

\begin{lemma}\label{itm:B1} Let $\Omega$ be a connected $(\varepsilon,\delta)$-domain and let $U_j$ be the set defined by \eqref{Uj} for $j \in \N$. Then there exists~$j_0 \in \N$ with the following properties: for each ball $B_{j,k}$ with $j \geq j_0$ and
  $B_{j,k} \cap U_j \neq \emptyset$, there exists a
  ball~$B_{j,k}^\sharp \subset \Omega$ such that
  $r(B_{j,k}^\sharp) \approx r(B_{j,k}) \approx
  {\rm dist}(B_{j,k}^\sharp, \partial \Omega)$ and ${\rm dist}(B_{j,k},B_{j,k}^\sharp)  \approx r(B_{j,k})$,
  with equivalence constants independent of~$j$ and $k$.
\end{lemma}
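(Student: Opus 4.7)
The plan is to apply the $(\varepsilon,\delta)$-condition to \emph{reflect} a suitable point of $B_{j,k}\cap U_j$ into a region of $\Omega$ lying at Euclidean distance comparable to $2^{-j}$ from the boundary; this region will host $B_{j,k}^\sharp$. First I would fix an interior reference point $y^\ast\in\Omega$ with $\mathrm{dist}(y^\ast,\partial\Omega)\geq c_\Omega$ for some $c_\Omega>0$, and then choose $j_0\in\N$ so large that $2^{-j_0}$ is much smaller than $\min\{\delta,c_\Omega\}/\varepsilon$. For any $j\geq j_0$ and any $B_{j,k}$ with $B_{j,k}\cap U_j\neq\emptyset$, I would pick $x_0\in B_{j,k}\cap U_j$ and then produce an auxiliary point $y'\in\Omega$ with $|x_0-y'|=\delta/4$; such a $y'$ arises from composing a continuous path in $\Omega$ joining $x_0$ to $y^\ast$ with the intermediate-value theorem applied to $y\mapsto|x_0-y|$.

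Since $|x_0-y'|<\delta$, conditions \eqref{eq:epsdelta1}--\eqref{eq:epsdelta2} supply a rectifiable curve $\gamma\subset\Omega$ joining $x_0$ and $y'$. Continuity of Euclidean distance along $\gamma$, combined with $2^{-j}<\delta/4$, yields some $z\in\gamma$ with $|x_0-z|=2^{-j}$, and the triangle inequality gives $|y'-z|\geq\delta/4-2^{-j}\geq\delta/8$. Plugging these into \eqref{eq:epsdelta2} produces
\[
\mathrm{dist}(z,\partial\Omega)\,\geq\,\varepsilon\cdot 2^{-j}\cdot\frac{\delta/8}{\delta/4}\,=\,\tfrac{\varepsilon}{2}\,2^{-j}.
\]
I would then define $B_{j,k}^\sharp$ to be the open ball of radius $\tfrac{\varepsilon}{4}\,2^{-j}$ centered at $z$. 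The displayed inequality forces $B_{j,k}^\sharp\subset\Omega$ and $\mathrm{dist}(B_{j,k}^\sharp,\partial\Omega)\gtrsim 2^{-j}$; for the reverse bound, $\mathrm{dist}(x_0,\partial\Omega)<2^{-j}$ (because $x_0\in U_j$) and $|x_0-z|=2^{-j}$ give $\mathrm{dist}(z,\partial\Omega)<2\cdot 2^{-j}$. The equivalence $r(B_{j,k}^\sharp)\approx r(B_{j,k})\approx 2^{-j}$ is built into the construction, while $\mathrm{dist}(B_{j,k},B_{j,k}^\sharp)\approx 2^{-j}$ follows from $|x_0-z|=2^{-j}$ together with the triangle inequality and the fact that both radii are $\lesssim 2^{-j}$.

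The main obstacle is the uniform construction of $y'$: it requires $\Omega$ to have enough extent relative to $\delta$, which is automatic for bounded Lipschitz domains and, more generally, whenever $\mathrm{diam}\,\Omega\gtrsim\delta$. In degenerate cases (e.g.\ $\mathrm{diam}\,\Omega\ll\delta$), one instead enlarges $j_0$ further, so that $B_{j,k}\cap U_j=\emptyset$ for every $j\geq j_0$ and the conclusion becomes vacuous; alternatively, the reference point $y^\ast$ and the target distance $\delta/4$ are rescaled to match the geometry of $\Omega$. All the constants entering the three equivalences will depend only on $\varepsilon$ (and through $c_\Omega$ on $\Omega$), but not on $j$ or $k$, as required.
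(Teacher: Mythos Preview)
Your argument is essentially correct and close in spirit to the paper's: both proofs pick $x_0\in B_{j,k}\cap U_j$, locate an auxiliary point $y$ in $\Omega$ at controlled distance from $x_0$, invoke \eqref{eq:epsdelta1}--\eqref{eq:epsdelta2} to produce a curve, and then select on this curve a point $z$ with $\mathrm{dist}(z,\partial\Omega)\approx 2^{-j}$. The differences are mainly tactical. The paper takes $|x_0-y_0|=\tfrac{C}{\varepsilon}\,r(B_{j,k})$ (so the auxiliary distance \emph{scales} with $2^{-j}$) and finds this $y_0$ by a pure connectedness argument: if no $y_0$ existed on that sphere, $\Omega$ would lie inside the corresponding ball, contradicting $\mathrm{diam}(\Omega)$. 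Then $z_0$ is taken as a midpoint of the curve, and $B_{j,k}^\sharp$ is chosen to be a ball \emph{from the covering} $\{B_{j,\ell}\}_\ell$ containing $z_0$. Your route instead fixes $|x_0-y'|=\delta/4$ (independent of $j$), obtains $y'$ via IVT along a path to a reference point $y^\ast$, and then uses a second IVT along the $(\varepsilon,\delta)$-curve to hit $|x_0-z|=2^{-j}$ exactly. The paper's scaling choice buys a cleaner treatment of small domains; your choice buys a slightly more explicit constant in the distance estimate for $z$.

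One correction: your ``vacuous'' escape for the degenerate case is wrong. Whenever $\partial\Omega\neq\emptyset$, the set $U_j$ is nonempty for every $j$, and since $\{B_{j,k}\}_k$ covers $\R^n$, some $B_{j,k}$ always meets $U_j$; enlarging $j_0$ does not make the hypothesis empty. The fix you also mention --- replacing $\delta/4$ by a distance adapted to $\Omega$, e.g.\ $\min\{\delta/4,\tfrac12\mathrm{diam}(\Omega)\}$ or simply $|x_0-y^\ast|$ when the latter is below $\delta$ --- is the right one and is precisely what the paper's scaling accomplishes in a single stroke. With that adjustment (and enlarging $j_0$ so that $2^{-j_0}$ is small relative to this adapted distance), your argument goes through with equivalence constants depending only on $\varepsilon$ and $n$, as required.
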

\begin{proof}
 Fix $C>0$ to be chosen later.  Let $j_0\in \setN$ be such that $\frac 12\tfrac{C}{\varepsilon}2^{-j_0}< {\rm diam}(\Omega)$. Hence, by equation \eqref{B1}, 
\begin{equation}\label{mar110}
2\tfrac{C}{\varepsilon}r(B_{j_0,k})< {\rm diam}(\Omega)
\end{equation}
for every $k\in \setN$. Given $j \geq j_0$ and $k \in \setN$, let $B_{jk}$ be such that
$B_{jk}\cap U_j\neq\emptyset$. Fix $x_0\in B_{jk}\cap U_j$. We claim that there exists  $y_0\in\Omega$ satisfying
\begin{align}\label{eq:smooth1}
|x_0-y_0|=\frac{C}{\varepsilon}r(B_{jk}).
\end{align}
%This is always possible for $j\geq j_0$ since
%$r(B_{j,k})\approx 2^{-j}$ by \ref{B1}. {\color{black} Choose $j_0$ in such a way that $2\tfrac{C}{\ep}r(B_{j_0,k})< {\rm diam}(\Omega)$. 
Indeed,
equation \eqref{eq:smooth1} amounts to saying that $y_0$ belongs to the sphere centered at $x_0$ with radius $\tfrac{C}{\varepsilon}r(B_{j_0,k})$. If $y_0$ did not exist, then the intersection of this sphere with $\Omega$ would be empty. Since $\Omega$ is connected, it would be contained in the ball with the same center and radius, thus contradicting inequality \eqref{mar110}.
Since $\Omega$ is an $(\varepsilon,\delta)$-domain, there exists a curve $\gamma \subset \Omega$ connecting $x_0$ and $y_0$ and satisfying \eqref{eq:epsdelta1} and \eqref{eq:epsdelta2}. Choose a point $z_0$ on  this curve, with the property  that
\begin{align*}
    \min\{|x_0-z_0|,|y_0-z_0|\}\geq \frac{1}{2}|x_0-y_0|.
\end{align*}
From equations \eqref{eq:epsdelta2} and \eqref{eq:smooth1} we infer that
\begin{align}\label{eq:2909}
    \mathrm{dist}(z_0,\partial\Omega)\geq \frac{\varepsilon}{4}|x_0-y_0|= \frac{C}{4}r(B_{j,k}).
\end{align}
Now, choose a ball $B_{jk}^\sharp$ from the covering $\{B_{j,k}\}_k$ such that $z_0\in B_{jk}^\sharp$.
  By inequalities \eqref{B1},  one has that $r(B_{jk}^\sharp)\approx 2^{-j}$. Therefore, by \eqref{eq:2909}, the constant $C$ can be chosen so large  that $B_{jk}^\sharp\subset\Omega$. Moreover, owing to \eqref{eq:smooth1},  $\mathrm{dist}(B_{j,k}, B_{j,k}^\sharp)\approx r(B_{j,k})$.
\end{proof}
The following lemma tells us  
that a ball  and the reflected ball constructed in Lemma \ref{itm:B1} can be connected by a chain of equivalent balls. The subsequent result ensures that a similar property holds  for the reflected balls of two balls with equivalent diameters and nonempty intersection.
\begin{lemma}\label{itm:B2} Let $\Omega$ be a connected  $(\varepsilon,\delta)$-domain.    There exists $j_1 \in \setN$ with the following property.
Assume that 
  $B_{j,k}\subset\Omega$ and   $B_{j,k} \cap U_j \neq \emptyset$ for some $j \geq j_1$ and $k \in \setN$, and let $B^\sharp_{jk}$ be the reflected ball of $B_{j,k}$ introduced in Lemma \ref{itm:B1}. Then there exists a chain of
  balls~$\mathcal B_1,\dots, \mathcal B_h  \subset \Omega$, with $h \in \setN$ uniformly  bounded independently of $j$ and $k$, such
  that
  \begin{align}
  \label{a'} &\mathcal B_1 = B_{j,k}\quad \text{and}\quad \mathcal B_h = B_{j,k}^\sharp;\\
  \label{b'}  \abs{\mathcal B_i \cap \mathcal B_{i+1}} &\approx \abs{\mathcal B_i} \approx
    \abs{\mathcal B_{i+1}} \approx \abs{B_{j,k}}\quad \text{for}\quad i=1, \dots,h-1;\\
  \label{c'}  &r(\mathcal B_i) \approx
    r(B_{j,k})\quad\text{for}\quad i=1,\dots, h;
  \end{align}
  with equivalence  constants  independent of~$j,k,i$.  
\end{lemma}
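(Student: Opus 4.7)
The plan is to apply the $(\varepsilon,\delta)$ condition directly to the centers $a$ of $B_{j,k}$ and $b$ of $B_{j,k}^\sharp$, rather than reusing the original curve $\gamma$ from the proof of Lemma~\ref{itm:B1}. The payoff of this reformulation is that, since $B_{j,k}$ and $B_{j,k}^\sharp$ are contained in $\Omega$, both centers satisfy $\mathrm{dist}(a,\partial\Omega)\geq r(B_{j,k})$ and $\mathrm{dist}(b,\partial\Omega)\geq r(B_{j,k}^\sharp)\approx r(B_{j,k})$, while Lemma~\ref{itm:B1} also delivers $|a-b|\leq r(B_{j,k})+\mathrm{dist}(B_{j,k},B_{j,k}^\sharp)+r(B_{j,k}^\sharp)\lesssim r(B_{j,k})$. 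Choosing $j_1\geq j_0$ so large that $|a-b|<\delta$ uniformly for $j\geq j_1$ and $k\in\setN$, one can invoke \eqref{eq:epsdelta1}--\eqref{eq:epsdelta2} to obtain a rectifiable curve $\tilde\gamma\subset\Omega$ from $a$ to $b$ of length $\ell(\tilde\gamma)\leq\varepsilon^{-1}|a-b|\lesssim r(B_{j,k})$ and satisfying $\mathrm{dist}(w,\partial\Omega)\geq\varepsilon|a-w||b-w|/|a-b|$ for every $w\in\tilde\gamma$.

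The key step is to upgrade this pointwise bound to a uniform estimate $\mathrm{dist}(w,\partial\Omega)\geq\eta\,r(B_{j,k})$ along the whole of $\tilde\gamma$, with $\eta>0$ depending only on $n$ and $\varepsilon$. This will follow from a trichotomy on $w$. If $|a-w|\leq r(B_{j,k})/2$, then $w\in B_{j,k}\subset\Omega$ at distance at least $r(B_{j,k})/2$ from $\partial B_{j,k}$, hence $\mathrm{dist}(w,\partial\Omega)\geq r(B_{j,k})/2$; if $|b-w|\leq r(B_{j,k}^\sharp)/2$, the analogous estimate yields $\mathrm{dist}(w,\partial\Omega)\gtrsim r(B_{j,k})$; otherwise both Euclidean distances exceed the respective thresholds and the $(\varepsilon,\delta)$ inequality, combined with $|a-b|\lesssim r(B_{j,k})$, gives $\mathrm{dist}(w,\partial\Omega)\gtrsim r(B_{j,k})$. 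All implicit constants come from $\varepsilon$ and from the constants of Lemma~\ref{itm:B1}, but not from $j$ or $k$.

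With this uniform lower bound in hand I would discretize the construction. Parametrizing $\tilde\gamma$ by arc length, pick points $p_0=a,p_1,\dots,p_N=b$ at arc-length intervals $\rho=\eta\,r(B_{j,k})/4$, so that $|p_{i-1}-p_i|\leq\rho$ and $N\lesssim\ell(\tilde\gamma)/\rho$ is bounded independently of $j$ and $k$. Set $\mathcal{B}_1:=B_{j,k}$, $\mathcal{B}_{N+1}:=B_{j,k}^\sharp$, and $\mathcal{B}_i:=B(p_{i-1},\eta\,r(B_{j,k})/2)$ for $2\leq i\leq N$. The uniform distance bound forces $\mathcal{B}_i\subset\Omega$, all radii are comparable to $r(B_{j,k})$ which takes care of \eqref{c'}, and consecutive centers lie within $\rho\leq r(\mathcal{B}_i)/2$ so that adjacent balls share a concentric ball of radius $\eta\,r(B_{j,k})/4$, yielding the volume comparability in \eqref{b'}. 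The endpoint transitions $\mathcal{B}_1\leftrightarrow\mathcal{B}_2$ and $\mathcal{B}_N\leftrightarrow\mathcal{B}_{N+1}$ are checked similarly, since $|a-p_1|,|p_{N-1}-b|\leq\rho$ is strictly smaller than both $r(B_{j,k})$ and $r(B_{j,k}^\sharp)$ once $\eta$ is chosen small enough, so that $\mathcal{B}_2\subset B_{j,k}$ and $\mathcal{B}_N\subset B_{j,k}^\sharp$.

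The main obstacle will be the third case of the trichotomy, where careful tracking of the dependence on the constants from Lemma~\ref{itm:B1} (namely, those controlling $r(B_{j,k}^\sharp)/r(B_{j,k})$ and $\mathrm{dist}(B_{j,k},B_{j,k}^\sharp)/r(B_{j,k})$) is needed to guarantee that $\eta$ depends only on $n$ and $\varepsilon$. A minor bookkeeping point is the choice of $j_1$ ensuring $|a-b|<\delta$; this sets a lower bound on $j_1$ that depends on $\delta$, $\varepsilon$, and on the implicit constants of Lemma~\ref{itm:B1}.
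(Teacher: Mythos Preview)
The proposal is correct and follows essentially the same route as the paper: apply the $(\varepsilon,\delta)$ condition to the centers of $B_{j,k}$ and $B_{j,k}^\sharp$, obtain a curve of length $\lesssim r(B_{j,k})$ that stays at distance $\gtrsim r(B_{j,k})$ from $\partial\Omega$, and cover it by a uniformly bounded number of balls of comparable radius. Your trichotomy is simply a more explicit version of the paper's one-line claim that $\mathrm{dist}(z,\partial\Omega)\geq c(\varepsilon)2^{-j}$ for $z$ on the curve outside the two endpoint balls. One minor slip: in the paper's convention $r(B_{j,k})$ denotes the \emph{diameter}, so the radius is $r(B_{j,k})/2$; thus $\mathrm{dist}(a,\partial\Omega)\geq r(B_{j,k})/2$ rather than $r(B_{j,k})$, and the threshold in your first case should be, say, $r(B_{j,k})/4$ to yield a strictly positive distance to $\partial B_{j,k}$---but this only shifts the constants and does not affect the argument.
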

\begin{proof}
%Assume that $B_{jk}\subset\Omega$ and consider $B_{j,k}^\sharp$ as in Lemma \ref{itm:B1} both with length $\approx 2^{-j}$. 
By Lemma \ref{itm:B1}, if $j \geq j_0$, then   ${\rm dist}(B_{j,k},B_{j,k}^\sharp) \approx r(B_{j,k})  \approx 2^{-j}$ for $k \in \setN$. Thus, 
since $\Omega$ is an $(\varepsilon,\delta)$-domain, there exists $j_1\geq j_0$ such that, if   $j \geq j_1$, then
 the centers $x$ and $x^\sharp$ of the  balls $B_{j,k}$ and $B_{j,k}^\sharp$ fulfill the inequality $|x-x^\sharp|<\delta$, and hence can be joined by a curve $\gamma$ satisfying properties \eqref{eq:epsdelta1} and \eqref{eq:epsdelta2}. 
 By inequality \eqref{eq:epsdelta1}, we have that $\ell(\gamma)\approx 2^{-j}$. Moreover, inequality \eqref{eq:epsdelta2}
%\todo{A: why do we need \eqref{eq:epsdelta2}? D: It's needed to estimate the distance to the boundary from below.} 
yields
\begin{align*}
    \mathrm{dist}(z,\partial\Omega)\geq c(\varepsilon)2^{-j} \quad \text{for $z\in\gamma\setminus \big(B_{j,k}\cap B_{jk}^\sharp\big)$. }
\end{align*}
Now, it suffices to choose balls $\mathcal B_i$ centered on $\gamma\setminus \big(B_{j,k}\cap B_{jk}^\sharp\big)$, with  diameter $\tfrac{1}{2}c(\varepsilon)2^{-j}$.
The number of balls needed to cover $\gamma$, with the measure of the intersection of consecutive balls  equivalent to their measure,    is proportional to $ \frac{\ell(\gamma)}{\tfrac{1}{2}c(\varepsilon)2^{-j}}$, a quantity which is in its turn equivalent to a constant depending only on $\varepsilon$.
%
%\begin{align*}
%    \frac{\ell(\gamma)}{\tfrac{1}{2}c(\varepsilon)2^{-j}}\approx C(\varepsilon).
%\end{align*}
\end{proof}
 % We define
 % $\Omega(B_{j,k},B^\sharp_{j,k}) := %\bigcup_{\beta=1}^\gamma W_\beta$.

\begin{lemma}\label{itm:B3} Let $\Omega$ be a connected $(\varepsilon,\delta)$-domain  and let $U_j$ be the set defined by \eqref{Uj} for $j \in \N$.  There exists $j_2\in \setN$  with the following properties: if $B_{j,k} \cap B_{l,m} \neq \emptyset$,  $B_{j,k} \cap U_j \neq \emptyset$, $B_{l,m} \cap U_m \neq \emptyset$, with
  $j,l \geq j_2$, $\abs{j-l} \leq 1$ and $k \in\setN$, then there exists a chain of
  balls~$\mathcal B_1,\dots, \mathcal B_h   \subset \Omega$, with $h\in \setN$ uniformly bounded independently of $j, k,l,m$,  such that
  \begin{align}
  \label{lem:53a}\mathcal B_1 &= B_{j,k}^\sharp\quad \text{and} \quad \mathcal B_h = B_{l,m}^\sharp;\\
 \label{lem:53b} \abs{\mathcal B_i \cap \mathcal B_{i+1}} &\approx \abs{\mathcal B_i} \approx
    \abs{\mathcal B_{i+1}} \approx \abs{B_{j,k}}\quad\text{for}\quad i=1, \dots,h-1;\\
  \label{lem:53c}\mathrm{dist}(\mathcal B_i, \partial \Omega) &\approx r(\mathcal B_i) \approx
    r(B_{j,k})\quad\text{for}\quad i=1,\dots, h;
  \end{align}
  the equivalence constants being independent of~$j,k,i$.
\end{lemma}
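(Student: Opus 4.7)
The plan is to parallel the argument of Lemma~\ref{itm:B2}, but now connecting the two reflected balls $B_{j,k}^\sharp$ and $B_{l,m}^\sharp$ via a curve in $\Omega$ supplied by the $(\varepsilon,\delta)$-property, and then covering this curve by a short chain of balls of size $\approx 2^{-j}$.

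\textbf{Step 1: Distance between the reflected balls.} Since $|j-l|\le 1$ and $B_{j,k}\cap B_{l,m}\neq\emptyset$, property~\eqref{B1} gives $r(B_{j,k})\approx r(B_{l,m})\approx 2^{-j}$, and the distance between their centers is $\lesssim 2^{-j}$. Combined with Lemma~\ref{itm:B1}, which provides $r(B_{j,k}^\sharp)\approx r(B_{l,m}^\sharp)\approx 2^{-j}$ and $\operatorname{dist}(B_{j,k},B_{j,k}^\sharp)\approx\operatorname{dist}(B_{l,m},B_{l,m}^\sharp)\approx 2^{-j}$, the triangle inequality yields
\begin{equation*}
|x_{j,k}^\sharp-x_{l,m}^\sharp|\lesssim 2^{-j},
\end{equation*}
where $x_{j,k}^\sharp$ and $x_{l,m}^\sharp$ denote the centers of $B_{j,k}^\sharp$ and $B_{l,m}^\sharp$, respectively.

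\textbf{Step 2: Applying the $(\varepsilon,\delta)$-condition.} Choose $j_2\ge j_1$ so large that $|x_{j,k}^\sharp-x_{l,m}^\sharp|<\delta$ whenever $j,l\ge j_2$. Then there is a rectifiable curve $\gamma\subset\Omega$ joining $x_{j,k}^\sharp$ and $x_{l,m}^\sharp$ with $\ell(\gamma)\lesssim 2^{-j}$, and satisfying \eqref{eq:epsdelta2}. For any $z\in\gamma$ that lies outside $B_{j,k}^\sharp\cup B_{l,m}^\sharp$, one has $|z-x_{j,k}^\sharp|\gtrsim 2^{-j}$ and $|z-x_{l,m}^\sharp|\gtrsim 2^{-j}$, so \eqref{eq:epsdelta2} gives
\begin{equation*}
\operatorname{dist}(z,\partial\Omega)\ge\varepsilon\frac{|z-x_{j,k}^\sharp|\,|z-x_{l,m}^\sharp|}{|x_{j,k}^\sharp-x_{l,m}^\sharp|}\gtrsim \varepsilon\,2^{-j}.
\end{equation*}
For $z$ inside either reflected ball, $\operatorname{dist}(z,\partial\Omega)\gtrsim 2^{-j}$ directly from Lemma~\ref{itm:B1}. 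Hence the entire curve $\gamma$ stays at distance $\gtrsim \varepsilon\,2^{-j}$ from $\partial\Omega$.

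\textbf{Step 3: Building the chain.} Set $\mathcal B_1=B_{j,k}^\sharp$ and $\mathcal B_h=B_{l,m}^\sharp$. Pick intermediate balls $\mathcal B_2,\dots,\mathcal B_{h-1}$ centered on $\gamma$, each with radius equal to a fixed small multiple of $\varepsilon\,2^{-j}$, chosen small enough that every $\mathcal B_i\subset\Omega$ (by Step~2) and ordered along $\gamma$ so that $\mathcal B_i\cap\mathcal B_{i+1}$ is a ball of radius comparable to $r(\mathcal B_i)$; this gives $|\mathcal B_i\cap\mathcal B_{i+1}|\approx|\mathcal B_i|\approx|\mathcal B_{i+1}|\approx|B_{j,k}|$. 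The number of balls needed to cover a curve of length $\lesssim 2^{-j}$ with balls of radius $\approx \varepsilon\,2^{-j}$ is bounded by a constant depending only on $\varepsilon$ and $n$, so $h$ is uniformly bounded. Properties \eqref{lem:53a}--\eqref{lem:53c} are then immediate from this construction.

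The main technical point is the geometric estimate in Step~2, verifying that $\gamma$ stays well inside $\Omega$ uniformly in $j,k,l,m$; once this is in place, the covering argument in Step~3 is routine and identical in spirit to the end of the proof of Lemma~\ref{itm:B2}.
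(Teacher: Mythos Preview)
Your approach is correct and essentially identical to the paper's: both observe that $\operatorname{dist}(B_{j,k}^\sharp,B_{l,m}^\sharp)\approx 2^{-j}$, apply the $(\varepsilon,\delta)$-property to join the centers of the reflected balls by a curve $\gamma$ of length $\lesssim 2^{-j}$, and cover $\gamma$ by a bounded number of balls of radius $\approx \varepsilon\,2^{-j}$.

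One small point you gloss over, which the paper singles out explicitly: property~\eqref{lem:53c} requires not only $\operatorname{dist}(\mathcal B_i,\partial\Omega)\gtrsim 2^{-j}$ (which your Step~2 gives) but also the \emph{upper} bound $\operatorname{dist}(\mathcal B_i,\partial\Omega)\lesssim 2^{-j}$. This follows at once from $\ell(\gamma)\lesssim 2^{-j}$ together with $\operatorname{dist}(x_{j,k}^\sharp,\partial\Omega)\approx 2^{-j}$, since every point of $\gamma$ lies within arc-length $\lesssim 2^{-j}$ of an endpoint; but you should say so rather than declare \eqref{lem:53c} ``immediate''.
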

\begin{proof}
 The assumption  $\abs{j-l} \leq 1$ and property \eqref{B1} ensure that  $r(B_{j,k})\approx r(B_{l,m})$. Thus, by Lemma \ref{itm:B1},    $r (B_{j,k}^\sharp) \approx r (B_{l,m}^\sharp)$,  and 
 $\mathrm{dist}(B_{j,k}^\sharp,B_{l,m}^\sharp)\approx 2^{-j}$. The proof can then be accomplished along the same lines as that of Lemma \ref{itm:B2}. Let us just point out that, in connection with property \eqref{lem:53c},  an estimate of the form 
$\mathrm{dist}(\mathcal B_i, \partial \Omega)  \lesssim 2^{-j}$
holds 
since $\mathrm{dist}(B_{j,k}^\sharp, \partial \Omega) \approx  2^{-j}$ and  $\mathrm{dist}(B_{l,m}^\sharp, \partial \Omega)\approx  2^{-j}$. Hence, 
if the maximal distance from $ \partial \Omega$ of the  points on the curve  appearing in the definition of $(\varepsilon, \delta)$-domain grew faster than $ 2^{-j}$, then property \eqref{eq:epsdelta1} would be violated.
%
%and hence the number of balls with radii $ \approx  2^{-j}$ covering a curve joining $B_{j,k}^\sharp$ with $B_{l,m}^\sharp$ could not be uniformly bounded if the maximal distance of its points from $ \partial \Omega$ grew faster than $ 2^{-j}$.}
\end{proof}
 % We define
 % $\Omega(B^\sharp_{j,k},B^\sharp_{l,m}) := \bigcup_{\beta=1}^\gamma
 % W_\beta$.
Let us denote by $\Pi_{B_{j,k}^\sharp}$  the orthogonal projection in $L^2(B_{j,k}^\sharp)$ onto the space $\mathcal R$.
% and define
%\begin{align*}
%  \Pi_{j,k} \bfu = \Pi_{B_{j,k}^\sharp} \bfu
%\end{align*}
%for $\bfu\in L^2(B_{j,k}^\sharp)$. 
%Hence,
%\begin{align*}
%  \int_{B_{j,k}^\sharp} \abs{\Pi_{j,k} \bfu}^2\dx &\leq 
%                               \int_B \abs{\bfu}^2\dx.
%\end{align*}
Since $\mathcal R$ is a finite dimensional space, the operator $\Pi_{B_{j,k}^\sharp}$ can be extended to  a linear bounded operator  on $L^1(B_{j,k}^\sharp)$ as follows.
%it is possible to obtain a corresponding estimate also on $L^1(B_{j,k}^\sharp)$
%as follows. 
Let $\{\bfv_1,\dots,\bfv_N\}$ be an orthonormal basis of $\mathcal R$ with respect to the scalar product in $L^2(B_{j,k}^\sharp)$. Then  
\begin{align}\label{eq:1803}
   \Pi_{B_{j,k}^\sharp} \bfu=\sum_{i=1}^N\bigg(\int_{B_{j,k}^\sharp}\bfu\cdot\bfv_i\dx\bigg)\bfv_i
\end{align}
for   $\bfu\in L^2(B_{j,k}^\sharp)$. Since the functions $\bfv_i \in L^\infty (B_{j,k}^\sharp)$,
the right-hand side of equation \eqref{eq:1803} is well defined also for $\bfu\in L^1(B_{j,k}^\sharp)$. 
%with
%\begin{align}
%  \label{eq:PiB-Linfty_L1}
%  \norm{\Pi_{j,k} \bfu}_{L^\infty(B_{j,k}^\sharp)} &\le 
%  c\, \dashint_{B_{j,k}^\sharp} \abs{\Pi_{j,k} \bfu}\dx.
%\end{align}
%Indeed, this is clear if $B_{j,k}^\sharp$ is the unit ball and extends to general balls by dilation
%and translation. It follows from this for $\bfu\in L^\infty(B_{j,k}^\sharp)$
%\begin{align*}
%\dashint_{B_{j,k}^\sharp} \Pi_{j,k} \bfu\cdot\bfu\dx&=\dashint_{B_{j,k}^\sharp}  \bfu\cdot\Pi_{j,k}\bfu\dx\,\leq c\,\dashint_{B_{j,k}^\sharp}  |\bfu|\dx\|\Pi_{j,k}\bfu\|_{L^\infty(B_{j,k}^\sharp)}\\
%&\leq\,c\,    \dashint_{B_{j,k}^\sharp}  |\bfu|\dx                 \dashint_{B_{j,k}^\sharp} \abs{\Pi_{j,k} \bfu}\dx      \leq\,c\, \dashint_{B_{j,k}^\sharp}  |\bfu|\dx\bigg(\dashint_{B_{j,k}^\sharp} \abs{\Pi_{j,k} \bfu}^2\dx\bigg)^{\frac{1}{2}}\\
%&\leq\,c\, \dashint_{B_{j,k}^\sharp}  |\bfu|\dx\bigg(\dashint_{B_{j,k}^\sharp} \abs{\bfu}^2\dx\bigg)^{\frac{1}{2}}\leq c\,\dashint_{B_{j,k}^\sharp}  |\bfu|\dx\|\bfu\|_{L^\infty(B_{j,k}^\sharp)}.
%\end{align*}
Moreover,  there exists a constant $c=c(n)$ such that
\begin{align}
  \label{eq:PiB-L1}
  \dashint_{B_{j,k}^\sharp} \abs{\Pi_{B_{j,k}^\sharp} \bfu}\dx \leq \,c\,  \dashint_{B_{j,k}^\sharp} \abs{\bfu}\dx
  \end{align}
for $\bfu\in L^1(B_{j,k}^\sharp)$. \\
%where $\Pi_{B_{j,k}^\sharp}$ denotes the $L^2(B_{j,k}^\sharp)$ orthogonal projection onto the space of rigid motions.
Next, let $\{\eta_{jk}\}$ and $\{U_j\}$ be the families of functions and of subsets of $\Omega$ introduced above, and let $j_2\in\setN$ be the index appearing in the statement of Lemma  \ref{itm:B3}.
Let $\{\rho_j\} \subset   C^\infty(\rn)$ be a sequence such that
$\chi_{U_{j+1}} \leq \rho_j \leq \chi_{U_j}$ and
$\norm{\nabla \rho_j}_{L^\infty(\rn)} \lesssim 2^j$ for $j \in \setN$.
% and let
%$\bfu \in E^1L^1(\Omega)$.
 For each $j \geq j_2$, define  the function  $T_j \bfu :   \rn  \to \R^n$
as
\begin{align}
  \label{eq:def-Tj}
  T_j \bfu  =
          (1-\rho_j) \bfu + \rho_j \sum_{k=1}^\infty \eta_{j,k} \Pi_{j,k},
\end{align}
where the term  $(1-\rho_j) \bfu$ is extended by $0$ in $\rn \setminus \Omega$. 
%\textcolor{black}{Note that in the latter formulation $T_j \bfu$ can be extended pointwise to $\partial\Omega$.}
Note that
$$
 T_j \bfu = \bfu - \rho_j \sum_{k=1}^\infty \eta_{j,k} \big(\bfu - \Pi_{j,k} \bfu\big) \quad \text{in $\Omega$.}
$$
%and $T_j \bfu =0$ on $\partial\Omega$. }
Thanks to Lemmas \ref{itm:B1}--\ref{itm:B3}, one can apply  \cite[Corollary 4.10]{BDG} and deduce the following result.
\begin{lemma}\label{lem:BDG}
    Let $\Omega$ be a connected  $(\varepsilon,\delta)$-domain and let
$\bfu \in E^1L^1(\Omega)$. Then  $T_j\bfu\in C^\infty(\overline{U}_{j+1})$ and   $T_j\bfu\rightarrow \bfu$ in $E^1L^1(\Omega)$.
\end{lemma}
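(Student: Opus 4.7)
The plan is to handle the two claims separately, leveraging the geometric structure secured by Lemmas~\ref{itm:B1}--\ref{itm:B3}. The smoothness of $T_j\bfu$ on $\overline{U}_{j+1}$ is essentially immediate: on $U_{j+1}$ the cutoff $\rho_j$ equals $1$, so the awkward term $(1-\rho_j)\bfu$ vanishes and
\[
T_j\bfu = \sum_{k=1}^\infty \eta_{j,k}\,\Pi_{j,k}\bfu \quad\text{on }U_{j+1}.
\]
By property \eqref{B3}, near any point this sum has a uniformly bounded number of nonzero summands; since each $\eta_{j,k}\in C^\infty_0(\rn)$ and each $\Pi_{j,k}\bfu \in\mathcal R$ is an affine vector field, the sum is smooth up to the closure of $U_{j+1}$.

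For the convergence, I would write
\[
\bfu - T_j\bfu = \rho_j\sum_{k=1}^\infty \eta_{j,k}\bigl(\bfu - \Pi_{j,k}\bfu\bigr),
\]
which is supported in $U_j$. The $L^1$-bound on this difference reduces, after a finite-overlap estimate on $\{B_{j,k}\}$, to controlling $\int_{B_{j,k}}|\bfu-\Pi_{j,k}\bfu|\,dx$. Since $\Pi_{j,k}$ projects in $L^2(B_{j,k}^\sharp)$ and not on $B_{j,k}$ itself, the bridge is provided by the chain of balls $\mathcal B_1,\dots,\mathcal B_h$ joining $B_{j,k}$ to $B_{j,k}^\sharp$ from Lemma~\ref{itm:B2}. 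On each $\mathcal B_i$ one applies Corollary~\ref{Poinc} (the Korn--Poincar\'e inequality on balls), together with \eqref{eq:PiB-L1} and \eqref{norm-equiv4} to compare the rigid displacement $\Pi_{j,k}\bfu$ with the local mean; telescoping along the chain and using the uniform bound on $h$ yields
\[
\int_{B_{j,k}}|\bfu-\Pi_{j,k}\bfu|\,dx \lesssim r(B_{j,k})\int_{V_{j,k}}|\ep(\bfu)|\,dx,
\]
where $V_{j,k}$ is the union of the chain balls. Summing over $k$ with finite overlap gives $\|\bfu-T_j\bfu\|_{L^1(\Omega)}\lesssim 2^{-j}\|\ep(\bfu)\|_{L^1(V_j)}\to 0$ by absolute continuity of the integral.

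For the symmetric-gradient term, using $\ep(\Pi_{j,k}\bfu)=0$ I compute
\[
\ep(\bfu-T_j\bfu) = \rho_j\,\ep(\bfu)
+ \nabla\rho_j\otimes^{\sym}\!\!\sum_k \eta_{j,k}(\bfu-\Pi_{j,k}\bfu)
+ \rho_j\sum_k \nabla\eta_{j,k}\otimes^{\sym}(\bfu-\Pi_{j,k}\bfu).
\]
The first term vanishes outside $U_j$ and converges to zero in $L^1$. The second is controlled by $|\nabla\rho_j|\lesssim 2^j$ combined with the Korn--Poincar\'e estimate above, so it is also $o(1)$. The real difficulty sits in the third sum: individual summands carry a factor $|\nabla\eta_{j,k}|\lesssim 2^j$ that, by itself, only balances the $2^{-j}$ from Poincar\'e without yielding decay. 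The main obstacle, and the reason for invoking Lemma~\ref{itm:B3}, is to extract additional smallness by rewriting the sum against a fixed reference index $l$ using $\sum_k \nabla\eta_{j,k} \equiv 0$, giving
\[
\sum_k \nabla\eta_{j,k}\otimes^{\sym}(\bfu-\Pi_{j,k}\bfu)
= \sum_k \nabla\eta_{j,k}\otimes^{\sym}(\Pi_{j,l}\bfu-\Pi_{j,k}\bfu).
\]
Lemma~\ref{itm:B3} supplies a chain connecting $B_{j,l}^\sharp$ and $B_{j,k}^\sharp$ with uniformly bounded length, and on each link Corollary~\ref{Poinc} combined with \eqref{eq:PiB-L1} yields $|\Pi_{j,l}\bfu-\Pi_{j,k}\bfu|\lesssim r(B_{j,k})\,\dashint_{\widetilde V_{j,k,l}}|\ep(\bfu)|\,dx$ on $B_{j,k}\cap B_{j,l}$. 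Compensating $2^j\cdot r(B_{j,k})\approx 1$ and summing with bounded overlap leaves the tail of $\int_{U_j}|\ep(\bfu)|\,dx$, which vanishes as $j\to\infty$. Together with the $L^1$ estimate this gives $T_j\bfu\to\bfu$ in $E^1L^1(\Omega)$, as claimed; this is precisely the adaptation of \cite[Corollary~4.10]{BDG} to the symmetric-gradient setting, with the ordinary Poincar\'e inequality on balls replaced by Corollary~\ref{Poinc}.
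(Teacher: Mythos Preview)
Your proposal is correct and is precisely an expansion of the paper's argument, which simply observes that Lemmas~\ref{itm:B1}--\ref{itm:B3} furnish the geometric hypotheses needed to invoke \cite[Corollary~4.10]{BDG}. What you have written out---the chain-of-balls telescoping combined with Corollary~\ref{Poinc} and the rewriting via $\sum_k\nabla\eta_{j,k}=0$---is exactly the content of that corollary in the symmetric-gradient setting, as you yourself note in your final sentence.
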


We are now ready to accomplish the proof of Theorem  \ref{thm:smooth}.

\begin{proof}[Proof of Theorem \ref{thm:smooth}]  It suffices to prove the result under the assumption that $\Omega$ is connected. Indeed, if $\Omega$ is disconnected, then the distance between two connected components is at least $\delta$. Hence, an approximating sequence on the whole of $\Omega$ can be obtained by gluing the approximating sequences in each connected component, via cut-off functions {\color{black} with disjoint supports and equibounded gradients}.
\\ We make use of an argument from  \cite[Proof of Corollary 4.15]{BDG}. Let $\zeta_\kappa \colon\R^{n}\to\R$ be a (even and non-negative) standard mollifier. It is well known that, for each $j\in \setN$, the sequence defined as 
  \begin{equation}\label{mar130}
\bfu_{j,\kappa} = \rho_{j+1} T_j \bfu + ((1-\rho_{j+1}) T_j \bfu) *
  \zeta_\kappa
\end{equation}
  converges to $T_j \bfu$ in $E^1L^1(\Omega)$ as  $\kappa \to 0$. Hence, for each $j \in \setN$, there exists ~$\kappa_j$ such that
  \begin{align*}
    \norm{\bfu_{j,\kappa_j} - T_j \bfu}_{L^1(\Omega)} &\leq 2^{-j},
    \\
    \norm{\ep(T_j \bfu) - \ep(\bfu_{j,\kappa_j})}_{L^1(\Omega)}
                                                  &\leq 2^{-j}.
  \end{align*}
  Since,  by Lemma \ref{lem:BDG},  $T_j\bfu \to \bfu$   in $E^1L^1(\Omega)$,   the sequence $\{\bfu_j\}$, defined as  $\bfu_j = \bfu_{j,\kappa_j}$, converges to $\bfu$ in $E^1L^1(\Omega)$. Moreover,  
  $\bfu_j\in C^\infty(\overline\Omega)$, since $\mathrm{supp}(\rho_{j})\subset \overline U_{j+1}$ and $T^j \bfu \in C^\infty (\overline U_{j+1})$ by Lemma \ref{lem:BDG}.
\end{proof}

\section{Extension operators}\label{extension}
%\textcolor{black}{Mention $n\geq2$ in the theorems?}

The objective of this section is to establish an extension theorem on  $(\varepsilon,\delta)$-domains $\Omega$, that carries the result of \cite{Jo} over to symmetric gradient Sobolev spaces. This is a key step in view of the computation of the $K$-functional of symmetric gradient Sobolev spaces on domains of this kind. The $K$-functional is in  turn a main ingredient for our characterization of optimal embeddings for these spaces. A basic, yet fundamental, version of the extension result in question is the subject of Theorem \ref{thm:extensionoperator} below. Let us mention that an extension operator on $E^1L^1 (\Omega )$, under stronger regularity assumptions on $\Omega$,  is also constructed in \cite[Theorem 4.1]{GmRa}. Our approach is reminiscent of those of \cite{Jo} and   \cite{GmRa}. 
However, a major novelty of Theorem \ref{thm:extensionoperator} is a pointwise estimate for the rearrangement of the symmetric gradient of the extension of functions defined in the domain.
%
%
%However, a major novelty of Theorem \ref{thm:extensionoperator}  amounts to a peculiar form of the extension operator, which allows for a pointwise  estimate for the rearrangement  of the  symmetric gradient of the extension of functions defined in the domain. This    property is critical for our applications to $K$-functionals, and, in particular, calls for the use of an ad hoc projection operator onto the space $\mathcal R$.
%\todo[inline]{Pointwise rearrangement inequality}
%besides dealing with a more general class of domains than those considered in the latter paper, a major novelty of Theorem \ref{thm:extensionoperator}  is in a peculiar form of the extension operator. Specifically, the operator is designed in such a way that the symmetric gradient of the extension of a function splits into the sum of two further operators, acting on the function itself and on its symmetric gradient separately. This  is a key property for our applications to $K$-functionals, and, in particular, calls for the use of an ad hoc projection operator onto the space $\mathcal R$.
%
%
%\todo[inline]{A: emphasize that $\ep(\mathscr{E}_\Omega \bfu)=\mathscr L^1\ep(\bfu)+\mathscr L^2\bfu$. Compare with \cite{GmRa} where this is not true, because of a different projection operator.}

\begin{theorem}\label{thm:extensionoperator} {\rm{\bf [Extension operator on $(\varepsilon,\delta)$-domains]}}
Assume that $\Omega$ is an $(\varepsilon,\delta)$-domain in $\rn$. Then there exists a 
 linear extension operator $\mathscr{E}_\Omega : L^1(\Omega) + L^\infty (\Omega)\to  L^1(\rn) + L^\infty (\rn)$, such that
\begin{equation}\label{ext0}
\mathscr{E}_\Omega \bfu = \bfu \quad \hbox{in $\Omega$}
\end{equation}
for $\bfu\in  L^1(\Omega) + L^\infty (\Omega)$, and  
\begin{align}\label{eq:epu1'}
\mathscr{E}_\Omega : L^1 (\Omega ) \to L^1 (\rn ), \quad
%\hbox{and} \quad
%\end{equation}
%and
%\begin{equation}\label{ext2}
\mathscr{E}_\Omega : L^\infty (\Omega ) \to L^\infty (\rn ).
%
%
%\|\mathscr{E}_\Omega \bfu\|_{L^1(\R^n)}&\leq\,c(p,\Omega)\|\bfu\|_{L^1(\Omega)}
\end{align}
Moreover,
\begin{equation}\label{new18}
\mathscr{E}_\Omega : E^1 L^1(\Omega) +E^1 L^\infty (\Omega) \to  E^1 L^1(\rn) + E^1 L^\infty (\rn),
\end{equation}
\begin{equation}\label{ext1}
\mathscr{E}_\Omega : E^1L^1 (\Omega ) \to E^1L^1 (\rn ), \quad
%\hbox{and} \quad
%\end{equation}
%and
%\begin{equation}\label{ext2}
\mathscr{E}_\Omega : E^1L^\infty (\Omega ) \to E^1L^\infty (\rn ),
\end{equation}
and there exists a constant $c=c(\Omega)$ such that
\begin{equation}\label{ext*}
\int_0^t \mathcal E( \mathscr{E}_\Omega\bfu)^*(s)\, ds \leq c \int_0^t \mathcal E( \bfu)^*(s)\, ds \qquad \text{for $t \geq 0$,}
\end{equation}
for every $\bfu\in  E^1 L^1(\Omega) + E^1L^\infty (\Omega) $. Here, $\mathcal E$ denotes the operator defined by \eqref{new23}.
%and
%for $\bfu \in E^1L^1 (\Omega ) + E^1L^\infty (\Omega )$. 
\iffalse
 Moreover,  
%for $\bfu \in L^1(\Omega)$,
%and
%\begin{align}\label{eq:epu1''}
%\|\mathscr{E}_\Omega \bfu\|_{L^\infty(\R^n)}&\leq\,c(p,\Omega)\|\bfu\|_{L^\infty(\Omega)}
%\end{align}
%for $\bfu \in L^\infty(\Omega)$,
 there exist linear bounded operators $\mathscr L^i$, $i=1,2$, such that
\begin{equation}\label{may4}\mathscr L^i: L^1(\Omega)\rightarrow L^1(\R^n), \quad \mathscr L^i: L^\infty(\Omega)\rightarrow L^\infty(\R^n) \quad \text{for $i=1,2$,}
\end{equation}
and
\begin{align}\label{eq:epu2'}
\ep(\mathscr{E}_\Omega \bfu)=\mathscr L^1\ep(\bfu)+\mathscr L^2\bfu
\end{align}
for $\bfu\in E^1L^1(\Omega)+E^1 L^\infty(\Omega)$.
\fi
%\todo[inline]{A: check the spaces for $\bfu$ in this statement}\todo[inline]{D: Are $(\varepsilon,\delta)$-domains bounded?}. 
%In particular, one has for $p=1$ and $p=\infty$,
%\begin{align}
%\|\mathscr{E}_\Omega \bfu\|_{L^p(\R^n)}&\leq\,c(p,\Omega)\|\bfu\|_{L^p(\Omega)}\quad\forall \bfu\in L^p(\Omega),\label{eq:epu1}
%\end{align}
%and there exist linear operators $\mathscr L^1,\mathscr L^2:L^p(\Omega)\rightarrow L^p(\R^n)$ such that, for $p=1$ and $p=\infty$,
%\begin{align}
%\ep(\mathscr{E}_\Omega \bfu)=\mathscr L^1\ep(\bfu)+\mathscr L^2\bfu\quad\forall\bfu\in E^1L^1(\Omega).\label{eq:epu2}
%\end{align}
\end{theorem}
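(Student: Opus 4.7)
The plan is to adapt the Jones construction from \cite{Jo} to the symmetric-gradient setting, using the rigid-displacement projection $\mathcal{R}_{Q^*}$ from Lemma \ref{lem:repr} in place of Taylor polynomials. First, I would assume $\Omega$ is connected (otherwise components are at distance at least $\delta$, and one handles them separately with cut-offs). I would take a Whitney decomposition $\{S_j\}$ of $\R^n \setminus \overline{\Omega}$, selecting only the cubes $S_j$ whose diameter is bounded by a small multiple of $\delta$ (the \lq\lq near\rq\rq\ cubes); to each such $S_j$ I would assign, following \cite{Jo}, a \lq\lq reflected\rq\rq\ cube $S_j^\sharp \subset \Omega$ with $r(S_j^\sharp)\approx r(S_j)\approx \mathrm{dist}(S_j,S_j^\sharp)\approx \mathrm{dist}(S_j^\sharp,\partial \Omega)$. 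Let $\{\psi_j\}$ be a smooth partition of unity subordinate to slight enlargements $S_j^*$ of the near cubes, whose support is contained in a $\delta/4$-neighborhood of $\overline \Omega$, and let $\chi$ be a cut-off equal to $1$ on that neighborhood and vanishing outside a larger one. I would then set
\begin{equation*}
\mathscr{E}_\Omega \bfu = \bfu \quad \text{on } \Omega, \qquad \mathscr{E}_\Omega \bfu = \chi \sum_{j} \psi_j\, \mathcal{R}_{S_j^\sharp}(\bfu) \quad \text{on } \R^n \setminus \overline{\Omega}.
\end{equation*}
The crucial point is that each $\mathcal{R}_{S_j^\sharp}(\bfu)$ is a rigid displacement, hence its symmetric gradient vanishes; this is the symmetric-gradient analogue of Jones's use of polynomial approximants.

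The next step is to verify continuity across $\partial\Omega$ and linearity, which for smooth $\bfu$ (dense by Theorem \ref{thm:smooth}) is standard once one notes that for $x\in \partial\Omega$ the sums are locally finite and tend to $\bfu(x)$. I would then prove $L^1$ and $L^\infty$ boundedness of $\mathscr{E}_\Omega$: the $L^\infty$ estimate follows from \eqref{eq:extE1} applied to each $S_j^\sharp$ and the finite-overlap property; the $L^1$ estimate comes from summing $\|\mathcal{R}_{S_j^\sharp}(\bfu)\|_{L^1(S_j^*)} \lesssim (|S_j^*|/|S_j^\sharp|)\|\bfu\|_{L^1(S_j^\sharp)}\lesssim \|\bfu\|_{L^1(S_j^\sharp)}$ by \eqref{eq:extE_A} (applied with $X=L^1$) and the bounded overlap of the $S_j^\sharp$. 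To estimate the symmetric gradient on $\R^n \setminus \overline\Omega$, I would write, for $x$ in a near-cube $S_k^*$ and using $\sum_j \psi_j = 1$ there,
\begin{equation*}
\ep(\mathscr{E}_\Omega \bfu)(x) = \sum_j \nabla \psi_j(x)\otimes^{\mathrm{sym}}\bigl(\mathcal{R}_{S_j^\sharp}(\bfu)-\mathcal{R}_{S_k^\sharp}(\bfu)\bigr)(x),
\end{equation*}
since $\ep(\mathcal{R}_{S_k^\sharp}(\bfu))\equiv 0$. The difference of two rigid displacements is itself a rigid displacement, so Corollary \ref{equivnorm} and \eqref{norm-equiv4} let me transfer the estimate from $S_k^*$ to either $S_j^\sharp$ or $S_k^\sharp$. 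Joining $S_j^\sharp$ and $S_k^\sharp$ by a chain of comparable balls in $\Omega$ (a construction like Lemmas \ref{itm:B2}--\ref{itm:B3}, which is where the $(\varepsilon,\delta)$-property is used) and applying Corollary \ref{Poinc} on each ball of the chain, I would bound $|\nabla \psi_j(x)\otimes^{\mathrm{sym}}(\mathcal{R}_{S_j^\sharp}(\bfu)-\mathcal{R}_{S_k^\sharp}(\bfu))|$ by an average of $|\ep(\bfu)|$ over a union of balls inside $\Omega$ of total measure comparable to that of $S_k$.

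Putting these bounds together yields a pointwise estimate of the form
\begin{equation*}
|\mathcal E(\mathscr E_\Omega \bfu)(x)| \leq c\, M\bigl(\chi_\Omega\,|\mathcal E(\bfu)|\bigr)(x) \quad \text{for a.e. } x\in \R^n\setminus \overline \Omega,
\end{equation*}
and trivially $|\mathcal E(\mathscr E_\Omega \bfu)|=|\mathcal E(\bfu)|$ on $\Omega$. Combining this with the two-sided rearrangement inequality \eqref{riesz} for the Hardy--Littlewood maximal operator gives
\begin{equation*}
\mathcal E(\mathscr E_\Omega \bfu)^*(s)\leq c\,\mathcal E(\bfu)^{**}(s) \quad \text{for } s>0,
\end{equation*}
whence \eqref{ext*} follows upon integrating from $0$ to $t$ and using that $\int_0^t f^{**}(s)\ds \leq c\int_0^t f^*(s)\ds$ (a consequence of Hardy's inequality, via the logarithmic kernel). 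The endpoint boundedness \eqref{eq:epu1'}, \eqref{new18}, \eqref{ext1} then follow either directly from the construction or by specializing \eqref{ext*}. The main obstacle will be the pointwise bound on $\ep(\mathscr E_\Omega \bfu)$ outside $\Omega$: organizing the chain-of-cubes Poincaré estimates so that the resulting upper bound is genuinely the maximal function of $\chi_\Omega |\mathcal E(\bfu)|$ (and not of some larger auxiliary quantity) is the delicate step, since the reflected cubes can be far from the original Whitney cube and the Korn-type substitute for the standard Poincaré inequality must be applied uniformly across chains whose length is controlled only by $\varepsilon$ and $\delta$.
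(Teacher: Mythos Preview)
Your construction (Whitney cubes in $\rn\setminus\overline\Omega$, reflected cubes in $\Omega$, rigid projections $\mathcal R_{S_j^\sharp}$, chain arguments) is exactly what the paper does, and your estimate of $\ep(\mathscr{E}_\Omega\bfu)$ via $\sum_j \nabla\psi_j\otimes^{\mathrm{sym}}(\mathcal R_{S_j^\sharp}(\bfu)-\mathcal R_{S_k^\sharp}(\bfu))$ and Poincar\'e on chains is the right idea. The gap is in the final step toward \eqref{ext*}. From the pointwise bound $|\mathcal E(\mathscr{E}_\Omega\bfu)|\le c\,M(\chi_\Omega|\mathcal E(\bfu)|)$ and \eqref{riesz} you correctly get $\mathcal E(\mathscr{E}_\Omega\bfu)^*(s)\le c\,\mathcal E(\bfu)^{**}(s)$, but your claimed inequality $\int_0^t f^{**}(s)\,ds\le c\int_0^t f^{*}(s)\,ds$ is \emph{false}: by Fubini, $\int_0^t f^{**}(s)\,ds=\int_0^t f^{*}(r)\log(t/r)\,dr$, and the logarithm is unbounded near $r=0$ (take $f^{*}=\chi_{(0,\varepsilon)}$ and $t=1$). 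The maximal-function route only gives an $L^{1,\infty}$-type control, not the $L^1$-type estimate \eqref{ext*}.

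The paper avoids this by not passing through the maximal function at all. Instead, it exploits the finite-overlap structure directly: for every Young function $A$ one sums $\int_{S_k^*}A(|\ep(\mathscr{E}_\Omega\bfu)|)$ over $k$ and, using the chain/Poincar\'e bounds and the bounded overlap of the reflected cubes, obtains
\[
\int_{\rn\setminus\overline\Omega}A\big(|\mathscr{E}_\Omega\bfu|+|\ep(\mathscr{E}_\Omega\bfu)|\big)\,dx\ \le\ \int_\Omega A\big(c(|\bfu|+|\ep(\bfu)|)\big)\,dx.
\]
Since this holds for \emph{all} Young functions $A$, the characterization \eqref{ALT} yields $\mathcal E(\mathscr{E}_\Omega\bfu)^{**}(t)\le c\,\mathcal E(\bfu)^{**}(t)$, i.e.\ \eqref{ext*}. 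The point is that summing $A$-modulars cube by cube (before taking a sup over cubes) retains the finite-overlap gain that the maximal function throws away; equivalently, one can view $\ep(\mathscr{E}_\Omega\bfu)$ outside $\Omega$ as a genuine linear operator bounded on both $L^1$ and $L^\infty$ applied to $(\bfu,\ep(\bfu))$, rather than merely dominated by $M$ applied to it.
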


The next theorem generalizes Theorem \ref{thm:extensionoperator} via an interpolation argument, and provides us with an extension operator from 
 $E^1X(\Omega)$ into $E^1\widehat X(\Omega)$ for any rearrangement-invariant space $X(\Omega)$ and any of its extensions $\widehat X(\Omega)$ as defined  in Subsection \ref{ri}. This is a special case of a more general interpolation result, of independent interest, which is stated in the same theorem.
%
%.  It is a special case of a more general interpolation result which we believe is of independent interest (a corresponding result for $W^{1}X(\Omega)$-spaces can be found in \cite[Section 4]{cianchi-randolfi IUMJ}). Recall that an extension $\widehat X(\rn)$ of $X(\Omega)$ is defined in Subsection \ref{ri}.
\begin{theorem}\label{C1}{\rm{\bf [Interpolation for extension operators on $(\varepsilon,\delta)$-domains]}}
%Let $\Omega$ be a bounded domain with the cone property
Assume that $\Omega$ is an $(\varepsilon,\delta)$-domain in $\rn$.
%
%
%Let $\Omega$ an open set in $\r^n$   such that $|\Omega|<\infty$
%and $\partial \Omega$ is minimally smooth. Let $X(\Omega )$ be a
%rearrangement invariant space, and let $X_e(\Omega )$ be an
%extension to $\rn$ whose norm is defined as in \eqref{ext}.
Let $T : E^1L^1(\Omega) +
 E^1L^\infty(\Omega) \to E^1L^1(\rn) +
 E^1L^\infty(\rn)$ be a linear operator  
such that
\begin{equation}\label{boundT}
T: E^1L^1(\Omega) \to E^1L^1(\rn) \quad \hbox{and} \quad T:
E^1L^\infty(\Omega)  \to E^1L^\infty(\rn)\,,
\end{equation}
with norms  $M_1$ and $M_\infty$, respectively. Let $X (\Omega )$ be  a rearrangement-invariant space and let
$\widehat X(\rn )$ be  an extension of $X(\Omega )$.
 Then
\begin{equation}\label{boundTX}
 T : E^1X(\Omega) \to E^1\widehat X(\rn)\,,
 \end{equation}
 with norm depending on {\color{black} $\Omega$, $M_1$ and
$M_\infty$}.\\
In particular, the extension operator $\mathscr{E}_\Omega$ provided by Theorem \ref{thm:extensionoperator}  is such that
%
%there exists  a bounded linear extension operator such that, for every rearrangement-invariant space  $X (\Omega )$,
\begin{equation}\label{feb2} 
\mathscr{E}_\Omega : E^1X(\Omega) \to E^1\widehat X(\R^n),
\end{equation}
for every rearrangement-invariant space  $X (\Omega )$ and any of its extensions $\widehat X(\rn )$.
%
%and $$\mathscr E_\Omega \bfu =\bfu \quad \text{in $\Omega$,}$$
% for  $\bfu\in E^1X(\Omega)$.
\end{theorem}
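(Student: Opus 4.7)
The plan is to reduce \eqref{boundTX} to a pointwise rearrangement inequality for $\mathcal E(T\bfu)^*$ in terms of $\mathcal E(\bfu)^*$, and then invoke the Hardy-Littlewood-P\'olya property \eqref{hardy} applied to the extended function norm $\widehat X$, using the defining identity \eqref{mar171}.

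First, from the endpoint hypothesis \eqref{boundT} and the linearity of $T$, a standard $K$-functional argument for the compatible couple $(E^1L^1(\Omega), E^1L^\infty(\Omega))$ yields the inequality
$$\int_0^t \mathcal E(T\bfu)^*(s)\,ds \leq (M_1+M_\infty)\,K\bigl(\bfu,t;E^1L^1(\Omega),E^1L^\infty(\Omega)\bigr) \quad \text{for every } t>0.$$
Indeed, for any admissible decomposition $\bfu=\bfu_0+\bfu_1$ with $\bfu_0\in E^1L^1(\Omega)$ and $\bfu_1\in E^1L^\infty(\Omega)$, linearity gives $\mathcal E(T\bfu) = \mathcal E(T\bfu_0) + \mathcal E(T\bfu_1)$; the endpoint bounds, combined with the classical identity $\int_0^t f^*(s)\,ds \leq \|f_0\|_{L^1(\rn)} + t\|f_1\|_{L^\infty(\rn)}$ valid for any splitting $f=f_0+f_1$, yield the inequality after taking the infimum.

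Next, one invokes the formula for the $K$-functional on $(\varepsilon,\delta)$-domains, to be established in Section \ref{Kfunct}, namely
$$K\bigl(\bfu,t;E^1L^1(\Omega),E^1L^\infty(\Omega)\bigr) \approx \int_0^t \mathcal E(\bfu)^*(s)\,ds,$$
with equivalence constants depending only on $\Omega$. Combining this with the previous step gives the key rearrangement inequality
$$\int_0^t \mathcal E(T\bfu)^*(s)\,ds \leq C \int_0^t \mathcal E(\bfu)^*(s)\,ds \quad \text{for all } t>0,$$
with $C$ depending only on $\Omega$, $M_1$ and $M_\infty$. The Hardy-Littlewood-P\'olya property \eqref{hardy}, applied with the rearrangement-invariant function norm $\widehat X$, then implies $\|\mathcal E(T\bfu)\|_{\widehat X(\rn)} \leq C\|(\mathcal E \bfu)_e\|_{\widehat X(\rn)}$, and the extension identity \eqref{mar171} turns the right-hand side into $C\|\mathcal E\bfu\|_{X(\Omega)}\approx C\|\bfu\|_{E^1X(\Omega)}$. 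This establishes \eqref{boundTX}; the concluding claim \eqref{feb2} is then the application of \eqref{boundTX} to $T=\mathscr E_\Omega$, whose endpoint boundedness \eqref{ext1} is supplied by Theorem \ref{thm:extensionoperator}.

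The main obstacle is the $K$-functional identification for the symmetric-gradient couple on an $(\varepsilon,\delta)$-domain, since this requires producing, for each parameter $t>0$, an explicit admissible splitting of $\bfu$ with controlled norms. The natural strategy is to first transfer $\bfu$ to $\rn$ via the extension operator $\mathscr E_\Omega$ of Theorem \ref{thm:extensionoperator} (invoking the pointwise bound \eqref{ext*} to preserve the rearrangement control), then apply the truncation operator $T^{\lambda,\lambda}$ from Theorem \ref{lem:Tlest} at a level $\lambda\approx\mathcal E(\bfu)^*(t)$ in order to produce a bounded piece whose $E^1L^\infty(\rn)$ norm is $\lesssim \lambda$, and finally restrict both summands back to $\Omega$. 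The essentially-bounded pointwise estimates of Theorems \ref{lem:Tlest}--\ref{lem:feb1} on the truncated function and its symmetric gradient are precisely what make this splitting quantitatively compatible with $\int_0^t\mathcal E(\bfu)^*(s)\,ds$.
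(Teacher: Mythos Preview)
Your proposal is correct and follows essentially the same route as the paper: both arguments derive the pointwise inequality $\mathcal E(T\bfu)^{**}(s)\leq C\,\mathcal E(\bfu)^{**}(s)$ by combining the standard $K$-functional boundedness of $T$ with the $K$-functional identifications \eqref{nov1} and \eqref{nov3}, and then conclude via \eqref{hardy} and the extension identity \eqref{mar170}/\eqref{mar171}. Your last paragraph correctly sketches the mechanism behind \eqref{nov3}, matching the paper's proof of Theorem~\ref{K-funct}(iii) via $\mathscr E_\Omega$, the bound \eqref{ext*}, and the truncation operators of Theorem~\ref{lem:Tlest}.
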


The remaining part of this section is devoted to a proof of Theorem 
 \ref{thm:extensionoperator}. The proof of Theorem \ref{C1} requires the use of $K$-functionals, and is given at the end of the next section.

\begin{proof}[Proof of Theorem \ref{thm:extensionoperator}]
%We follow the construction from \cite[Thm. 4.1]{GmRa}\textcolor{black}{decide how to cite} which actually applies to more general differential operators than just symmetric gradients.  
Let  $\{Q_j\}$ be a   covering of $\R^n\setminus\overline\Omega$ and  $\{\widetilde{Q}_j\}$ a   covering of $\Omega$ as in Lemma \ref{lems:whitneysteady}. Denote the centre of  the cube $Q_j$ by $x_j$ and its side-length by $r_j$, and by $\widetilde x_j$ and $\widetilde r_j$ the centre and the side-length of $\widetilde Q_j$. Define $\mathbb M$ as 
the  subset of $\N$ of those indices $j$ such that  $Q_j$ is \lq\lq close to $\partial \Omega$" in the sense that 
\begin{equation}\label{red}
r_j\leq\, c\varepsilon\delta
\end{equation}
 for some constant $c$. We choose this constant $c$, depending on $n$, in such a way that \cite[Lemmas 2.4--2.7]{Jo} apply to the family of cubes $\{Q_j\}_{j\in \mathbb M}$. 
In particular, with each cube
 $Q_j$,  $j\in\mathbb M$, it is  associated a  cube  from the  covering $\{\widetilde{Q}_j\}$, which, after   relabelling,   can be assumed  to be $\widetilde Q_j$ and has the property that
%. The results from \cite{Jo} imply in particular that
%\begin{enumerate}[label={(6.\arabic{*})},start=8]
%\item We have
\begin{equation}
 \label{itm:R1} 
r_j\leq \widetilde r_j\leq 4r_j \quad \text{ for $j\in\mathbb M$,}
\end{equation}
%and
\begin{equation}
\label{itm:R2}  \mathrm{dist}(Q_j,\widetilde Q_j)\leq C r_j \quad \text{ for $j\in\mathbb M$,}
\end{equation}
for some constant $C=C(n)$. 
%\end{enumerate}
%\setcounter{equation}{9}
Moreover, if the cubes $Q_j$ and $Q_k$ are neighbours,  then \cite[Lemma 2.8]{Jo} enables us to connect the associated cubes $\widetilde Q_j$ and $\widetilde Q_k$ by a chain of $m$ cubes of equivalent size.
The number $m=m_{jk}$ of cubes in this chain is uniformly bounded independently of $j$ and $k$. Precisely, there exist
cubes $\widetilde Q_{j_1},\dots,\widetilde Q_{j_m}$ with $\widetilde Q_{j_1}=\widetilde Q_j$ and $\widetilde Q_{j_m}=\widetilde Q_{k}$ such that $\widetilde Q_{j_h}\cap \widetilde Q_{j_{h+1}}\neq \emptyset$ for $h=1, \dots, m_{jk}-1$.
By property \ref{itm:W3'}, we have that $2^{1-h} \widetilde r_j\leq \widetilde r_{j_h}\leq 2^{h-1} \widetilde r_j$ and $2^{h-m_{jk}} \widetilde r_k\leq \widetilde r_{j_h}\leq 2^{m_{jk}-h} \widetilde r_k$, 
whence
\begin{align}\label{eq:chain'}
    \frac{1}{c}\min\{\widetilde r_j,\widetilde r_k\}\leq \widetilde r_{j_h}\leq \,c\,\max\{\widetilde r_j,\widetilde r_k\}\quad \text{for $h=1,\dots,m_{jk}$,}
\end{align}
for some positive constant $c$ independent of $j$ and $k$, since  $m_{jk}$ is uniformly bounded.\\
Let $Q_j^\ast$ and $\widetilde Q_j^\ast$ be the cubes associated with $Q_j$ and $\widetilde Q_j$, respectively, as in \eqref{Q*}, and hence enjoying the properties described in Corollary \ref{cor:whitney}.  Let 
$\{\phi_j\}_{j\in\mathbb M}$ be  a partition of unity 
associated with the cubes $\{Q_j^\ast\}_{j\in\mathbb M}$ as in Lemma \ref{lemma:partition}, and hence
satisfying properties  \ref{itm:U2}--\ref{itm:U4}. 
\\
{\color{black}
Given a function $\bfu \in  L^1(\Omega) + L^\infty (\Omega)$, define the function $  \mathscr E_\Omega \bfu \in  L^1(\rn)+ L^\infty(\rn)$ as
% \todo{We don't know if the trace exists}
 \begin{align}\label{Eomega}
  \mathscr E_\Omega \bfu &=
  \begin{cases}
    \bfu &\quad\text{in  $ \Omega$}
    \\[2mm]
    \displaystyle{\sum_{j\in\mathbb M}} \phi_j \widetilde\bfu_j&\quad\text{in
        $\R^n\setminus  \overline \Omega$,}
  \end{cases}
\end{align}}
where $\widetilde\bfu_j = \widetilde{\mathcal{R}}_{j} \bfu$ for $j \in \mathbb M$ and 
we have set $\widetilde{\mathcal{R}}_j=\mathcal{R}_{\widetilde Q_j^\ast}$, according to  definition \eqref{mar95}.
%Here $\theta>0$ is chosen in such a way that  
%\begin{equation}\label{apr200}
%\theta(x-x_j)+\widetilde x_j \in \widetilde Q_j^* \quad \text{if $x \in Q^*_j$,}
%\end{equation}
%and
% \begin{align}\label{eq:0908}
%       \theta \, \max\{ r_j,  r_k\}\leq c_1  \, \mathrm{dist}(\widetilde x_{jk}^*,\partial \big(\widetilde Q_j^\ast\cap \widetilde Q_k^\ast\big))
%  \end{align}
%  for all cubes such that $\widetilde Q_j^\ast\cap \widetilde Q_k^\ast\neq\emptyset$, where  $\widetilde x_{jk}^*$ denotes  the center of $\widetilde Q_j^\ast\cap \widetilde Q_k^\ast$.   Here, $c_1$ denotes a sufficiently small constant $c_2$, independent of $j$ and $k$, to be chosen later.
% Notice that 
%property \eqref{apr200} certainly holds  if $\theta \leq c_2$ for a sufficiently small constant $c_2$, independent of $j$, since $r(Q^*_j) \approx r( \widetilde Q_j^*)$. Throughout this proof, the relations $\lq\lq \approx "$ and $\lq\lq  \lesssim "$ hold up to constants depending only on {\color{black} $\Omega$}.
%%
%% \textcolor{black}{and $C>0$ is given in \eqref{eq:chain'} below}. 
%The existence of $\theta$ making property \eqref{eq:0908}  true
%follows from  \eqref{eq:chain'}  and \ref{itm:W2}. Indeed, one can verify that $\widetilde Q_j^\ast\cap \widetilde Q_k^\ast$ is a rectangle with side-lengths equivalent
%  to $\widetilde r_j$ and $\widetilde r_k$, and the latter are equivalent to $r_j$ and $r_k$.   Moreover, 
  Observe  that equation \eqref{Eomega}  defines $\mathscr E_\Omega \bfu $ a.e. in $\rn$, since $|\partial \Omega|=0$, by \cite[Lemma  2.3]{Jo}.
Finally,  since $\phi_j$ is  compactly supported in  $Q_j^\ast$, then   $\sum_{j\in\mathbb M} \phi_j \widetilde\bfu_j$ vanishes  outside
  $\bigcup_{j\in\mathbb M}\mathrm{supp}(\phi_j)$.   In particular, $\mathscr E_\Omega \bfu$ is supported in a neighbourhood of $\Omega$. 
%has compact support in $\R^n$.
\\
We begin by
 establishing property \eqref{eq:epu1'}.
% If $x\in Q_j^*$,  then, owing to property \eqref{itm:R1}, we have that 
%\todo[inline]{$\theta(x-x_j)+\widetilde x_j\in \widetilde Q_j$.} Assume that $\bfu \in L^\infty(\Omega)$. 
 By inequality \eqref{new35}, there exists a constant $c=c(n)$ such that 
% and $x\in Q_j^\ast$ we have 
%\todo[inline]{A: we should use a different notation instead of $\bfu$, since $\bfu$ is not assumed to be in $L^1(\Omega)$ or in $L^\infty(\Omega)$, but just in their sum.}
 \begin{align*}
\|\widetilde\bfu_j\|_{L^\infty( Q_j^\ast)}=\|\widetilde{\mathcal{R}}_j(\bfu)\|_{L^\infty(Q_j^\ast)}
%\leq\,c\|\widetilde{\mathcal{R}}_j(\bfu)\|_{L^\infty(\widetilde Q_j^\ast)}
\leq\,c\,\|\bfu\|_{L^\infty(\widetilde Q_j^\ast)}\leq\,c\,\|\bfu\|_{L^\infty(\Omega)}
\end{align*}
for every $j\in\mathbb M$. 
%Throughout this proof, the relations $\lq\lq \approx "$ and $\lq\lq  \lesssim "$ hold up to constants depending only on {\color{black} $\Omega$}.
Since {\color{black} $\{\phi_j\}$} is a partition of unity,  property \eqref{eq:epu1'}  for $L^\infty$ follows.
Similarly, if $\bfu \in L^1(\Omega)$, then, by %\eqref{apr200} and
  property   \ref{itm:W3} and inequality \eqref{new35}%{\color{black}  and \eqref{eq:0908} ???}
\begin{align}\label{eq:2708}
\|\mathscr E_\Omega \bfu\|_{L^1(\rn \setminus \overline \Omega)}=   \int_{\R^n}\Big|\sum_{j\in\mathbb M} \phi_j \widetilde \bfu_j\Big|\dx&\leq \sum_{j\in\mathbb M}\int_{Q_j^\ast}|\widetilde \bfu_j|\dx=\sum_{j\in\mathbb M}\int_{Q_j^\ast}|\widetilde{\mathcal{R}}_j(\bfu) |\dx
\\
\nonumber
&  
  %\leq \,c\,\sum_{j\in\mathbb M}\int_{\widetilde Q_j^\ast}|\widetilde{\mathcal{R}}_j(\bfu)(x)|\dx
\leq\,c\,\sum_{j\in\mathbb M}\int_{\widetilde Q_j^\ast}|\bfu|\dx\leq\,c'\,\int_\Omega|\bfu|\dx,
\end{align}
for some constants $c=c(n)$ and $c'=c'(n)$, whence \eqref{eq:epu1'}  follows for $L^1$ as well.
\\
{\color{black}
Assume now that $\bfu \in E^1 L^1(\Omega) + L^\infty(\Omega)$. Hence, in particular, $\bfu\in E^1L^1(\widetilde Q_j^\ast)$ for $j\in \mathbb M$.
Owing to Lemma \ref{lem:repr},  any such function 
% $\bfu\in E^1L^1(\widetilde Q_j^\ast)$
 can be represented as
\begin{align}\label{eq:reprj}
\bfu =\widetilde{\mathcal{R}}_j(\bfu)+\widetilde{\mathscr{L}}_j(\ep(\bfu)) \quad \text{in $\widetilde Q_j^\ast$,}
\end{align}
where we have set   $\widetilde{\mathscr{L}}_j=\mathscr L_{\widetilde Q_j^\ast}$, according to  \eqref{mar93}.}
\\
Given  $j\in \mathbb M$
define, as in \eqref{Aj}, $A_j = \set{k \in \setN \,:\, Q_j^* \cap Q_k^* \not= \emptyset}$, and 
\begin{equation}\label{ajm}
A_j ^{\mathbb M}= \set{k \in \mathbb M \,:\, Q_j^* \cap Q_k^* \not= \emptyset}.
\end{equation}
 Consider the subset of $\mathbb M$ defined by $$\mathbb P = \{j\in \setN :  A_j ^{\mathbb M}= A_j\}.$$
Thus, $\{Q_j\}_{j\in \mathbb P}$ is the family of those cubes  from the family $\{Q_j\}_{j\in \mathbb M}$, whose neighbours also belong to the family $\{Q_j\}_{j\in \mathbb M}$.
Define $\mathcal W_{\mathbb P} \, = \cup_{j \in \mathbb P}\, Q_j$.
Note that, owing to \ref{itm:U2}, we have that  $ \widetilde\bfu_j= \sum_{k \in A_j ^{\mathbb M}} \phi_k \widetilde \bfu_j$ in $Q_j^\ast\cap \mathcal W_{\mathbb P}$. Hence, since $\ep( \widetilde\bfu_j)=0$, it follows that $ \sum_{k \in A_j ^{\mathbb M}} \nabla \phi_k \otimes^\sym  \widetilde\bfu_j=0$ in $Q_j^\ast\cap\mathcal W_{\mathbb P}$. Inasmuch as  $\ep( \widetilde\bfu_k)=0$ as well, if $x\in Q_j^\ast\cap \mathcal W_{\mathbb P}$ then  
  \begin{align}\label{apr201}
    \eps (\mathscr E_\Omega\bfu)(x) &
    = \sum_{k \in A_j ^{\mathbb M}} \nabla \phi_k(x) \otimes^\sym (\widetilde\bfu_k(x) - \widetilde\bfu_j(x))\\ \nonumber &= \sum_{k \in A_j ^{\mathbb M}} \nabla \phi_k \otimes^\sym (\widetilde{\mathcal{R}}_k(\bfu)(x) - \widetilde{\mathcal{R}}_j(\bfu)(x)).
 \end{align} 
  If $Q_j^\ast\setminus\mathcal W_{\mathbb P}\neq \emptyset$ and
$x\in Q_j^\ast\setminus\mathcal W_{\mathbb P}$,  then the sums  in equation \eqref{apr201} have to be complemented with the  extra term
  \begin{align}\label{eq:2308}
 \sum_{k \in A_j^{\mathbb M}} \nabla \phi_k(x) \otimes^\sym \widetilde \bfu _j(x),
  \end{align}
 since  $\sum_{k \in A_j^{\mathbb M}}{\phi_k}\leq 1$ in $Q_j^\ast\setminus\mathcal W_{\mathbb P}$, and hence 
 $ \sum_{k \in A_j^{\mathbb M}}\nabla \phi_k \otimes^\sym  \widetilde\bfu_j\neq 0$.
\\ Now, observe that, if 
$k\in A_j^{\mathbb M}$  then  $\partial Q_j\cap \partial Q_k \neq \emptyset$,  but, in general, it may happen  that $\partial \widetilde Q_j \cap \partial \widetilde Q_k= \emptyset $. However,  
$\widetilde Q_j$ and $\widetilde Q_k$ can be connected by a chain of cubes $\widetilde Q_{j_1},\dots,\widetilde Q_{j_{m_{jk}}}$, such that  $\widetilde Q_{j_1}=\widetilde Q_j$ and $\widetilde Q_{j_{m_{jk}}}=\widetilde Q_{k}$, and satisfying \eqref{eq:chain'}. 
Consequently, by \ref{itm:W3} and \eqref{eq:chain'}, one has that $|\widetilde Q_{j_h}^\ast\cap \widetilde Q_{j_{h-1}}^\ast| \approx |\widetilde Q_j| \approx |\widetilde Q_k|$ for $h=2,\dots, m_{jk}$. 
%
%
%the intersection $\widetilde Q_{j_{\ell}}^\ast\cap \widetilde Q_{j_{\ell-1}}^\ast$ ($\ell=1,\dots, m$) has measure proportional to those of $\widetilde Q_j$ and $\widetilde Q_k$ due to \ref{itm:W3} and \eqref{eq:chain'}. 
%In particular, the center   $\widetilde x_{j_{h-1},j_{h}}^*$ 
%%
%% point $\widetilde x_{j_\ell}^\ast:=x_{j_{\ell-1},j_{\ell}}$ 
%of $\widetilde Q_{j_{h-1}}^\ast \cap \widetilde Q_{j_{h}}^\ast$ satisfies
% \eqref{eq:0908}, with $j$ and $k$ replaced by $j_{h-1}$ and $j_h$, respectively. Hence,
%  $\theta(x-x_k)+\widetilde x^\ast_{j_{h-1}j_{h}}\in  \widetilde Q_{j_{h-1}}^\ast \cap \widetilde Q_{j_{h}}^\ast$ for all $x\in   Q_k^*$, and the same inclusion holds for $\theta(x- x_j)+\widetilde x^\ast_{j_{h-1}j_{h}}$,  provided that the constant $c_1$ in \eqref{eq:0908} is chosen sufficiently small. 
  %\todo[inline]{This is where the $1/C$ from \eqref{eq:0908} is needed} 
Thanks to equation  \eqref{eq:reprj},  the following equality holds in $Q_j^\ast\cap \mathcal W_{\mathbb P}$:
    \begin{align}\label{eq:chain}
  \eps (\mathscr E_\Omega\bfu)
&    =  \sum_{k \in A_j^{\mathbb M}}\sum_{h=2}^{m_{jk}} \nabla \phi_k \otimes^\sym \Big(\widetilde{\mathcal{R}}_{j_h}(\bfu)  - \widetilde {\mathcal R}_{j_{h-1}}(\bfu)\Big).
  \end{align}
%  \textcolor{blue}{
%  Consider for instance
%  \begin{align*}
%   \sum_{k \in A_j^{\mathbb M}}\sum_{h=2}^{m_{jk}} \nabla \phi_k \otimes^\sym \Big(\widetilde{\mathcal{R}}_j(\bfu)(\theta(x-x_k)+\widetilde x_j) - \widetilde{\mathcal{R}}_j(\bfu)(\theta(x-x_j)+\widetilde x_j)\Big).
%  \end{align*}
%  Now consider the cube $\widetilde Q_{jk}$ defined via a shift
%  of $\widetilde Q_j$ by $\frac{1}{\theta}(x_k-x_j)$. The corresponding rigid motion $\widetilde{\mathcal{R}}_{jk}$ satisfies
%  \begin{align*}
%  \widetilde{\mathcal{R}}_{jk}(\bfu)(\theta(x-x_k)+\widetilde x_j)=\widetilde{\mathcal{R}}_j(\bfu)(\theta(x-x_j)+\widetilde x_j)
%  \end{align*}
%  and we obtain
%    \begin{align*}
%       \sum_{k \in A_j^{\mathbb M}}&\sum_{h=2}^{m_{jk}} \nabla \phi_k \otimes^\sym \Big(\widetilde{\mathcal{R}}_j(\bfu)(\theta(x-x_k)+\widetilde x_j) - \widetilde{\mathcal{R}}_j(\bfu)(\theta(x-x_j)+\widetilde x_j)\Big)\\
%  &= \sum_{k \in A_j^{\mathbb M}}\sum_{h=2}^{m_{jk}} \nabla \phi_k \otimes^\sym \Big(\widetilde{\mathcal{R}}_j(\bfu)(\theta(x-x_k)+\widetilde x_j) - \widetilde{\mathcal{R}}_{jk}(\bfu)(\theta(x-x_k)+\widetilde x_j)\Big)\\
%    &= \sum_{k \in A_j^{\mathbb M}}\sum_{h=2}^{m_{jk}} \nabla \phi_k \otimes^\sym \Big(\widetilde{\mathscr L}_j(\ep(\bfu))(\theta(x-x_k)+\widetilde x_j) - \widetilde{\mathscr{L}}_{jk}(\ep(\bfu))(\theta(x-x_k)+\widetilde x_j)\Big)
%  \end{align*}
%  }
  In $Q_j^\ast\setminus \mathcal W_{\mathbb P}$, 
%    a chain analogous to \eqref{eq:chain} holds, where all equalities hold with the 
  the extra addend  \eqref{eq:2308} appears in  \eqref{eq:chain}.
Now,  let us set
  \begin{align*}
\mathscr L(\bfu)&=\chi_{\Omega}\eps(\bfu)+\mathscr L^{1}(\bfu)+\mathscr L^{2}(\bfu),\\
    \mathscr L^{1}(\bfu)&= \sum_{j\in\mathbb M}\chi_{Q_j^\ast\cap \mathcal W_{\mathbb P}}\sum_{k \in A_j^{\mathbb M}}\sum_{h=2}^{m_{jk}} \nabla \phi_k \otimes^\sym \Big(\widetilde{\mathcal{R}}_{j_h}(\bfu) - \widetilde {\mathcal R}_{j_{h-1}}(\bfu)\Big), \\
%    \mathscr L^{12}(\bfE)&= \sum_{j\in\mathbb M}\chi_{Q_j^\ast\cap \mathcal W_{\mathbb P}}\sum_{k \in A_j^{\mathbb M}}\sum_{h=2}^{m_{jk}} \nabla \phi_k \otimes^\sym\tilde{\mathscr{L}}_{j_{h-1}}(\bfE)(\theta(x-x_k)+\widetilde x_{j_{h-1},j_{h}}^*),\\
%    \mathscr L^2(\bfu)&=\mathscr L^{21}(\bfu)+\mathscr L^{22}(\bfu)+\mathscr L^{23}(\bfu)+\mathscr L^{24}(\bfu),\\
%    \mathscr L^{21}(\bfu)&=\sum_{j\in\mathbb M}\chi_{Q_j^\ast\cap\mathcal W_{\mathbb P}} \sum_{k \in A_j^{\mathbb M}}\sum_{h=2}^{m_{jk}} \nabla \phi_k \otimes^\sym \widetilde\bfR_{j_h}(\bfu)( \widetilde x_{j_h}-\widetilde x_{j_{h-1},j_{h}}^*),\\
%    \mathscr L^{22}(\bfu)&= \sum_{j\in\mathbb M}\chi_{Q_j^\ast\cap \mathcal W_{\mathbb P}}\sum_{k \in A_j^{\mathbb M}}\sum_{h=2}^{m_{jk}} \nabla \phi_k \otimes^\sym \widetilde\bfR_{j_{h-1}}(\bfu)(\widetilde x_{j_{h-1},j_{h}}^*-\widetilde x_{j_{h-1}}),\\
%     \mathscr L^{23}(\bfu)&=\sum_{j\in\mathbb M}\chi_{Q_j^\ast\cap\mathcal W_{\mathbb P}}\sum_{k \in A_j^{\mathbb M}}\nabla \phi_k \otimes^\sym \widetilde\bfR_j(\bfu)\theta(x_j-x_k),\\
      \mathscr L^{2}(\bfu)&=\sum_{j\in\mathbb M}\chi_{Q_j^\ast\setminus \mathcal W_{\mathbb P}}\sum_{k \in A_j^{\mathbb M}} \nabla \phi_k \otimes^\sym \widetilde\bfu_j.
  \end{align*}
%  Although we may guess that
%  \begin{align}\label{eq:2008}
%      \eps (\mathscr E_\Omega\bfu)&=\mathscr L^1(\ep(\bfu))+\mathscr L^2(\bfu),
%      \end{align}
%      the computations above do not yet imply that $\mathscr E_\Omega\bfu$ is weakly differentiable (in the sense that $\eps (\mathscr E_\Omega\bfu)$ is an $L^1$-function) across $\partial\Omega$.
%  We postopne this question to the end of the proof, where we shall show that $\mathscr E_\Omega\bfu$ is actually Lipschitz continuous in $\R^n$, provided that $\bfu$ is smooth, and then make use of an approximation argument. Before addressing this issue, 
As a next step,
we prove that $\mathscr L^1 : E^1L^\infty (\Omega) \to L^\infty(\rn)$ and
$\mathscr L^1 : E^1L^1 (\Omega) \to L^1(\rn)$,  as well as $\mathscr L^2  : L^\infty (\Omega) \to L^\infty (\rn)$ and $\mathscr L^2  : L^1 (\Omega) \to L^1 (\rn)$. \\
%This will be accomplished by showing that the operators into which the operators $\mathscr L^i$ have been decomposed enjoy the relevant boundedness properties. \\
  First, consider $\mathscr L^{1}$.  
%  By equations \eqref{eq:0908} and \eqref{eq:chain'}, we have  that $|\widetilde x_{j_h}-\widetilde x_{j_{h-1},j_{h}}^*|\lesssim \,r_k$. Moreover, 
  Property \ref{itm:U4} implies  that $|\nabla\phi_k|\lesssim r_k^{-1}$. Hence,   if $\bfu \in E^1L^\infty(\Omega)$, then
 \begin{align}\label{eq:1408}
\|\mathscr L^{1}(\bfu)\|_{L^\infty(Q_j^*)}
&\lesssim \sum_{k\in A_j^{\mathbb M}}\sum_{h=2}^{m_{jk}}r_k^{-1}\|\widetilde{\mathcal{R}}_{j_h}(\bfu) - \widetilde {\mathcal R}_{j_{h-1}}(\bfu)\|_{L^\infty(Q_{j_h}^\ast)}
%\\
%&\lesssim \sum_{k\in A_j^{\mathbb M}}\sum_{h=2}^{m_{jk}}r_k^{-n-1}\|\widetilde{\mathcal{R}}_{j_h}(\bfu)- \widetilde {\mathcal R}_{j_{h-1}}(\bfu)\|_{L^1(\tilde Q_{j_h}^\ast)}
\\ \nonumber
&\lesssim\sum_{k\in A_j^{\mathbb M}}\sum_{h=2}^{m_{jk}}r_k^{-1}\|\widetilde{\mathcal{R}}_{j_h}(\bfu) - \widetilde {\mathcal R}_{j_{h-1}}(\bfu)\|_{L^\infty(\tilde Q_{j_h}^\ast\cap \tilde Q_{j_{h-1}}^\ast)}
\\ \nonumber
&= \sum_{k\in A_j^{\mathbb M}}\sum_{h=2}^{m_{jk}}r_k^{-1}\|\widetilde{\mathscr{L}}_{j_h}(\ep(\bfu)) - \widetilde {\mathscr L}_{j_{h-1}}(\ep(\bfu))\|_{L^\infty(\tilde Q_{j_h}^\ast\cap \tilde Q_{j_{h-1}}^\ast)}
%\\  \nonumber
%&\lesssim \sum_{k\in A_j^{\mathbb M}}\sum_{h=2}^{m_{jk}}r_k^{-n}\big(\|\ep(\bfu)\|_{L^1(\tilde Q_{j_h}^\ast)}+\|\ep(\bfu)\|_{L^1(\tilde Q_{j_{h-1}}^\ast)}\big)
\\ \nonumber
&\lesssim \sum_{k\in A_j^{\mathbb M}}\sum_{h=2}^{m_{jk}}\big(\|\ep(\bfu)\|_{L^\infty(\widetilde Q_{j_h}^\ast)} +\|\ep(\bfu)\|_{L^\infty(\tilde Q_{j_{h-1}}^\ast)}\big) \lesssim \|\eps(\bfu)\|_{L^\infty(\Omega)}
\end{align}
for every  $Q_j^*$, where the  second inequality holds by equation \eqref{norm-equiv4},   the third inequality by inequality \eqref{eq:extE2}, and the last one
since the sums   are extended to a finite set of addends, with a uniform upper bound for the number of addends.
Therefore, $\mathscr L^{1}: E^1L^\infty(\Omega)\rightarrow L^\infty(\R^n)$. 
\\ The boundedness of $\mathscr L^1 : E^1L^1 (\Omega) \to L^1(\rn)$ follows from the fact that,  similarly to \eqref{eq:1408},  
\begin{align*}
\int_{\R^n}&\bigg|\sum_{j\in\mathbb M}\chi_{Q_j^\ast\cap\mathcal W_{\mathbb P}}\sum_{k\in A_j^{\mathbb M}}\sum_{h=2}^{m_{jk}}\nabla \phi_k \otimes^\sym\big(\widetilde{\mathcal{R}}_{j_h}(\bfu)- \widetilde {\mathcal R}_{j_{h-1}}(\bfu)\big)\bigg|\dx
%\\
%&
\\ \nonumber & \lesssim  \sum_{j\in\mathbb M}\sum_{k\in A_j^{\mathbb M}}\sum_{h=2}^{m_{jk}}  \frac 1{r_k} \|\widetilde{\mathscr{L}}_{j_h}(\ep(\bfu)) - \widetilde {\mathscr L}_{j_{h-1}}(\ep(\bfu))\|_{L^1(\tilde Q_{j_h}^\ast\cap \tilde Q_{j_{h-1}}^\ast)}
\\ \nonumber &
\lesssim \sum_{j\in\mathbb M}\sum_{k\in A_j^{\mathbb M}}\sum_{h=2}^{m_{jk}}\big(\|\eps(\bfu)\|_{L^1(\widetilde Q_{j_{h}}^\ast)}+ \|\ep(\bfu)\|_{L^1(\tilde Q_{j_{h-1}}^\ast)}\big) \lesssim \|\eps(\bfu)\|_{L^1(\Omega)}
%leq\,c\,\int_{\R^n}\chi_{\cup_{j\in\mathbb M}\mathcal O_{j}^\ast}|\bfu|\dx\rightarrow0
\end{align*}
for $\bfu \in  E^1L^1(\Omega)$.  
%as $J\rightarrow\mathbb M$ by \ref{itm:W4} and $\bfu\in L^1(\Omega)$, where $\mathcal O_{j}^\ast:=\bigcup_{k\in A_j}\bigcup_{l=1}^{m_{jk}} \widetilde Q^\ast_{j_h}$ with $\mathcal L^n(\mathcal O_j^\ast)\leq\,c \mathcal L^n(\widetilde Q^\ast_j)$. 
%By \ref{itm:W4} and the fact that the two inner sums are finite (with a uniformly bounded number of addedns)
%the sum in the definition of $\mathscr L^{21}$ converges in $L^1(\R^n)$ and 
%we obtain
% \begin{align*}
%\int_{\R^n}|\mathscr L^{21}(\bfu)|\dx\leq\,c\,\sum_{j\in\mathbb M}\sum_{k\in A_j}\sum_{h=1}^{m_{jk}}\|\bfu\|_{L^1(\widetilde Q_{j_h}^\ast)}\leq\,c\,\|\bfu\|_{L^1(\Omega)}.
%\end{align*}
%Here, we have made use of     property \ref{itm:W3} and of the uniform boundedness of $m_{jk}$ in $j$ and $k$.
%The corresponding property for the operators
%$\mathscr L^{22}$ and $\mathscr L^{23}$ can be established analogously. 
\\
As far as $\mathscr L^{2}$ is concerned, notice that only cubes   $Q_j^*$, with $j\in\mathbb M$, such that $Q_j^\ast\setminus\mathcal W_{\mathbb P}\neq \emptyset$  are in question. By \ref{itm:W2}, each cube with this property  satisfies {\color{black} $\partial Q_j \cap \partial Q_0$ for some  cube $Q_0$ in the family $\{Q_j\}_{j\in\N\setminus\mathbb P}$. Hence, by property \ref{itm:W3'} and the definition of $\mathbb M$, its side-length $r_0$ satisfies
$r_0> \tfrac c2 \varepsilon\delta$.} Thus, by \ref{itm:W3'} again,  
\begin{align}\label{eq:2308b}
r_j\geq \frac{1}{2}r_0\geq \frac{c}{4}\varepsilon\delta.
\end{align}
Inequality \eqref{eq:2308b} in turn  implies a  parallel uniform lower bound for   the side-length $r_k$ of all cubes entering the definition of $\mathscr L^{2}$. Hence,   we obtain that  $|\nabla\phi_k|\lesssim \frac 1{\varepsilon \delta}$ for every $k \in A_j^{\mathbb M}$. 
Thus, 
 owing to properties
\ref{itm:W3} and \eqref{new35}, %\eqref{norm-equiv4} and \eqref{eq:extE_A},
 \begin{align}\label{eq:1408'}
\|\mathscr L^{2}(\bfu)\|_{L^\infty(Q_j^\ast)}
\lesssim \|\widetilde{\bfu}_j\|_{L^\infty(Q_j^\ast)}
%{\color{red}
 %\lesssim}\,\|\widetilde{\mathcal{R}}_j(\bfu)\|_{L^\infty(\widetilde Q_j^\ast)}
 \lesssim \|\bfu\|_{L^\infty(\widetilde Q_j^\ast)}\lesssim \|\bfu\|_{L^\infty(\Omega)}
\end{align}
for $j \in \mathbb M$ and $\bfu \in L^\infty (\Omega)$.
This implies that $\mathscr L^{2}:L^\infty(\Omega)\rightarrow L^\infty(\R^n)$. 
%Altogether,
%we conclude that $\mathscr L^{2}:L^\infty(\Omega)\rightarrow L^\infty(\R^n)$.
%For the estimate in $L^1$ we have to deal with the infinite sum in the definition of
%$\mathscr L^{21}$. Let us assume that $J\subset\mathbb M$ is finite and $\bfu\in L^1(\Omega)$. 
%The estimates for $\mathscr L^{22}$ and $\mathscr L^{23}$ are completely analogous. 
\\ The boundedness of  $\mathscr L^2:L^1(\Omega)\rightarrow L^1(\R^n)$  follows from the fact that, similarly to \eqref{eq:2708} and  owing to \eqref{eq:2308b},
\begin{align*}
\int_{\R^n}\bigg|\sum_{j\in\mathbb M}\chi_{Q_j^\ast\cap W_{\mathbb P}}\sum_{k \in A_j}^{\mathbb M} \nabla \phi_k \otimes^\sym \widetilde\bfu_j\bigg|\dx\lesssim \sum_{j\in\mathbb M}\int_{Q_j^\ast}|\widetilde\bfu_j|\dx \lesssim \int_\Omega|\bfu|\dx.
\end{align*}
Let us next establish property \eqref{new18}. Thanks to the estimates  proved above, it suffices to show that, if  $\bfu \in E^1L^1(\Omega)+E^1 L^\infty (\Omega)$, then
\begin{equation}\label{mar125}
\mathscr E_\Omega\bfu \in E^1_{\rm loc}L^1(\rn).
\end{equation}
%With this property at our disposal, {\color{black} equations \eqref{ext1} and}  \eqref{eq:epu2'} will follow.
% (that is \eqref{eq:2008}).
%It will then follow that
%  \begin{align}\label{eq:2008}
%      \eps (\mathscr E_\Omega\bfu)&=\mathscr L^1(\ep(\bfu))+\mathscr L^2(\bfu)
%      \end{align}
%for $\bfu \in E^1L^1(\Omega) +  E^1L^\infty(\Omega)$.
%      the computations above do not yet imply that $\mathscr E_\Omega\bfu$ is weakly differentiable (in the sense that $\eps (\mathscr E_\Omega\bfu)$ is an $L^1$-function) across $\partial\Omega$.
%  We postopne this question to the end of the proof, where we shall show that 
%We are going to show that, in fact,
%$\mathscr E_\Omega\bfu$ is   Lipschitz continuous in $\R^n$. 
 Property  \eqref{mar125} will follow if we  show that $\mathscr E_\Omega\bfu \in E^1L^1(B)$, where $B$ is a ball in each of the  following three cases: $\overline B \subset \Omega$, $\overline B \subset \rn \setminus \overline \Omega$, the center of $B$ belongs to $\partial \Omega$.
\\ The case when $\overline B \subset \Omega$ is trivial, since $\mathscr E_\Omega\bfu = \bfu$ in $\Omega$, and $\bfu \in E^1L^1(B)$, inasmuch as {\color{black} $\bfu  \in E^1_{\rm loc} L^1(\Omega)$}.
%
%$\bfu  \in E^1L^1(\Omega) + E^1L^\infty (\Omega)$. 
\\ If $\overline B \subset \rn \setminus \overline \Omega$, then the conclusion follows from the fact that, by \eqref{Eomega} and  \ref{itm:W3},  in $B$ the function $\mathscr E_\Omega\bfu$ agrees with a finite sum of products of affine functions times smooth functions.
\\ Assume now that the center of $B$ belongs to $\partial \Omega$. The properties of the cubes $\{Q_j\}$ and of their reflected cubes ensure that a function $\phi \in C^\infty_0(\rn)$ can be chosen with a  support so large  that $\phi=1$ in $B$ and $\mathscr E_\Omega (\phi \bfu) = \mathscr E_\Omega \bfu$ in $B$. Thus, on replacing $\bfu$ by $\phi \bfu$, it suffices to prove that $\mathscr E_\Omega\bfu \in E^1L^1(B)$ under the additional assumption that $\bfu$ has a bounded support. This piece of information, combined with the assumption that $ E^1L^1(\Omega)+E^1 L^\infty (\Omega)$,
 %$\bfu  \in E^1L^1(\Omega) + E^1L^\infty (\Omega)$, 
ensures that, in fact, $\bfu  \in E^1L^1(\Omega)$. Assume that we already know that, if 
$\bfu\in C^\infty(\overline{\Omega})$, then $\mathscr E_\Omega \bfu\in  E^1L^1(\rn)$. Hence,  
%equation  \eqref{eq:epu2'} holds for such a function $\bfu$, and, 
by the estimates
for the operators $\mathscr L^1$ and $\mathscr L^2$ established above, there exists a constant $c=c(\Omega)$ such that
\begin{equation}\label{mar126}
\|\mathscr E_\Omega \bfu\|_{E^1L^1(\rn)} \leq c \| \bfu\|_{E^1L^1(\Omega)}.
\end{equation}
Owing to Theorem \ref{thm:smooth}, there exists a sequence $\{\bfu_k\}\subset C^\infty(\rn)$ such that  $\bfu_k \to \bfu$ in $E^1L^1(\Omega)$,  where $\bfu_k$ still denotes   the restriction of $\bfu_k$ to $\Omega$. Moreover, since $\bfu$ has bounded support, the functions $\bfu_k$, defined according to equation \eqref{mar130}, also have (uniformly) bounded supports.
From inequality  \eqref{mar126} one has that $\mathscr E_\Omega \bfu_k$ is a Cauchy sequence in $E^1L^1(\rn)$, and hence converges to some function $\overline \bfu \in E^1L^1(\rn)$. Since $\mathscr E_\Omega \bfu_k = \bfu_k$ and  $\mathscr E_\Omega \bfu = \bfu$ in $\Omega$, we have that  $\mathscr E_\Omega \bfu_k \to \mathscr E_\Omega\bfu$ in $L^1(\Omega)$. Moreover, owing to equation \eqref{eq:2708} applied to $\bfu_k - \bfu$, we have that  $\mathscr E_\Omega \bfu_k \to \mathscr E_\Omega\bfu$ in $L^1(\rn \setminus \Omega)$. Therefore,  $\mathscr E_\Omega \bfu_k \to \mathscr E_\Omega\bfu$ in $L^1(\rn)$, whence $\mathscr E_\Omega\bfu = \overline{\bfu} \in  E^1L^1(\rn)$.
\\ It thus remains to show that if $\bfu\in C^\infty(\overline{\Omega})$ and has bounded support, then $\mathscr E_\Omega \bfu\in  E^1L^1(\rn)$. 
%\todo[inline]{A: check if we may assume that $\bfu_k$ have also uniformly bounded supports}
Since, as observed above,  the function $\mathscr E_\Omega \bfu$ has also a bounded support in $\rn$, it suffices to prove that 
\begin{equation}\label{mar131}
\text{$\mathscr E_\Omega 	\bfu$ is Lipschitz continuous in $\rn$.}
\end{equation}
Equation \eqref{mar131} will in turn follow if we show that
\begin{equation}\label{mar132}
\text{$\mathscr E_\Omega 	\bfu$ is Lipschitz continuous in $\Omega$,}
\end{equation}
\begin{equation}\label{mar133}
\text{$\mathscr E_\Omega 	\bfu$ is Lipschitz continuous in $\rn \setminus \overline \Omega$,}
\end{equation}
\begin{equation}\label{mar134}
\text{$\mathscr E_\Omega 	\bfu$ is  continuous in $\rn$.}
\end{equation}
Property \eqref{mar132} holds since  $\mathscr E_\Omega \bfu = \bfu$ in $\Omega$ and $\bfu \in C^{\infty}(\overline \Omega)$ and has a bounded support, and hence $\mathscr E_\Omega \bfu$ agrees, in $\Omega$, with the restriction of a Lipschitz continuous function. 
\\ Let us now focus on \eqref{mar133}. 
%
%
%
%By the smooth approximation result of Theorem \ref{thm:smooth},  it suffices to consider the case when $\bfu\in C^\infty(\overline{\Omega})$. \textcolor{black}{Under this assumption, we are going to prove that
%\begin{align}\label{eq:0209'}
%\mathscr E_\Omega\bfu\in W^{1,\infty}(\R^n).
%\end{align}
Similarly to equation \eqref{eq:chain}, owing to  \ref{itm:U2} one has that
\begin{align}\label{mar135}
    \nabla \mathscr E_\Omega\bfu 
    %&=\nabla\sum_{k \in A_j} \phi_k(x)  (\widetilde\bfu_k(x)-(\bfu)_{\widetilde Q_j^\ast})\\&
    &= \sum_{k \in A_j^{\mathbb M}} \nabla \phi_k\otimes \widetilde\bfu_k+\sum_{k \in A_j^{\mathbb M}}  \phi_k \nabla\widetilde\bfu_k 
   \\ \nonumber
   &
    = \sum_{k \in A_j^{\mathbb M}} \nabla \phi_k \otimes (\widetilde\bfu_k - \widetilde\bfu_j)+\sum_{k \in A_j^{\mathbb M}}  \phi_k \nabla\widetilde\bfu_k
 + \chi_{Q_j^\ast\setminus \mathcal W_{\mathbb P}} \sum_{k \in A_j^{\mathbb M}} \nabla \phi_k \otimes \widetilde \bfu _j \quad
 \text{in $Q_j^\ast$.}
    %+\widetilde\bfu_j(x)-(\bfu)_{\widetilde Q_j^\ast}
    \end{align}
%    for $x\in Q_j^\ast\cap\mathcal W_{\partial\Omega}^<$ using \ref{itm:U2} (if $x\in Q_j^\ast\cap \mathcal W_{\partial\Omega}^>$ an additional error terms as in \eqref{eq:2308} appears where $\otimes^{sym}$ is replaced by $\otimes$). 
One can estimate the $L^\infty (Q_j^\ast)$ norm  of the first and last  term on the rightmost side of equation \eqref{mar135}  via analogous arguments as in the proof of the $L^\infty$ estimates for the operators  $\mathscr L^1$ and $\mathscr L^2$ above,  after replacing $\otimes^{sym}$  by $\otimes$.
As for the middle term,  by \eqref{eq:extgrad} and \ref{itm:W3}, one has that
    \begin{align*}
        \bigg\|\sum_{k \in A_j^{\mathbb M}}  \phi_k \nabla\widetilde\bfu_k\bigg\|_{L^\infty(Q_j^\ast)}&\lesssim \sum_{k\in A_j^{\mathbb M}}\|\nabla\widetilde\bfu_k\|_{L^\infty(Q_k^\ast)}\lesssim \sum_{k\in A_j^{\mathbb M}}\|\nabla\widetilde{\mathcal R}_k\|_{L^\infty(\widetilde Q_k^\ast)}
\\
  &
=  \sum_{k\in A_j^{\mathbb M}}|\widetilde{\bfR}_k|\lesssim  \sum_{k\in A_j^{\mathbb M}}\|\nabla\bfu\|_{L^\infty(\widetilde Q_k^\ast)}\lesssim \|\nabla\bfu\|_{L^\infty(\Omega)}.
    \end{align*}
    %Furthermore, it holds
   % \begin{align*}
   %   \|\widetilde\bfu_j-(\bfu)_{\widetilde Q_j}\|_{L^\infty(Q_j^\ast)}
  %    &\leq \,  \,c\,\norm{\widetilde{\mathcal R}_ j(\bfu) -
  %      (\bfu)_{\widetilde Q_j}}_{L^\infty(\widetilde Q_j^\ast)}
   %   \leq\,\norm{\widetilde{\mathcal R}_ j(\bfu) -
   %     \bfu}_{L^\infty(\widetilde Q_j^\ast)}+\|\bfu-(\bfu)_{\widetilde Q_j^\ast}\|_{L^\infty(\widetilde Q_j^\ast)}\\
    %    &\leq\,c\,r_j\Big(\norm{
    %    \ep(\bfu)}_{L^\infty(\widetilde Q_j^\ast)}+\|\nabla\bfu\|_{L^\infty(\widetilde Q_j^\ast)}\Big)\leq\,c\,r_j\|\nabla\bfu\|_{L^\infty(\Omega)}
%  \end{align*}
%  by \eqref{itm:meanw1} and Poincar\'e's inequality. 
 This shows that
\begin{align}\label{sep10}
\mathscr E_\Omega\bfu\in W^{1,\infty}(\R^n\setminus \overline \Omega).
\end{align}
 The next step consists in proving that property \eqref{sep10} implies \eqref{mar133}.
To this purpose, we first show that
\begin{align}\label{eq:2008b}
    \|\mathscr E_\Omega\bfu-\bfu_{\widetilde Q_{j_0}}\|_{L^\infty(Q_{j_0})}\leq\,c\,r_{j_0}\|\nabla\bfu\|_{L^\infty(\Omega)}
\end{align}
for any cube $Q_{j_0}\in \{Q_j\}_{j\in\mathbb M}$. Here $\widetilde Q_{j_0}$ denotes the reflected cube of $Q_{j_0}$ and $\bfu_{\widetilde Q_{j_0}}$ the mean value of $\bfu$ over $\widetilde Q_{j_0}$.
In order to prove \eqref{eq:2008b}, observe that, by  \ref{itm:U2},
\begin{align}\label{mar140}
\mathscr E_\Omega\bfu-\bfu_{\widetilde Q_{j_0}}=\sum_{k\in A_{j_0}^{\mathbb M}}\phi_k(\widetilde\bfu_k-\widetilde\bfu_{j_0})+\widetilde\bfu_{j_0}-\bfu_{\widetilde Q_{j_0}} \quad \text{in $Q_{j_0}$.}
\end{align}
Here $A_{j_0}^{\mathbb M}$ is defined as in  \eqref{ajm}.
%denotes the set of direct neighbours of $Q_0$, including $Q_0$.
By the definition of  $\widetilde\bfu_k$ and equations \eqref{norm-equiv4} and \eqref{norm-equiv3}, one has that
%
%\eqref{eq:0908} we have
\begin{align}\label{mar141}
    \norm{\widetilde\bfu_k-
        \widetilde\bfu_{j_0}}_{L^\infty(Q_{j_0})}\lesssim \norm{\widetilde{\mathcal R}_ k(\bfu)-
        \widetilde{\mathcal R}_{j_0}(\bfu)}_{L^\infty( Q_{j_0}^*)}\lesssim \dashint_{\tilde Q_{j_0}^*} \abs{\widetilde{\mathcal R}_ k(\bfu)-
        \widetilde{\mathcal R}_{j_0}(\bfu)}\dx.
\end{align}
% using also that the space of rigid motions is finite-dimensional \todo[inline]{A: don't we need to know that the constants in the equivalence of the norms are independent of the cubes?}. 
Since $\widetilde Q_k$ and $\widetilde Q_{j_0}$ are not necessarily  neighbours, we argue as in \eqref{eq:chain} and connect them via a chain of cubes $\widetilde Q_{j_1},\dots,\widetilde Q_{j_m}$, with $\widetilde Q_{j_1}=\widetilde Q_{j_0}$ and $\widetilde Q_{j_m}=\widetilde Q_{k}$, such that $m=m_{{j_0},k}$ is uniformly bounded independently of ${j_0}$ and $k$,  the cubes $\widetilde Q_{j_h}$, with $h=1, \dots, m$,  have side-lengths equivalent to $r_{j_0}$, and the measure of their intersections is equivalent  to that of $\widetilde Q_{j_0}$.  We infer that
 \begin{align}\label{mar137}
    \dashint_{\widetilde Q_{j_0}^*} \abs{\widetilde{\mathcal R}_ k(\bfu)-
        \widetilde{\mathcal R}_{j_0}(\bfu)}\dx\leq\,\sum_{h=2}^{m}\dashint_{\widetilde Q_{j_0}^\ast} \abs{\widetilde{\mathcal R}_ {j_h}(\bfu)-
        \widetilde{\mathcal R}_{j_{h-1}}(\bfu)}\dx\lesssim \sum_{h=2}^{m}\dashint_{\widetilde Q_{j_{h}}^*} \abs{\widetilde{\mathcal R}_ {j_h}(\bfu)-
        \widetilde{\mathcal R}_{j_{h-1}}(\bfu)}\dx.
\end{align}
Observe that the last inequality holds owing to equation  \eqref{norm-equiv4},
%
% the fact that
%the norms $\int_{\widetilde Q_{j_{h}}^*}|\cdot|\dx$ and $\int_{\widetilde Q_{j_0}^*}|\cdot|\dx$ are equivalent, uniformly in $h$, on the space $\mathcal R$, 
since $|\widetilde Q_{j_{h}}^*|\approx |\widetilde Q_{j_0}^*|$ for $h=1, \dots, m$.
%using again that the space of rigid motions is finite dimensional.
%Arguing exactly as in \eqref{eq:2008c} and using
 Thanks to equation \eqref{norm-equiv4} and inequality \eqref{itm:meanw1},  
 \begin{align}\label{mar138}
 \dashint_{\widetilde Q_{j_{h}}^*} \abs{\widetilde{\mathcal R}_ {j_h}(\bfu)-
        \widetilde{\mathcal R}_{j_{h-1}}(\bfu)}\dx &
 \lesssim
 \dashint_{\widetilde Q_{j_{h-1}}^* \cap \widetilde Q_{j_{h}}^*} \abs{\widetilde{\mathcal R}_ {j_h}(\bfu)-
        \widetilde{\mathcal R}_{j_{h-1}}(\bfu)}\dx
\\ \nonumber &
\lesssim \dashint_{\widetilde Q_{j_{h}}^*}\abs{\bfu - \widetilde{\mathcal R}_{j_{h}}(\bfu)}\dx +
    \dashint_{\widetilde Q_{j_{h-1}}^*} \abs{\bfu -
        \widetilde{\mathcal R}_{j_{h-1}}(\bfu)}\dx
\\ \nonumber
        &\lesssim r_{j_h}\, \dashint_{\widetilde Q_{j_{h}}^*}\abs{\ep(\bfu)}\dx +
    r_{j_{h-1}}\, \dashint_{\widetilde Q_{j_{h-1}}^*} \abs{\ep(\bfu)}\dx
%\\
%    &
\\ \nonumber &
\lesssim \,r_{j_0}\|\ep(\bfu)\|_{L^\infty(\Omega)} \lesssim r_{j_0}\|\nabla \bfu\|_{L^\infty(\Omega)}
  \end{align}
 for $h=1, \dots, m$. Notice that  we have also made use of the equivalence $r_{j_h}\approx r_{j_0}$ for these values of $h$.
  %Note that we also took into account that $r_0\sim\tilde r_0\sim\tilde r_k$ by \ref{itm:R1} and \ref{itm:W3'}. 
  Finally, on exploiting equations \eqref{norm-equiv4} and \eqref{norm-equiv3} again, we deduce from inequality  \eqref{itm:meanw1} and a standard Poincar\'e inequality  that
   \begin{align}\label{mar139}
      \|\widetilde\bfu_{j_0}-\bfu_{\widetilde Q_{j_0}}\|_{L^\infty(Q_{j_0})}&\lesssim \norm{\widetilde{\mathcal R}_ {j_0}(\bfu) -
        \bfu_{\widetilde Q_{j_0}}}_{L^\infty(\widetilde Q_{j_0})}
\lesssim  \dashint_{\widetilde Q_{j_0}^*} |\widetilde{\mathcal R}_ {j_0}(\bfu) -
        \bfu_{\widetilde Q_{j_0}}|\dx 
\\  \nonumber & \lesssim  \dashint_{\widetilde Q_{j_0}^*} |\widetilde{\mathcal R}_ {j_0}(\bfu) -
        \bfu|\dx 
+ \dashint_{\widetilde Q_{j_0}^*} | \bfu -
        \bfu_{\widetilde Q_{j_0}}|\dx 
\\  \nonumber &
\lesssim  \dashint_{\widetilde Q_{j_0}^*} |
       \ep(\bfu)|\dx 
+ \dashint_{\widetilde Q_{j_0}^*} | \nabla \bfu 
        |\dx 
\lesssim \,r_{j_0}\|\nabla\bfu\|_{L^\infty(\Omega)}.
  \end{align}
%
%
%  \begin{align*}
%      \|\widetilde\bfu_0-(\bfu)_{\widetilde Q_0}\|_{L^\infty(Q_0)}&\leq \,  \,c\,\norm{\widetilde{\mathcal R}_ 0(\bfu) -
%        (\bfu)_{\widetilde Q_0}}_{L^\infty(\widetilde Q_0)}
%      \leq\,\norm{\widetilde{\mathcal R}_ 0(\bfu) -
%        \bfu}_{L^\infty(\widetilde Q_0^\ast)}+\|\bfu-(\bfu)_{\widetilde Q_0}\|_{L^\infty(\widetilde Q_0)}\\
%        &\leq\,c\,r_0\Big(\norm{
%        \ep(\bfu)}_{L^\infty(\widetilde Q_0^\ast)}+\|\nabla\bfu\|_{L^\infty(\widetilde Q_0)}\Big)\leq\,c\,r_0\|\nabla\bfu\|_{L^\infty(\Omega)}
%  \end{align*}
%  by \eqref{itm:meanw1} and Poincar\'e's inequality.
Inequality  \eqref{eq:2008b} follows from inequalities \eqref{mar141}--\eqref{mar139}.
\\ We are now in a position to accomplish the proof of property \eqref{mar133}. 
Assume that $x,y\in\R^n\setminus \overline\Omega$. We shall 
  prove that
\begin{align}\label{eq:0409}
    |\mathscr E_\Omega\bfu(x)-\mathscr E_\Omega\bfu(y)|\leq\,c\,|x-y|\|\nabla\bfu\|_{L^\infty(\Omega)}
\end{align}
for some constant $c$ independent of $x$ and $y$.
If   either $|x-y|<\mathrm{dist}(x,\partial\Omega)$,  or $|x-y|<\mathrm{dist}(y,\partial\Omega)$, then equation \eqref{eq:0409} trivially holds, since the points $x,y$ can be connected by a straight line in $\R^n\setminus\Omega$.   Consequently, we can assume that 
\begin{align}\label{eq:2209}
    |x-y|>\max\{\mathrm{dist}(x,\partial\Omega),\mathrm{dist}(y,\partial\Omega)\}.
\end{align}
We can also assume that  $|x-y|\leq c\varepsilon\delta$, for some constant $c$, otherwise \eqref{eq:0409} is trivial again.
If this constant $c$ is chosen sufficiently small,  depending only the  constant appearing in equation \eqref{red}, it follows from \eqref{eq:2209} that
there exist cubes $Q_x,Q_y\subset \{Q_j\}_{j\in\mathbb M}$, whose  side-lengths are bounded by $|x-y|$, containing $x$ and $y$. We consider the reflected cubes $\widetilde Q_x$ and $\widetilde Q_y$, containing points $\widetilde x$ and $\widetilde y$ respectively. Note that the side-lengths of $\widetilde Q_x$ and $\widetilde Q_y$ and the distances between each cube and its reflected cube are bounded by $|x-y|$ as well. Thereby,
\begin{align*}
    |\widetilde x-\widetilde y|\leq|\widetilde x-x|+|x-y|+|y-\widetilde y|\lesssim |x-y|.
\end{align*}
Hence, owing to  \eqref{eq:2008b} and to the Lipschitz continuity of $\bfu$ in $\Omega$, there exist constants $c$ and $c'$, independent of $\bfu$, such that
\begin{align*}
   |\mathscr E_\Omega\bfu(x)-\mathscr E_\Omega\bfu(y)|&\leq |\mathscr E_\Omega\bfu(x)-\bfu_{\widetilde Q_x}|+|\bfu_{\widetilde Q_x}-\bfu_{\widetilde Q_y}|+|\bfu_{\widetilde Q_y}-\mathscr E_\Omega\bfu(y)|\\
   &\leq\,c\,\big(r(\widetilde Q_x)+r(\widetilde Q_y)\big)\|\nabla\bfu\|_{L^\infty(\Omega)}+c\|\nabla\bfu\|_{L^\infty(\Omega)}\big(|\widetilde x-\widetilde y|+r(\widetilde Q_x)+r(\widetilde Q_y)\big)\\
   &\leq\,c\,|x-y|\|\nabla\bfu\|_{L^\infty(\Omega)}.
\end{align*}
Hence, inequality \eqref{eq:0409} follows. Property \eqref{mar133} is thus established.
\\ Finally, we are going to prove that inequality
\eqref{eq:2008b} implies Lipschitz continuity of $\mathscr E_\Omega\bfu$ across $\partial\Omega$,  and hence \eqref{mar134}. Let $x\in\R^n\setminus\Omega$ and $y\in\Omega$. We may assume that 
$|x-y|<c\varepsilon\delta$ for some constant $c=c(n)$, the case when $|x-y|\geq\,c\varepsilon\delta$ being trivial. Hence, in particular, $$\mathrm{dist}(x,\partial\Omega)\leq|x-y|\leq\, c\varepsilon\delta.$$
The constant $c$  can be chosen, depending on the constant appearing in inequality \eqref{red}, in such a way that there exists a cube $Q_x\in\{Q_j\}_{j\in\mathbb M}$ with $x\in Q_x$.  Consequently, by property     \ref{itm:Wbnd}, one has that $r(Q_x)\lesssim |x-y|$. Now,   
\begin{align*}
    \mathscr E_\Omega\bfu(x)-\mathscr E_\Omega\bfu(y)=\mathscr E_\Omega\bfu(x)-\bfu_{\widetilde Q_x}+\bfu_{\widetilde Q_x}-\bfu(y),
\end{align*}
where $\widetilde Q_x$ is the reflected cube of $Q_x$.
The absolute value of the first difference on the right-hand side of this equality can be bounded via inequality \eqref{eq:2008b}, with $Q_{j_0}$ replaced by $Q_x$. As for the second difference, one can  make use of property \eqref{mar132}, combined   with the inequality
\begin{align*}
    \mathrm{dist}(y,\widetilde Q_x)\leq\,|y-x|+\mathrm{dist}(x,\widetilde Q_x)\lesssim |x-y|.
\end{align*}
Note that the last inequality relies upon property \eqref{itm:R2}  and on the fact that $r(Q_x)\lesssim |x-y|$. Altogether, this shows that inequality \eqref{eq:0409} also holds if $x\in\R^n\setminus\Omega$ and $y\in\Omega$, thus establishing property \eqref{mar134}. 
\\ We conclude with a proof of inequality \eqref{ext*}. Thanks to properties \eqref{ext0} and \eqref{ALT}, to this purpose it suffices to show that there exists a constant $c=c(\Omega)$ such that
\begin{align}\label{new15}
\int_{\rn \setminus \overline \Omega} A\big(|\mathscr E_\Omega \bfu|+|\ep(\mathscr E_\Omega \bfu)|)\dx 
 \leq \int_\Omega A(c(|\bfu|+|\ep(\bfu)|)\dx
\end{align}
for every Young function $A$.
 \\ To begin with, one has that
\begin{align}\label{new8}
\int_{\rn \setminus \overline \Omega} A\big(|\mathscr E_\Omega \bfu|)\, dx & = 
  \int_{\R^n}A\Big(\Big|\sum_{j\in\mathbb M} \phi_j \widetilde \bfu_j\Big|\Big)\dx\leq \sum_{j\in\mathbb M}\int_{Q_j^\ast}A(|\widetilde \bfu_j|)\dx=\sum_{j\in\mathbb M}\int_{Q_j^\ast}A(|\widetilde{\mathcal{R}}_j(\bfu)|)\dx
\\
\nonumber
&  
  %\leq\,\sum_{j\in\mathbb M}\int_{\widetilde Q_j^\ast}A(c|\widetilde{\mathcal{R}}_j(\bfu)|)\dx
\leq \sum_{j\in\mathbb M}\int_{\widetilde Q_j^\ast}A(c|\bfu|)\dx\leq \int_\Omega A(c'|\bfu|)\dx,
\end{align}
for suitable constants $c$ and $c'$ depending on $n$ and $\Omega$ and for every function $\bfu \in \mathcal M(\Omega)$ making the integral on the rightmost  side of  chain \eqref{new8} finite.
 Notice that we have made use of the convexity of $A$ in the first inequality,  of  equations 
\eqref{new35} and \eqref{new20} in the second one,
%  the equivalence of integrals  on $\mathcal R$ independent of $A$
%\todo[inline]{to be checked} in the second one, of inequality \eqref{eq:extE_A} in the third one, 
and of 
  property   \ref{itm:W3} in the last one.
\iffalse

  \textcolor{blue}{ 
%  Let us assume that $Q_j^*$ and $\widetilde Q_j^*$ have side length one, the general case follows by scaling. 
%  Now 
  We have for $\bfv\in\mathcal R$
  \begin{align*}
\|\bfv\|_{L^\infty(Q_j^*)}\lesssim \frac{1}{r_j^n}\|\bfv\|_{L^1(Q_j^*)}\lesssim \frac{1}{r_j^n}\|\bfv\|_{L^1(\widetilde Q_j^*)}
  \end{align*}
  where the hidden constants do not depend on $j$. Hence there holds by Jensen's inequality  
    \begin{align*}
  \dashint_{Q_j^*}A(|\bfv|)\leq A(\|\bfv\|_{L^\infty(Q_j^*)})\leq A(cr_j^{-n}\|\bfv\|_{L^1(\widetilde Q_j^*)})\leq
   \dashint_{\widetilde Q_j^*}A(c|\bfv|)\dx,
  \end{align*}
  for any $\bfv\in\mathcal R$ with a constant independent of $A$ and $j$. 
  }

\fi
\\ Our next task is to show that 
\begin{align}\label{new11}
\int_{\rn \setminus \overline \Omega} A\big(|\ep(\mathscr E_\Omega \bfu)|)\dx 
 \leq \int_\Omega A(c(|\bfu|+|\ep(\bfu|))\dx.
\end{align}
We have that
 \begin{align}\label{new10}
\int_{\rn}A\big(|\mathscr L^{1}(\bfu)|\big)\dx & =
\int_{\rn}A\bigg(\bigg|\sum_{j\in\mathbb M}\chi_{Q_j^\ast\cap\mathcal W_{\mathbb P}}\sum_{k\in A_j^{\mathbb M}}\sum_{h=2}^{m_{jk}}\nabla \phi_k \otimes^\sym\big(\widetilde{\mathcal{R}}_{j_h}(\bfu)- \widetilde {\mathcal R}_{j_{h-1}}(\bfu)\big)    \bigg|\bigg)\dx
% \\  \nonumber 
%%&\lesssim \sum_{j\in\mathbb M}
%%\int_{Q_j^*}A\big(|\mathscr L^{1}(\bfu)|\big)\dx
% &
%\lesssim \sum_{j\in\mathbb M}   \int_{Q_{j_h}^\ast}A\bigg(c  \sum_{k\in A_j^{\mathbb M}}\sum_{h=2}^{m_{jk}}  r_k^{-1}|
%\widetilde{\mathcal{R}}_{j_h}(\bfu) - \widetilde {\mathcal R}_{j_{h-1}}(\bfu)|\bigg)\dx
 \\  \nonumber &\lesssim \sum_{j\in\mathbb M}\sum_{k\in A_j^{\mathbb M}}\sum_{h=2}^{m_{jk}}   \int_{Q_{j_h}^\ast}A\big(cr_k^{-1}|
\widetilde{\mathcal{R}}_{j_h}(\bfu) - \widetilde {\mathcal R}_{j_{h-1}}(\bfu)|\big)\dx
%\\ \nonumber
%&\lesssim \sum_{k\in A_j^{\mathbb M}}\sum_{h=2}^{m_{jk}}r_k^{-1}   \int_{\tilde Q_{j_h}^\ast}A\big(|
%\widetilde{\mathcal{R}}_{j_h}(\bfu) - \widetilde {\mathcal R}_{j_{h-1}}(\bfu)|\big)\dx
\\ \nonumber
&\lesssim \sum_{j\in\mathbb M} \sum_{k\in A_j^{\mathbb M}}\sum_{h=2}^{m_{jk}}   \int_{\tilde Q_{j_h}^\ast\cap \tilde Q_{j_{h-1}}^\ast}A\big(cr_k^{-1}|
\widetilde{\mathcal{R}}_{j_h}(\bfu) - \widetilde {\mathcal R}_{j_{h-1}}(\bfu)|\big)\dx
\\ \nonumber
&= \sum_{j\in\mathbb M}\sum_{k\in A_j^{\mathbb M}}\sum_{h=2}^{m_{jk}}    \int_{\tilde Q_{j_h}^\ast\cap \tilde Q_{j_{h-1}}^\ast}A\big(c'r_k^{-1}|
\widetilde{\mathscr{L}}_{j_h}(\ep(\bfu)) - \widetilde {\mathscr L}_{j_{h-1}}(\ep(\bfu)|\big)\dx
\\  \nonumber
&\lesssim \sum_{j\in\mathbb M}\sum_{k\in A_j^{\mathbb M}}\sum_{h=2}^{m_{jk}} \bigg(  \int_{\tilde Q_{j_h}^\ast}A\big(c'|
\ep(\bfu)|) \dx + \int_{\tilde Q_{j_{h-1}}^\ast}A\big(c'|
\ep(\bfu)|) \dx\bigg)
%\\ \nonumber
%&\lesssim   \sum_{j\in\mathbb M}\sum_{k\in A_j^{\mathbb M}}\sum_{h=2}^{m_{jk}} \bigg(  \int_{\tilde Q_{j_h}^\ast}A\big(c'|
%\ep(\bfu)|) \dx + \int_{\tilde Q_{j_{h-1}}^\ast}A\big(c'|
%\ep(\bfu)|) \dx\bigg)
\\ \nonumber
&\lesssim  \int_{\Omega}A\big(c'|
\ep(\bfu)|) \dx,
\end{align}
for suitable constants $c$ and $c'$ depending on $n$ and $\Omega$. 
Note that  the first inequality holds since $|\nabla \phi_k|\lesssim r_k^{-1}$ and  all sums   are finite, with a uniform upper bound for the number of addends, the second
 inequality holds by properties \eqref{norm-equiv4} and \eqref{new20},  the third one by \eqref{eq:extE2} and \eqref{new20}, 
 and the last one 
by the the finiteness of  the sums   again.
\\ On the other hand, thanks to inequality \eqref{eq:2308b}, one deduces analogously to \eqref{new8} that
\begin{align}\label{new45}
\int_{\rn}A\big(|\mathscr L^{2}(\bfu)|\big)\dx & =
\int_{\R^n}A\bigg(\bigg|\sum_{j\in\mathbb M}\chi_{Q_j^\ast\cap W_{\mathbb P}}\sum_{k \in A_j}^{\mathbb M} \nabla \phi_k \otimes^\sym \widetilde\bfu_j\bigg|\bigg)\dx\\ \nonumber & \lesssim \sum_{j\in\mathbb M}\int_{Q_j^\ast}A(c|\widetilde\bfu_j|)\dx \lesssim \int_\Omega A(c' |\bfu|)\dx
\end{align}
 for some constants $c$ and $c'$ depending on $n$ and $\Omega$. 
Inequality \eqref{new15} follows from \eqref{new10}and \eqref{new45}.

\end{proof}

\section{The $K$-functional for symmetric gradient Sobolev spaces}\label{Kfunct}

The notion of $K$-functional is fundamental in the theory of real interpolation of normed spaces. Loosely speaking, knowledge of the  $K$-functional for two couples of spaces enables one to deduce the boundedness of any sublinear operator between any pair of suitably related intermediate spaces from the boundedness of the relevant operator between the endpoint spaces.  Recall that, given a couple of normed spaces $(Z_0, Z_1)$, which are both continuously embedded into some Hausdorff vector space, the $K$-functional is defined for each $\zeta \in Z_0 + Z_1$ and $t>0$ as 
$$
K(\zeta, t; Z_0, Z_1) = \inf _{\begin{tiny} \begin{array}{c}
                       {\zeta = \zeta_0 + \zeta_1}\\
                        {\zeta_0 \in Z_0, \, \zeta_1 \in Z_1}
                      \end{array}
                      \end{tiny}} \big(\|\zeta_0\|_{Z_0} + t \|\zeta_1\|_{Z_1}\big).
$$
The $K$-functional for the couple of classical  $k$-th order Sobolev spaces $(W^{k,1}(\rn), W^{k,\infty}(\rn))$, with $k\in \setN$, has been computed (up to equivalence) in the paper \cite{DeSc}. In particular, \cite[Theorem 1]{DeSc} tells us that 
\begin{equation}\label{KW} K(\bfu, t; W^{1,1}(\rn), W^{1,\infty}(\rn)) \approx \int_0^t (\mathcal D\bfu)^*(s)\, \ds \quad \hbox{for $t>0$,}
\end{equation}
with equivalence constants depending on $n$, where $\mathcal D\bfu$ is defined by \eqref{Du}. A parallel formula is established in the same paper for the couple $(W^{k,1}(\Omega), W^{k,\infty}(\Omega))$, where $\Omega$ is any  open set in $\rn$  with a
minimally smooth boundary in the sense of  \cite[Chapter 6, Section 4]{stein}.
\par A version of equation \eqref{KW} for symmetric gradient Sobolev spaces in $\rn$, and a counterpart in any $(\varepsilon, \delta)$-domain $\Omega$, are the content of the following result.

\begin{theorem}\label{K-funct}{\rm{\bf [$K$-functional for the couple $(E^1L^1,E^{1} L^\infty)$]}}
%Let $n \geq 2$. 
\\ (i) One has that
\begin{align}\label{nov1}
K(\bfu,t; E^1L^1(\R^n),E^{1} L^\infty(\R^n))\approx \int _0^t \mathcal E(\bfu)^*(s) \ds \quad \text{for $t>0$,}
%\int_0^t \bfu^\ast(s)\ds+\int_0^t \ep(\bfu)^\ast(s)\ds.
\end{align}
for every $\bfu \in E^1L^1(\R^n) + E^{1} L^\infty(\R^n)$, with equivalence constants depending on $n$.
\\ 
(ii) One has that
\begin{align}\label{nov2'}
K(\bfu,t; E^1_b L^1(\R^n), E^1_b  L^\infty(\R^n))\approx \int _0^t \ep(\bfu)^*(s) \ds \quad \text{for $t>0$,}
%\int_0^t \bfu^\ast(s)\ds+\int_0^t \ep(\bfu)^\ast(s)\ds.
\end{align}
for every $\bfu \in E^1_bL^1(\R^n) + E^{1}_bL^\infty(\R^n)=E^1_bL^1(\R^n)$, and 
%\todo[inline]{A: I would drop equation \eqref{nov2''}, since we are not using it in the embeddings}
\begin{align}\label{nov2''}
K(\bfu,t; E^1_0 L^1(\R^n), E^1_0  L^\infty(\R^n))\approx \int _0^t \ep(\bfu)^*(s) \ds \quad \text{for $t>0$,}
%\int_0^t \bfu^\ast(s)\ds+\int_0^t \ep(\bfu)^\ast(s)\ds.
\end{align}
for every $\bfu \in E^1_0L^1(\R^n) + E^{1}_0L^\infty(\R^n)$,  with equivalence constants depending on $n$.
%=  E^1_0L^1(\R^n) + E^{1}_bL^\infty(\R^n)= E^1_0L^1(\R^n)$.}
%\\ {\color{black} 
%(ii)
%\begin{align}\label{nov2'}
%K(\bfu,t, E^1_b L^1(\R^n), E^1_b  L^\infty(\R^n))\approx \int _0^t \ep(\bfu)^*(s) \ds
%%\int_0^t \bfu^\ast(s)\ds+\int_0^t \ep(\bfu)^\ast(s)\ds.
%\end{align}
%for every $u \in E^1_bL^1(\R^n) + E^{1}_bL^\infty(\R^n)$.
% satisfying condition \eqref{supp}. Equation \eqref{nov2'} also holds for any function $u \in E^1X(\R^n)$ (not necessarily satifying \eqref{supp}) if $X(\R^n)$ is a rearrangement-invariant space fulfilling assumption \eqref{feb42}.
\iffalse
(ii) If $\Omega$ is an open set in $\R^n$  with $|\Omega|<\infty$,
%with finite measure, 
then 
\begin{align}\label{nov2}
K(\bfu,t;E^{1}_0L^1(\Omega),E^{1}_0L^\infty(\Omega))\approx \int _0^t \mathcal E(\bfu)^*(s) \ds
%\int_0^t \bfu^\ast(s)\ds+\int_0^t \ep(\bfu)^\ast(s)\ds.
\end{align}
for every $u \in E^{1}_0L^1(\Omega)+E^{1}_0L^\infty(\Omega)$, with equivalence constants depending on $\Omega$.
\fi
\\ (iii) Assume that
 $\Omega$ is an 
%{\color{black} bounded} \todo[inline]{A: is this needed?} 
$(\varepsilon, \delta)$-domain.  Then 
\begin{align}\label{nov3}
K(\bfu,t;E^{1}L^1(\Omega),E^{1}L^\infty(\Omega))\approx \int _0^t \mathcal E(\bfu)^*(s) \ds \quad \text{for $t>0$,}
%\int_0^t \bfu^\ast(s)\ds+\int_0^t \ep(\bfu)^\ast(s)\ds.
\end{align}
for every $\bfu \in E^{1}L^1(\Omega)+E^{1}L^\infty(\Omega)$, with equivalence constants depending on $\Omega$.

\iffalse
By \cite{Ad}[Lemma 4.22] every bounded domain with the cone property can be decomposed into a finite union of Lipschitz domains. On the hand every Lipschitz domain can be decomposed into a finite union of open sets which are star-shaped with respect to balls \cite{Ga}[Lemma 3.4, Chapter 3]. Consequently, it is easy to see the existence of
the operator $\mathscr{E}_\Omega$ with \eqref{ext1} and \eqref{ext0} continues to hold if we merely assume that $\Omega$ has the cone property. \todo[inline]{This cannot be true: a domain with the cone property need not be an extension domain even for standard Sobolev spaces}
\fi

\end{theorem}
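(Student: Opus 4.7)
The proof rests on two ingredients: subadditivity of decreasing rearrangements for the lower bounds, and the truncation operators from Section \ref{truncation} (together with, for Part (iii), the extension operator of Theorem \ref{thm:extensionoperator}) for the upper bounds. The lower bounds are the easy direction in all three parts. Given any admissible decomposition $\bfu = \bfu_0 + \bfu_1$, property \eqref{subadd} together with the estimate $f^{**}(t) \leq \min\{t^{-1}\|f\|_{L^1},\,\|f\|_{L^\infty}\}$ applied to each piece yields
\begin{align*}
\int_0^t \mathcal E(\bfu)^*(s)\,\ds \;=\; t\,\mathcal E(\bfu)^{**}(t) \;\leq\; t\,\mathcal E(\bfu_0)^{**}(t) + t\,\mathcal E(\bfu_1)^{**}(t) \;\leq\; \|\mathcal E(\bfu_0)\|_{L^1} + t\|\mathcal E(\bfu_1)\|_{L^\infty}.
\end{align*}
Taking the infimum over decompositions gives $\int_0^t \mathcal E(\bfu)^*\,\ds \lesssim K(\bfu,t)$; the argument runs identically on $\R^n$ and on any open $\Omega \subset \R^n$, and with $\ep(\bfu)$ in place of $\mathcal E(\bfu)$ for Part (ii).

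For the upper bound in Part (i), fix $\bfu \in E^1L^1(\R^n)+E^1L^\infty(\R^n)$ and $t>0$, and choose $\lambda$ of the order $\mathcal E(\bfu)^{**}(t)$. By the two-sided rearrangement inequality \eqref{riesz} for the maximal operator, $|\{M\bfu>\lambda\}| \lesssim t$ and $|\{M\ep(\bfu)>\lambda\}|\lesssim t$, whence $|\mathcal O_{\lambda,\lambda}| \lesssim t$. Apply the truncation $T^{\lambda,\lambda}$ of Theorem \ref{lem:Tlest} and write $\bfu = (\bfu - T^{\lambda,\lambda}\bfu) + T^{\lambda,\lambda}\bfu$. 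The pointwise estimates \eqref{itm:TlestLinfty2}--\eqref{itm:TlestLinfty} give $\|T^{\lambda,\lambda}\bfu\|_{E^1L^\infty(\R^n)} \lesssim \lambda$, while the difference is supported in $\mathcal O_{\lambda,\lambda}$ and pointwise majorized by $c(|\mathcal E(\bfu)|+\lambda)$ there; the Hardy--Littlewood inequality \eqref{B.0} applied to the characteristic function of $\mathcal O_{\lambda,\lambda}$ then yields
\begin{align*}
\|\bfu - T^{\lambda,\lambda}\bfu\|_{E^1L^1(\R^n)} \;\lesssim\; \int_0^{|\mathcal O_{\lambda,\lambda}|} \mathcal E(\bfu)^*(s)\,\ds + \lambda|\mathcal O_{\lambda,\lambda}| \;\lesssim\; \int_0^t \mathcal E(\bfu)^*(s)\,\ds,
\end{align*}
where we have used $\lambda t \lesssim t\,\mathcal E(\bfu)^{**}(t) \leq \int_0^t \mathcal E(\bfu)^*\,\ds$ together with the standard doubling $\int_0^{ct}\mathcal E(\bfu)^*\,\ds \lesssim \int_0^t\mathcal E(\bfu)^*\,\ds$. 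Similarly $t\|T^{\lambda,\lambda}\bfu\|_{E^1L^\infty} \lesssim t\lambda \lesssim \int_0^t \mathcal E(\bfu)^*\,\ds$, and the upper bound follows.

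Part (ii) follows the same scheme using the truncation $T^\lambda$ of Theorem \ref{lem:feb1}, which acts on $\ep(\bfu)$ alone and preserves boundedness of support. For $\bfu \in E^1_bL^1(\R^n)$, choose $\lambda \approx \ep(\bfu)^{**}(t)$; then both $T^\lambda \bfu$ and $\bfu - T^\lambda\bfu$ belong to $E^1_b$, and the estimates run in parallel to Part (i) but with $\ep$ in place of $\mathcal E$, yielding \eqref{nov2'}. The $E^1_0$ statement \eqref{nov2''} then follows from \eqref{nov2'} via the inclusion $E^1_bL^p \subset E^1_0L^p$ (which gives $K_{E^1_0} \leq K_{E^1_b}$ on $E^1_bL^1$), extended to the full space by a density argument based on the definition of $E^1_0L^p$ as the closure of $E^1_bL^p$. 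For Part (iii), the lower bound is subadditivity on $\Omega$. For the upper bound, extend $\bfu \in E^1L^1(\Omega)+E^1L^\infty(\Omega)$ to $\mathscr E_\Omega\bfu \in E^1L^1(\R^n)+E^1L^\infty(\R^n)$ via Theorem \ref{thm:extensionoperator}, apply Part (i) to obtain a decomposition $\mathscr E_\Omega\bfu = \bfV_0+\bfV_1$ with
\begin{align*}
\|\bfV_0\|_{E^1L^1(\R^n)} + t\|\bfV_1\|_{E^1L^\infty(\R^n)} \lesssim \int_0^t \mathcal E(\mathscr E_\Omega\bfu)^*(s)\,\ds,
\end{align*}
and restrict both pieces to $\Omega$. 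The key pointwise rearrangement bound \eqref{ext*} converts $\int_0^t \mathcal E(\mathscr E_\Omega\bfu)^*\,\ds$ into $\int_0^t \mathcal E(\bfu)^*\,\ds$, up to a constant depending on $\Omega$.

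The hardest step is the upper bound in Part (i): calibrating $\lambda$ so that $|\mathcal O_{\lambda,\lambda}|$ is comparable to $t$ while $t\lambda$ remains dominated by $\int_0^t \mathcal E(\bfu)^*\,\ds$, and extracting from Theorem \ref{lem:Tlest} sharp pointwise control both on $\mathcal E(T^{\lambda,\lambda}\bfu)$ and on $\mathcal E(\bfu - T^{\lambda,\lambda}\bfu)$. The entire argument hinges on the compatibility between the measure of the truncation set and the rearrangement of $\mathcal E(\bfu)$, which is exactly what the two-sided inequality \eqref{riesz} is designed to bridge.
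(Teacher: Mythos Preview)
Your lower bounds and the treatments of Parts~(ii) and~(iii) are in line with the paper. The gap is in the upper bound of Part~(i): you invoke Theorem~\ref{lem:Tlest} for an arbitrary $\bfu \in E^1L^1(\R^n)+E^1L^\infty(\R^n)$, but neither part of that theorem covers this directly. Part~(i) there is stated for $\bfu \in E^1L^1(\R^n)$, and Part~(ii) requires condition~\eqref{feb20}; with $X=L^1+L^\infty$ one has $\lim_{s\to\infty}\varphi_X(s)=1$, so \eqref{feb20} reads $\lambda > 2C_M\|\bfu\|_{E^1(L^1+L^\infty)}$, and your choice $\lambda \approx \mathcal E(\bfu)^{**}(t)$ violates this for large $t$ whenever $\mathcal E(\bfu)^{**}(t)$ drops below that fixed threshold. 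The paper confronts exactly this point: it first proves the upper bound under the extra assumption $\bfu\in E^1L^1(\R^n)$ (essentially your argument, with $\theta=M(\bfu)^*(t)$ and $\lambda=M(\ep(\bfu))^*(t)$), and then spends roughly a page removing that assumption by localizing $\bfu=\sum_j \psi_j\bfu$ via a partition of unity on an exhaustion by balls, applying the $E^1L^1$ case to each compactly supported piece $\bfu_j$, and reassembling using the zero-boundary-preserving truncation $T^{\theta,\lambda}_0$ of Theorem~\ref{cor:Tlest} on annuli so that the $E^1L^\infty$ contributions can be summed.

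Your shortcut may be repairable: inspection of the proof of Theorem~\ref{lem:Tlest} shows that what is really needed, beyond $\bfu\in E^1_{\rm loc}L^1(\R^n)$, is only $|\mathcal O_{\lambda,\lambda}|<\infty$, and your choice of $\lambda$ does secure this via \eqref{riesz}. But you would then have to argue explicitly that $T^{\lambda,\lambda}\bfu-\bfu\in E^1_0L^1(\mathcal O_{\lambda,\lambda})$ in this generality and that its extension by zero lies in $E^1L^1(\R^n)$, so that $T^{\lambda,\lambda}\bfu$ is weakly differentiable across $\partial\mathcal O_{\lambda,\lambda}$ and the pointwise bounds place it in $E^1L^\infty(\R^n)$. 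As written, the citation does not deliver this, and the paper's localization argument is precisely the device that fills the gap.
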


Theorem \ref{K-funct} is the main  step in the proof of formulas for the $K$-functional of the couples $(E^{1}L^1(\R^n),E^{1}L^{n,1}(\R^n))$, $({E^1_b}L^1(\R^n), {E^1_b}L^{n,1}(\R^n))$ and $(E^{1}L^1(\Omega),E^{1}L^{n,1}(\Omega))$. These formulas play a decisive role in our embedding theorems for symmetric gradient Sobolev spaces built upon arbitrary rearrangement-invariant spaces.

\begin{theorem}\label{corollary}{\rm{\bf [$K$-functional for the couple $(E^1L^1,E^{1}L^{n,1})$]}}
%Let $n \geq 2$. 
\\
(i)  One has that
\begin{align}\label{feb5}
K(\bfu,t;E^{1}L^1(\R^n),E^{1}L^{n,1}(\R^n))\approx \int _0^{t^{n'}} \mathcal E(\bfu)^*(s) \ds + t \int_{t^{n'}}^\infty  \mathcal E(\bfu)^*(s) s^{-\frac 1{n'}} \ds  \quad \hbox{for $t>0$,}
%\int_0^t \bfu^\ast(s)\ds+\int_0^t \ep(\bfu)^\ast(s)\ds.
\end{align}
for every $\bfu \in E^{1}L^1(\R^n) + E^{1}L^{n,1}(\R^n)$, with equivalence constants depending on $n$.
\\ (ii) One has that
\begin{align}\label{feb6}
K(\bfu,t; {E^1_b}L^1(\R^n), {E^1_b}L^{n,1}(\R^n))\approx \int _0^{t^{n'}} \ep(\bfu)^*(s) \ds + t \int_{t^{n'}}^\infty  \ep(\bfu)^*(s) s^{-\frac 1{n'}} \ds  \quad \hbox{for $t>0$,}
%\int_0^t \bfu^\ast(s)\ds+\int_0^t \ep(\bfu)^\ast(s)\ds.
\end{align}
for every $\bfu \in E^1_bL^1(\R^n) + E^{1}_bL^{n,1}(\R^n){\color{black} = E^1_bL^1(\R^n)}$, and
\begin{align}\label{feb6'}
K(\bfu,t; {E^1_0}L^1(\R^n), {E^1_0}L^{n,1}(\R^n))\approx \int _0^{t^{n'}} \ep(\bfu)^*(s) \ds + t \int_{t^{n'}}^\infty  \ep(\bfu)^*(s) s^{-\frac 1{n'}} \ds \quad \hbox{for $t>0$,}
%\int_0^t \bfu^\ast(s)\ds+\int_0^t \ep(\bfu)^\ast(s)\ds.
\end{align}
for every $\bfu \in E^1_0L^1(\R^n) + E^{1}_0L^{n,1}(\R^n)$, with equivalence constants depending on $n$.
\iffalse
\\ I(ii) f $\Omega$ is an open set in $\R^n$, then
\begin{align}\label{feb6}
K(\bfu,t;E^{1}_0L^1(\Omega),E^{1}_0L^{n,1}(\Omega))\approx \int _0^{t^{n'}} \mathcal E(\bfu)^*(s) \ds + t \int_{t^{n'}}^\infty  \mathcal E(\bfu)^*(s) s^{-\frac 1{n'}} \ds .
%\int_0^t \bfu^\ast(s)\ds+\int_0^t \ep(\bfu)^\ast(s)\ds.
\end{align}
for every $\bfu \in E^{1}_0L^1(\Omega) + E^{1}_0L^{n,1}(\Omega)$.
\fi
\\ (iii) Assume that
 $\Omega$ is an 
$(\varepsilon, \delta)$-domain. Then 
\begin{align}\label{feb7}
K(\bfu,t;E^{1}L^1(\Omega),E^{1}L^{n,1}(\Omega))\approx \int _0^{t^{n'}} \mathcal E(\bfu)^*(s) \ds + t \int_{t^{n'}}^\infty  \mathcal E(\bfu)^*(s) s^{-\frac 1{n'}} \ds  \quad \hbox{for $t>0$,}
%\int_0^t \bfu^\ast(s)\ds+\int_0^t \ep(\bfu)^\ast(s)\ds.
\end{align}
for every $\bfu \in E^{1}L^1(\Omega)+E^{1}L^{n,1}(\Omega)$, with equivalence constants depending on $\Omega$.
\iffalse
We have
\begin{align}\label{july1}
K(\bfu,t,E^1L^1,E^{1}L^{n,1})\approx \int _0^{t^{n'}} \mathcal E(\bfu)^*(s) \ds + t \int_{t^{n'}}^\infty  \mathcal E(\bfu)^*(s) s^{-\frac 1{n'}} \ds .
%\int_0^t \bfu^\ast(s)\ds+\int_0^t \ep(\bfu)^\ast(s)\ds.
\end{align}
\fi
\end{theorem}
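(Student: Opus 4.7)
The plan is to derive the four formulas \eqref{feb5}, \eqref{feb6}, \eqref{feb6'} and \eqref{feb7} from Theorem \ref{K-funct} via a single application of a Holmstedt-type reiteration identity. Since the cases (i), (ii) and (iii) are strictly parallel once the relevant endpoint $K$-functional from Theorem \ref{K-funct} is inserted, I only discuss the derivation of \eqref{feb5}; the three remaining identities are obtained in exactly the same way by substituting \eqref{nov2'}, \eqref{nov2''} and \eqref{nov3} for \eqref{nov1}, with $\ep(\bfu)$ or the appropriate $\mathcal E(\bfu)$ playing the role of the integrand.

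The first step is the identification of $E^1L^{n,1}(\R^n)$ as the real interpolation space $(E^1L^1(\R^n), E^1L^\infty(\R^n))_{1/n', 1; K}$. Using \eqref{nov1} together with a Fubini swap, one checks that the norm $\int_0^\infty t^{-1/n' - 1} K(\bfu, t; E^1L^1(\R^n), E^1L^\infty(\R^n))\, \dt$ is, up to multiplicative constants, equal to $\int_0^\infty \mathcal E(\bfu)^*(s) s^{-1/n'}\, \ds$, which is in turn equivalent to $\|\mathcal E(\bfu)\|_{L^{n,1}(\R^n)} \approx \|\bfu\|_{E^1 L^{n,1}(\R^n)}$. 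The analogous identification in the remaining settings  is obtained by repeating this computation with \eqref{nov2'}, \eqref{nov2''} and \eqref{nov3} in place of \eqref{nov1}.

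Next, I would invoke the Holmstedt reiteration formula in the endpoint form
\[
K(\bfu, t; Z_0, (Z_0, Z_1)_{\theta, 1; K}) \approx K(\bfu, t^{1/\theta}; Z_0, Z_1) + t \int_{t^{1/\theta}}^\infty K(\bfu, s; Z_0, Z_1)\, s^{-\theta - 1}\, \ds,
\]
valid for any compatible Banach couple $(Z_0, Z_1)$ and any $\theta \in (0,1)$. Applied with $(Z_0, Z_1) = (E^1L^1(\R^n), E^1L^\infty(\R^n))$ and $\theta = 1/n'$, and combined with the identification of the previous paragraph, it yields $K(\bfu, t; E^1L^1(\R^n), E^1L^{n,1}(\R^n))$ equivalent to the right-hand side of the display. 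Inserting \eqref{nov1}, swapping the order of integration in the resulting double integral, and evaluating $\int_a^\infty s^{-1/n' - 1}\, \ds = n' a^{-1/n'}$ separately on $\{r<t^{n'}\}$ and $\{r \ge t^{n'}\}$ then reduces the whole expression to a linear combination of $\int_0^{t^{n'}} \mathcal E(\bfu)^*(r)\, \dr$ and $t \int_{t^{n'}}^\infty \mathcal E(\bfu)^*(r) r^{-1/n'}\, \dr$, which is precisely \eqref{feb5}.

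The whole substance of the argument is already contained in Theorem \ref{K-funct}; the present theorem amounts only to a reiteration and a Fubini computation. The only mildly delicate point is the legitimacy of the endpoint case $\theta_0 = 0$, $q_0 = \infty$ in Holmstedt's theorem: this rests on the identification $(Z_0, Z_1)_{0, \infty; K} = Z_0$ with equivalent norms, which follows from the fact that $s \mapsto K(\bfu, s; Z_0, Z_1)$ is non-decreasing and tends to $\|\bfu\|_{Z_0}$ as $s \to \infty$ whenever $\bfu \in Z_0$. This endpoint version of the reiteration theorem is covered, for example, by the framework of Brudnyi-Krugljak.
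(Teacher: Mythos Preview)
Your approach is essentially the one in the paper: identify the second endpoint as a real interpolation space via Theorem~\ref{K-funct}, then apply the Holmstedt-type reiteration formula (this is exactly the Bergh--L\"ofstr\"om corollary the paper invokes) and finish with Fubini. For \eqref{feb5}, \eqref{feb6} and \eqref{feb7} the argument goes through verbatim.

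The one place where your proposal is too quick is \eqref{feb6'}. You say it follows by substituting \eqref{nov2''} for \eqref{nov1}, but the reiteration step requires knowing that $E^1_0L^{n,1}(\R^n)$ coincides \emph{as a set} with $(E^1_0L^1(\R^n),E^1_0L^\infty(\R^n))_{1/n',1}$, not just that the norms agree on the intersection. Because the $E^1_0$ spaces are defined as closures of compactly supported functions, neither inclusion is obvious: a limit in the $\|\ep(\cdot)\|_{L^{n,1}}$-norm of $E^1_b L^{n,1}$ functions need not lie in $E^1_0L^1+E^1_0L^\infty$, and conversely an element of the interpolation space need not be approximable in the $L^{n,1}$-seminorm by bounded-support functions. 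Without this set identification, Holmstedt only yields the $K$-functional for the couple $(E^1_0L^1,(E^1_0L^1,E^1_0L^\infty)_{1/n',1})$, which you cannot substitute for $(E^1_0L^1,E^1_0L^{n,1})$ in the upper-bound direction. The paper avoids this issue entirely: it obtains \eqref{feb6'} from \eqref{feb6} by an approximation argument analogous to the one that already passed from \eqref{nov2'} to \eqref{nov2''} (approximate $\bfu\in E^1_0L^1+E^1_0L^{n,1}$ by elements of $E^1_bL^1+E^1_bL^{n,1}$, apply \eqref{feb6}, and pass to the limit). You should either supply that approximation step or prove the set equality directly.

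A minor remark: your concern about the endpoint $(Z_0,Z_1)_{0,\infty}=Z_0$ is unnecessary. The reiteration identity you need, for the couple $\bigl(Z_0,(Z_0,Z_1)_{\theta,1}\bigr)$, is stated and proved directly in Bergh--L\"ofstr\"om without passing through that identification; moreover the first term $K(\bfu,t^{1/\theta};Z_0,Z_1)$ in your version of the formula is already dominated by the integral term (since $K$ is nondecreasing and $\int_{t^{1/\theta}}^\infty s^{-\theta-1}\,ds=\theta^{-1}t^{-1}$), so the paper's one-term formulation in \eqref{july7} is equivalent.
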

  
We premise to the proofs of Theorems \ref{K-funct} and \ref{corollary} a lemma on the coincidence between the $K$-functional of a couple of rearrangement-invariant spaces  $(X(\Omega, \mathbb R^m), Y(\Omega, \mathbb R^m))$  of $\mathbb R^m$-valued functions, and the $K$-functional of the couple $(X(\Omega), Y(\Omega))$ of scalar-valued functions,  built upon the same rearrangement-invariant function norms, and evaluated at the modulus of the $\mathbb R^m$-valued functions. Of course, the same result holds for spaces of matrix-valued functions. This result will be exploited without explicitly mentioning it in what follows.
%Given a rearrangement-invariant space  $X(\Omega)$ and $m \in N$, define the Banach space $X^m(\Omega)$ of measurable $\R^m$-valued functions in $\Omega$ as 
%\begin{equation}\label{jan1} X^m(\Omega)= \{\bfU=(U^1, \dots U^m):   \, U^i \in X(\Omega), \, i=1, \dots m\},
%\end{equation}
%equipped with the norm
%$$\|\bfU\|_{X^m(\Omega)}= \sum_{i=1}^m \|U^i\|_{X(\Omega)}.$$

\begin{lemma}\label{K-vector}
Let $\Omega$ be a measurable set in $\rn$ and let $\|\cdot \|_{X(0,|\Omega|)}$ and  $\|\cdot \|_{Y(0,|\Omega|)}$ be rearrangement-invariant function norms.   Let   $m\in \N$. 
%Set
%$$X^m(\Omega)= \{\bfU=(U_1, \dots U_m): \Omega \to \R^m \, \text{s.t.} \, U_i \in X(\Omega), \, i=1, \dots m\},$$
%equipped with the norm
%$$\|\bfU\|_{X^m(\Omega)}= \sum_{i=1}^m \|U_i\|_{X(\Omega)},$$
%and define $Y^m(\Omega)$ analogously.
Then 
\begin{equation}\label{K-v1}
K(|\bfU|, t; X(\Omega), Y(\Omega)) =K(\bfU, t; X(\Omega, \mathbb R^m), Y(\Omega, \mathbb R^m)) 
%K(|\bfU|, t; X(\Omega), Y(\Omega))\leq  K(\bfU, t; X(\Omega, \mathbb R^m), Y(\Omega, \mathbb R^m)) 
%\leq m K(|\bfU|, t; X(\Omega), Y(\Omega))
\quad \text{for $t>0$.}
\end{equation}
\end{lemma}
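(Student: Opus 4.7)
\textbf{Proof plan for Lemma \ref{K-vector}.} The identity \eqref{K-v1} reduces to two matching inequalities. The underlying fact I will use repeatedly is that, by the very definition of the rearrangement-invariant norm of a vector-valued function (equation \eqref{norm}), $\|\bfV\|_{X(\Omega,\R^m)} = \||\bfV|\|_{X(\Omega)}$ for every $\bfV\in\M(\Omega,\R^m)$, since $\bfV^*$ coincides with the decreasing rearrangement of the scalar function $|\bfV|$. The same identity holds with $X$ replaced by $Y$. Monotonicity of rearrangement-invariant norms (that is, $0\leq g\leq h$ a.e.\ implies $\|g\|_{X(\Omega)}\leq\|h\|_{X(\Omega)}$, a consequence of property (P2) and the monotonicity of the operation $f\mapsto f^*$) will also be used throughout.

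For the direction $K(|\bfU|,t;X(\Omega),Y(\Omega))\leq K(\bfU,t;X(\Omega,\R^m),Y(\Omega,\R^m))$, I take any admissible decomposition $\bfU=\bfU_0+\bfU_1$ with $\bfU_0\in X(\Omega,\R^m)$ and $\bfU_1\in Y(\Omega,\R^m)$ and construct the scalar decomposition
\begin{equation*}
f_0=\frac{|\bfU_0|}{|\bfU_0|+|\bfU_1|}\,|\bfU|,\qquad f_1=\frac{|\bfU_1|}{|\bfU_0|+|\bfU_1|}\,|\bfU|,
\end{equation*}
on the set where $|\bfU_0|+|\bfU_1|>0$, setting both to $0$ elsewhere. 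By the triangle inequality $|\bfU|\leq |\bfU_0|+|\bfU_1|$, so $0\leq f_0\leq |\bfU_0|$ and $0\leq f_1\leq |\bfU_1|$ a.e., while $f_0+f_1=|\bfU|$ everywhere. Monotonicity and the identity above give $\|f_0\|_{X(\Omega)}\leq\||\bfU_0|\|_{X(\Omega)}=\|\bfU_0\|_{X(\Omega,\R^m)}$ and analogously for $f_1$; taking the infimum over all decompositions of $\bfU$ yields the inequality.

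For the reverse inequality, given any admissible decomposition $|\bfU|=f_0+f_1$ of the scalar function (where I may assume $f_0,f_1$ are real-valued), I set
\begin{equation*}
\bfU_0=\frac{f_0}{|\bfU|}\,\bfU,\qquad \bfU_1=\frac{f_1}{|\bfU|}\,\bfU,
\end{equation*}
on $\{|\bfU|>0\}$ and both to $\mathbf 0$ elsewhere. Then $\bfU_0+\bfU_1=\bfU$ and $|\bfU_j|=|f_j|$ pointwise for $j=0,1$, so $\|\bfU_j\|_{X(\Omega,\R^m)}=\||f_j|\|_{X(\Omega)}=\|f_j\|_{X(\Omega)}$ (and likewise for $Y$), since rearrangement-invariant norms depend only on the modulus. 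Taking the infimum over decompositions of $|\bfU|$ gives $K(\bfU,t;X(\Omega,\R^m),Y(\Omega,\R^m))\leq K(|\bfU|,t;X(\Omega),Y(\Omega))$, completing the proof.

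The argument is essentially bookkeeping; the only delicate step is the first direction, where a vector-valued decomposition of $\bfU$ does not directly provide a scalar decomposition of $|\bfU|$, and one has to rescale $|\bfU_0|$ and $|\bfU_1|$ proportionally so that they sum to $|\bfU|$ exactly. I do not foresee any serious obstacle.
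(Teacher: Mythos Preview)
Your proposal is correct and follows essentially the same approach as the paper's proof: the paper uses the identical constructions $\bfU_j=\frac{v_j}{|\bfU|}\bfU$ for one direction and $\widehat v_j=\frac{v_j|\bfU|}{v_0+v_1}$ (your $f_j$, with $v_j=|\bfU_j|$) for the other. If anything, your presentation of the vector-to-scalar direction is slightly more streamlined, since the paper passes through an intermediate infimum over nonnegative pairs $v_0,v_1$ with $|\bfU|\leq v_0+v_1$ before applying the rescaling, whereas you apply it directly to $|\bfU_0|,|\bfU_1|$.
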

\begin{proof}
We have that
\begin{align*}%\label{aug210}
K(\bfU, t; X(\Omega, \mathbb R^m), Y(\Omega, \mathbb R^m))  & = \inf _{\bfU = \bfU_0 + \bfU_1}\big(\|\bfU_0\|_{X(\Omega, \mathbb R^m)} + t \|\bfU_1\|_{Y(\Omega, \mathbb R^m)}\big)
%\\ \nonumber &  
%=  \inf _{\bfU = \bfU_0 + \bfU_1}\bigg\{\sum_{i=1}^m \|U^i_0\|_{X(\Omega)} + t \sum_{i=1}^m \|U^i_1\|_{Y(\Omega)}\bigg\} 
  \\ \nonumber & =  \inf _{\bfU = \bfU _0 + \bfU _1}\big(  \||\bfU _0|\|_{X(\Omega)} + t   \||\bfU_1|\|_{Y(\Omega)}\big)
\leq  \inf _{|\bfU| = v _0 + v_1}\big(\|v _0\|_{X(\Omega)} + t   \|v _1|\|_{Y(\Omega)}\big)
\\ \nonumber &
 =  K(|\bfU|, t; X(\Omega), Y(\Omega)).
\end{align*}
Note that the  inequality  holds since, if $|\bfU| = v_0 + v_1$ for some $v_0$ and $v_1$, then 
$\bfU = \bfU _0 + \bfU _1$ with $\bfU _0 = \frac{\bfU}{|\bfU|}v_0$ and  $\bfU_1= \frac{\bfU}{|\bfU|}v_1$. %This proves the second inequality in \eqref{K-v1}.
The reverse inequality holds since %The first one follows from the following chain:
\begin{align*}%\label{aug211}
K(\bfU, t; X(\Omega, \mathbb R^m), Y(\Omega, \mathbb R^m))  
& = \inf _{\bfU = \bfU_0 + \bfU_1}\{\|\bfU_0\|_{X(\Omega, \mathbb R^m)} + t \|\bfU_1\|_{Y(\Omega, \mathbb R^m)}\}  %\\ \nonumber &
%= \inf _{\bfU = \bfU_0 + \bfU_1}\big\{\sum_{i=1}^m \|U^i_0\|_{X(\Omega)} + t \sum_{i=1}^mg \|U^i_1\|_{Y(\Omega)}\bigg\}
 \\ \nonumber & = \inf _{\bfU = \bfU _0 + \bfU _1}\{  \||\bfU _0|\|_{X(\Omega)} + t   \||\bfU_1|\|_{Y(\Omega)}\} 
 %\\ \nonumber & 
\geq \inf _
{\begin{tiny} \begin{array}{c}
                       {|\bfU|\leq  v_0 + v_1}\\
                        { v_0, v_1 \geq0}
                      \end{array}
                      \end{tiny}}
%{|\bfU|\leq  \phi_0 + \phi_1, \, \phi_0, \phi_1 \geq0}
\{  \|v_0\|_{X(\Omega)} + t   \|v_1\|_{Y(\Omega)}\} 
\\ \nonumber & \geq \inf _{|\bfU|=  v_0 + v_1}\{  \|v_0\|_{X(\Omega)} + t   \|v_1\|_{Y(\Omega)}\} 
%\\ \nonumber & 
= K(|\bfU|, t; X(\Omega), Y(\Omega)).
\end{align*}
Observe that the second inequality holds since, given nonnegative functions $v_0$ and $v_1$ such that $|\bfU|\leq  v_0 + v_1$, the functions $\widehat v_0$ and $\widehat v_1$, defined as
$$\widehat v_0 = \frac{v_0 |\bfU|}{v_0 + v_1} \quad \text{and} \quad \widehat v_1 = \frac{v_1 |\bfU|}{v_0 + v_1},$$
enjoy the following properties:
$$|\bfU| = \widehat v_0 + \widehat v_1,$$
$$0 \leq \widehat v_0 \leq v_0, \quad 0\leq \widehat v_ 1 \leq v_1.$$
Hence, by the monotonicity of rearrangement-invariant function norms with respect to pointwise inequalities of functions,
$$\|v_0\|_{X(\Omega)} + t   \|v_1\|_{Y(\Omega)} \geq \|\widehat v_0\|_{X(\Omega)} + t   \|\widehat v_1\|_{Y(\Omega)} \quad \text{for $t >0$.}$$
The proof is complete
\end{proof}

Our proof of Theorem \ref{K-funct} relies upon the use of maximal functions, in the spirit of \cite[Thm. 3.1]{CM}, and of the truncation operators introduced in Section \ref{truncation}. The proof of Part (iii) makes use of Part (i) and of the extension results of Section \ref{extension}.

\begin{proof}[Proof of Theorem \ref{K-funct}]
Part (i).  The inequality ``$\gtrsim$'' in \eqref{nov1}  is a  consequence of the formula
\begin{equation}\label{apr30}
K(v,t;L^{1}(\rn),L^{\infty}(\rn))\approx \int_0^t v^\ast(s)\ds  \quad \text{for $t >0$,}
\end{equation}
for $v \in L^1(\rn) + L^\infty(\rn)$ --
see e.g. \cite[Theorem 1.6, Chapter 5]{BS}. Here, and throughout the proof of Part (i), the equivalence constants depend only on $n$.
 Indeed, given $t>0$, we have that
\begin{align}\label{jan22}
&K(\bfu,t;E^1L^1(\rn),E^1L^\infty(\rn))=\inf_{\bfu=\bfu_0+\bfu_1}\big(\|\bfu_0\|_{E^1L^1(\rn)}+t\|\bfu_1\|_{E^1L^\infty(\rn)}\big)
\\  \nonumber
&\geq\inf_ 
{\begin{tiny} \begin{array}{c}
                       {\bfu=\bfu_0+\bfu_1 }\\
                        {\bfu_0 \in E^1L^1, \bfu_1 \in E^1L^\infty }
                      \end{array}
                      \end{tiny}} \tfrac{1}{2}
\big(\|\bfu_0\|_{L^{1}(\rn)}+t\|\bfu_1\|_{L^{\infty}(\rn)}\big)+ \inf_{\begin{tiny} \begin{array}{c}
                       {\bfu=\bfu_0+\bfu_1 }\\
                        {\bfu_0 \in E^1L^1, \bfu_1 \in E^1L^\infty }
                      \end{array}
                      \end{tiny}} 
                      \tfrac{1}{2} \big(\|\ep(\bfu_0)\|_{L^{1}(\rn)}+t\|\ep(\bfu_1)\|_{L^{\infty}(\rn)}\big)
\\  \nonumber
&\geq \inf_{\begin{tiny} \begin{array}{c}
                       {\bfu=\bfv_0+\bfv_1 }\\
                        {\bfv_0 \in L^1, \bfv_1 \in L^\infty }
                      \end{array}
                      \end{tiny}}  \tfrac{1}{2}
\big(\|\bfv_0\|_{L^{1}(\rn)}+t\|\bfv_1\|_{L^{\infty}(\rn)}\big)+\inf
_{\begin{tiny} \begin{array}{c}
                       {\ep(\bfu)=\bfE_0+\bfE_1}\\
                        {\bfE_0 \in L^1, \bfE_1 \in L^\infty }
                      \end{array}
                      \end{tiny}}  \tfrac{1}{2}
\big(\|\bfE_0\|_{L^{1}(\rn)}+t\|\bfE_1\|_{L^{\infty}(\rn)}\big)
\\ \nonumber
&= \tfrac{1}{2}\,K(\bfu,t;L^{1}(\rn),L^{\infty}(\rn))+\tfrac{1}{2}\,K(\ep(\bfu),t;L^{1}(\rn),L^{\infty}(\rn)) \gtrsim \bigg(\int_0^t \bfu^\ast(s)\ds+\int_0^t \ep(\bfu)^\ast(s)\ds\bigg)
\\ \nonumber
& \geq  \int _0^t \mathcal E(\bfu)^*(s) \ds.
\end{align}
Note that the last inequality holds owing to property \eqref{subadd} of rearrangements.
 \\
Let us now prove the reverse inequality ``$\lesssim$''. 
 %we follow the ideas of \cite[Thm. 3.1]{CM}.\textcolor{black}{reformulate?}
 Assume, for the time being, that   $\bfu \in E^1L^1(\R^n)$.
%\todo[inline]{A: If $X(\rn)$ satisfies assumption \eqref{feb20},    can we prove the result for functions in  $E^1X(\rn)$ normed only with $\ep(\bfu)$, by a truncation acting only on $\ep(\bfu)$?}
%$\bfu\in C^\infty_c(\R^n)$ (the general case will be obtained later by smooth approximation).
Given any $\theta,\lambda>0$, let $\mathcal O_{\theta,\lambda}$ be the set in $\rn$ defined by \eqref{mar106}, and let $\bfu_{\theta,\lambda} =T^{\theta, \lambda}\bfu$ be the function given by  \eqref{eq:wlambda}. By  Theorem \ref{lem:Tlest}, Part (i),
 $\bfu_{\theta,\lambda}\in E^1L^\infty(\R^n)$, and 
%
% with $\bfu=\bfu_{\theta,\lambda}$ outside $\mathcal O_{\theta,\lambda}=\mathcal O_\theta\cup\mathcal O_\lambda$, where
%\begin{align*}
%\mathcal O_\theta&=\set{x\in\R^n:\,M(\bfu)>\theta},\quad \mathcal O_\lambda=\set{x\in\R^n:\,M(\ep(\bfu))>\lambda}.
%\end{align*}
%Moreover, by properties  \eqref{itm:TlestLinfty2} and \eqref{itm:TlestLinfty} of Theorem \ref{lem:Tlest},
\begin{align}\label{july110}
|\bfu_{\theta,\lambda}|\lesssim \theta \quad \text{and} \quad  |\ep(\bfu_{\theta,\lambda})|\lesssim\lambda.
\end{align}
Fix $t>0$. We choose $\theta=M(\bfu)^\ast(t)$, $\lambda=M(\ep(\bfu))^\ast(t)$ and set $\bfu_1=\bfu_{\theta,\lambda}$ and $\bfu_0=\bfu-\bfu_1$. Thus, 
\begin{align}\label{may1}
K(\bfu,t;E^1L^1(\rn),E^1L^\infty(\rn))\leq \|\bfu_0\|_{E^1L^1(\rn)}+t\|\bfu_1\|_{E^1L^\infty(\rn)}.
\end{align}
By equation \eqref{july110} and the choice of $\theta$ and $\lambda$,  
\begin{align}\label{a}
\|\bfu_1\|_{E^1L^\infty(\rn)}=\|\bfu_1\|_{L^{\infty}(\rn)}+\|\ep(\bfu_1)\|_{L^{\infty}(\rn)}\lesssim \theta+\,\lambda = M(\bfu)^\ast(t)+  M(\ep(\bfu))^\ast(t).
\end{align}
Next, since $\bfu_0=0$  in $\rn \setminus \mathcal O_{\theta,\lambda}$, 
\begin{align}\label{b}
\|\bfu_0\|_{E^1L^1(\rn)}\leq \|\bfu\chi_{\mathcal O_{\theta,\lambda}}\|_{L^{1}(\rn)}+\|\ep(\bfu)\chi_{\mathcal O_{\theta,\lambda}}\|_{L^{1}(\rn)}+\|\bfu_1\chi_{\mathcal O_{\theta,\lambda}}\|_{L^{1}(\rn)}+\|\ep(\bfu_1)\chi_{\mathcal O_{\theta,\lambda}}\|_{L^{1}(\rn)}.
\end{align}
Owing to the choice of $\theta$ and $\lambda$, one has that $|\mathcal O_{\theta,\lambda}|\leq c\,t$ for some constant $c=c(n)$. 
 Hence,
\begin{align}\label{c}
\|\bfu\chi_{\mathcal O_{\theta,\lambda}}\|_{L^{1}(\rn)}&  \leq\|\bfu^\ast\chi_{(0,ct)}\|_{L^{1}(0,\infty)}   \leq\,\max\{1,c\}\,\|\bfu^\ast\chi_{(0,t)}\|_{L^{1}(0, \infty)}=
\,\max\{1,c\}\,\int_0^t\bfu^\ast(s)\ds,
\end{align}
and similarly
\begin{align}\label{d}
\|\ep(\bfu)\chi_{\mathcal O_{\theta,\lambda}}\|_{L^{1}(\rn)}\leq \,\max\{1,c\}\,\int_0^t\ep(\bfu)^\ast(s)\ds.
\end{align}
Moreover,
\begin{align}\label{e}
\|\bfu_1\chi_{\mathcal O_{\theta,\lambda}}\|_{L^{1}(\rn)}+\|\ep(\bfu_1)\chi_{\mathcal O_{\theta,\lambda}}\|_{L^{1}(\rn)}&\lesssim t\,\theta+t\lambda =t\big(M(\bfu)^\ast(t)+M(\ep(\bfu))^\ast(t)\big).
\end{align}
On the other hand, by the second inequality in \eqref{riesz} and inequality \eqref{subadd},
\begin{align}\label{may2}
M(\bfu)^\ast(t)+M(\ep(\bfu))^\ast(t) \lesssim 
\frac 1t \,\bigg(\int_0^t\bfu^\ast(s)\ds+\int_0^t\ep(\bfu)^\ast(s)\ds\bigg)
\leq\, \frac 2t \,\bigg(\int_0^t\mathcal E(\bfu)^*(s)\, \ds\bigg).
\end{align}
Combining inequalities \eqref{may1}--\eqref{may2}  proves the inequality $\lesssim$ in \eqref{nov1}. This inequality is thus established under the assumption that $\bfu \in E^1L^1(\rn)$. It remains to remove this assumption. 
\\ To this purpose, given any function $\bfu \in E^1L^1(\R^n) + E^{1} L^\infty(\R^n)$, fix $t>0$ and set
$$M_0 = M(\mathcal E(\bfu))^*(t)$$
and 
$$\mathcal O = \{x \in \mathbb R^n:  M(\mathcal E(\bfu))> M_0\}.$$
Hence, 
\begin{equation}\label{jan5}
|\mathcal O|\leq t.
\end{equation}
Define the decreasing sequences $\{t _j\}$ by $t_j=2^{-j}t$, and 
$\{\varepsilon _j\}$ in such a way that $\varepsilon _1=t$ and 
\begin{equation}\label{5.36}
\int_0^{\varepsilon _j}\mathcal E(\bfu)^*(s) \, \ds \leq t_j M_0
\end{equation}
for $j\geq 2$. Notice that the function $\mathcal E(\bfu)^*$ is actually integrable near $0$, owing to the first inequality in  \eqref{riesz}. Next, let $\{s_j\}$ be an increasing sequence such that $s_0=0$,  $s_j \to \infty$, $s_{j+1}>s_j+1$ and that, on setting $B_j=\{x\in \mathbb R^n: |x|<s_{j-1}\}$, we have that
\begin{equation}\label{5.37}
 |\mathcal O \setminus B_j|\leq \varepsilon _j \quad \text{for $j\in \N$.}
\end{equation}
Consider the annulus $G_j= \{x\in \mathbb R^n: s_{j-1}\leq |x| < s_{j+1}\}$ and note that $G_j\cap B_j = \emptyset$ for $j \in \N$.  Let $\{\psi_j\}$ be a partition of unity associated with the covering $\{G_j\}$ of $\mathbb R^n$. Thus,
$$\sum _{j=1}^\infty \psi_j = 1 \quad \text{in $\mathbb R^n$,} \qquad {\rm supp}\, \psi _j \subset G_j,$$
and
\begin{equation}\label{jan7}
\|\psi_j\|_{L^\infty(\rn)} + \|\nabla \psi_j\|_{L^\infty(\rn)} \leq c,
\end{equation}
for some constant $c=c(n)$ and every $j \in \setN$. Consider the sequence $\{\bfu_j\}$ defined by
$$\bfu_j = \bfu \,\psi_j$$
for $j \in \setN$.  Clearly, $\sum _{j=1}^\infty \bfu_j=\bfu$. Moreover,   $\bfu_j \in E^1L^1(\rn)$, since $\bfu_j \in L^1(\rn)$ and 
\begin{equation}\label{jan8}
\ep (\bfu_j) = \psi _j \ep (\bfu ) + \nabla \psi_j \otimes^\sym\bfu \in L^1(\rn)
\end{equation}
for $j \in \setN$.  We may thus apply equation \eqref{nov1} to the function $\bfu_j$, and deduce that
\begin{equation}\label{5.39}
K(\bfu_j,t_j; E^1L^1(\R^n),E^{1} L^\infty(\R^n))\approx \int _0^{t_j} \mathcal E(\bfu_j)^*(s) \ds
\end{equation}
for  $j \in \setN$. Let $E_j\subset G_j$ be such that 
\begin{equation}\label{jan9}
|E_j| \leq t_j
\end{equation}
and
\begin{equation}\label{5:40}
\int_0^{t_j} \mathcal E(\bfu_j)^*(s)\, \ds = \int _{E_j}|\mathcal E(\bfu_j)|\, \dx
\end{equation}
for $j \in \setN$. Owing to equations \eqref{jan7} and \eqref{jan8},  
$$|\mathcal E(\bfu_j)| \lesssim |\mathcal E(\bfu)|$$
for $j \in \setN$. Also, $E_j \cap B_j = \emptyset$ and
\begin{equation}\label{jan10}
|\mathcal E(\bfu)| \leq M_0 \quad \text{in $\rn \setminus \mathcal O$.}
\end{equation}
Therefore,
\begin{align}\label{5.41}
\int_{E_j} |\mathcal E(\bfu_j)|\, \dx & \lesssim  \int_{E_j\cap \mathcal O} |\mathcal E(\bfu)|\, \dx  + 
  \int_{E_j\setminus \mathcal O} |\mathcal E(\bfu)|\, \dx
\lesssim \int_{\mathcal O \setminus B_j} |\mathcal E(\bfu)|\, \dx  + 
  t_j\, M_0 
\\ \nonumber & \lesssim \int_0^{\varepsilon _j}\mathcal E(\bfu)^*(s) \, \ds + 
  t_j\, M_0  \lesssim t_j\, M_0
\end{align}
for $j \in \setN$, where the second inequality holds by equations \eqref{jan9} and \eqref{jan10}, the third one by \eqref{5.37} and the fourth one by \eqref{5.36}. From inequalities \eqref{5.39}--\eqref{5.41} we deduce that
\begin{equation}\label{5.42}
K(\bfu_j,t_j;E^1L^1(\R^n),E^{1} L^\infty(\R^n))\lesssim t_j\, M_0
\end{equation}
for $j \in \setN$. Next, for each $j \in \setN$, set  $\theta_j =M(\bfu_j)^\ast(t)$ and  $\lambda_j=M(\ep(\bfu_j))^\ast(t)$, and $(\bfu_j)_{\theta_j, \lambda_j}= T^{\theta_j,\lambda_j}_0\bfu_j$, where $T^{\theta_j,\lambda_j}_0\bfu_j$ is defined as in  \eqref{may3}--\eqref{eq:wlambda0}, with $\Omega$ replaced by $G_j$ and $\bfu$ by $\bfu_j$. Note that this is consistent, since ${\rm supp}\, \bfu_j \subset G_j$. Set, for simplicity, $\bfv_j = (\bfu_j)_{{\theta_{j}}{\lambda_{j}}}$.  Thanks to
Theorem \ref{cor:Tlest},
%. Observe that this theorem can actually be applied, since the function $\bfu_j$ is supported in an annulus $G_j$. 
%, the constants in the inequalities 
%
%$(\bfu_j)_{\theta, \lambda}$ be defined as above \textcolor{black}{with the difference that we apply the truncation $T^{\theta,\lambda}_0$ from Theorem \ref{cor:Tlest}, with $\bfu$ replaced by $\bfu_j$. It is easy to see that in case of $\Omega=G_j$ the constant in \eqref{eq:1301} is independent of $j$ and hence so are the constants in Theorem \ref{cor:Tlest}.} 
we have that $\bfv_j \in E^1L^\infty (\rn)$ and  ${\rm supp}\, \bfv_j \subset G_j$. Moreover,
\begin{align}\label{5.43}
\|\bfu_j - \bfv_j\|_{E^1L^1(\rn)} + t_j \|\bfv_j\|_{ E^1L^\infty (\rn)} \lesssim \int _0^{t_j} \mathcal E(\bfu_j)^*(s) \ds \lesssim K(\bfu_j,t_j;E^1L^1(\R^n),E^{1} L^\infty(\R^n))
\end{align}
for $j \in \setN$,
where the first inequality holds owing to equations \eqref{a}--\eqref{may2} applied with $\bfu$ replaced by $\bfu_j$, and the second one owing to equation \eqref{5.39}. Equations \eqref{5.42} and \eqref{5.43} yield
\begin{align}\label{5.44}
\|\bfu_j - \bfv_j\|_{E^1L^1(\rn)} \lesssim  t_j\, M_0
\end{align}
and 
\begin{align}\label{5.45}
\|\bfv_j\|_{ E^1L^\infty (\rn)} \lesssim   M_0
\end{align}
for $j \in \setN$.
Define the function $\bfv : \rn \to \rn$ as 
$$\bfv = \sum_{j\geq 2} \bfv_j.$$
Notice that $\bfv \in E^1L^\infty (\rn)$, since  ${\rm supp}\, \bfv_j \subset G_j$  for $j \in \setN$, and hence
 the sum in the last equality is locally extended to a fixed finite number of indices $j$. Furthermore,  thanks to equation \eqref{5.45},
\begin{equation}\label{jan20}
\|\bfv\|_{ E^1L^\infty (\rn)} \lesssim   M_0.
\end{equation}
Inasmuch as 
$$ \bfu = \sum_{j=1}^\infty \bfu_j = \bfu_1 + \sum_{j=2}^\infty (\bfu_j - \bfv_j) + \bfv,$$
one has that
\begin{align}\label{jan21}
K(\bfu,& t; E^1L^1(\R^n),E^{1} L^\infty(\R^n))\\ \nonumber & \leq K(\bfu_1, t; E^1L^1(\R^n),E^{1} L^\infty(\R^n))
+ \sum_{j=2}^\infty \|\bfu_j - \bfv_j\|_{E^1L^1(\rn)} + t \|\bfv\|_{E^1L^\infty(\rn)}
\\ \nonumber 
& \lesssim  \int _0^{t} \mathcal E(\bfu_1)^*(s) \ds + \sum_{j=2}^\infty t_j M_0 + t M_0  
\lesssim  \int _0^{t} \mathcal E(\bfu)^*(s) \ds +   t M_0
\\ \nonumber 
& =  \int _0^{t} \mathcal E(\bfu)^*(s) \ds +  t M(\mathcal E(\bfu))^*(t) \lesssim \int _0^{t} \mathcal E(\bfu)^*(s) \ds,
\end{align}
where the second inequality relies upon equations \eqref{5.39}, \eqref{5.44} and \eqref{5.45}, and the last inequality is a consequence of inequality  \eqref{riesz}. Inequality \eqref{jan21} provides us with the 
relation $\lesssim$ in \eqref{nov1}. The proof of equation \eqref{nov1} is thus complete.
\\ 
Part (ii). The  inequality $\gtrsim$ in equation \eqref{nov2'} follows via a chain analogous to \eqref{jan22}. In order to establish
%Equation \eqref{nov2} is an easy consequence of \eqref{nov1} , by the definition of the spaces $E^1_0L^1(\Omega)$ and $E^1_0L^\infty (\Omega)$.
the reverse inequality,   assume that $\bfu \in E^1_bL^1(\R^n)+ E^1_bL^\infty(\rn)$. We have to show that
\begin{align}\label{feb65}
K(\bfu,t; { E^1_b} L^1(\R^n),  { E^1_b} L^\infty (\R^n))\lesssim 
\int_0^t\ep(\bfu)^\ast(s)\ds.
\end{align}
Here, and throughout the proof of Part (ii), the equivalence constants depend only on $n$.
 Given   $\lambda >0$, let  $ \mathcal O_\lambda$ and $T^\lambda \bfu$ be the set and the function defined as in \eqref{Olambda} and \eqref{eq:wlambda'}, respectively. 
%
%
%\todo[inline]{A: here we do not need the extra argument as in Part (i) to pass from $E^1L^1$ to $E^1L^1+E^1L^\infty$, since, owing to the fact that we  are dealing with functions with support of finite measure, Theorem \ref{lem:feb1} applies to any r.i. space $E^1X$, and hence also to $E^1L^1+E^1L^\infty$}
%
Thus,  
owing to Theorem \ref{lem:feb1}, Part (ii), applied with $E^1_bX(\rn)=E^1_bL^1(\R^n)+ E^1_bL^\infty(\rn)$,  we have that $T^\lambda \bfu \in E^{1}_bL^\infty (\R^n)$,  and  $T^\lambda \bfu  = \bfu$ in  $\R^n \setminus \mathcal O_{\lambda}$.
%where
%\begin{align*}  \mathcal O_\lambda=\set{x\in\R^n:\,M(\ep(\bfu))>\lambda}.
%\end{align*}
Moreover,  
\begin{align}\label{feb30}
|\ep(T^\lambda \bfu)|\lesssim \lambda.
\end{align}
Fix $t>0$, choose  $\lambda=M(\ep(\bfu))^\ast(t)$ and set $\bfu_1=T^\lambda \bfu$ and $\bfu_0=\bfu-\bfu_1$. Plainly, 
\begin{align}\label{feb35}
K(\bfu,t; { E^1_b} L^1(\R^n),  { E^1_b} L^\infty (\R^n))\leq \|\ep(\bfu_0)\|_{L^{1}(\R^n)}+t\|\ep(\bfu_1)\|_{L^{\infty}(\R^n)}.
\end{align}
By \eqref{feb30} and the choice of  $\lambda$,  
\begin{align}\label{feb31}
\|\ep(\bfu_1)\|_{L^{\infty}(\R^n)}\lesssim \lambda =  M(\ep(\bfu))^\ast(t) \lesssim \frac 1t \int_0^t\ep(\bfu)^\ast(s)\ds,
\end{align}
where the last inequality holds thanks to  inequality \eqref{riesz}. 
Next, note that, since $\bfu_0=0$ in  $\R^n \setminus \mathcal O_{\lambda}$,  
\begin{align}\label{feb32}
\|\ep(\bfu_0)\|_{L^{1}(\R^n)}\leq \|\ep(\bfu)\chi_{\mathcal O_{\lambda}}\|_{L^{1}(\R^n)}+ \|\ep(\bfu_1)\chi_{\mathcal O_{\lambda}}\|_{L^{1}(\R^n)}.
\end{align}
The 
 choice of   $\lambda$ ensures  that $|\mathcal O_{\lambda}|\leq t$. 
 Therefore,
\begin{align}\label{feb33}
\|\ep(\bfu)\chi_{\mathcal O_{\lambda}}\|_{L^{1}(\R^n)} 
&  \leq\|\ep(\bfu)^\ast\chi_{(0,t)}\|_{L^{1}(0, \infty)}  =\,\int_0^t\ep(\bfu)^\ast(s)\ds.
\end{align}
Also,
\begin{align}\label{feb34}
\|\ep(\bfu_1)\chi_{\mathcal O_{\lambda}}\|_{L^{1}(\R^n)}&\lesssim \lambda = t M(\ep(\bfu))^\ast(t) \lesssim  \int_0^t\ep(\bfu)^\ast(s)\ds,
\end{align}
where the last inequality holds by    inequality \eqref{riesz} again. Combining inequalities \eqref{feb35}--\eqref{feb34} yields inequality \eqref{feb65}.
Equation \eqref{nov2'} is thus established.
\\ Now, consider equation \eqref{nov2''}.  The proof of the inequality $\lq\lq \gtrsim"$ in this equation follows as in \eqref{jan22}. In order to prove the inequality $\lq\lq \lesssim"$, assume  that  $\bfu \in E^1_0L^1(\R^n) + E^{1}_0L^\infty(\R^n)$.  Then $\bfu = \bfv_0 + \bfv_1$, for some $\bfv_0\in E^1_0L^1(\R^n)$ and $\bfv_1 \in E^{1}_0L^\infty(\R^n)$.  Given any $\varepsilon >0$, there exist $\overline {\bfv}_0 \in E^1_bL^1(\R^n)$ and $\overline {\bfv}_1 \in E^1_bL^\infty(\R^n)$ such that
\begin{equation}\label{mar60} \|\ep(\bfv_0) -  \ep(\overline {\bfv}_0)\|_{L^1(\R^n)}  + \|\ep(\bfv_1) -  \ep(\overline {\bfv}_1)\|_{L^\infty(\R^n)}
%= \|\ep(\bfv_1) - \ep(\overline {\bfv_1})\|_{L^1(\R^n)} 
< \varepsilon.
\end{equation}
Thus, 
on setting $\overline \bfu = \overline {\bfv}_0 + \overline\bfv_1$, we have that $\overline\bfu \in E^1_bL^1(\R^n)+ E^1_bL^\infty(\R^n)$.
% and
%$$ \|\ep(\bfu)-  \ep(\overline \bfu)\|_{L^1(\R^n)} = \|\ep(\bfv_0) -  \ep(\overline {\bfv}_0)\|_{L^1(\R^n)}  < \varepsilon.$$
Fix $t>0$. An application of inequality \eqref{feb65} to the function  $\overline \bfu$  and the definition of the $K$-functional ensure that there exist functions $\overline {\bfu}_0 \in E^1_bL^1(\R^n)$ and $\overline {\bfu}_1 \in  E^1_bL^\infty(\R^n)$ such that $\overline \bfu = \overline {\bfu}_0 + \overline {\bfu}_1$ and 
\begin{align}\label{feb66}
\| \ep(\overline {\bfu}_0) \|_{L^1(\R^n)} + t \|\ep( \overline {\bfu}_1) \|_{L^\infty(\R^n)} & \lesssim \int_0^t\ep(\overline \bfu)^\ast(s)\ds \leq \int_0^t\ep(\overline \bfu- \bfu)^\ast(s)\ds +\int_0^t\ep(\bfu)^\ast(s)\ds,
%\\ \nonumber
%& \leq c \|\ep(\bfu)-  \ep(\overline \bfu)\|_{L^1(\R^n)} +  c \int_0^t\ep(\bfu)^\ast(s)\ds 
%\leq c \epsilon + c \int_0^t\ep(\bfu)^\ast(s)\ds.
\end{align}
where the second inequality holds owing to property \eqref{subadd}. 
Observe  that
\begin{equation}\label{mar61}
\bfu = (\bfv_0 -\overline {\bfv}_0) + \overline {\bfu}_0 +  (\bfv_1 -\overline {\bfv}_1) + \overline {\bfu}_1.
\end{equation}
Thus, 
%equation \eqref{nov2'} \textcolor{black}{why \eqref{nov2'}?} and
{\color{black} inequality \eqref{mar60} yields}
\begin{align}\label{mar62}
 \int_0^t\ep(\overline \bfu - \bfu)^*(s)\ds & \leq  \int_0^t\ep(\bfv_0 -\overline {\bfv}_0)^*(s)\ds +  \int_0^t\ep(\bfv_1 -\overline {\bfv}_1)^*(s)\ds \\ \nonumber &
\leq   \|\ep(\bfv_0) -  \ep(\overline {\bfv}_0)\|_{L^1(\R^n)}  + t \|\ep(\bfv_1) -  \ep(\overline {\bfv}_1)\|_{L^\infty(\R^n)} \leq \varepsilon (1+t).
\end{align}
From the definition of the $K$-functional and inequalities \eqref{mar60}, \eqref{feb66} and \eqref{mar62} we deduce that
\begin{align}\label{mar63}
 K(\bfu,t;  &{E^1_0}L^1(\R^n), {E^1_0}L^{\infty}(\R^n)) 
%\\ \nonumber &
 \leq \|\ep (\bfv_0 -\overline {\bfv}_0)  +\ep ( \overline {\bfu}_0)\|_{L^1(\rn)} + t \|\ep (\bfv_1 -\overline {\bfv}_1) + \ep(\overline {\bfu}_1)\|_{L^{\infty}(\rn)} \\ \nonumber & \leq 
\|\ep (\bfv_0 -\overline {\bfv}_0)\|_{L^1(\rn)}  + \| \ep  (\overline {\bfu}_0)\|_{L^1(\rn)} + t \|\ep (\bfv_1 -\overline {\bfv}_1) \|_{L^{\infty}(\rn)} + t\|\ep(\overline {\bfu}_1)\|_{L^{\infty}(\rn)}
\\ \nonumber & \lesssim \varepsilon (1+t) +   \int_0^t\ep(\overline \bfu - \bfu)^*(s)\ds +  \int_0^t\ep(\bfu)^*(s)\ds
%\\ \nonumber &
 \lesssim  \varepsilon (1+t)  +  \int_0^t\ep(\bfu)^*(s) \ds.
\end{align}
%Therefore,
%\begin{align}\label{feb67}
%\| \ep(\bfu) - \ep(\overline {\bfu}_1) \|_{L^1(\R^n)} + t \| \ep(\overline {\bfu}_1) \|_{L^\infty(\R^n)} & \leq
%\| \ep(\bfu) - \ep(\overline {\bfu}) \|_{L^1(\R^n)} + 
%\| \ep(\overline {\bfu}_0) \|_{L^1(\R^n)}  +
%t \|\ep( \overline {\bfu_1}) \|_{L^\infty(\R^n)}\\\ \nonumber & \leq (1+c)
% \epsilon + c  \int_0^t\ep(\bfu)^\ast(s)\ds.
%\end{align}
Hence, the inequality $\lesssim$ in \eqref{feb35} follows, owing  to the arbitrariness of $\varepsilon$.
%\\
%The same argument, relying again on  the properties of the operator $T^\lambda$ from Theorem \ref{lem:feb1}, Part (ii), shows that   equation \eqref{nov2'} still holds if $\bfu \in E^1X(\rn)$ for some space $X(\rn)$ fulfilling condition \eqref{feb42}.
\par \noindent Part (iii).
The inequality $\gtrsim$ in equation \eqref{nov3} follows as in \eqref{jan22}.  Consider the reverse inequality. Fix $t>0$. Assume that $\bfu \in E^1L^1(\Omega) + E^1L^\infty (\Omega)$, and let $\mathscr E_\Omega$ be the extension operator provided by 
Theorem \ref{thm:extensionoperator}.
%
%Making use of the extension operator $\mathscr E_\Omega$ from \eqref{ext1} and \eqref{ext0} we obtain
We have that
\begin{align}\label{feb26mar}
    K(\bfu,t;E^1L^1(\Omega),E^1L^\infty(\Omega))&=K(\mathscr E_\Omega\bfu,t;E^1L^1(\Omega),E^1L^\infty(\Omega)) \leq K(\mathscr E_\Omega\bfu,t;E^1L^1(\R^n),E^1L^\infty(\R^n))\\ \nonumber
    & \lesssim
\int_0^t\mathcal E(\mathscr E_\Omega\bfu)^*(s)\, \ds    \lesssim  \int_0^t\mathcal  E(\bfu)^*(s)\, \ds,
%
% \int_0^t\mathcal (\mathscr E_\Omega\bfu)^*(s)\, \ds+\int_0^t\mathcal \ep(\mathscr E_\Omega\bfu)^*(s)\, \ds,
\end{align}
where the second  inequality follows from equation \eqref{nov1} and the last inequality holds by inequality  \eqref{ext*}. This proves  the inequality $\lesssim$ in equation \eqref{nov3}.
\end{proof}

\begin{proof}[Proof of Theorem \ref{corollary}] Part (i). 
The interpolation space $(L^1(\rn), L^\infty(\rn))_{\frac 1{n'}, 1}$ is defined as the rearrangement-invariant space equipped with the norm defined as
$$ \|\bfv\|_{(L^1(\rn), L^\infty(\rn))_{\frac 1{n'}, 1}} = 
\int _0^\infty t^{-\frac 1{n'}} K(\bfv, t; L^1(\rn), L^\infty(\rn)) \frac{\dt}{t}$$
for $\bfv \in  \mathcal M(\rn)$ -- see e.g. \cite[Chapter 5, Definition 1.7]{BS}.
\iffalse
, given a couple of normed spaces $(Z_0, Z_1)$,
the norm in the interpolation space $(Z_0, Z_1)_{\theta, q}$ is given, for $\theta \in (0,1)$ and $q \in [1, \infty)$, 
%or $\theta \in [0,1]$ and $q=\infty$, 
by 
\begin{equation}\label{july2}
\|f\|_{(Z_0, Z_1)_{\theta, q}} =  \bigg(\int _0^\infty [t^{-\theta}K(t,f, Z_0, Z_1)]^q\frac{\dt}{t}\bigg)^{\frac 1q}.
\end{equation}
\fi
%
%\begin{cases} \bigg(\int _0^\infty [t^{-\theta}K(t,f; X_0, X_1)]^q\frac{\dt}{t}\bigg)^{\frac 1q} & \text{ if  $\theta \in (0,1)$ and $q \in [1, \infty)$,}
%\\ \sup _{t>0} t^{-\theta} K(t,f; X_0, X_1) & \text{ if  $\theta \in [0,1]$ and $q  =\infty$.}
%\end{cases}
Thus, by equation \eqref{apr30},
\begin{equation}\label{apr31}
 \|\bfv\|_{(L^1(\rn), L^\infty(\rn))_{\frac 1{n'}, 1}} \approx
\int _0^\infty t^{-\frac 1{n'}}\bigg( \int_0^t \bfv^*(s)\,\ds\bigg) \frac{\dt}{t}.
\end{equation}
By \cite[Chapter 5, Theorem 1.9]{BS}, 
\begin{equation}\label{july3}
L^{n.1}(\rn) = (L^1(\rn), L^\infty(\rn))_{\frac 1{n'}, 1}.
\end{equation}
Owing to equations  \eqref{july3}, \eqref{apr31} and  \eqref{nov1},
%\textcolor{black}{explain second line}
\begin{align}\label{july5}
E^1L^{n,1}(\rn) & = E^1  (L^1(\rn), L^\infty(\rn))_{\frac 1{n'}, 1} = \big\{\bfu: \|\mathcal E(\bfu)\|_{(L^1(\rn), L^\infty(\rn))_{\frac 1{n'}, 1}} < \infty\big\}
\\ \nonumber & = \bigg\{\bfu: \int _0^\infty t^{-\frac 1{n'}}\bigg( \int_0^t \mathcal E(\bfu)^*(s)\,\ds\bigg) \frac{\dt}{t} < \infty\bigg\}
\\ \nonumber & =  \bigg\{\bfu: \int _0^\infty t^{-\frac 1{n'}}K (\bfu, t; E^{1}L^1(\rn), E^{1}L^\infty(\rn)) \frac{\dt}{t} < \infty\bigg\}
\\ \nonumber & = (E^{1}L^1(\rn), E^{1}L^\infty(\rn))_{\frac 1{n'}, 1}\,.
\end{align}
%Also, since 
%$$K(t,f; L^1, L^\infty) \approx \int _0^t f^*(s)\, ds,$$
%we trivially have that 
%\begin{equation}\label{july4}
%L^{1} = (L^1, L^\infty)_{0, \infty}.
%\end{equation}
From equation \eqref{july5}, \cite[Corollary 3.6.2]{BerghLofstrom} and \eqref{nov1} one deduces that
\begin{align}\label{july7}
K(\bfu, t^{\frac 1{n'}}; E^{1}L^1(\rn), E^1L^{n,1}(\rn)) &  \approx t^{\frac 1{n'}}\int _t^\infty s^{-\frac 1{n'}} K(\bfu, s;E^{1}L^1(\rn), E^{1}L^\infty(\rn))\, \frac{\ds}{s} 
\\ \nonumber & \approx t^{\frac 1{n'}}\int _t^\infty s^{-\frac 1{n'}-1}\bigg( \int _0^s \mathcal E(\bfu)^* (r)\, \dr\bigg)\, \ds 
\\ \nonumber &
\approx \int _0^{t} \mathcal E(\bfu)^*(s) \ds + t^{\frac 1{n'}} \int_{t}^\infty  \mathcal E(\bfu)^*(s) s^{-\frac 1{n'}} \ds \quad \text{for $t>0$,}
\end{align}
%
%
%
%
%Also,
%\begin{align}\label{july6}
%E^1L^1  =   \{\bfu: \|\mathcal E(\bfu)\|_{L^1} < \infty\} = \bigg\{\bfu: \int _0^\infty  \mathcal E(\bfu)^*(t)\,\dt  < \infty\bigg\}
% = (E^1L^1, E^1L^\infty)_{0, \infty}\,.
%\end{align}
%Thus, by \eqref{july5}, \eqref{july6} and  \cite[Corollary 2.3, Chapter 5]{BS},
%\begin{align}\label{july7}
%K(\bfu, t^{\frac 1{n'}}; E^1L^1, E^1L^{n,1}) & = K(\bfu, t^{\frac 1{n'}};  (E^1L^1, E^1L^\infty)_{0, \infty}, (E^1L^1, E^1L^\infty)_{\frac 1{n'}, 1}) 
%\\ \nonumber & \approx t^{\frac 1{n'}}\int _t^\infty s^{-\frac 1{n'}} K(\bfu, s; E^1L^1, E^1L^\infty)\, \ds 
%\\ \nonumber & \approx t^{\frac 1{n'}}\int _t^\infty s^{-\frac 1{n'}}\bigg( \int _0^s \mathcal E(\bfu)^* (r)\, \dr\bigg)\, \ds 
%\\ \nonumber &
%\approx \int _0^{t} \mathcal E(\bfu)^*(s) \ds + t^{\frac 1{n'}} \int_{t}^\infty  \mathcal E(\bfu)^*(s) s^{-\frac 1{n'}} \ds,
%\end{align}
where the last equivalence holds owing to Fubini's theorem. Equation \eqref{feb5} is equivalent to \eqref{july7}.
\\  Part (ii).  Equation \eqref{feb6} follows from equation \eqref{nov2'} via the same argument  that yields \eqref{feb5} from \eqref{nov1}. Equation \eqref{feb6'} can be deduced from  \eqref{feb6} via an approximation argument analogous to the argument which implies \eqref{nov2''}  via \eqref{nov2'}.
\\
Part (iii). The proof is completely analogous to   that of Part (i). One has just to replace $\rn$ by $\Omega$ in equations \eqref{july3}--\eqref{july7}, and make use of equation \eqref{nov3} instead of \eqref{nov1}. 
\end{proof}

As a direct application of Theorem \ref{K-funct}, we can now provide a proof of Theorem \ref{C1}.

\begin{proof}[Proof of Theorem \ref{C1}] By assumption \eqref{boundT} and a basic
property of the $K$-functional -- see \cite[Theorem 1.11, Chapter
5]{BS}  -- one has that
\begin{align}\label{c2}
K&\left( T\bfu, s; E^1L^1(\rn), E^1L^\infty(\rn)\right)
 %\\ \nonumber &
\leq (M_1 + M_\infty )
K\left( \bfu, s; E^1L^1(\Omega), E^1L^\infty(\Omega)\right) \qquad
\hbox{for $s>0$},
\end{align}
for every $\bfu\in\, E^1L^1(\Omega)+E^1L^\infty(\Omega)$.
Owing to inequality (\ref{c2}) and equations \eqref{nov1} and \eqref{nov3}, %Theorem \ref{K-funct},
\begin{equation}\label{c4}
\mathcal E(T\bfu)^{**}(s)\leq C
\mathcal E(\bfu)^{**}(s) \qquad \hbox{for
$s>0$},
\end{equation}
for some constant $C$ depending  on $\Omega$, $M_1$ and
$M_\infty$, and for every $\bfu\in\, E^1L^1(\Omega)+E^1L^\infty(\Omega)$.
From inequality \eqref{c4}, via \eqref{hardy} and \eqref{mar170},
we deduce that
\begin{align}\label{may9}
\|T\bfu \|_{E^1\widehat X (\rn )} & \approx \|\mathcal E(T\bfu)\|_{\widehat X (\rn )} =
 \|\mathcal E(T\bfu)^*\|_{\widehat X (0, \infty)} \leq  C 
 \|\mathcal E(\bfu)^*\|_{\widehat X (0, \infty)} \\ \nonumber &=  C
 \|\mathcal E(\bfu)^*\|_{X (0, |\Omega|)} =    C \|\mathcal E(\bfu) \|_{E^1X  (\Omega )}
 \approx C \|\bfu \|_{E^1X  (\Omega )}
\end{align}
for every $\bfu \in E^1X  (\Omega )$, where $C$ is the constant appearing in inequality \eqref{c4} and the equivalence constants depend only on $n$.
Hence, property
\eqref{boundTX} follows.
%\\
%Due to our assumptions on $\Omega$ we can use Theorem \ref{thm:extensionoperator} to infer the existence of an extension operator $\mathscr{E}_\Omega$ with \eqref{ext1} and \eqref{ext0}. The second claim follows now immediately from the first statement.
\end{proof}

\section{Sobolev embeddings into rearrangement-invariant spaces}\label{sobemb}
%\textcolor{black}{assume $n\geq2$?}
The discussion of Sobolev embeddings for symmetric gradient Sobolev spaces -- the core of the present paper -- begins with this section, which is  concerned with embeddings into rearrangement-invariant spaces. As disclosed in Section \ref{intro}, the point of departure and central step of our approach is a reduction principle. This principle tells us that any embedding of a symmetric gradient Sobolev space built upon a rearrangement-invariant space is equivalent to a one-dimensional Hardy type inequality involving the corresponding function norms. Since a parallel equivalence is known to hold for their full gradient analogues, it turns out that symmetric gradient and full gradient Sobolev embeddings into rearrangement-invariant spaces are always equivalent.

\subsection{Embeddings on open sets with finite measure}\label{sscomega}

The reduction theorem on open sets with finite measure reads as follows.

\begin{theorem}\label{reduction} {\bf [Sobolev embeddings on open sets into rearrangement-invariant spaces] }
Let $\Omega$ be an open set in $\R^n$ with $|\Omega|< \infty$.  Let $X(\Omega)$ and $Y(\Omega)$ be rearrangement-invariant spaces. 

\vspace{0.1cm}
\par\noindent 
{\bf Part I [Embeddings for  $E^1_0X(\Omega)$]}  The following facts are equivalent:
\\ (i) The embedding $E^1_0X(\Omega) \to Y(\Omega)$ holds, namely there exists a constant $c_1$ such that
\begin{equation}\label{july8}
\|\bfu \|_{Y(\Omega)} \leq c_1 \|\ep(\bfu)\|_{X(\Omega)}
\end{equation}
 for every $\bfu \in E^1_0X(\Omega)$;
%\begin{equation}\label{feb10}
%E^1_0X(\Omega) \to Y(\Omega),
%\end{equation}
\\ (ii) The embedding $ W^1_0X(\Omega) \to Y(\Omega)$ holds, namely there exists a constant $c_2$ such that
\begin{equation}\label{july9}
\|\bfu \|_{Y(\Omega)} \leq c_2 \|\nabla \bfu\|_{X(\Omega)}
%
%W^1_0X(\Omega) \to Y(\Omega),
\end{equation}
 for every $\bfu \in W^1_0X(\Omega)$;
\\ (iii) There exists a constant $c_3$ such that
\begin{equation}\label{july10}
\bigg\|\int_s^{|\Omega|} f (r)r^{-1+\frac 1{n}}\dr \bigg\|_{Y(0, |\Omega|)} \leq  c_3\|f\|_{X(0, |\Omega|)}
\end{equation}
 for every non-increasing function $f: (0, |\Omega|)\to [0, \infty)$. Moreover the constants $c_1$ and $c_2$ in inequalities  \eqref{july8} and \eqref{july9} depend only on the constant $c_3$   in \eqref{july10} {\color{black} and on $n$}.
% and there exists a constant $c=c(n, |\Omega|, C)$ such that
%\begin{equation}\label{feb10}
%\|\bfu \|_{Y(\Omega)} \leq c \|\ep(\bfu)\|_{X(\Omega)}
%\end{equation}
%for every $\bfu \in E^1_0X(\Omega)$.

\vspace{0.1cm}
\par\noindent 
  {\bf Part II [Embeddings for  $E^1X(\Omega)$]} Assume, in addition that $\Omega$ is an $(\ve, \delta)$-domain. Then the following facts are equivalent:
\\ (i) The embedding $E^1X(\Omega) \to Y(\Omega)$ holds, namely there exists a constant $c_1$ such that
\begin{equation}\label{jan40}
\|\bfu \|_{Y(\Omega)} \leq c_1 \|\bfu\|_{ E^1X(\Omega)}
\end{equation}
 for every $\bfu \in E^1X(\Omega)$;
%
%E^1X(\Omega) \to Y(\Omega),
%\end{equation}
\\ (ii) The embedding $W^1X(\Omega) \to Y(\Omega)$ holds, namely there exists a constant $c_2$ such that
\begin{equation}\label{jan41}
\|\bfu \|_{Y(\Omega)} \leq c_2 \|\bfu\|_{W^1X(\Omega)}
%
%W^1_0X(\Omega) \to Y(\Omega),
\end{equation}
for every $\bfu \in W^1X(\Omega)$;
%
%W^1X(\Omega) \to Y(\Omega),
%\end{equation}
\\ (iii) Inequality \eqref{july10} holds for every non-increasing function $f: (0, |\Omega|)\to [0, \infty)$. 
%
%
%There exists a constant $C$ such that
%\begin{equation}\label{jan42}
%\bigg\|\int_s^{|\Omega|} \phi (r)r^{-1+\frac 1{n'}}\, dr \bigg\|_{Y(0, |\Omega|)} \leq  C\|\phi\|_{X(0, |\Omega|)}
%\end{equation}
%for every non-increasing function  $\phi : (0, |\Omega|)\to [0, \infty)$. 
\\ Moreover the constants $c_1$ and $c_2$ in inequalities  \eqref{jan40} and \eqref{jan41} depend only on the constant $c_3$   in \eqref{july10}  and on $\Omega$. 
%\\ {\color{black} If, in addition, $\Omega$ is   connected, then
%there exists a constant $c=c(\Omega, C)$ such that
%\begin{equation}\label{feb11}
%\inf _{\bfv \in \mathcal R}\|\bfu -\bfv\|_{Y(\Omega)} \leq c \|\ep(\bfu)\|_{X(\Omega)}
%\end{equation}
%for every $\bfu \in E^1X(\Omega)$.}
% \\Assume, in addition, that $\Omega$ is a {\color{black} bounded ?} $(\ve, \delta)$-domain. Then the same equivalence holds, with the spaces  $E^1_0X(\Omega)$  and $W^1_0X(\Omega)$ replaced by  $E^1X(\Omega)$  and   $W^1X(\Omega)$. Moreover,  the norms of the relevant embeddings   depend only on the constant $C$ appearing in \eqref{july10}, and on   $\Omega$.
%\todo{There should be a version also in $\R^n$. I am going to check this later}
\end{theorem}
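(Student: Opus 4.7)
The plan is, for both parts, to prove the cycle $(i)\Rightarrow(ii)\Rightarrow(iii)\Rightarrow(i)$.

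The implication $(i)\Rightarrow(ii)$ is immediate in either part: $|\ep(\bfu)|\le|\nabla\bfu|$ pointwise, and $W^1_0X(\Omega)\subset E^1_0X(\Omega)$ (respectively $W^1X(\Omega)\subset E^1X(\Omega)$) with the obvious norm comparison. The equivalence $(ii)\Leftrightarrow(iii)$ is the now-classical reduction principle for the ordinary Sobolev inequality built upon rearrangement-invariant norms, obtained via Schwarz symmetrization (P\'olya--Szeg\H{o} on balls, extended to bounded open sets and, on $(\ve,\delta)$-domains, to $W^1X(\Omega)$ via Jones' extension theorem for ordinary Sobolev spaces); see e.g.~\cite{Ci1, EKP}. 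The essential new content is the direction $(iii)\Rightarrow(i)$.

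\textbf{Part I, $(iii)\Rightarrow(i)$.} For $\bfu\in E^1_bX(\Omega)$, the hypothesis $|\Omega|<\infty$ together with the embedding $X(\Omega)\hookrightarrow L^1(\Omega)$ from \eqref{l1linf} ensures that the extension $\bfu_e$ belongs to $E^1_bL^1(\R^n)$. Corollary \ref{rearrepu} then yields the pointwise rearrangement bound
\[
\bfu^\ast(s)\le C\Big(s^{-1/n'}\!\int_0^s\!\ep(\bfu)^\ast(r)\,\dr+\int_s^{|\Omega|}\!\ep(\bfu)^\ast(r)\,r^{-1/n'}\,\dr\Big),\qquad 0<s<|\Omega|,
\]
the upper limit of the second integral being $|\Omega|$ since $\ep(\bfu_e)^\ast$ is supported in $(0,|\Omega|)$. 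Taking the $Y(0,|\Omega|)$-norm, the second summand on the right is controlled via hypothesis $(iii)$ by $c_3\|\ep(\bfu)\|_{X(\Omega)}$. For the first summand, one appeals to the well-known equivalence, valid for non-increasing $f\geq 0$, between $(iii)$ and the norm-boundedness of the full Calder\'on operator $Pf(s)=s^{-1/n'}\!\int_0^s f+\int_s^{|\Omega|}\!f(r)r^{-1/n'}\,\dr$ as a map $X(0,|\Omega|)\to Y(0,|\Omega|)$. The resulting inequality $\|\bfu\|_{Y(\Omega)}\lesssim\|\ep(\bfu)\|_{X(\Omega)}$ extends to all of $E^1_0X(\Omega)$ by the definition \eqref{apr103} as a closure.

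\textbf{Part II, $(iii)\Rightarrow(i)$.} The strategy is to reduce to the argument on $\R^n$ via the extension operator. Given $\bfu\in E^1X(\Omega)$, Theorems \ref{thm:extensionoperator} and \ref{C1} provide $\mathscr E_\Omega\bfu\in E^1\widehat X(\R^n)$ satisfying $\|\mathscr E_\Omega\bfu\|_{E^1\widehat X(\R^n)}\lesssim\|\bfu\|_{E^1X(\Omega)}$, together with the refined pointwise inequality \eqref{ext*} linking the rearrangements of $\mathcal E(\mathscr E_\Omega\bfu)$ and $\mathcal E(\bfu)$. Applying the rearrangement-bound argument to $\mathscr E_\Omega\bfu$ on $\R^n$, using hypothesis $(iii)$ and the Calder\'on reduction as above, and observing that $\mathscr E_\Omega\bfu=\bfu$ on $\Omega$ (property \eqref{ext0}), yields $\|\bfu\|_{Y(\Omega)}\lesssim\|\bfu\|_{E^1X(\Omega)}$.

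The principal technical difficulty is the reduction of hypothesis $(iii)$, which controls only the right-hand piece of the Calder\'on operator, to the boundedness of the \emph{full} Calder\'on operator $P$ on non-increasing functions between the r.i. spaces $X(0,|\Omega|)$ and $Y(0,|\Omega|)$. This is the step where the rearrangement-invariant nature of both $X$ and $Y$ enters essentially, via the dilation invariance of r.i. norms combined with a pointwise comparison between the left and right Hardy-type operators on non-increasing functions (the left-hand operator at scale $s$ being dominated, up to a uniform dilation, by the right-hand operator at a comparable scale).
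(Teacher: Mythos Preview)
Your approach to $(iii)\Rightarrow(i)$ has a genuine gap. The claimed ``well-known equivalence'' between hypothesis $(iii)$ and boundedness of the \emph{full} Calder\'on operator $Pf(s)=s^{-1/n'}\int_0^s f+\int_s^{|\Omega|}f(r)r^{-1/n'}\,dr$ from $X$ to $Y$ on non-increasing functions is simply false. Take $X=L^1(0,1)$ and $Y=L^{n',1}(0,1)$: a direct Fubini computation shows $(iii)$ holds with constant $n'$, yet for $f=\chi_{(0,\epsilon)}$ the left-hand piece satisfies $\|s^{-1/n'}\!\int_0^s f\|_{L^{n'}}^{n'}=\tfrac{1}{n'}\epsilon^{n'}+\epsilon^{n'}\log(1/\epsilon)$, so already the weaker $L^{n'}$-norm behaves like $\epsilon(\log(1/\epsilon))^{1/n'}\gg\epsilon=\|f\|_{L^1}$. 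No dilation or pointwise comparison on non-increasing functions can repair this: for the same $f$ and any $s>2\epsilon$, the left piece is $\epsilon s^{-1/n'}>0$ while $\int_{cs}^{1}f(r)r^{-1/n'}\,dr=0$. This is precisely why the route via Corollary~\ref{rearrepu} (i.e.\ O'Neil's rearrangement estimate for the Riesz potential) cannot yield the sharp embedding --- the potential is only of weak type at the $L^1$ endpoint, a limitation the paper explicitly flags in the introduction.

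The paper's proof circumvents this by abandoning the pointwise rearrangement inequality altogether. It first establishes the two endpoint embeddings $E^1_bL^1(\rn)\to L^{n',1}$ (the deep Spector--Van~Schaftingen inequality, which does \emph{not} follow from the representation formula) and $E^1_bL^{n,1}(\rn)\to L^\infty$, then interpolates via the $K$-functional for the pair $(E^1_bL^1,E^1_bL^{n,1})$ computed in Theorem~\ref{corollary}. The output is an inequality not between $\bfu^*$ and the Calder\'on operator applied to $\ep(\bfu)^*$, but between \emph{weighted integrals} $\int_0^t s^{-1/n}\bfu^*(s)\,ds$ and $\int_0^{ct}s^{-1/n}\int_s^{|\Omega|}\ep(\bfu)^*(r)r^{-1/n'}\,dr\,ds$; this integrated form is exactly what the Kerman--Pick machinery (july21)--(july22) needs to conclude from $(iii)$ alone, with no appeal to the left Hardy piece. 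Part~II follows the same template after invoking the extension operator and the $K$-functional on $\Omega$ (Theorem~\ref{corollary}(iii)). The moral is that the entire apparatus of Sections~4--7 (truncation, approximation, extension, $K$-functional) is there precisely to avoid the trap your argument falls into.
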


\begin{remark}\label{increasing}
{\rm Inequality \eqref{july10} holds for every non-increasing   function $f: (0, |\Omega|)\to [0, \infty)$ if and only if it just holds for every measurable   function $f: (0, |\Omega|)\to [0, \infty)$ -- see \cite[Corollary 9.8]{CPS_advances}. This nontrivial property can be of use in applications of Theorem \ref{sobemb}, since the characterizations of Hardy type inequalities of the form \eqref{july10}  for specific spaces are different, in general, depending on whether arbitrary functions or just non-increasing trial functions $f$ are considered.}
\end{remark}

The next result ensures that a Sobolev-Poincar\'e type inequality on  connected   $(\ve, \delta)$-domains with finite measure is available whenever a corresponding embedding holds.

\begin{theorem}\label{poincare-omega}{\rm{\bf [Sobolev-Poincar\'e type inequalities] }}
Let $\Omega$ be a connected   $(\ve, \delta)$-domain
 in $\R^n$ with $|\Omega|< \infty$.  Assume that $X(\Omega)$ and $Y(\Omega)$ are rearrangement-invariant  spaces for which any of the equivalent properties (i)-(iii) of Theorem \ref{reduction}, Part II, holds. Then 
there exists a constant $c$ such that
\begin{equation}\label{feb11}
\inf _{\bfv \in \mathcal R}\|\bfu -\bfv\|_{Y(\Omega)} \leq c \|\ep(\bfu)\|_{X(\Omega)}
\end{equation}
for every $\bfu \in E^1X(\Omega)$. 
%In particular, the constant $c$ depends only on $\Omega$ and on the constant $c_3$ appearing in inequality \eqref{july10}.
\end{theorem}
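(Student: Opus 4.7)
The plan is to reduce the Sobolev–Poincar\'e inequality to the Sobolev embedding of Theorem \ref{reduction}(ii) by means of a linear $L^{1}$-bounded projection onto the finite-dimensional space $\mathcal R$, and to dispatch the auxiliary Korn–Poincar\'e-type step by interpolating between $L^1$ and $L^\infty$ via the $K$-functional formula \eqref{nov3} of Theorem \ref{K-funct}(iii).

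\emph{Step 1 (the projection).} Fix an $L^2(\Omega)$-orthonormal basis $\bfv_1,\dots,\bfv_N$ of $\mathcal R$ (restricted to $\Omega$; this is a well-defined basis since $\mathcal R$ is finite dimensional and no nonzero rigid displacement can vanish on a set of positive measure). Define
\[
P\bfu = \sum_{i=1}^{N}\bigg(\int_\Omega \bfu\cdot\bfv_i\,\dx\bigg)\bfv_i, \qquad \bfu\in L^1(\Omega),
\]
as in \eqref{eq:1803}. Then $P$ is a linear projection of $L^1(\Omega)$ onto $\mathcal R$, maps $L^1(\Omega)$ boundedly into $L^\infty(\Omega)$, and hence, since $|\Omega|<\infty$ so that $L^\infty(\Omega)\hookrightarrow X(\Omega)\hookrightarrow L^1(\Omega)$ by \eqref{l1linf}, is bounded on every rearrangement-invariant space on $\Omega$. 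For $\bfu\in E^1X(\Omega)$ set $\bfw = \bfu - P\bfu$; then $\bfw\in E^1X(\Omega)$, $\ep(\bfw)=\ep(\bfu)$, and $P\bfw=0$.

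\emph{Step 2 (reduction).} Trivially
\[
\inf_{\bfv\in\mathcal R}\|\bfu-\bfv\|_{Y(\Omega)} \le \|\bfw\|_{Y(\Omega)},
\]
and the hypothesized embedding applied to $\bfw$ gives $\|\bfw\|_{Y(\Omega)} \le c_1\bigl(\|\bfw\|_{X(\Omega)}+\|\ep(\bfu)\|_{X(\Omega)}\bigr)$. Therefore the conclusion will follow once we establish
\begin{equation}\label{plan:KP}
\|\bfu - P\bfu\|_{X(\Omega)} \le c\,\|\ep(\bfu)\|_{X(\Omega)} \qquad\text{for every } \bfu\in E^1X(\Omega).
\end{equation}

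\emph{Step 3 (endpoint Korn–Poincar\'e).} I would prove \eqref{plan:KP} at the endpoints $X=L^1$ and $X=L^\infty$. For $X=L^1$ I would argue by contradiction, in the spirit of Lemma \ref{poinc-meas0}: given a normalized sequence $\{\bfu_k\}\subset E^1L^1(\Omega)$ with $P\bfu_k=0$, $\|\bfu_k\|_{L^1(\Omega)}=1$ and $\|\ep(\bfu_k)\|_{L^1(\Omega)}\to 0$, the compact embedding $E^1L^1(\Omega)\hookrightarrow\hookrightarrow L^1(\Omega)$ (which holds on $(\varepsilon,\delta)$-domains via the extension operator of Theorem \ref{thm:extensionoperator} coupled with the Temam compactness result \cite[Chapter II, Prop.~1.4]{Temam}) yields a subsequential limit $\bfu\in E^1L^1(\Omega)$ with $\ep(\bfu)=0$, so $\bfu\in\mathcal R$ by connectedness, and $P$-continuity forces $\bfu=P\bfu=0$, contradicting $\|\bfu\|_{L^1(\Omega)}=1$. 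For $X=L^\infty$ I would instead combine a chain-of-balls argument with the representation formula of Lemma \ref{lem:repr}: cover $\Omega$ by finitely many balls, use $\bfu-\mathcal R_{B}(\bfu)=\mathscr L_B(\ep(\bfu))$ on each ball, bound $\|\mathscr L_B(\ep(\bfu))\|_{L^\infty(B)}\le c\,|B|^{1/n}\|\ep(\bfu)\|_{L^\infty(B)}$ by \eqref{eq:extE2}, and control differences of consecutive rigid motions $\mathcal R_{B_j}(\bfu)-\mathcal R_{B_k}(\bfu)$ on overlaps via Corollary \ref{equivnorm}; the chain is of bounded length thanks to the $(\varepsilon,\delta)$-condition applied as in Lemma \ref{itm:B2}.

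\emph{Step 4 (interpolation).} Step 3 shows that the linear operator $\bfu\mapsto \bfu - P\bfu$, viewed as a map
\[
E^1L^1(\Omega)\to L^1(\Omega), \qquad E^1L^\infty(\Omega)\to L^\infty(\Omega),
\]
is bounded with norm controlled by $\|\ep(\bfu)\|_{L^1(\Omega)}$ and $\|\ep(\bfu)\|_{L^\infty(\Omega)}$ respectively. Via the $K$-functional identity \eqref{nov3} of Theorem \ref{K-funct}(iii) for the couple $(E^1L^1(\Omega),E^1L^\infty(\Omega))$ and the classical formula \eqref{apr30} for $(L^1(\Omega),L^\infty(\Omega))$, an interpolation argument analogous to that in the proof of Theorem \ref{C1} promotes the endpoint bounds to the rearrangement-invariant bound \eqref{plan:KP}. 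Combining with Step 2 gives \eqref{feb11}.

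\emph{Main obstacle.} The delicate part is Step 3 for $X=L^\infty$, where the chain-of-balls must be assembled with uniformly controlled cardinality and overlap — the $(\varepsilon,\delta)$-geometry of $\Omega$ is exactly what allows this, and the bookkeeping of consecutive rigid motions is what makes the $L^\infty$ bound robust despite the well-known failure of Korn's inequality in $L^\infty$.
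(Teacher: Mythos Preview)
Your reduction in Steps~1--2 is exactly right, and your endpoint arguments in Step~3 are plausible (modulo the fact that the finite ball covering in your $L^\infty$ step tacitly assumes $\Omega$ is bounded, whereas the hypothesis is only $|\Omega|<\infty$). The real gap is in Step~4.

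The $K$-functional formula \eqref{nov3} reads
\[
K(\bfu,t;E^1L^1(\Omega),E^1L^\infty(\Omega))\approx\int_0^t\mathcal E(\bfu)^*(s)\,ds,
\qquad \mathcal E(\bfu)=(\bfu,\ep(\bfu)),
\]
and \emph{not} $\int_0^t\ep(\bfu)^*(s)\,ds$. Running the interpolation of Theorem~\ref{C1} on the linear map $T=\mathrm{Id}-P$ therefore gives $(T\bfu)^{**}(s)\lesssim\mathcal E(\bfu)^{**}(s)$, hence only
\[
\|\bfu-P\bfu\|_{X(\Omega)}\lesssim\|\bfu\|_{E^1X(\Omega)},
\]
not the claimed \eqref{plan:KP}. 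The sharper endpoint information ``controlled by $\|\ep(\bfu)\|$ alone'' does not survive, because the domain-side $K$-functional necessarily carries the full $E^1$-norm. Trying to recover it by writing $T\bfu=T(\bfu-\bfv)$ for $\bfv\in\mathcal R$ and optimising over $\bfv$ leads to
\[
\|T\bfu\|_X\lesssim\inf_{\bfv\in\mathcal R}\|\bfu-\bfv\|_X+\|\ep(\bfu)\|_X\le\|T\bfu\|_X+\|\ep(\bfu)\|_X,
\]
which is circular. What you would need is a $K$-functional for the quotient couple $(E^1L^1(\Omega)/\mathcal R,\,E^1L^\infty(\Omega)/\mathcal R)$ in terms of $\ep(\bfu)^*$ alone; the analogue \eqref{nov2'}--\eqref{nov2''} is available only on $\R^n$ for $E^1_b$/$E^1_0$ functions, and the extension operator does not transfer it to $\Omega$ (the piece $\mathscr L^2(\bfu)$ in the proof of Theorem~\ref{thm:extensionoperator} depends on $\bfu$, not just on $\ep(\bfu)$).

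The paper avoids this obstruction by proving \eqref{plan:KP} directly in $X$: it first establishes that the embedding $E^1X(\Omega)\hookrightarrow X(\Omega)$ is compact (via almost-compact embeddings through the optimal target $X_1(\Omega)$ when $X\neq L^\infty$, and via Korn's inequality through $W^{1,p}(\Omega)$ for some $p>n$ when $X=L^\infty$), and then runs your Step~3 contradiction argument in $X$ itself rather than at the endpoints.
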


Theorem \ref{reduction}, coupled with \cite[Theorem 3.5]{CP-arkiv}, yields the following characterization of embeddings of $E^1_0X(\Omega)$ or $E^1X(\Omega)$ into  $L^\infty(\Omega)$, and of parallel Sobolev-Poincar\'e type inequalities. The optimal condition on the function norm $\|\cdot \|_{X(0, |\Omega|)}$ for such an embedding to hold amounts to the finiteness of the norm $\|\cdot \|_{X'(0, |\Omega|)}$ of the function $r^{-\frac 1{n'}}$. Note that, in the scale of Lebesgue function norms $\|\cdot \|_{L^p(0, |\Omega|)}$, this condition reproduces the  well known assumption that $p>n$.

\begin{corollary}\label{Linfa}{\rm{\bf [Sobolev embeddings into $L^\infty$] }}Let $\Omega$ be an open set in $\R^n$ with $|\Omega|< \infty$, and   let $X(\Omega)$   be a rearrangement-invariant space. Then inequalities \eqref{july8} and \eqref{july9} hold with $Y(\Omega) = L^\infty(\Omega)$ if and only if 
\begin{equation}\label{apr40}\|r^{-\frac 1{n'}}\|_{X'(0, |\Omega|)}< \infty.
\end{equation}
Moreover, the latter condition is equivalent to the embedding
\begin{equation}\label{apr41}
X(\Omega) \to L^{n,1}(\Omega).
\end{equation}
Under the additional assumption that $\Omega$ is an $(\ve, \delta)$-domain, this condition and embedding are also equivalent to inequalities \eqref{jan40} and \eqref{jan41}.
\end{corollary}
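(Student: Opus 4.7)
\textbf{Proof plan for Corollary \ref{Linfa}.} The plan is to specialize the reduction principle of Theorem \ref{reduction} to the endpoint target $Y(\Omega)=L^\infty(\Omega)$ and then recognize the resulting one-dimensional Hardy condition as a weighted norm condition that is equivalent, by duality, both to $\|r^{-1/n'}\|_{X'(0,|\Omega|)}<\infty$ and to $X(\Omega)\to L^{n,1}(\Omega)$. First, I would apply Theorem \ref{reduction}, Part I (and, under the $(\varepsilon,\delta)$ hypothesis, Part II), which tells us that inequality \eqref{july8} (resp.\ \eqref{jan40}) is equivalent to the  Hardy inequality \eqref{july10} for every non-increasing $f:(0,|\Omega|)\to[0,\infty)$. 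Substituting $Y(\Omega)=L^\infty(\Omega)$ so that $\|\cdot\|_{Y(0,|\Omega|)}=\|\cdot\|_{L^\infty(0,|\Omega|)}$, the Hardy inequality takes the form
\begin{equation}\label{planH}
\esssup_{s\in(0,|\Omega|)} \int_s^{|\Omega|} f(r)\, r^{-\frac 1{n'}}\,\dr \leq c_3 \|f\|_{X(0,|\Omega|)}.
\end{equation}

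Next, because the integrand in \eqref{planH} is nonnegative, the essential supremum is attained in the limit $s\to 0^+$ by monotone convergence, so \eqref{planH} is equivalent to
\begin{equation}\label{planI}
\int_0^{|\Omega|} f(r)\, r^{-\frac 1{n'}}\,\dr \leq c_3 \|f\|_{X(0,|\Omega|)}
\end{equation}
for every non-increasing $f\geq 0$. I would then upgrade \eqref{planI} from non-increasing to arbitrary measurable $f\geq 0$: given such $f$, applying \eqref{planI} to $f^*$ and using the Hardy--Littlewood inequality \eqref{B.0} together with the fact that $r\mapsto r^{-1/n'}$ is non-increasing and that $\|f\|_{X(0,|\Omega|)}=\|f^*\|_{X(0,|\Omega|)}$ by property (P6) gives \eqref{planI} for $f$ itself. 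Once \eqref{planI} is known to hold for all measurable $f\geq 0$, the definition of the associate function norm recalled in Subsection \ref{ri} shows that it is equivalent to
\begin{equation}\label{planJ}
\|r^{-\frac 1{n'}}\|_{X'(0,|\Omega|)}<\infty,
\end{equation}
which is precisely condition \eqref{apr40}.

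Finally, I would verify the equivalence of \eqref{apr40} with the embedding $X(\Omega)\to L^{n,1}(\Omega)$. Unravelling the definition of the Lorentz norm (with $p=n$, $q=1$), the embedding amounts to
\begin{equation}\label{planK}
\int_0^{|\Omega|} u^*(s)\, s^{-\frac 1{n'}}\,\ds \leq C\|u\|_{X(\Omega)}
\end{equation}
for every $u\in X(\Omega)$; by rearrangement-invariance, \eqref{planK} is the same assertion as \eqref{planI} restricted to non-increasing $f$, hence equivalent to \eqref{planJ}. Combining these steps with Theorem \ref{reduction} yields the chain of equivalences claimed, for both the $E^1_0X(\Omega)$ and (under the $(\varepsilon,\delta)$ hypothesis) the $E^1X(\Omega)$ versions. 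The only mildly delicate point is the passage from \eqref{planI} for non-increasing test functions to \eqref{planI} for all nonnegative measurable test functions; this is handled by the Hardy--Littlewood inequality as explained above (and is also recorded as a general fact in Remark \ref{increasing} or, in the notation of \cite[Theorem 3.5]{CP-arkiv}, which can be invoked directly if preferred).
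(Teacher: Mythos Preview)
Your proposal is correct and follows essentially the same route as the paper: invoke the reduction principle of Theorem \ref{reduction} and then analyze the one-dimensional Hardy condition with $Y=L^\infty$. The paper simply cites \cite[Theorem 3.5]{CP-arkiv} for the one-dimensional equivalence between \eqref{july10} with $Y=L^\infty$, condition \eqref{apr40}, and the embedding \eqref{apr41}; you instead spell out that argument explicitly (the monotone-convergence step reducing the $\esssup$ to the full integral, the Hardy--Littlewood upgrade from non-increasing to general test functions, and the identification with the associate-norm condition and the $L^{n,1}$ norm), which is a perfectly acceptable and self-contained substitute for the citation.
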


The optimal rearrangement-invariant  function norm $\|\cdot\|_{Y(0,|\Omega|)}$ in  inequality \eqref{july10}, and  the corresponding  optimal target space $Y(\Omega)$ in inequalities \eqref{july8} and \eqref{july9},  are denoted by  $\|\cdot\|_{X_1(0, |\Omega|)}$ and $X_1(\Omega)$,  respectively.  The relevant function norm obeys
\begin{align}\label{Z}
\|f\|_{X_1'(0, |\Omega|)}=\|s\sp{\frac 1n}f\sp{**}(s)\|_{X'(0,|\Omega|)}
\end{align}
for $f \in \mathcal \Mpl (0, |\Omega|)$. \textcolor{black}{This is the content of the following result.}

\begin{theorem}\label{optomega}{\rm{\bf [Optimal rearrangement-invariant target in Sobolev embeddings on open sets] }}
Let $\Omega$ be an open set in $\R^n$ with $|\Omega|< \infty$.  Let $X(\Omega)$  be a rearrangement-invariant space and let $X_1(\Omega)$ be the rearrangement-invariant space defined via the function norm given by \eqref{Z}.

\vspace{0.1cm}
\par\noindent 
{\bf Part I [Embeddings for $E^1_0X(\Omega)$]}   The embedding $E^1_0X(\Omega) \to X_1(\Omega)$ holds, namely there exists a constant $c$ such that
\begin{equation}\label{jan74}
\|\bfu\|_{X_1(\Omega)} \leq c \|\ep (\bfu)\|_{X(\Omega)}
\end{equation}
for every $\bfu \in E^1_0X(\Omega)$. Moreover,
$X_1(\Omega)$ is the optimal (smallest possible) rearrangement-invariant  target space in \eqref{jan74}.

\vspace{0.1cm}
\par\noindent 
 {\bf Part II [Embeddings for $E^1X(\Omega)$]}
Assume, in addition that $\Omega$ is an $(\ve, \delta)$-domain.  Then   $E^1X(\Omega) \to X_1(\Omega)$, namely there   exists a constant $c$ such that 
\begin{equation}\label{jan75}
\|\bfu\|_{X_1(\Omega)} \leq c \|\bfu\|_{E^1X(\Omega)}
\end{equation}
for every $\bfu \in E^1X(\Omega)$. Moreover, $X_1(\Omega)$ is the optimal (smallest possible) rearrangement-invariant  target space in \eqref{jan75}.
\end{theorem}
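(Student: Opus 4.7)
The plan is to derive Theorem \ref{optomega} from the reduction principle in Theorem \ref{reduction}, which recasts each Sobolev inequality \eqref{jan74} and \eqref{jan75} as the one-dimensional Hardy type inequality \eqref{july10} for the operator $Hf(s)=\int_s^{|\Omega|} f(r)r^{-1+\frac 1n}\dr$. A routine verification, based on the subadditivity of the mapping $f\mapsto f^{**}$ and on properties (P1)--(P6) of $\|\cdot\|_{X'(0,|\Omega|)}$, confirms that the functional in \eqref{Z} defines a rearrangement-invariant function norm, so $X_1(\Omega)$ is a well-defined rearrangement-invariant space, and the existence and optimality assertions of both parts of Theorem \ref{optomega} reduce to the following two one-dimensional claims: $H\colon X(0,|\Omega|)\to X_1(0,|\Omega|)$ boundedly; and for every rearrangement-invariant space $Y(\Omega)$ such that $H\colon X(0,|\Omega|)\to Y(0,|\Omega|)$ boundedly, one has $X_1(\Omega)\to Y(\Omega)$.

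Both claims will be handled simultaneously through the formal adjoint $H^\ast g(r)=r^{-1+\frac 1n}\int_0^r g(s)\ds$, which by Fubini's theorem satisfies $\int_0^{|\Omega|} g\,(Hf)\,\ds=\int_0^{|\Omega|} f\,(H^\ast g)\,\dr$ for non-negative $f$ and $g$. Consequently, the associate-norm representation of $\|\cdot\|_{Y(0,|\Omega|)}$ combined with property \eqref{X''} shows that $H\colon X(0,|\Omega|)\to Y(0,|\Omega|)$ boundedly if and only if $H^\ast\colon Y'(0,|\Omega|)\to X'(0,|\Omega|)$ boundedly, with the same operator norm. The pointwise bound
\begin{equation*}
H^\ast g(r)\leq r^{\frac 1n}g^{**}(r)\qquad\text{for }r\in(0,|\Omega|),
\end{equation*}
derived from $\int_0^r g(s)\ds\leq \int_0^r g^\ast(s)\ds=r g^{**}(r)$, becomes an equality whenever $g$ is non-increasing. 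Coupled with the monotonicity of $\|\cdot\|_{X'(0,|\Omega|)}$ and the very definition \eqref{Z} of $X_1'$, this bound immediately yields $\|H^\ast g\|_{X'(0,|\Omega|)}\leq \|g\|_{X_1'(0,|\Omega|)}$ for all non-negative $g$, whence by adjointness $H\colon X(0,|\Omega|)\to X_1(0,|\Omega|)$ and hence the Sobolev embeddings \eqref{jan74} and \eqref{jan75} via Theorem \ref{reduction}. For optimality, any admissible $Y(\Omega)$ produces a constant $c$ with $\|H^\ast g\|_{X'(0,|\Omega|)}\leq c\|g\|_{Y'(0,|\Omega|)}$; specialising to non-increasing $g\geq 0$ and invoking the equality case of the pointwise bound gives
\begin{equation*}
\|g\|_{X_1'(0,|\Omega|)}=\|r^{\frac 1n}g^{**}(r)\|_{X'(0,|\Omega|)}=\|H^\ast g\|_{X'(0,|\Omega|)}\leq c\,\|g\|_{Y'(0,|\Omega|)},
\end{equation*}
which extends to every $g\in Y'(\Omega)$ via $g^\ast$ by the rearrangement invariance of both norms. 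Thus $Y'(\Omega)\to X_1'(\Omega)$, and associate duality delivers $X_1(\Omega)\to Y(\Omega)$, completing both parts of Theorem \ref{optomega}.

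The substantive work lies upstream: once Theorem \ref{reduction} is secured, Theorem \ref{optomega} becomes a clean adjoint-plus-duality computation whose only genuine ingredient is the elementary pointwise bound on $H^\ast g$, and the two settings of Part I and Part II are treated uniformly by the same argument. The principal obstacle in the broader project is the reduction principle itself, which rests crucially on the $K$-functional formula of Theorem \ref{corollary} for the couple $(E^1L^1,E^{1}L^{n,1})$, together with the smooth approximation and extension results of Sections \ref{smoothapprox}--\ref{extension} that are needed to transfer Part I into Part II on $(\varepsilon,\delta)$-domains.
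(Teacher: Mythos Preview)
Your proof is correct and follows the same route as the paper: both reduce via Theorem \ref{reduction} to the one-dimensional Hardy inequality \eqref{july10} and then identify $X_1$ as its optimal target. The paper simply cites \cite[Proposition 5.2]{KermanPick} for this last step, whereas you supply the argument explicitly via the adjoint $H^\ast$ and associate duality---which is essentially the proof in that reference; one small point worth making explicit is that passing from ``\eqref{july10} for non-increasing $f$'' (which is what Theorem \ref{reduction} delivers) to the adjoint bound $H^\ast\colon Y'\to X'$ uses Remark \ref{increasing}.
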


\begin{proof}[Proof of Theorem \ref{reduction}] \emph{Part} I.
Embedding \eqref{july8} trivially implies embedding \eqref{july9}. The fact that the latter embedding implies inequality \eqref{july10} is well known, and  basically follows by considering radially decreasing functions in $W^1_0X(\Omega)$ that are compactly supported in a ball contained in $\Omega$ -- see \cite{EKP}.
\\ Let us prove that inequality \eqref{july10} implies embedding \eqref{july8}. A  result of \cite{SV} ensures that 
%\begin{equation}\label{feb1}
%E^1L^1(\rn) \to L^{n',1}(\rn),
%\end{equation}
%and 
\begin{equation}\label{feb22}
\|\bfu\|_{ L^{n',1}(\rn)} \leq C \|\ep(\bfu)\|_{L^1(\rn)}
\end{equation}
for some constant $C=C(n)$ and for every $\bfu \in E^1L^1(\rn)$.
Specifically,   inequality \eqref{feb22} follows from inequality \cite[Theorem 1]{SV} for smooth compactly supported functions in $\rn$, owing to the fact that the latter are dense in $E^1L^1(\rn)$. 
%
%
%From embedding \eqref{feb1} and the very definition of the space $E^1_0L^1(\Omega) $, we infer that
%\begin{equation}\label{july11}
%E^{1}L^1_0(\Omega) \to L^{n',1}(\Omega),
%\end{equation}
%and 
%\begin{equation}\label{feb23}
%\|\bfu\|_{ L^{n',1}(\Omega)} \leq C \|\ep(\bfu)\|_{L^1(\Omega)}
%\end{equation}
%for every $\bfu \in E^1_0L^1(\Omega)$.
\\
On the other hand, we claim that
\begin{equation}\label{july12}
\|\bfu\|_{ L^{\infty}(\rn)} \leq C \|\ep(\bfu)\|_{L^{n,1}(\rn)}
\end{equation}
for every $\bfu \in E^1L^{n,1}(\rn)$. 
Indeed, if $\bfu \in C^\infty_0(\R^n)$, then inequality \eqref{july12} follows from \eqref{rearrepu1}, which implies that  
\begin{equation*}%\label{mar140}
\|\bfu\|_{ L^{\infty}(\rn)}= \bfu^*(0) \leq C \int_0^\infty \ep(\bfu)^*(r)r^{-\frac 1{n'}}\dr =C \|\ep(\bfu)\|_{L^{n,1}(\rn)}
\end{equation*}
for some constant $C=C(n)$.
 Inequality \eqref{july12} continues to hold for every $\bfu \in E^1L^{n,1}(\rn)$, since the space $C^\infty_0(\R^n)$ is dense in $E^1L^{n,1}(\R^n)$.
\par\noindent
% Now, let us denote by $L_\Omega$ the restriction operator that maps a function defined in $\R^n$ to its restriction to $\Omega$.
Now, denote as above by $\bfu_r : \Omega \to \rn$ the restriction to $\Omega$ of a function $\bfu : \rn \to \rn$.  %This is clearly a linear operation.
 Inequality \eqref{feb22} entails, 
in particular, that
\begin{equation}\label{feb37}
\|\bfu_r\|_{ L^{n',1}(\Omega)} \leq C \|\ep(\bfu)\|_{L^1(\rn)}
\end{equation}
for every $\bfu \in {E^1_b}L^1(\rn)$, and inequality  \eqref{july12} implies that 
\begin{equation}\label{feb38}
\|\bfu_r\|_{ L^{\infty}(\Omega)} \leq C \|\ep(\bfu)\|_{L^{n,1}(\rn)}
\end{equation}
for every $\bfu \in  E^1_bL^{n,1}(\rn)$. 
Inequalities \eqref{feb37} and \eqref{feb38} tell us that the linear operator that maps $\bfu$ to $\bfu_r$
%
%
%linear operator $L_\Omega$  satisfies
\begin{equation}\label{feb39}
\text{is bounded from ${E^1_b}L^1(\rn)$ into $L^{n',1}(\Omega)$ and from $ {E^1_b}L^{n,1}(\rn)$ into $L^{\infty}(\Omega)$.}
%
%L_\Omega :    {E^1_b}L^1(\rn) \to L^{n',1}(\Omega) \quad \text{and} \quad L_\Omega :  {E^1_b}L^{n,1}(\rn) \to L^{\infty}(\Omega).
\end{equation}
 A basic property of the $K$-functional -- see e.g. \cite[Chapter 5, Theorem 1.11]{BS} -- then tells us that
\begin{equation}\label{july16}
K(\bfu_r, t;L^{n',1}(\Omega), L^\infty(\Omega))
\leq C K(\bfu, t/C;  {E^1_b}L^1(\R^n),   {E^1_b}L^{n,1}(\R^n)) \quad\text{for $t>0$,}
\end{equation}
for some constant $C=C(n)$ and for every $\bfu\in   {E^1_b}L^1(\R^n)+  {E^1_b}L^{n,1}(\R^n){\color{black} = {E^1_b}L^1(\R^n)}$.  Owing to \cite[Corollary 2.3, Chapter 5]{BS}, 
\begin{equation}\label{july17}
K(\bfu_r, t;L^{n',1}(\Omega), L^\infty(\Omega)) \approx \int_0^{t^{n'}} s^{-\frac{1}{n}} \bfu_r^*(s)\ds \quad\text{for $t>0$.}
\end{equation} 
By equations  \eqref{feb6} and \eqref{july17}, inequality \eqref{july16} yields
\begin{equation}\label{july18}
 \int_0^{t^{n'}} s^{-\frac{1}{n}} \bfu_r^*(s)\ds
\leq C 
 \bigg(\int _0^{t^{n'}/C} \ep(\bfu)^*(s) \ds + t \int_{t^{n'}/C}^\infty  \ep(\bfu)^*(s) s^{-\frac 1{n'}} \ds\bigg) \quad\text{for $t>0$.}
\end{equation}
If $\bfu \in E^1_bX(\Omega)$, then, by the definition of the latter space, its extension $\bfu_e \in E^1_bX(\R^n)$.
 Thus, from an application of inequality \eqref{july18} with $\bfu$ replaced by $\bfu_e$ one obtains that
\begin{equation}\label{feb40}
 \int_0^{t^{n'}} s^{-\frac{1}{n}} \bfu^*(s)\ds
\leq C 
 \bigg(\int _0^{t^{n'}/C} \ep(\bfu_e)^*(s) \ds + t \int_{t^{n'}/C}^\infty  \ep(\bfu_e)^*(s) s^{-\frac 1{n'}} \ds\bigg) \quad\text{for $t>0$.}
\end{equation}
An estimate for the right-hand side of \eqref{feb40} as in \cite[Proof of Theorem 4.2]{KermanPick} enables us to deduce that
\begin{equation}\label{july19}
 \int_0^{t^{n'}} s^{-\frac{1}{n}} \bfu^*(s)\ds
\leq C 
 \int_0^{ct^{n'}} s^{-\frac{1}{n}} \int_s^{\infty}\ep(\bfu_e)^*(r) r^{-\frac{1}{n'}}\,\mathrm{d}r\ds  \quad\text{for $t>0$,}
\end{equation}
for some constants   $c=c(n)\geq 1$ and $C=C(n)$. Observe that  $\ep(\bfu_e)^*(s)= \ep(\bfu)^*(s)$ if $s\in (0, |\Omega|)$, and $\ep(\bfu_e)^*(s)=0$ if $s\in [|\Omega|, \infty)$.
Hence, 
via a change of variables,   
\begin{align}\label{july20'}
\int_0^t  s^{-\frac{1}{n}} \bfu^*(s)\ds
\leq C\int_0^{t}  s^{-\frac{1}{n}} \int_{s/c}^{|\Omega|} \ep(\bfu)^*(r) r^{-\frac{1}{n'}}\,\mathrm{d}r\ds  \quad\text{for $t\in (0, |\Omega|)$,}
\end{align}
for some constants  $c=c(n)\geq 1$ and $C=C(n)$. An argument from \cite[Proof of Theorem A]{KermanPick} (see also \cite[Proof of Theorem 4.1]{CPS_Frostman} for an alternative simpler proof), relying only on one-dimensional inequalities, tells us that, if inequality \eqref{july10} holds, and the  measurable functions $f, g : (0,  |\Omega|) \to \mathbb R$ are such that
\begin{align}\label{july21}
\int_0^t  s^{-\frac{1}{n}} g^*(s)\ds
\leq C\int_0^{t}  s^{-\frac{1}{n}} \int_{s/c}^{|\Omega|} f^*(r) r^{-\frac{1}{n'}}\,\mathrm{d}r\ds  \quad\text{for $t\in (0, |\Omega|)$,}
\end{align}
for some constants   $c>1$ and $C>0$, then
\begin{align}\label{july22}
\|g\|_{Y(0, |\Omega|)} \leq C' \|f\|_{X(0, |\Omega|)},
\end{align}
for a suitable constant $C'=C'(c,C)$. Owing to inequality \eqref{july20'}, an application of this property with $g= \bfu^*$ and $f =\ep (\bfu)^*$ tells
us that
\begin{align}\label{july23}
\|\bfu\|_{Y(\Omega)} = \| \bfu^*\|_{Y(0,|\Omega|)} \leq C'   \|\ep(\bfu)^*\|_{X(0,|\Omega|)} =   C' \|\ep(\bfu)\|_{X(\Omega)}
\end{align}
for every $\bfu \in E^1_bX(\Omega)$, where $C'=C'(n)$. We have thus shown that, if $\bfu \in E^1_bX(\Omega)$, then  $\bfu \in Y(\Omega)$, and 
 inequality \eqref{july10} holds.
\\ Assume now that $\bfu \in  E^1_0X(\Omega)$. Let $\{\bfu_k\}$ be a sequence in $E^1_bX(\Omega)$ such that  $\bfu_k \to \bfu$ in $E^1_bX(\Omega)$. Thus, $\{\ep(\bfu_k)\}$ is a Cauchy sequence in $X(\Omega)$, and there exists a function $\bfU : \Omega \to \mathbb R^{n\times n}$ such that $\ep(\bfu_k) \to \bfU$ in $X(\Omega)$. Owing to inequality \eqref{july23} applied to $\{\bfu_ k-\bfu_m\}$ for $k,m \in \mathbb N$,  $\{\bfu_k\}$ is a Cauchly sequence in $Y(\Omega)$.  Hence, it converges to some function $\bfv$ in $Y(\Omega)$ and (up to subsequences) a.e. in $\Omega$. Altogether, we have that $\bfu=\bfv \in Y(\Omega)$ and $\ep(\bfu)= \bfU \in X(\Omega)$. Moreover, passing  to the limit in inequality \eqref{july23} applied with $\bfu$ replaced by $\bfu_k$, yields \eqref{july23} for $\bfu$, thanks to the Fatou's lemma for rearrangement-invariant function norms (see \cite[Chapter 1, Theorem 1.7]{BS}).
%
%$\bfu_k \to $\|\ep(\bfu_k}- \bfu \|_{X(\Omega)}\to 0$. 
%\\ {\color{black} Inequality \eqref{feb10} follows from the same argument, after proving that in the $K$-functional the quantity $\mathcal E(\bfu)$ can be replaced just with $\ep(\bfu)$.
%
 %holds since $\|\bfu\|_{E^1_0X(\Omega)}$ and  $\|\ep(\bfu)\|_{X(\Omega)}$  are equivalent norms  in $E^1_0X(\Omega)$ by the closed graph theorem (check), or by an argument as in Lemma \ref{poinc-meas0}.
%}
%\todo{The case of non-vanishing boundary values is analogous. We have to use an extension operator. I am going to check this correctly later}
%\todo[inline]{A: proof of Part (ii) to be added}

\smallskip
\par\noindent \emph{Part} II. 
Plainly, inequality \eqref{jan40}  implies inequality \eqref{jan41}, and  the latter   implies inequality \eqref{july10}, as shown in \cite[Theorem A]{KermanPick}. As for the fact that inequality  \eqref{july10} implies inequality \eqref{jan40}, note that inequalities \eqref{feb22} and \eqref{july12},
combined  with the use of the extension operator $\mathscr{E}_\Omega$ from Theorems \ref{thm:extensionoperator} and  \ref{C1}, ensure that there exists a constant $C=C(\Omega)$ such that
\begin{equation}\label{feb3}
\|\bfu \|_{L^{n',1}(\Omega)} \leq C \|\bfu\|_{E^{1}L^1(\Omega)}
\end{equation}
for every $\bfu \in E^{1}L^1(\Omega)$, 
and
\begin{equation}\label{feb4}
 \|\bfu\|_{L^{\infty}(\Omega)} \leq \|\bfu\|_{E^{1}L^{n,1}(\Omega)}
\end{equation}
for every $\bfu \in E^{1}L^{n,1}(\Omega)$.
On making us of embeddings \eqref{feb3} and \eqref{feb4} instead of \eqref{feb22} and \eqref{july12}, and of equation \eqref{feb7} instead of \eqref{feb6},  embedding \eqref{jan40} can be deduced from inequality \eqref{july10} via the same argument that has been exploited in Part I to deduce embedding \eqref{july8} from inequality \eqref{july10}.
\end{proof}

\begin{proof}[Proof of Theorem \ref{optomega}] The conclusions follow from Theorem \ref{reduction} and the fact that, by \cite[Proposition 5.2]{KermanPick}, the function norm $\|\cdot\|_{X_1'(0, |\Omega|)}$ satisfying  \eqref{Z} is the optimal 
target norm in inequality \eqref{july10}.
\end{proof} 
%
%\todo[inline]{A: say something about the proof of Theorem \ref{optomega}}

The next lemma, of use in our proof of Theorem \ref{poincare-omega}, tells us that any domain as in that theorem  satisfies a relative isoperimetric inequality with exponent $\frac 1{n'}$. In the statement, $P(E; \Omega)$ denotes the perimeter in the sense of De Giorgi of a set $E$ relative to $\Omega$, that can be defined as the total variation of the function $\chi_E$ in $\Omega$.

\begin{lemma}\label{isop} Let  $\Omega$ be a connected   $(\ve, \delta)$-domain
 in $\R^n$ with $|\Omega|< \infty$. Then there exists a positive constant $c$ such that
\begin{equation}\label{isop1}
c |E|^{\frac 1{n'}} \leq  P(E; \Omega)
\end{equation}
for every measurable set $E\subset \Omega$ such that $|E|\leq \frac 12 |\Omega|$.
\end{lemma}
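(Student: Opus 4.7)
The plan is to deduce the relative isoperimetric inequality from a scalar Sobolev--Poincar\'e inequality, applied to the characteristic function $\chi_E$ understood as an element of $BV(\Omega)$. Since we are free to assume $P(E;\Omega) < \infty$ (otherwise there is nothing to prove), $\chi_E$ lies in $BV(\Omega)$ with total variation equal to $P(E;\Omega)$.

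The first step is to establish the scalar Sobolev--Poincar\'e inequality on $\Omega$: there exists $C = C(\Omega)$ such that
$$
\inf_{c \in \R} \|u - c\|_{L^{n'}(\Omega)} \leq C \|\nabla u\|_{L^1(\Omega)}
$$
for every $u \in W^{1,1}(\Omega)$. The Sobolev embedding $W^{1,1}(\Omega) \to L^{n'}(\Omega)$ is already provided by Theorem \ref{reduction}, Part II, applied with $X(\Omega) = L^1(\Omega)$ and $Y(\Omega) = L^{n'}(\Omega)$, since the corresponding one-dimensional Hardy inequality \eqref{july10} reduces to a classical weighted Hardy inequality. The passage from this full-norm embedding to its mean-value form uses a contradiction-and-compactness argument, in the spirit of the proof of Lemma \ref{poinc-meas0}, based on the compact embedding $W^{1,1}(\Omega) \hookrightarrow L^1(\Omega)$, which holds on $(\varepsilon,\delta)$-domains thanks to Jones' extension theorem. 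By a standard approximation of $BV$ functions by smooth functions (using again Jones' extension), the resulting inequality extends to $u \in BV(\Omega)$ with $\|\nabla u\|_{L^1(\Omega)}$ replaced by $|Du|(\Omega)$.

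The second step is to apply this inequality to $u = \chi_E$ with the natural choice $c = |E|/|\Omega|$, yielding
$$
\bigl\|\chi_E - |E|/|\Omega|\bigr\|_{L^{n'}(\Omega)} \leq C\, P(E;\Omega).
$$
A direct computation gives
$$
\int_\Omega \bigl|\chi_E - |E|/|\Omega|\bigr|^{n'} \dx = |E|\bigl(1 - |E|/|\Omega|\bigr)^{n'} + (|\Omega| - |E|)\bigl(|E|/|\Omega|\bigr)^{n'},
$$
and the hypothesis $|E| \leq |\Omega|/2$ forces $1 - |E|/|\Omega| \geq 1/2$, so that the first term alone bounds the left-hand side below by $|E|/2^{n'}$. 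Taking the $n'$-th root, one deduces $|E|^{1/n'}/2 \leq C\, P(E;\Omega)$, which is precisely \eqref{isop1} with $c = 1/(2C)$.

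The only genuine obstacle is the derivation of the mean-value Sobolev--Poincar\'e inequality in the scalar setting, together with its extension to $BV(\Omega)$. These are classical consequences of the extension property of $(\varepsilon,\delta)$-domains established by Jones, so no fundamentally new technique is needed; the rest of the argument reduces to the elementary pointwise computation above.
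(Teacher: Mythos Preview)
Your argument is correct but follows a different route from the paper's. The paper invokes a criterion of Maz'ya \cite[Lemma 5.2.3/2 and Corollary 5.2.3]{Mazya} that reduces the relative isoperimetric inequality to the validity of
\[
\|u\|_{L^{n'}(\Omega)} \leq c\bigl(\|\nabla u\|_{L^1(\Omega)} + \|u\|_{L^1(G)}\bigr)
\]
for some open $G$ with $\overline G\subset\Omega$; this is then deduced from the full Sobolev embedding $W^{1,1}(\Omega)\to L^{n'}(\Omega)$ (via Jones' extension, which you also use) by a short absorption trick---choose $G$ with $|\Omega\setminus G|$ small enough that the $L^1(\Omega\setminus G)$-contribution, controlled by H\"older against the $L^{n'}$-norm, can be swallowed into the left-hand side. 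Neither a compactness argument nor a passage through $BV$ appears. Your approach is more self-contained in that it recovers both the Poincar\'e step and the isoperimetric conclusion by hand, but it quietly leans on two facts that deserve a sentence each: the compact embedding $W^{1,1}(\Omega)\hookrightarrow L^1(\Omega)$ on a possibly \emph{unbounded} $(\varepsilon,\delta)$-domain of finite measure (this does hold---extend via Jones, extract an $L^1_{\mathrm{loc}}$-convergent subsequence, and control tails using the uniform $L^{n'}$-bound), and the strict approximation of $BV(\Omega)$-functions by smooth ones (valid on any open set; Jones' extension is not needed there).
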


\begin{proof}
 By  \cite[Lemma 5.2.3/2 and Corollary 5.2.3]{Mazya}, inequality \eqref{isop1} will follow if we show that
\begin{equation}\label{apr35}
\|u\|_{L^{n'}(\Omega)}\leq c(\|\nabla u\|_{L^1(\Omega)} + \|u\|_{L^1(G)})
\end{equation}
for some open set $G$ such that $\overline G \subset \Omega$, for some constant $c$ and for every weakly differentiable function $u$ such that $\nabla u \in L^1(\Omega)$. Since, by \cite[Theorem 1]{Jo}, the set $\Omega$ is an extension domain for $W^{1,1}(\Omega)$, inequality \eqref{apr35} certainly holds with $G$ replaced by $\Omega$. Namely, 
\begin{equation}\label{apr36}
\|u\|_{L^{n'}(\Omega)}\leq c'(\|\nabla u\|_{L^1(\Omega)} + \|u\|_{L^1(\Omega)})
\end{equation}
for some constant $c'$ and for every function $u \in  W^{1,1}(\Omega)$. We claim that \eqref{apr36} implies \eqref{apr35}. Note that it suffices to prove that inequality  \eqref{apr35} holds under the additional assumption that $u \in L^\infty(\Omega)$. The conclusion for general $u$ then follows by truncating $u$ at levels $-t$ and $t$, letting $t\to \infty$, and making use of the monotone convergence theorem. Now, choose $G$ in \eqref{apr35} in such a way that $c'|\Omega \setminus G|^{\frac 1{n}}\leq \frac 12$, and fix any function $\rho \in C^\infty_0(\Omega)$ such that $0\leq \rho \leq 1$ and $\rho =1$ in $G$. Assume that $\nabla u \in L^1(\Omega)$ and $u\in L^\infty(\Omega)$. One has that,
\begin{equation}\label{apr37}
\|u\|_{L^1(\Omega)}\leq \|\rho u\|_{L^1(G)}+ \|(1-\rho)u\|_{L^1(\Omega\setminus G)} \leq  \| u\|_{L^1(G)}+ \|u\|_{L^{n'}(\Omega\setminus G)} |\Omega \setminus G|^{\frac 1{n}}\leq   \| u\|_{L^1(G)}+ \frac 1{2c'}\|u\|_{L^{n'}(\Omega)}.
\end{equation}
Note that the norm $\|u\|_{L^{n'}(\Omega)}$ is certainly finite, since we are assuming that $u \in L^\infty(\Omega)$. Coupling inequality \eqref{apr36} with \eqref{apr37}, and absorbing the term $\frac 1{2c'}\|u\|_{L^{n'}(\Omega)}$ on the left-hand side yields \eqref{apr35}. 
\end{proof}

\begin{proof}[Proof of Theorem \ref{poincare-omega}]
%We conclude with a proof of inequalty 
%\eqref{feb11} under the additional assumption that $\Omega$ is connected. Let us point out that we shall make use of Theorem \ref{optomega} below, whose proof, however, only rests upon the results of Part II of the present theorem that have already been established.
To begin with, we claim that, if a sequence $\{\bfu_k\} \subset E^1X(\Omega)$ is bounded in $E^1X(\Omega)$, then there exists a function $\bfu \in \mathcal M(\Omega)$ and a subsequence, still denoted by $\{\bfu_k\}$, such that $\bfu_k \to \bfu$ a.e. in $\Omega$. Indeed, owing to the second embedding in \eqref{l1linf}, the sequence is also bounded in $E^1L^1(\Omega)$.  Our claim  then follows on choosing a  countable covering of $\Omega$ by open balls $B_j\subset \Omega$,  exploiting the compactness of the embedding  $E^1L^1(B_j)\to L^1(B_j)$ for each $j$ to extract a convergent subsequence for each $j$, and then employing a diagonal argument -- see \cite[Proof of Lemma 5.5]{Slav2} for an implementation of this argument in the case of Sobolev spaces for the full gradient.
\\ Next, we show that the embedding 
\begin{equation}\label{compact}
E^1X(\Omega) \to X(\Omega)
\end{equation}
is compact. Assume first that $X(\Omega) \neq L^\infty(\Omega)$. The compactness of embedding \eqref{compact} relies upon the fact that inequalities  \eqref{july10} and  \eqref{jan40} hold with the spaces $Y(0, |\Omega|)=X_1(0, |\Omega|)$ and $Y(\Omega)=X_1(\Omega)$ built upon the function norm defined by \eqref{Z}, and its proof makes use of a characterization of compact embeddings from \cite{Slav1}.  Specifically, one can verify that, for every $L>0$,
\begin{equation*}%\label{apr10}
\bigg\|\int_s^Lf (r)r^{-1+\frac 1{n}}\dr \bigg\|_{L^1(0, L)} \leq  L^{\frac 1n} \|f\|_{L^1(0, L)}\quad  \text{and}\quad \bigg\|\int_s^Lf (r)r^{-1+\frac 1{n}}\dr \bigg\|_{L^\infty(0, L)} \leq  nL^{\frac 1n} \|f\|_{L^\infty(0, L)}
\end{equation*}
for $f \in L^1(0,L)$ and $f \in L^\infty(0,L)$, respectively. Hence, an interpolation theorem by Calder\'on \cite[Theorem 2.12, Chapter 3]{BS} ensures that, if $0<L<|\Omega|$, then
\begin{equation}\label{apr11}
\bigg\|\int_s^Lf (r)r^{-1+\frac 1{n}}\dr \bigg\|_{X_r(0, L)} \leq  n L^{\frac 1n} \|f\|_{X_r(0, L)}
\end{equation}
for $f \in X(0,L)$. As a consequence,
\begin{equation}\label{apr12}
\lim_{L\to 0^+} \sup_{\|f\|_{X_r(0,L)}\leq 1} \left\| \int_r^L f(\varrho) \varrho^{-1+\frac{1}{n}}\,d\varrho\right\|_{X_r(0,L)} =0.
\end{equation}
By \cite[Theorem 4.2]{Slav2}, equation \eqref{apr12} implies that 
\begin{equation}\label{apr13}
X_1(0, |\Omega|) \to X(0, |\Omega|)
\end{equation}
almost compactly, in the sense that $
\lim_{L\to 0^+} \sup_{\|f\|_{X(0, |\Omega|)}\leq 1} \|f^*\|_{(X_1)_r(0,L)}=0$. Next, let $\{\bfu _k\}$ be a bounded sequence in $E^1X(\Omega)$. As shown above, there exist a function $\bfu \in \mathcal M(\Omega)$ and a subsequence still denoted by $\{\bfu_k\}$, such that $\bfu_k \to \bfu$ a.e. in $\Omega$. By the embedding $E^1X(\Omega) \to X_1(\Omega)$, which is guaranteed by Theorem \ref{optomega}, the sequence  $\{\bfu_k\}$ and is bounded in  $X_1(\Omega)$.  Fatou's  lemma for rearrangement-invariant norms tells us that $\bfu \in  X_1(\Omega)$. Therefore, the sequence  $\{\bfu_k-\bfu\}$ is bounded in $X_1(\Omega)$, and $\bfu_k-\bfu \to 0$ a.e. in $\Omega$. By the almost-compact embedding \eqref{apr13}, this implies that $\bfu_k\to \bfu$ in $X(\Omega)$ \cite[Theorem 3.1]{Slav1}. The compactness of embedding \eqref{compact} is thus established when $X(\Omega) \neq L^\infty(\Omega)$. 
\\ Suppose next that 
 $X(\Omega) = L^\infty(\Omega)$. Fix any $p\in (n, \infty)$. We claim that
\begin{equation}\label{apr20}
E^1L^\infty(\Omega) \to W^{1,p}(\Omega).
\end{equation}
To verify this claim, consider the extension operator $\mathscr{E}_\Omega  : E^1L^p(\Omega) \to E^1L^\infty(\rn)$ provided by Theorem \ref{C1}. Since $|\Omega|< \infty$, one has  $E^1L^\infty(\Omega) \to E^1L^p(\Omega)$. Thus, given $\bfu \in E^1L^\infty(\Omega)$, we have that $\bfu \in E^1L^p(\Omega)$,  and there exists a constant $c$ such that $\|\mathscr{E}_\Omega (\bfu) \|_{E^1L^p(\rn)}\leq c \| \bfu \|_{E^1L^p(\Omega)}$ for every $\bfu  \in E^1L^\infty(\Omega)$. Since the space  $C^\infty_0(\rn)$ is dense in $E^1L^p(\rn)$, there exists a sequence of functions $\{\bfv_k\}\subset C^\infty_0(\rn)$ such that $\bfv_k \to \mathscr{E}_\Omega (\bfu)$ in  $E^1L^p(\rn)$. Moreover, by Korn's inequality, there exists a constant $C$ such that
\begin{equation}\label{apr21}
\|\nabla \bfv_k\|_{L^p(\Omega)}\leq \|\nabla \bfv_k\|_{L^p(\rn )}\leq C \|\ep (\bfv_k)\|_{L^p(\rn )}.
\end{equation}
Also,
\begin{equation}\label{apr22}
\lim_{k\to \infty} \|\ep (\bfv_k)\|_{L^p(\rn )}=  \|\ep (\mathscr{E}_\Omega (\bfu))\|_{L^p(\rn )} \leq C  \|\bfu\|_{E^1L^p(\Omega)}
\end{equation}
for some constant $C$.
Thus, the sequence $\{\nabla \bfv_k\}$ is bounded in $L^p(\Omega)$, and hence there exists a subsequence, still indexed by $k$,  and a function $\bfV \in L^p(\Omega)$, such that $\nabla \bfv_k \rightharpoonup \bfV$ weakly in $L^p(\Omega)$. Since $\bfv_k \to \bfu$ in $L^p(\Omega)$, one has that $\bfu \in W^{1,p}(\Omega)$ and $\nabla \bfu = \bfV$. Moreover, owing to equations \eqref{apr21} and \eqref{apr22},
\begin{equation}\label{apr23}
\|\nabla \bfu\|_{L^p(\Omega)}\leq C  \|\bfu\|_{E^1L^p(\Omega)}
\end{equation}
for some constant $C$.
Embedding \eqref{apr20} follows from inequality \eqref{apr23}. On the other hand, the embedding 
\begin{equation}\label{apr24}
W^{1,p}(\Omega) \to L^\infty(\Omega)
\end{equation}
 is compact. This follows, for instance, from \cite[Theorem 7.6]{Slav2}, whose assumption are fulfilled thanks to Lemma \ref{isop}. 
%
%Notice that that result can actually be applied, since $\Omega$ satisfies a relative isoperimetric inequality with exponent $\frac 1{n'}$, inasmuch as the latter is equivalent to  the Sobolev embedding $W^{1,1}(\Omega) \to L^{n'}(\Omega)$ \cite[???]{Mazya}, which does hold in $\Omega$. 
The compactness of  embedding  \eqref{compact} with $X(\Omega)=L^\infty(\Omega)$ 
is a consequence of \eqref{apr20} and of the compact embedding \eqref{apr24}.
\\ With the compact embedding \eqref{compact} at disposal, one can conclude via an argument similar to that in  the proof of Lemma \ref{poinc-meas0}. Denote by $\Pi_\Omega$ the bounded linear projection  operator $\Pi_\Omega : E^1X(\Omega) \to \mathcal R$ defined as in \eqref{eq:1803}. 
\\ Let us  show that
\begin{equation}\label{apr14}
\|\bfu - \Pi_\Omega (\bfu)\|_{X(\Omega)} \leq C \|\ep(\bfu)\|_{X(\Omega)}
\end{equation}
for some constant $C$ and for every $\bfu \in E^1X(\Omega)$. Assume,  by contradiction, that there exists a sequence $\{\bfu_k\} \subset E^1X(\Omega)$ such that
\begin{equation}\label{apr15}
\|\bfu_k\textcolor{black}{-\Pi_\Omega (\bfu_k)}\|_{X(\Omega)} \geq k \|\ep(\bfu_k)\|_{X(\Omega)}
\end{equation}
for $k\in \setN$. Without loss of generality, we may also assume that
\begin{equation}\label{apr16}
 \Pi_\Omega (\bfu_k)=0
\end{equation}
and
\begin{equation}\label{apr17}
\|\bfu_k\|_{X(\Omega)}=1
\end{equation}
for $k\in \setN$.
By the compact embedding \eqref{compact}, there exist  function $\bfu \in X(\Omega)$ and a subsequence, still denoted by $\{\bfu_k\}$, such that $\bfu_k \to \bfu$ in $X(\Omega)$. Moreover, equation \eqref{apr15} implies that $\ep(\bfu_k) \to 0$ in $X(\Omega)$. Hence, $\bfu \in E^1X(\Omega)$ and $\ep(\bfu)=0$. Therefore, $\bfu \in \mathcal R$, whence $\Pi_\Omega (\bfu)=\bfu$. Inasmuch as $0=\Pi_\Omega(\bfu_k) \to \Pi (\bfu)=\bfu$, we have that $\bfu=0$. From this piece of information and equation \eqref{apr17} one deduces that
$$1= \lim _{k\to\infty}\|\bfu_k\|_{X(\Omega)} = \|\bfu\|_{X(\Omega)}=0,$$
a contradiction.
\\ Inequalities \eqref{apr14} and \eqref{jan40} yield
\begin{equation}\label{apr18}
\|\bfu - \Pi_\Omega (\bfu)\|_{Y(\Omega)} \leq C \big(\|\ep(\bfu - \Pi_\Omega (\bfu))\|_{X(\Omega)}+ 
\|\bfu - \Pi_\Omega (\bfu)\|_{X(\Omega)}\big)  \leq C' \|\ep(\bfu) \|_{X(\Omega)}
\end{equation}
for some constants $C$ and $C'$ and for every $\bfu \in E^1X(\Omega)$, namely inequality \eqref{feb11}.
\end{proof}

\subsection{Embeddings on $\rn$}\label{ssecrn}

The distinct characterizations of the embeddings for the spaces $E^1_0X(\rn)$ and $E^1X(\rn)$ are stated in Part I and Part II, respectively, of the following theorem.

\begin{theorem}\label{reductionrn}{\rm{\bf [Sobolev embeddings on $\rn$ into rearrangement-invariant spaces] }}
 Let $X(\R^n)$ and $Y(\R^n)$ be rearrangement-invariant spaces. 
%\todo[inline]{A: at the moment, I think we can prove the following}

\vspace{0.1cm}
\par\noindent 
{\bf Part I [Embeddings for $E^1_0X(\rn)$]} 
The following facts are equivalent:
\\ (i)  The embedding $E^1_0X(\rn) \to Y(\rn)$ holds, namely there 
 exists a constant $c_1$ such that
\begin{equation}\label{feb24}
\|\bfu \|_{Y(\R^n)} \leq c_1 \|\ep (\bfu)\|_{X(\rn)}
%
%E^1_dX(\R^n) \to Y(\R^n),
\end{equation}
for every $\bfu \in E^1_0X(\rn)$.
\\ (ii) The embedding $W^1_0X(\rn) \to Y(\rn)$  holds, namely there exists a constant $c_2$ such that
\begin{equation}\label{feb25}
\|\bfu \|_{Y(\R^n)} \leq c_2 \|\nabla \bfu\|_{X(\rn)}
%W^1_dX(\R^n) \to Y(\R^n),
\end{equation}
 for every $\bfu \in W^1_0X(\rn)$.
\\ (iii) There exists a constant $c_3$ such that
\begin{equation}\label{feb26}
\bigg\|\int_s^{\infty} f (r)r^{-1+\frac 1{n}}\dr \bigg\|_{Y(0, \infty)} \leq  c_3 \|f\|_{X(0, \infty)}
\end{equation}
for every non-increasing function  $f : (0, |\Omega|)\to [0, \infty)$. Moreover,   the constants  $c_1$ and $c_2$ in inequalities \eqref{feb24} and \eqref{feb25} depend only on the constant $c_3$  in \eqref{feb26} and on $n$.
%{\color{yellow}
%\\ (i)  The following facts are equivalent:
%\begin{equation}\label{jan44}
%E^1_dX(\R^n) \to Y(\R^n),
%\end{equation}
%\begin{equation}\label{jan45}
%W^1_dX(\R^n) \to Y(\R^n),
%\end{equation}
%\begin{equation}\label{jan46}
%\bigg\|\int_s^{\infty} \phi (r)r^{-1+\frac 1{n'}}\, dr \bigg\|_{Y(0, \infty)} \leq  C\|\phi\|_{X(0, \infty)}
%\end{equation}
%for some constant $C$ and for every non-increasing function  $\phi : (0, |\Omega|)\to [0, \infty)$. Moreover the norms of the embeddings \eqref{jan44} and \eqref{jan45} depend only on the constant $C$ appearing in \eqref{jan46} and on $n$.}

\vspace{0.1cm}
\par\noindent 
 {\bf Part II [Embeddings for $E^1X(\rn)$]}  
The following facts are equivalent:
\\ (i) The embedding $E^1X(\rn) \to Y(\rn)$ holds, namely there  exists a constant $c_1$ such that
\begin{equation}\label{jan47}
\|\bfu \|_{Y(\R^n)} \leq c_1 \|\bfu\|_{E^1X(\rn)}
%
%E^1_dX(\R^n) \to Y(\R^n),
\end{equation}
 for every $\bfu \in E^1X(\rn)$.
\\ (ii) The embedding $W^1X(\rn) \to Y(\rn)$ holds, namely there  exists a constant $c_2$ such that
\begin{equation}\label{jan48}
\|\bfu \|_{Y(\R^n)} \leq c_2 \|\bfu\|_{W^1X(\rn)}
\end{equation}
 for every $\bfu \in W^1X(\R^n)$.
\\ (iii) There exists a constant $c_3$ such that
\begin{equation}\label{jan49}
\bigg\|\chi_{(0,1)}(s)\int_s^{1} f (r)r^{-1+\frac 1{n}}\dr \bigg\|_{Y(0, \infty)} \leq  c_3\|\chi_{(0,1)}f\|_{X(0, \infty)} \quad \text{and} \quad \|\chi _{(1,\infty)} f\|_{Y(0, \infty)} \leq c_3 \|f\|_{X (0,\infty)}
\end{equation}
 for every non-increasing function  $f : (0,\infty)\to [0, \infty)$. 
\\ Moreover, the constants $c_1$ and $c_2$  in inequalities \eqref{jan47} and \eqref{jan48} depend only on the constant $c_3$  in   \eqref{jan49} and on $n$.
% \\Assume, in addition, that $\Omega$ is a {\color{black} bounded ?} $(\ve, \delta)$-domain. Then the same equivalence holds, with the spaces  $E^1_0X(\Omega)$  and $W^1_0X(\Omega)$ replaced by  $E^1X(\Omega)$  and   $W^1X(\Omega)$. Moreover,  the norms of the relevant embeddings   depend only on the constant $C$ appearing in \eqref{july10}, and on   $\Omega$.
%\todo{There should be a version also in $\R^n$. I am going to check this later}
\end{theorem}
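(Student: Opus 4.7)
The implications (i) $\Rightarrow$ (ii) in both parts are immediate from the pointwise bound $|\ep(u)|\leq|\nabla u|$. The implications (ii) $\Rightarrow$ (iii) are classical one-dimensional reductions for full-gradient Sobolev embeddings on $\R^n$: for Part I, testing \eqref{feb25} on smooth radially decreasing maps compactly supported in $\R^n$ produces \eqref{feb26}, as in \cite{EKP}; for Part II, the same test on a fixed ball extracts the local inequality (the first condition in \eqref{jan49}), while testing on smoothed characteristic functions of larger and larger balls, whose gradient norm is small compared to their $X$-norm, gives the tail inequality $X(\R^n)\hookrightarrow Y(\R^n)$ (the second condition).

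The substantive step is (iii) $\Rightarrow$ (i). For Part I, my plan is to mirror the proof of Theorem \ref{reduction}, Part I, substituting the whole-space ingredients for the bounded-domain ones. The endpoint bounds $\|u\|_{L^{n',1}(\R^n)}\lesssim\|\ep(u)\|_{L^1(\R^n)}$ from \eqref{feb22} (a consequence of \cite{SV}) and $\|u\|_{L^\infty(\R^n)}\lesssim\|\ep(u)\|_{L^{n,1}(\R^n)}$ from \eqref{july12} give the boundedness of the identity map $E^1_0L^1(\R^n)\to L^{n',1}(\R^n)$ and $E^1_0L^{n,1}(\R^n)\to L^\infty(\R^n)$. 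Combining \cite[Chapter 5, Theorem 1.11]{BS} with the $K$-functional formula \eqref{feb6'} and the Holmstedt formula $K(u,t;L^{n',1}(\R^n),L^\infty(\R^n))\approx\int_0^{t^{n'}}s^{-1/n}u^*(s)\,ds$ of \cite[Chapter 5, Corollary 2.3]{BS} yields, for $u\in E^1_bX(\R^n)$,
\begin{equation*}
\int_0^{t^{n'}}s^{-1/n}u^*(s)\,ds\;\lesssim\;\int_0^{ct^{n'}}\ep(u)^*(s)\,ds+t\int_{ct^{n'}}^{\infty}\ep(u)^*(s)\,s^{-1/n'}\,ds.
\end{equation*}
The one-dimensional transfer argument of \cite[Proof of Theorem A]{KermanPick} (or the simpler variant of \cite[Proof of Theorem 4.1]{CPS_Frostman}), fed with the Hardy inequality \eqref{feb26}, then delivers $\|u\|_{Y(\R^n)}\lesssim\|\ep(u)\|_{X(\R^n)}$; a density and Fatou argument extends the bound from $E^1_bX(\R^n)$ to $E^1_0X(\R^n)$, exactly as in the closing step of the proof of Theorem \ref{reduction}.

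The delicate case is Part II. A direct transcription of the Part I argument breaks down because a function in $E^1L^1(\R^n)$ with unbounded support need not decay sharply at infinity, and so the endpoint embedding $E^1L^1(\R^n)\to L^{n',1}(\R^n)$ is not available. My plan is to split every $u\in E^1X(\R^n)$ into a part of finite-measure support and a bounded part by means of the truncation operator of Theorem \ref{lem:Tlest}(ii). Choosing $\lambda>0$ so that \eqref{feb20} holds and writing $u=w+v$ with $v=T^{\lambda,\lambda}u$ and $w=u-v$, one has $|v|,|\ep(v)|\lesssim\lambda$ globally while $w$ vanishes outside the finite-measure set $\mathcal O_{\lambda,\lambda}$. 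The local Hardy condition (the first inequality in \eqref{jan49}), combined with a variant of the Part I estimate applied to $w$, should control the portion of $u^*$ supported in $(0,c|\mathcal O_{\lambda,\lambda}|)$ by $\|\ep(u)\|_{X(\R^n)}$; the tail condition (the second inequality in \eqref{jan49}), together with the uniform bound on $v$, should control the remaining portion by $\|u\|_{E^1X(\R^n)}$. Summing the two estimates and optimizing in $\lambda$ as a function of the level of $u^*$ should yield (i).

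The hard part will be making the split rigorous: the set $\mathcal O_{\lambda,\lambda}$ has finite measure but is typically unbounded, so $w$ is not directly covered by Part I (which is stated for $E^1_0X(\R^n)$, i.e.\ closures of maps with bounded support). Overcoming this will require either a further cut-off in the space variable, combined with the extension theorem of Theorem \ref{thm:extensionoperator} applied on an enlarging family of balls, to reduce to bounded supports; or alternatively a direct check that the $K$-functional identity \eqref{feb6'} continues to hold when ``bounded support'' is relaxed to ``finite-measure support''. Either route should work, but the bookkeeping must be carried out carefully so that the errors do not contaminate the sharp Hardy estimate encoded in \eqref{jan49}.
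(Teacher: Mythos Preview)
Your Part I argument is correct and coincides with the paper's.

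For Part II your truncation idea is exactly the right engine, but the paper organizes the argument differently, and this organization dissolves the difficulty you flag at the end. The paper does \emph{not} prove (iii)$\Rightarrow$(i) directly. Instead, it first establishes (independently of Theorem \ref{reductionrn}) the optimal embedding $E^1X(\rn)\to X_{1,\rn}(\rn)$ of Theorem \ref{optrn}, Part II, and it is \emph{there} that the truncation is used: normalize $\|\bfu\|_{E^1X}\leq 1$, pick $\lambda_0$ so large that $|\mathcal O_{\lambda_0,\lambda_0}|\leq 1$ (this is possible by the weak-type estimate \eqref{weakX}), and split $\bfu=\bfu_0+\bfu_1$ with $\bfu_0=T^{\lambda_0,\lambda_0}\bfu$. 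The bounded piece $\bfu_0$ is estimated trivially, and the piece $\bfu_1$ is handled by applying Theorem \ref{optomega}, Part I, on the set $\Omega=\mathcal O_{\lambda_0,\lambda_0}$. Once $E^1X(\rn)\to X_{1,\rn}(\rn)$ is in hand, the implication (iii)$\Rightarrow$(i) in Theorem \ref{reductionrn}, Part II, is a pure function-norm comparison: the first condition in \eqref{jan49} is exactly \eqref{july10} on $(0,1)$, so by the optimality clause of Theorem \ref{optomega} one has $(X_r)_1(0,1)\to Y_r(0,1)$; combining this with the second condition in \eqref{jan49} gives $X_{1,\rn}(\rn)\to Y(\rn)$, and composing yields \eqref{jan47}.

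This also answers your worry about $\mathcal O_{\lambda,\lambda}$ being unbounded. The result one applies to $w=\bfu_1$ is \emph{not} Part I of Theorem \ref{reductionrn} (closures of bounded-support maps in $\rn$), but Theorem \ref{reduction}/\ref{optomega}, Part I, whose only hypothesis on the domain is $|\Omega|<\infty$. So neither a further spatial cut-off nor a relaxation of \eqref{feb6'} is needed; the point is simply to invoke the finite-measure theorem on $\mathcal O_{\lambda_0,\lambda_0}$ rather than the whole-space theorem on $\rn$. Your direct approach would also work, but it reproves the content of Theorem \ref{optrn}, Part II, inside the proof of Theorem \ref{reductionrn}; the paper's factorization through the optimal space $X_{1,\rn}(\rn)$ is cleaner and avoids the ``optimizing in $\lambda$'' step you anticipated.
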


\begin{remark}\label{increasing1}
{\rm Analogously to inequality  \eqref{july10}  (see Remark \ref{increasing}), one has that inequality \eqref{feb26} holds 
 for every non-increasing   function $f: (0, \infty)\to [0, \infty)$ if and only if it holds for every measurable   function $f: (0, \infty)\to [0, \infty)$. This is shown in \cite[Theorem 1.1]{Pesa}.}
\end{remark}

The optimal rearrangement-invariant target spaces $Y(\rn)$ in inequalities \eqref{feb24} and \eqref{jan47} are characterized in Theorem \ref{optrn} below. 
%In fact, this is one step in our proof of Theorem  \ref{reductionrn}. 
The function norm $\|\cdot \|_{X_1(0, \infty)}$ that defines the optimal target space in \eqref{feb24} is obtained as in \eqref{Z}, with $|\Omega|$ replaced by $\infty$. 
The function norm associated with the optimal target space in \eqref{jan47} is defined as follows. Given  the function norm $\|\cdot\|_{X(0,\infty)}$, consider the localized function  norm $\|\cdot \|_{X_r(0,1)}$, then build the function norm  $\|\cdot \|_{(X_r)_1(0,1)}$ as in \eqref{Z}, and   extend it back to a function norm in $(0, \infty)$ as $\|\cdot \|_{((X_r)_1)_e(0,\infty)}$. Then the optimal rearrangement-invariant target space in \eqref{jan47} is built upon the function norm $\|\cdot\|_{X_{1, \rn}(0, \infty)}$ given by 
\begin{equation}\label{may15}
\|f\|_{X_{1, \rn}(0, \infty)} = \|f \|_{((X_r)_1)_e(0,\infty)} + \|f\|_{X(0, \infty)}
\end{equation}
for $f \in \Mpl (0,\infty)$. Hence, the corresponding rearrangement-invariant space   on $\rn$ is
\begin{equation}\label{jan78}
X_{1, \rn} (\rn) = ((X_r)_1)_e (\rn)\cap X (\rn).
\end{equation}
Roughly speaking, the
norm in the optimal target space $X_{1, \rn} (\rn) $  behaves locally like  the optimal
 target norm for embeddings of the space $E^1X(B)$ on  a ball $B$,   and like the norm of  $X(\rn)$ itself near infinity.

\begin{theorem}\label{optrn}{\rm{\bf [Optimal rearrangement-invariant target in Sobolev embeddings on $\rn$] }}
 Let $X(\rn)$  be a rearrangement-invariant space.

\vspace{0.1cm}
\par\noindent 
 {\bf Part I [Embeddings for $E^1_0X(\rn)$]} Assume that 
\begin{equation}\label{jan76}
\big\|(1+r)^{-\frac 1{n'}}\big\|_{X '(0, \infty)} < \infty.
\end{equation}
Let $X_1(\rn)$ be the rearrangement-invariant space defined via the function norm \eqref{Z}, with $|\Omega|$ replaced by $\infty$. Then  $E^1_0X(\rn) \to X_1(\rn)$, namely there 
 exists a constant $c$ such that
\begin{equation}\label{feb27}
\|\bfu\|_{X_1(\rn)} \leq c \|\ep (\bfu)\|_{X(\rn)}
%
%E^1_dX(\rn) \to X_1(\rn),
\end{equation}
for every $\bfu \in E^1_0X(\rn)$. Moreover,
 $X_1(\rn)$ is the optimal (smallest possible) rearrangement-invariant  target space in \eqref{feb27}. 
Assumption \eqref{jan76} is indispensable, in the sense that, if it is dropped, then inequality \eqref{feb24} fails for every rearrangement-invariant space $Y(\rn)$.
%
%If assumption \eqref{jan76} is dropped, then no inequality of the form \eqref{feb27}, even with  $X_1(\rn)$ replaced by any  rearrangement-invariant space $Y(\rn)$,  holds.

\vspace{0.1cm}
\par\noindent 
 {\bf Part II [Embeddings for $E^1X(\rn)$}
 Let $X_{1, {\rn}} (\rn) $ be the rearrangement-invariant space given by \eqref{jan78}. 
  Then $E^1X(\rn) \to X_{1, {\rn}} (\rn)$, namely there 
 exists a constant $c$ such that
\begin{equation}\label{jan79}
\|\bfu\|_{X_{1, {\rn}}(\rn)} \leq c \|\bfu\|_{E^1X(\rn)}
\end{equation}
for every $\bfu \in E^1X(\rn)$. Moreover,
 $X_{1, {\rn}}(\rn)$ is the optimal (smallest possible) rearrangement-invariant  target space in \eqref{jan79}.
\end{theorem}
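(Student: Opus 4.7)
The plan is to deduce both parts of Theorem~\ref{optrn} from the reduction principle of Theorem~\ref{reductionrn}, combined with the identification of the optimal target in the associated one-dimensional Hardy-type inequalities, in parallel with the proof of Theorem~\ref{optomega}. The behaviour at infinity on $\rn$ is what forces a splitting into a local piece on $(0,1)$ and a tail piece on $(1,\infty)$, and this is precisely the point where the analysis of Part~II diverges from that of Part~I.

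For Part~I, the first step is to invoke Theorem~\ref{reductionrn}, Part~I, so that \eqref{feb27} reduces to the Hardy inequality \eqref{feb26} with $Y=X_1$. The optimality of $X_1$ is then extracted by duality: writing $Tf(s)=\int_s^\infty f(r)\,r^{-1/n'}\dr$, a Fubini swap gives
\begin{equation*}
\int_0^\infty (Tf)(s)\,g(s)\ds \,=\, \int_0^\infty f(r)\,r^{1/n}\,g^{**}(r)\dr
\end{equation*}
for $f,g\in\Mpl(0,\infty)$. Hence the bound $\|Tf\|_{Y(0,\infty)}\le c\|f\|_{X(0,\infty)}$ for non-increasing $f$ -- which, by Remark~\ref{increasing1}, is equivalent to the same bound for arbitrary $f\in\Mpl(0,\infty)$ -- translates, via the H\"older inequality \eqref{holder}, into $\|r^{1/n}\,g^{**}(r)\|_{X'(0,\infty)}\le c\|g\|_{Y'(0,\infty)}$, singling out $X_1$ (defined via \eqref{Z} with $|\Omega|=\infty$) as the smallest admissible target. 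For the indispensability of \eqref{jan76}, I would argue that admissibility of \eqref{Z} as a rearrangement-invariant function norm on $(0,\infty)$ forces axiom (P4), i.e.\ $\|\chi_{(0,t)}\|_{X_1'(0,\infty)}<\infty$ for every $t>0$; computing this norm via \eqref{Z} yields $\|\min(s^{1/n},\,t\,s^{-1/n'})\|_{X'(0,\infty)}$, whose finiteness for, say, $t=1$ is equivalent, up to constants, to \eqref{jan76}. If \eqref{jan76} fails, then any candidate r.i.\ target $Y$ would have to contain the failed $X_1$, and this is ruled out by a test-function argument using \eqref{fund1} and the monotonicity of $\varphi_Y(s)/s$.

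For Part~II, I would apply Theorem~\ref{reductionrn}, Part~II, reducing \eqref{jan79} to the conjunction of the two inequalities in \eqref{jan49}. The first (a local Hardy inequality on $(0,1)$) is of exactly the form handled in Part~I but in the localized norm $\|\cdot\|_{X_r(0,1)}$ constructed in Subsection~\ref{ri}; applying Theorem~\ref{optomega} with $\Omega=(0,1)$ identifies the optimal local target as $(X_r)_1$, and extending back to $(0,\infty)$ via \eqref{mar170} produces the component $((X_r)_1)_e$ of \eqref{may15}. The second inequality in \eqref{jan49} is a pure lattice bound on the tail whose smallest admissible rearrangement-invariant target is $X$ itself. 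Since both conditions must hold simultaneously, the smallest admissible target is the intersection space \eqref{jan78}, equipped with the norm \eqref{may15}.

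The main technical obstacle I expect is the verification that \eqref{Z} and \eqref{may15} really define rearrangement-invariant function norms on $(0,\infty)$: property (P4) for $X_1'$ is precisely equivalent to \eqref{jan76}, as indicated above, while for $X_{1,\rn}$ the intersection structure forces separate checks on characteristic functions of small and of large sets -- small sets being handled by the local piece $((X_r)_1)_e$ (finite by Theorem~\ref{optomega} applied on $(0,1)$) and large sets by the global piece $X$ itself. Once this bookkeeping is settled, the duality argument, the Fubini swap, and the two applications of the reduction theorem are routine given the machinery developed in Subsection~\ref{ri} and Section~\ref{sobemb}.
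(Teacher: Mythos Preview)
Your Part~I is essentially the paper's approach: reduce via Theorem~\ref{reductionrn}, Part~I, and then identify the optimal target in the one-dimensional inequality \eqref{feb26}. The paper simply cites \cite[Theorem~4.4]{EMMP} for the latter, whereas you sketch the duality/Fubini computation directly; the difference is cosmetic. Your necessity argument for \eqref{jan76} is a bit loose in its final step (``any candidate $Y$ would have to contain the failed $X_1$'' needs justification when $X_1$ is not even a norm), but the paper also outsources this point, to \cite[Lemma~1]{EGP}.

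Your Part~II, however, is circular. You propose to derive \eqref{jan79} from Theorem~\ref{reductionrn}, Part~II, but look at how the implication (iii)$\Rightarrow$(i) of that theorem is proved: the very last line reads ``Embedding \eqref{jan47} follows from inequalities \eqref{jan79} and \eqref{jan87}.'' In other words, Theorem~\ref{reductionrn}, Part~II, \emph{uses} inequality \eqref{jan79} --- which is precisely the conclusion of Theorem~\ref{optrn}, Part~II --- as an input. The paper flags this explicitly in the paragraph between the statement of Theorem~\ref{optrn} and its proof: ``the proof of Part~II of Theorem~\ref{reductionrn} makes use of Part~II of Theorem~\ref{optrn}, which has an independent proof.''

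That independent proof is what you are missing. The paper establishes \eqref{jan79} directly by a decomposition based on the truncation operator of Theorem~\ref{lem:Tlest}: given $\bfu\in E^1X(\rn)$ with $\|\bfu\|_{E^1X(\rn)}\le 1$, one chooses $\lambda_0$ large enough that $|\mathcal O_{\lambda_0,\lambda_0}|\le 1$, writes $\bfu=\bfu_0+\bfu_1$ with $\bfu_0=T^{\lambda_0,\lambda_0}\bfu$, applies Theorem~\ref{optomega} to $\bfu_1\in E^1_0 X_r(\mathcal O_{\lambda_0,\lambda_0})$ to control the $((X_r)_1)_e$-norm of $\bfu_1$, and uses the $L^\infty$ bound \eqref{itm:TlestLinfty2} to control that of $\bfu_0$. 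Optimality then comes from the known optimality for $W^1X(\rn)$ in \cite[Theorem~3.1]{ACPS}. This is the piece your plan needs to supply in place of the circular appeal to Theorem~\ref{reductionrn}.
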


Let us warn that the proof of Part I of Theorem \ref{optrn} makes use of Part I of Theorem  \ref{reductionrn}, which has an independent proof, whereas the proof of Part II of Theorem  \ref{reductionrn} makes use of Part II of Theorem \ref{optrn}, which has an independent proof.

{\color{black}
\begin{remark}\label{linfrn}
{\rm It follows from Theorem \ref{optrn} that, if the function norm $\|\cdot\|_{X(0,\infty)}$ fulfills condition \eqref{apr40} locally, in the sense that $\|\chi_{(0,1)}(r)r^{-\frac 1{n'}}\|_{X'(0,\infty)}<\infty$, then both $E^1_0X(\rn) $ and $E^1X(\rn)$ are continuously embedded  into $L^\infty(\rn)$. (Of course, the first embedding requires that assumption \eqref{jan76} be satisfied as well.) In particular, Part II tells us that 
$$E^1X(\rn) \to L^\infty(\rn)\cap X(\rn),$$
the target space being optimal among all rearrangement-invariant spaces.
}
\end{remark}}

\iffalse
where the localized  rearrangement-invariant function norm $\|\cdot
\|_{\widetilde{X}(0,1)}$ is defined as in \eqref{tilde}, and
$\|\cdot \|_{\widetilde{X}'(0,1)}$ stands for its associated
function norm. The function norm $\|\cdot \|_{Z (0,1)}$ determines
the optimal rearrangement-invariant space in the Sobolev embedding
for the space $W^m\widetilde X(\Omega)$ in any regular open set in
$\rn$ (with $|\Omega |=1$),   see \cite{CPS, T2}.
%
%We begin with optimal rearrangement-invariant
%target space in the Sobolev embedding for a ``localized'' version of
%the space $X(\rn)$. The norm in this space is determined by the
%function norm $\|\cdot\|_{Z(0,1)}$  obeying
%\begin{equation}\label{Z}
%\|g\|_{Z'(0,1)}=\|t\sp{\frac mn}g\sp{**}(t)\|_{\widetilde
%X'(0,1)}\qquad \textup{for}\ \quad g\in\M(0,1),
%\end{equation}
%where the rearrangement-invariant function norm $\widetilde{X}$ is
%defined as in \eqref{tilde}, and $\widetilde X'(0,1)$ stands for
%$(\widetilde X)'(0,1)$ (see \cite{CPS, T2}).
\\
Next, we extend $\| \cdot\|_{Z(0,1)}$ to a function norm  $\|
\cdot\|_{X^m(0,\infty)}$  on $(0, \infty)$ by setting
\begin{equation}\label{E:definition-of-Y-opt-2-higher}
\|f\|_{X\sp m(0,\infty)} = \|f\sp*\|_{Z(0,1)}\qquad \textup{for}\
f\in\M(0,\infty).
\end{equation}
The optimal rearrangement-invariant target space $Y(\rn)$ in
embedding \eqref{embrn} is the space $X_{\opt}^m(\rn) $ defined as
\begin{equation}\label{optspace}
X_{\opt}^m(\rn)  = (X^m \cap X)(\rn).
\end{equation}
 One has that
 $$\{u \in X^m(\rn): |{\rm supp}\; u|< \infty\} \subset X(\rn).$$
 This follows via  Proposition \ref{ZX}, Section
 \ref{S:proofs}.
 %On the other hand, by equations \eqref{Z}
  %and \eqref{E:definition-of-Y-opt-2-higher}, the norm
  %in $X^m(\rn)$ only depends on the restriction of functions
  %to subsets of finite measure.
  Hence, if $|{\rm supp}\; u|< \infty$, then $u \in X_{\opt}^m(\rn)$ if and only if  $u \in X^m (\rn)$.
    In this sense the optimal space $X_{\opt}^m(\rn) $ is determined  by $X^m(\rn)$ locally, and  by $X(\rn)$ near infinity.
\fi

\begin{proof}[Proof of Theorem \ref{optrn}] \emph{Part} I. Inequality \eqref{feb27}, and the optimality of the space $X_1(\rn)$, are a consequence of Theorem \ref{reductionrn}, and of the fact, established in \cite[Theorem 4.4]{EMMP},  that inequality \eqref{feb26} holds  with $Y(0,\infty)=X_1(0,\infty)$, the latter space being optimal among all rearrangement-invariant spaces. The necessity of condition \eqref{jan76} for an inequality of the form \eqref{feb26}   for any rearrangement-invariant space $Y(0,\infty)$,
is a consequence of a necessary condition for the validity of Hardy type inequalities in general rearrangement-invariant spaces, which, for instance, follows by duality from  \cite[Lemma 1]{EGP}. As a consequence of Theorem \ref{reductionrn}, condition \eqref{jan76} is also necessary 
 for inequality \eqref{feb24} to hold for any rearrangement-invariant space $Y(\Omega)$.

\smallskip
\par\noindent
  \emph{Part} II.
Assume that the function $\bfu \in E^1X(\R^n)$ satisfies 
\begin{equation}\label{jan60}
\|\bfu\|_{E^1X(\R^n)} \leq 1.
\end{equation}
Let   $\lambda_0>0$ be so large that
$$\lim_{t \to \infty} \varphi _X(t) > \frac {2C_M}{\lambda_0},$$
where $C_M$ is the constant appearing in conditions \eqref{weakX} and  \eqref{feb20}, and 
\begin{equation}\label{jan61}
\varphi^{-1}_X( 2c_0/{\lambda_0})\leq 1, 
\end{equation}
where $c_0$ is a constant depending on $n$ and $X(\rn)$ to be chosen later, and
$\varphi_X^{-1}$ denotes
the (generalized left-continuous) inverse of the fundamental function $\varphi_X$ of $X(\rn)$. Note that   $\lambda_0$ certainly exists, inasmuch as
\begin{equation*}%\label{5a}
\lim _{t \to 0^+} \varphi_X^{-1} (t) =0.
\end{equation*}
Consider the decomposition $\bfu = \bfu_0 + \bfu_1$, where $\bfu_0 = T^{\lambda_0,\lambda_0}\bfu$.  Here,  $T^{\lambda_0,\lambda_0}$ denotes the operator defined as in \eqref{eq:wlambda}. Since $\bfu , \bfu_0 \in E^1X(\rn)$ and $\bfu _0= \bfu$ in $\rn \setminus \mathcal O_{\lambda_0,\lambda_0}$,  we have that $\bfu_1 \in E^1_0X_r(\mathcal O_{\lambda_0,\lambda_0})$ and, by inequality \eqref{jan50},
\begin{equation}\label{jan62}
\|\ep (\bfu_1)\|_{X_r(\mathcal O_{\lambda_0,\lambda_0})} \textcolor{blue}{\leq} \|\ep (\bfu_1)\|_{X(\rn)} =  \|\ep (\bfu)- \ep (\bfu_0)\|_{X(\rn)} \leq c \|\ep (\bfu)\|_{E^1X(\rn)} \leq c
\end{equation}
for some constant $c=c(n,X(\rn))$. On the other hand, by equations \eqref{jan51} and \eqref{weakX},
\begin{align}\label{jan63}
\lambda_0 \varphi_X(|\mathcal O_{\lambda_0, \lambda_0}|) & \leq     2\lambda _0(\varphi_X(|\set{M(\bfu)>\lambda_0}|)+ \varphi_X(|\set{M(\ep(\bfu))>\lambda_0}|) \\ \nonumber & \leq 2c \big(\|\bfu\|_{X(\rn)} + \|\ep (\bfu)\|_{X(\rn)}\big) = 2c \|\bfu \|_{E^1X(\rn)}\leq 2c
\end{align}
for some constant  $c=c(n,X(\rn))$.
Since \begin{equation*}%\label{6a}
t \leq \varphi_X^{-1} (\varphi _X (t))  \qquad \hbox{for $t>0$,}
\end{equation*}
equations \eqref{jan63} and \eqref{jan61} ensure that
\begin{equation}\label{jan64}
|\mathcal O_{\lambda_0, \lambda_0}|\leq \varphi^{-1}_X( 2c_0/{\lambda_0})\leq 1,
\end{equation}
provided that $c_0$ does not exceed the constant $c$ appearing in  \eqref{jan63}. An application of Theorem \ref{optomega} with $\Omega= \mathcal O_{\lambda_0, \lambda_0}$ and inequality \eqref{jan62} then tell us that
\begin{equation}\label{jan80}
\|\bfu_1\|_{((X_r)_1)_e(\rn)} = \|\bfu_1\|_{(X_r)_1( \mathcal O_{\lambda_0, \lambda_0})} \leq c \|\ep (\bfu_1)\|_{X_r(\mathcal O_{\lambda_0, \lambda_0})}\leq c' \|\ep (\bfu)\|_{X_r(\mathcal O_{\lambda_0, \lambda_0})} \leq c'
\end{equation}
for some constants $c$ and $c'$ depending on $n$ and $X(\rn)$.
\\ On the other hand, we deduce from property \eqref{itm:TlestLinfty2} of Theorem \ref{lem:Tlest} that
\begin{equation}\label{jan81}
\|\bfu_0\|_{((X_r)_1)_e(\rn)}\leq c \|\lambda_0\|_{((X_r)_1)_e(\rn)} = c'
\end{equation}
for some constants $c=c(n)$ and  $c'=c'(n, X(\rn))$. Equations \eqref{jan80} and \eqref{jan81} imply that
\begin{equation}\label{jan82}
\|\bfu\|_{((X_r)_1)_e(\rn)}\leq \|\bfu_0\|_{((X_r)_1)_e(\rn)} + \|\bfu_1\|_{((X_r)_1)_e(\rn)}\leq c
\end{equation}
for some constant $c=c(n, X(\rn))$. Hence,
\begin{equation}\label{jan83}
\|\bfu\|_{X_{1,\rn}(\rn)} = \|\bfu\|_{((X_r)_1)_e(\rn)} + \|\bfu\|_{X(\rn)}\leq c + 1,
\end{equation}
where $c$ is the constant on the rightmost side of equation \eqref{jan82}. Inequality \eqref{jan79} is thus established.
\\ The fact that the target space $X_{1,\rn}(\rn)$ is optimal in this inequality among all rearrangement-invariant spaces follows from its optimality in a parallel  inequality, with the space $E^1X(\rn)$ replaced by  $W^1X(\rn)$ -- see \cite[Theorem 3.1]{ACPS}.
\end{proof}

\begin{proof}[Proof of Theorem \ref{reductionrn}] 
\emph{Part} I. Inequality \eqref{feb24} trivially implies \eqref{feb25}, and the latter implies \eqref{feb26}, as shown e.g. in \cite[Theorem 3.3]{mihula}. In order to prove that inequality \eqref{feb26} implies \eqref{feb24}, 
\iffalse

let us begin by observing that, if \eqref{feb26} holds, then the space $X(\rn)$ satisfies condition \eqref{jan76}. This is a consequence of a necessary condition for the validity of Hardy type inequalities in general Banach function spaces -- see \cite[??]{???}.
\\ In its turn, condition \eqref{jan76} implies that the fundamental function $\varphi_X$ satisfies condition \eqref{feb42}. Indeed, 
\begin{align*}
\infty >\|(1+r)^{-\frac 1{n'}}\|_{X'(0, \infty)} \geq \|\chi_{(0,t)}(r)(1+r)^{-\frac 1{n'}}\|_{X'(0, \infty)}  \geq (1+t)^{-\frac 1{n'}} \|\chi_{(0,t)}(r)\|_{X'(0, \infty)} = \varphi_{X'}(t) (1+t)^{-\frac 1{n'}}.
\end{align*}
Hence equation \eqref{feb42} follows, inasmuch as $\varphi_X(t) \varphi_{X'}(t)  = t$ for $t \geq 0$ \cite[???]{BS}.
\\ Thanks \eqref{feb42}, 

\fi
one can make use of equation \eqref{feb6} for   every  function $\bfu \in E^1_bX(\rn)$ and proceed along the same lines as in the proof of inequality \eqref{july18} to deduce that
\begin{equation}\label{feb60}
 \int_0^{t^{n'}} s^{-\frac{1}{n}} \bfu^*(s)\ds
\leq C 
 \bigg(\int _0^{t^{n'}/C} \ep(\bfu)^*(s) \ds + t \int_{t^{n'}/C}^\infty  \ep(\bfu)^*(s) s^{-\frac 1{n'}} \ds\bigg) \quad\text{for $t>0$,}
\end{equation}
for some constant $C=C(n)$.
With equation \eqref{feb60} at our disposal, equations \eqref{july21}--\eqref{july23} continue to hold, with $\Omega$ replaced by $\rn$ and $|\Omega$| replaced by $\infty$, thus establishing inequality \eqref{feb24} for every $\bfu \in E^1_bX(\rn)$. This inequality carries over to any function $\bfu \in  E^1_0X(\rn)$, as shown by the same argument as in the proof of inequality \eqref{july8}.
%\\ If $\bfu \in  E^1_0X(\rn)$,  consider a sequence $\{\bfu_k\} \subset E^1_bX(\rn)$ such that  $\bfu = \lim _{k\to \infty} \bfu_k$. Thus, $\{\ep(\bfu_k)\}$ is a Cauchy sequence in $X(\Omega)$, and there exists a function $\bfU : \Omega \to \mathbb R^{n\times n}$ such that $\ep(\bfu_k) \to \bfU$ in $X(\Omega)$. Owing to inequality \eqref{july10} applied to $\{\bfu_ k-\bfu_m\}$ for $k,m \in \mathbb N$,  $\{\bfu_k\}$ is a Cauchly sequence in $Y(\Omega)$, and hence converges to some function $\bfv$ in $Y(\Omega)$, and (up to subsequences) a.e. in $\Omega$. Altogether, we have that $\bfu$  is  the function $\bfv \in Y(\Omega)$ and $\ep(\bfu)= \bfU \in X(\Omega)$. Moreover, passing  to the limit in inequality \eqref{july10} applied with $\bfu$ replaced by $\bfu_k$, yields \eqref{july10} for $\bfu$, thanks to the Fatou property of rearrangement-invariant norms.
%
%
%
%{\color{black} If $\bfu \in E^1X(\rn)$, then ......}
\\
\emph{Part} II.
Embedding \eqref{jan47} obviously implies \eqref{jan48}. The fact that the latter implies the inequalities in \eqref{jan49} is shown in \cite[Theorem 3.3]{ACPS}. It thus suffices to prove that
the inequalities in  \eqref{jan49} imply embedding \eqref{jan47}. To this purpose, observe that the first inequality in  \eqref{jan49}  is equivalent to % \textcolor{black}{$Y_r(0,1)$??}
\begin{equation}\label{jan84}
\bigg\|\int_s^{1} f(r)r^{-1+\frac 1{n}}\dr \bigg\|_{{\color{black} Y_r(0, 1)}} \leq  C\|f\|_{X_r(0, 1)} 
\end{equation}
for every  non-increasing function  $f: (0, 1)\to [0, \infty)$. By the optimality of the target space $(X_r)_1(0,1)$  in inequality \eqref{jan84}, we have that
\begin{equation}\label{jan85}
(X_r)_1(0,1) \to Y_r(0,1).
\end{equation}
Hence, 
\begin{equation}\label{jan86}
\|\chi_{(0,1)}f\|_{Y(0,\infty)} = \|f\|_{Y_r(0,1)} \leq \|f\|_{(X_r)_1(0,1)}
\end{equation}
for every  non-increasing function  $f : (0, 1)\to [0, \infty)$. Now, let $\bfu \in E^1X(\rn)$. From inequality \eqref{jan86} and the second inequality in  \eqref{jan49} we deduce that
\begin{align}\label{jan87}
\|\bfu\|_{X_{1,\rn}(\rn)} & = \|\bfu\|_{((X_r)_1)_e(\rn)} + \|\bfu\|_{X(\rn)} = \|\bfu^*\|_{((X_r)_1)_e(0,\infty)} + \|\bfu^*\|_{X(0,\infty)}
\\ \nonumber  &= \|\bfu^*\|_{(X_r)_1(0,1)} + \|\bfu^*\|_{X(0,\infty)} \geq  \|\chi_{(0,1)} \bfu^*\|_{Y(0,\infty)} + \|\chi_{(1, \infty)} \bfu^*\|_{Y(0,\infty)} \\ \nonumber  & \geq  \| \bfu^*\|_{Y(0,\infty)} = \| \bfu\|_{Y(\rn)}.
\end{align}
Embedding  \eqref{jan47} follows from inequalities  \eqref{jan79} and \eqref{jan87}.
\end{proof}

\section{Sobolev embeddings into spaces of continuous functions}\label{cont}
%\textcolor{black}{Again $n\geq2$?}
Here, we deal with symmetric gradient Sobolev spaces built upon 
rearrangement-invariant function norms which are  strong enough for condition \eqref{apr40} to be fulfilled. The results of the preceding section imply that, under this condition, any function from the relevant Sobolev spaces is bounded. The question thus arises of whether any such function is also continuous. Our first results provides us with a positive answer to this question.
	\par
We denote by $C^0(\Omega )$ the space of bounded continuous 
 functions $\bfu : \Omega \to \rn$ endowed with the standard norm $\|\bfu\|_{C^0(\Omega )}= \sup _{x \in \Omega}
 |\bfu(x)|$.

\begin{theorem}\label{Linfty}{\rm{\bf [Sobolev embeddings into $C^0$] }}
Let $\Omega$ be an open set in $\R^n$  with $|\Omega|<\infty$, and let $X(\Omega)$   be a rearrangement-invariant space. 

\vspace{0.1cm}
\par\noindent 
 {\bf Part I [Embeddings for $E^1_0X(\Omega)$]}
 The following facts are equivalent:
%\\ (i) There exists a constant $C$ such that
%\begin{equation}\label{july20}
%\|\bfu\|_{L^\infty(\Omega)}\leq C \|\ep(\bfu)\|_{X(\Omega)}
%%E^1_0X(\Omega) \to L^\infty(\Omega),
%\end{equation}
%for every $\bfu \in E^1_0X(\Omega)$.
\\ (i) The embedding $E^1_0X(\Omega) \to C^0(\Omega)$ holds, namely there exists a constant $c$ such that
\begin{equation}\label{july21mar}
\|\bfu\|_{C^0(\Omega)}\leq c \|\ep(\bfu)\|_{X(\Omega)}
%E^1_0X(\Omega) \to C^0(\Omega),
\end{equation}
for every $\bfu \in E^1_0X(\Omega)$.
\\ (ii) The embedding $W^1_0X(\Omega) \to C^0(\Omega)$ holds, namely there exists a constant $c$ such that
\begin{equation}\label{feb78}
\|\bfu\|_{C^0(\Omega)}\leq c \|\nabla \bfu\|_{X(\Omega)}
%E^1_0X(\Omega) \to C^0(\Omega),
\end{equation}
for every $\bfu \in W^1_0X(\Omega)$.
\\ (iii) Condition \eqref{apr40} holds.
%\begin{equation}\label{july22mar}
%\|s^{-\frac 1{n'}}\|_{X'(0, |\Omega|)}< \infty. 
%\end{equation}
\\
 (iv) One has that
\begin{equation}\label{feb77}
X(\Omega) \to L^{n,1}(\Omega).
\end{equation}

\vspace{0.1cm}
\par\noindent 
 {\bf Part II [Embeddings for $E^1X(\Omega)$]}
 Assume, in addition, that $\Omega$ is an $(\varepsilon, \delta)$-domain. 
Then the following facts are equivalent:
%\\ (i) There exists a constant $C$ such that
%\begin{equation}\label{july20'}
%\|\bfu\|_{L^\infty(\Omega)}\leq C \|\bfu\|_{E^1X(\Omega)}
%%E^1_0X(\Omega) \to L^\infty(\Omega),
%\end{equation}
%for every $\bfu \in E^1X(\Omega)$.
\\ (i) The embedding $E^1X(\Omega) \to C^0(\Omega)$ holds, namely there exists a constant $c$ such that
\begin{equation}\label{july21'}
\|\bfu\|_{C^0(\Omega)}\leq c \|\bfu\|_{E^1X(\Omega)}
%E^1_0X(\Omega) \to C^0(\Omega),
\end{equation}
for every $\bfu \in E^1X(\Omega)$.
\\ (ii) The embedding $W^1X(\Omega) \to C^0(\Omega)$ holds, namely there exists a constant $c$ such that
\begin{equation}\label{feb78'}
\|\bfu\|_{C^0(\Omega)}\leq c \|\bfu\|_{W^1X(\Omega)}
%E^1_0X(\Omega) \to C^0(\Omega),
\end{equation}
for every $\bfu \in W^1X(\Omega)$.
\\ (iii) Condition \eqref{apr40} holds.
\\ (iv) Embedding \eqref{feb77} holds.
\end{theorem}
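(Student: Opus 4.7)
The equivalence (iii)$\Leftrightarrow$(iv) is already recorded in Corollary \ref{Linfa}, together with the fact that each of them is equivalent to the $L^\infty$-variant of either (i) or (ii) (namely \eqref{july8}, \eqref{july9} with $Y=L^\infty$, and, under the extra assumption of Part~II, \eqref{jan40}, \eqref{jan41} with $Y=L^\infty$). Since $C^0(\Omega)\hookrightarrow L^\infty(\Omega)$, the implications (i)$\Rightarrow$(iii) and (ii)$\Rightarrow$(iii) are immediate, and the same reasoning works for both parts. Thus the task reduces to proving the two nontrivial implications (iv)$\Rightarrow$(i) and (iv)$\Rightarrow$(ii). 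The implication (iv)$\Rightarrow$(ii) for the full gradient is the classical Sobolev embedding in the rearrangement-invariant setting: \eqref{feb77} yields $W^1_0X(\Omega)\hookrightarrow W^1_0L^{n,1}(\Omega)$ (and, in Part~II, $W^1X(\Omega)\hookrightarrow W^1L^{n,1}(\Omega)$ via extension), and $W^1_0L^{n,1}(\Omega)\hookrightarrow C^0(\Omega)$ is well known. The remaining, and substantive, implication is (iv)$\Rightarrow$(i).

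The key auxiliary fact is that, if $\mathbf F\in L^{n,1}(\rn)$ has bounded support, then the Riesz-type integral
\[
\mathbf w(x)=\int_{\rn}\mathbf F(y)\,\frac{x-y}{|x-y|^{n}}\dy
\]
defines a bounded continuous function on $\rn$. Indeed, the kernel has size $|x-y|^{-(n-1)}$, so by \eqref{potrearr} the operator $\mathbf F\mapsto \mathbf w$ is bounded from $L^{n,1}(\rn)$ into $L^{\infty}(\rn)$ on functions of bounded support. Since $C^\infty_0(\rn)$ is dense in $L^{n,1}(\rn)$ among such functions, $\mathbf w$ is a uniform limit of continuous functions, hence continuous.

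With this at hand, I prove Part I(iv)$\Rightarrow$(i) as follows. Let $\bfu\in E^1_0X(\Omega)$ and pick a sequence $\{\bfu_k\}\subset E^1_bX(\Omega)$ with $\ep(\bfu_k)\to\ep(\bfu)$ in $X(\Omega)$. The zero extensions $(\bfu_k)_e$ lie in $E^1_bL^1(\rn)$ and, by \eqref{feb77} applied on $\Omega$, their symmetric gradients $\ep((\bfu_k)_e)=(\ep\bfu_k)_e$ form a Cauchy sequence in $L^{n,1}(\rn)$ supported in $\overline\Omega$. Lemma \ref{reprrn} gives
\[
(\bfu_k)_e(x)=\frac{1}{n\omega_n}\int_{\rn}\mathcal A\ep((\bfu_k)_e)(y)\,\frac{x-y}{|x-y|^{n}}\dy \quad\text{a.e. on }\rn,
\]
so by the auxiliary fact each $(\bfu_k)_e$ coincides a.e.\ with a continuous function and the sequence converges uniformly on $\rn$ to a continuous function $\mathbf v$. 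On the other hand, by Corollary \ref{Linfa} (applied with $Y=L^\infty$), $(\bfu_k)_e\to\bfu_e$ in $L^\infty(\Omega)$, so $\mathbf v=\bfu$ a.e.\ on $\Omega$; the continuous representative yields $\bfu\in C^0(\Omega)$ with $\|\bfu\|_{C^0(\Omega)}\lesssim\|\ep(\bfu)\|_{X(\Omega)}$.

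For Part II(iv)$\Rightarrow$(i), I use the linear extension operator $\mathscr{E}_\Omega$ of Theorems \ref{thm:extensionoperator} and \ref{C1}. Given $\bfu\in E^1X(\Omega)$, the function $\mathscr{E}_\Omega\bfu$ has bounded support and, by \eqref{feb2} applied with $\widehat X$ any extension of $X$ that agrees with $L^{n,1}$ on compactly supported functions in $\rn$ (available since $|\Omega|<\infty$, so $X(\Omega)\hookrightarrow L^{n,1}(\Omega)$ propagates to $\widehat X$ near $\Omega$ via \eqref{ext*}), one obtains $\mathscr{E}_\Omega\bfu\in E^1_bL^{n,1}(\rn)$. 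The argument of the preceding paragraph, applied to $\mathscr{E}_\Omega\bfu$ in place of $(\bfu_k)_e$, shows that $\mathscr{E}_\Omega\bfu$ admits a continuous representative on $\rn$; restricting to $\Omega$ and using that $\mathscr{E}_\Omega\bfu=\bfu$ there gives $\bfu\in C^0(\Omega)$, with the quantitative bound \eqref{july21'} following from the boundedness of $\mathscr{E}_\Omega$ together with the norm estimate produced by the representation formula. The main obstacle in the proof is this controlled passage from the rearrangement-invariant side to the $L^{n,1}$-side, which is ensured by the pointwise rearrangement bound \eqref{ext*} on the extension; once this is in place, the Riesz-kernel continuity argument does the rest.
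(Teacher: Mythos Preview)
Your proof is correct and follows the same route as the paper: reduce to $X=L^{n,1}$ via \eqref{feb77}, invoke the $L^\infty$ bound from Corollary~\ref{Linfa}, and upgrade to continuity by approximation and uniform convergence (you make the representation formula \eqref{reprrn1} and the Riesz-potential mechanism explicit, which the paper leaves implicit in its appeal to \eqref{feb80mar} and density). In Part~II your phrasing about the extension is a bit convoluted; the cleaner line is simply $E^1X(\Omega)\hookrightarrow E^1L^{n,1}(\Omega)$ by \eqref{feb77} and then Theorem~\ref{C1} with $X=L^{n,1}$---and note that the claim ``$\mathscr{E}_\Omega\bfu$ has bounded support'' tacitly uses that $\Omega$ is bounded rather than merely $|\Omega|<\infty$, an assumption the paper's own proof also makes.
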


In the light of Theorem \ref{Linfty}, as  a next step we  investigate  the optimal modulus of continuity of functions  from symmetric gradient Sobolev spaces as in that theorem. It turns out that an estimate for the modulus of continuity, which is uniform for all these functions, is only possible under a slightly stronger assumption than \eqref{apr40}. Under this strengthened assumption, the optimal modulus of continuity is exhibited in the main result of this section.
%Theorem \ref{continuity} below. 
%
%
%In the remaining part of this section, we shall be concerned with the modulus of continuity of functions in symmetric gradient Sobolev space. 
{\color{black} Since continuity is a local property, we shall focus on bounded domains $\Omega$.}
\par
 Let us premise a few definitions and notations. A function   $\sigma : (0, \infty ) \to (0,
\infty )$ is said to be a modulus of continuity if it is equivalent  (up to multiplicative
constants) near $0$  to a non-decreasing function,  and
$\lim _{s\to 0^+}\sigma (s)=0$ and  $\limsup _{s\to 0^+}\frac
s{\sigma (s)} < \infty $.
%
%
%non-decreasing functions vanishing only at
%$0$,
\\ We denote by $C^\sigma (\Omega )$ the Banach space of all functions $\bfu : \Omega \to \rn$
for which the norm
\begin{align}\label{sobolevspacebis}
\|\bfu\|_{C^\sigma (\Omega )} = \|\bfu\|_{C^0(\Omega )} + \sup_{
\begin{tiny}
                    \begin{array}{c}
                       x,y \in \Omega \\
                        x \neq y
                      \end{array}
                      \end{tiny}}
 \frac{|\bfu(x)-\bfu(y)|}{\sigma (|x-y|)}
 \end{align}
 is finite.  Note that moduli of continuity, which are equivalent  (up to multiplicative constants) near $0$, yield
 the same spaces (up to equivalent norms).
\par
Assume that   $\|\cdot\|_{X(0, |\Omega|)}$ is a rearrangement-invariant function norm satisfying condition \eqref{apr40}.  Then the function 
 $\vartheta _{X}: (0, \infty ) \to (0, \infty )$, given by
\begin{equation}\label{july25}
\vartheta _{X}(s) = \| r^{-\frac 1{n'}}\chi _{(0,
s^n)}(r)\|_{{X_e} '(0, \infty)} \quad \text{for $s>0$, }
\end{equation}
is well defined.
%Observe that different choices of  the representation
%space ${X}(0,
%  \infty)$
%  result in  functions  $\vartheta _{X}$ which are equivalent near $0$,
%  %with the same asymptotic behavior
% up to multiplicative constants.
Moreover, given any number {\color{black} $R>{\rm diam}(\Omega )^n$},
the function
 $\varrho _{X} : (0, \infty )
\to (0, \infty)$ given  by
\begin{equation}\label{july24}
\varrho _{X}(s)= s\,\| r^{-1}\chi _{(s^n ,
R)}(r)\|_{{X_e} '(0, \infty)} \quad \text{for $s \in (0, {\rm diam}(\Omega )]$,}
\end{equation}
and continued by $\varrho
_{X}({\rm diam}(\Omega ))$ for $s >{\rm diam}(\Omega )$, is also well defined. One can show that different choices of
 $R$,
  yield functions  $\varrho _{X}$
  which are mutually equivalent near $0$, up to multiplicative constants.
\\
Define
\begin{equation}\label{july23mar} \sigma _{X} =
 \vartheta _{X}+ \varrho _{X} .
\end{equation}
If 
%the function norm $\|\cdot\|_{X(0, L)}$ is such that both $\vartheta _{X}(s)<\infty$ and $\varrho _{X}(s)<\infty$ for $s>0$, and
\begin{equation}\label{limitesigma}
\lim _{s \to 0^+}\sigma _{X}(s)=0,
\end{equation}
then the function $\sigma _{X}$ is a modulus of continuity. This assertion can be verified via the argument employed  in \cite[Proof of Theorem 3.4]{cianchi-randolfi IUMJ}.
\par
% for the modulus of continuity of functions in higher-order (full gradient) Sobolev spaces.
The following theorem    tells us that $\sigma _{X}$ is the optimal modulus of continuity announced above. As mentioned in Section \ref{intro}, in sharp contrast with the case of embeddings into rearrangement-invariant  target spaces, the optimal target space $C^\sigma (\Omega )$ in a  Sobolev embedding  for symmetric gradients may be larger than the optimal one for  full gradients,  associated with the same function norm $\|\cdot\|_{X(0, |\Omega|)}$. As shown in \cite[Theorem 3.4]{cianchi-randolfi IUMJ}, the optimal target space $C^\sigma (\Omega )$ in the latter embedding is obtained with the choice $\sigma = \varrho _{X}$. Moreover, there  do exist function norms $\|\cdot\|_{X(0, |\Omega|)}$ such that $ \vartheta _{X}$ is not equivalent to $\varrho _{X}$, and hence $C^{ \varrho _{X}}(\Omega) \subsetneq C^{ \sigma _{X}}(\Omega)$ -- see Examples \ref{Linf} and \ref{exp} in the next  section.
\iffalse

Assume that $L\in (0, \infty)$ and that $\|\cdot\|_{X(0, L)}$ is a rearrangement-invariant function norm Let
 $\vartheta _{X}: (0, \infty ) \to (0, \infty ]$ be the function 
defined as
\begin{equation}\label{july25}
\vartheta _{X}(s) = \| r^{-\frac 1{n'}}\chi _{(0,
s^n)}(r)\|_{{X_e} '(0, \infty)} \quad \text{for $s>0$.  }
\end{equation}
%Observe that different choices of  the representation
%space ${X}(0,
%  \infty)$
%  result in  functions  $\vartheta _{X}$ which are equivalent near $0$,
%  %with the same asymptotic behavior
% up to multiplicative constants.
Moreover, given any number {\color{black} $R_2>R_1>0$},
%
% {\color{black} $R>{\rm diam}(\Omega )^n$},
let 
 $\varrho _{X} : (0, \infty )
\to (0, \infty ]$ be the function defined by
\begin{equation}\label{july24}
\varrho _{X}(s)= s\,\| r^{-1}\chi _{(s^n ,
R_2)}(r)\|_{{X_e} '(0, \infty)} \quad \text{for $s \in (0, R_1]$,}
\end{equation}
and continued by $\varrho
_{X}(R_1)$ for $s >R_1$. One can show that different choices of
 $R_1$ and $R_2$
  yield functions  $\varrho _{X}$
  which are mutually equivalent near $0$, up to multiplicative constants.
\\ 
Define
\begin{equation}\label{july23mar} \sigma _{X} =
 \vartheta _{X}+ \varrho _{X} .
\end{equation}
If the function norm $\|\cdot\|_{X(0, L)}$ is such that both $\vartheta _{X}(s)<\infty$ and $\varrho _{X}(s)<\infty$ for $s>0$, and
\begin{equation}\label{limitesigma}
\lim _{s \to 0^+}\sigma _{X}(s)=0,
\end{equation}
then the function $\sigma _{X}$ is a modulus of continuity. 
\fi

\begin{theorem}\label{continuity}{\rm{\bf [Sobolev embeddings  into  spaces of uniformly continuous functions] }}
Let $\Omega$ be a   bounded open set in $\R^n$ and  let $X(\Omega)$ be a rearrangement-invariant space fulfilling condition \eqref{limitesigma}.

\vspace{0.1cm}
\par\noindent 
 {\bf Part I [Embedding for $E^1_0X(\Omega)$]}
 The embedding $E^1_0X(\Omega) \to C^{\sigma _{X}}(\Omega )$ holds, namely there exists a constant $c$ such that
\begin{equation}\label{cont1}
\|\bfu\|_{C^{\sigma _{X}}(\Omega )} \leq c \|\ep(\bfu)\|_{X(\Omega)}
%
%E^1_0X(\Omega) \to C^{\sigma _{X}}(\Omega ).
\end{equation}
for every $\bfu \in E^1_0X(\Omega)$.

\vspace{0.1cm}
\par\noindent 
 {\bf Part II [Embedding for $E^1X(\Omega)$]}
Assume, in addition, that $\Omega$ is an     $(\varepsilon, \delta)$-domain. Then $E^1X(\Omega) \to C^{\sigma _{X}}(\Omega )$, namely 
there exists a constant $c$ such that
\begin{equation}\label{cont1'}
\|\bfu\|_{C^{\sigma _{X}}(\Omega )} \leq c \|\bfu\|_{E^1X(\Omega)}
%
%E^1_0X(\Omega) \to C^{\sigma _{X}}(\Omega ).
\end{equation}
for every $\bfu \in E^1X(\Omega)$. 

\vspace{0.1cm}
\par\noindent 
The result is sharp, in the sense that if there exists a modulus
of continuity $\sigma$ such that  either inequality \eqref{cont1} or \eqref{cont1'} holds, with  $C^{\sigma _{X}}(\Omega )$ replaced by $ C^{\sigma
}(\Omega )$,
%\begin{equation}\label{3.5}
%W^mX(\Omega ) \to C_{\sigma }(\Omega ),
%\end{equation}
%for every open set $\Omega ' \subset \subset \Omega$,
then \eqref{limitesigma}
% either of  \eqref{3.1}--\eqref{3.2'}
holds, and $C^{\sigma
_{X}}(\Omega ) \to C^{\sigma}(\Omega )$.
\end{theorem}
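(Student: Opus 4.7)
The plan is to exploit the integral representation from Lemma~\ref{reprrn}, which for $\bfu\in E^1_b L^1(\rn)$ gives, in view of \eqref{mar21},
\begin{equation*}
|\bfu(x)-\bfu(y)|\lesssim \int_{\rn}|\ep(\bfu)(z)|\,K(x,y,z)\,\dz,\qquad
K(x,y,z)=\biggl|\frac{x-z}{|x-z|^{n}}-\frac{y-z}{|y-z|^{n}}\biggr|.
\end{equation*}
Thus the modulus of continuity estimate reduces to bounding a Riesz-type potential difference in the norm $\|\ep(\bfu)\|_{X}$, and the two terms $\vartheta_X$ and $\varrho_X$ in $\sigma_X$ will arise from the ``near'' and ``far'' decompositions of the integration domain in $z$.

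\textbf{Part I.} For $\bfu\in E^1_0 X(\Omega)$, since $|\Omega|<\infty$ we have by \eqref{l1linf} that $\bfu\in E^1_b L^1(\rn)$ after extension by zero, with $\|\ep(\bfu_e)\|_{X_e(\rn)}=\|\ep(\bfu)\|_{X(\Omega)}$. I would split the kernel difference integral into a near part $N_{x,y}=\{z:\min(|z-x|,|z-y|)<2|x-y|\}$, where the bound $K(x,y,z)\lesssim |x-z|^{1-n}+|y-z|^{1-n}$ combined with H\"older's inequality \eqref{holder} and the definition of $\vartheta_X$ produces the term $\vartheta_X(|x-y|)\|\ep(\bfu)\|_{X(\Omega)}$; and a far part on which the mean-value estimate $K(x,y,z)\lesssim |x-y|/|x-z|^{n}$ together with H\"older and the definition \eqref{july24} of $\varrho_X$ produces $\varrho_X(|x-y|)\|\ep(\bfu)\|_{X(\Omega)}$. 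Adding the two yields \eqref{cont1}. The uniform bound $\|\bfu\|_{C^0(\Omega)}\lesssim \|\ep(\bfu)\|_{X(\Omega)}$ is already contained in Corollary~\ref{Linfa}, since condition \eqref{limitesigma} forces \eqref{apr40}.

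\textbf{Part II.} I would combine the argument above with the extension operator $\mathscr E_\Omega$ of Theorem~\ref{thm:extensionoperator} and~\ref{C1}. Since $\Omega$ is bounded, $\mathscr E_\Omega\bfu$ has bounded support, and by \eqref{ext*} together with \eqref{mar170},
\begin{equation*}
\|\mathcal E(\mathscr E_\Omega\bfu)^*\|_{\widehat X(0,\infty)}\lesssim \|\mathcal E(\bfu)^*\|_{X(0,|\Omega|)}\lesssim \|\bfu\|_{E^1 X(\Omega)}.
\end{equation*}
Hence $\mathscr E_\Omega\bfu\in E^1_b L^1(\rn)$, and the Part~I estimate applied to $\mathscr E_\Omega\bfu$ on $\rn$ delivers \eqref{cont1'}, on noting that the norms $X_e'$ appearing in $\vartheta_X$ and $\varrho_X$ control the corresponding quantities of the extension.

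\textbf{Sharpness.} Assume that some modulus of continuity $\sigma$ satisfies $E^1_0 X(\Omega)\to C^{\sigma}(\Omega)$ (the case $E^1 X$ is analogous via extension). The goal is to show $\sigma_X\lesssim \sigma$ near $0$, which yields both \eqref{limitesigma} and the continuous inclusion $C^{\sigma_X}(\Omega)\to C^{\sigma}(\Omega)$. First, testing on vector fields $\bfu=(u,0,\dots,0)$ for scalar $u\in W^{1}_0 X(\Omega)$ (so that $|\ep(\bfu)|\approx|\nabla u|$ pointwise), the assumption specializes to $W^{1}_0 X(\Omega)\to C^{\sigma}(\Omega)$, and the sharpness result of \cite[Theorem~3.4]{cianchi-randolfi IUMJ} forces $\varrho_X\lesssim \sigma$ near $0$. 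Next, to recover $\vartheta_X\lesssim \sigma$, I would construct, for each sufficiently small $s>0$, an extremal test field $\bfu_s\in E^1_0 X(\Omega)$ supported in a ball $B_s\subset\Omega$ of radius $\sim s$ centred at some $x_0$, with $\bfu_s(y_0)=0$ for a chosen $y_0\in\overline B_s\setminus B_{s/2}(x_0)$ and $|\bfu_s(x_0)|\gtrsim \vartheta_X(s)\|\ep(\bfu_s)\|_{X(\Omega)}$; the assumed inequality at the pair $(x_0,y_0)$ then forces $\vartheta_X(s)\lesssim \sigma(|x_0-y_0|)\lesssim \sigma(s)$, since $\sigma$ is (equivalent to) non-decreasing. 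The main obstacle is precisely this extremal construction: one cannot invoke Korn's inequality near the non-reflexive endpoint of the $X$-scale, so the field realizing the pointwise bound $\vartheta_X(s)$ must be produced directly as a Riesz-type potential of an $X'$-extremizer of $r^{-1/n'}\chi_{(0,s^n)}$ via the convolution kernel of Lemma~\ref{reprrn}, corrected by a rigid displacement and cut off to land in $E^1_0 X(\Omega)$; one then verifies, using the associate-norm duality in \eqref{holder}, that this produces the required sharp lower bound on $|\bfu_s(x_0)|$.
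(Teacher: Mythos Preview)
Your treatment of the embedding itself (Parts~I and~II) follows the paper's approach: representation formula from Lemma~\ref{reprrn}, near/far decomposition of the kernel difference, and H\"older's inequality against $\|\ep(\bfu)\|_X$. You gloss over one nontrivial step in the far part: the function $z\mapsto \chi_\Omega(z)\chi_{\{|x-z|\ge 3|h|\}}|x-z|^{-n}$ is not radially symmetric about a point with level sets of the right measure, so passing from its $X'(\rn)$-norm to $\|r^{-1}\chi_{(s^n,R)}\|_{X_e'(0,\infty)}$ requires the annulus comparison in \eqref{july30}--\eqref{july32}; but your overall strategy is correct and matches the paper.

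The sharpness argument, however, has a genuine gap. The paper produces both lower bounds via \emph{explicit} test fields. The bound $\vartheta_X\lesssim\sigma$ comes from the elementary radial scalar field $\bfu(x)=\big(\int_{\omega_n|x|^n}^1 f(r)r^{-1/n'}\dr,0,\dots,0\big)$, for which $|\ep(\bfu)|\approx|\nabla\bfu|\approx f(\omega_n|x|^n)$; this is a $W^1_0X$-type construction and needs no Riesz-potential machinery. The bound $\varrho_X\lesssim\sigma$ is the genuinely symmetric-gradient-specific one, obtained via
\[
\bfu(x)=\bfQ\,x\int_{\omega_n|x|^n}^{1} f(r)\,r^{-1}\,\dr,\qquad \bfQ\in\setR^{n\times n}_{\rm skew},
\]
whose point is that the skew part of $\nabla\bfu$ absorbs the large term $\bfQ\int f\,r^{-1}\dr$, leaving $|\ep(\bfu)|\lesssim f(\omega_n|x|^n)$ while $|\nabla\bfu|$ can fail to lie in $X$. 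You have these roles essentially interchanged: you delegate one bound to the full-gradient result of \cite{cianchi-randolfi IUMJ} and, for the other, propose a field ``produced as a Riesz-type potential of an $X'$-extremizer \dots\ corrected by a rigid displacement''. That description is not a construction: writing $\bfu=\int K(\cdot,y)\bfG(y)\dy$ does not force $\ep(\bfu)=\bfG$, so there is no control on $\|\ep(\bfu)\|_X$ in terms of $\|\bfG\|_X$, nor any mechanism for the required pointwise lower bound. The skew-matrix test field above is exactly the missing idea; without it the optimality of $\sigma_X$---and with it the strict gap between the symmetric- and full-gradient embeddings highlighted in Examples~\ref{Linf} and~\ref{exp}---is not established.
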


\begin{remark}\label{notunif}
 {\rm Let us notice that there actually exist rearrangement-invariant function norms $\|\cdot\|_{X(0, |\Omega|)}$  fulfilling condition \eqref{apr40}, for which however \eqref{limitesigma} fails. This is the case, for instance, for the borderline space $L^{n,1}(\Omega)$. Thus, by Theorem \ref{Linfty},
 $E^1_0L^{n,1}(\Omega) \to C^0(\Omega)$, whereas, by Theorem \ref{continuity}, the space $E^1_0L^{n,1}(\Omega)$ is not continuously embedded into  $C^\sigma(\Omega)$ for any
   modulus of continuity $\sigma$.}
\end{remark}

\begin{proof}[Proof of Theorem \ref{Linfty}] \emph{Part} I. The equivalence of properties (ii), (iii) and (iv) is shown in \cite[Theorem 3.1]{cianchi-randolfi IUMJ}. Property (i) plainly implies (ii). In order to prove that property (iv) implies (i), recall, from   Corollary \ref{Linfa}, that
\begin{equation}\label{feb80mar}
\|\bfu\|_{L^\infty(\Omega)} \leq C \|\ep(\bfu)\|_{L^{n,1}(\Omega)}
\end{equation} 
for every $\bfu \in E^1_0L^{n,1}(\Omega)$.  We have thus just to show that every function $\bfu \in E^1_0L^{n,1}(\Omega)$ is continuous. Since $\Omega$ is bounded, we have that $E^1_0L^{n,1}(\Omega) = E^1_bL^{n,1}(\Omega)$. Let $B_\Omega$ be a ball such that $\overline \Omega \subset  B_\Omega$. Thus, if $\bfu \in E^1_0L^{n,1}(\Omega)$, then the function $\bfu_e$ is compactly supported in $B_\Omega$. {\color{black} A standard approximation argument
%, as in the case of $E^1_0L^p(B_\Omega)$ (see e.g. \cite{???}), 
shows that $\bfu_e$ can be approximated  in $ E^1_0L^{n,1}(B_\Omega)$ by a sequence of functions  $\{\bfv_k\}\subset C^\infty_0(B_\Omega)$.} Passing to the limit as $k \to \infty$ in an analogue of inequality \eqref{feb80mar}, with $\Omega$ replaced by $B_\Omega$ and $\bfu$ replaced bu $\bfu_e - \bfv_k$,  tells us that $\bfu_e$, and hence $\bfu$, is continuous.
%
%
%Property (i) thus follows from \eqref{feb80}, by approximating any function  $\bfu\in E^1_0L^{n,1}(\Omega)$ by a sequence of smooth functions (it is not needed that they are compactly supported in $\Omega$: this is not guaranteed by our definition of 
%$E^1_0L^{n,1}(\Omega)$).
\\   \emph{Part} II.  The proof is analogous.   Notice that now we have to approximate $\bfu$   by a sequence $\{\bfu_k\}\subset C^\infty(\Omega) \cap E^1L^{n,1}(\Omega)$.
%
%The equivalence of properties \eqref{july20} and \eqref{july22} follows from Theorem \ref{reduction}, since inequality \eqref{july10} is known to hold with $Y(0, |\Omega|) = L^\infty (0, |\Omega|)$ if and only if condition \eqref{july22} is satisfied. Embedding \eqref{july21} trivially implies \eqref{july20}. Finally, the fact that embedding \eqref{july20} implies \eqref{july21} can be shown by approximating any function  $\bfu\in E^1_0X(\Omega)$ by a sequence of smooth functions (it is not needed that they are compactly supported in $\Omega$: this is not guaranteed by our definition of 
%$E^1_0X(\Omega)$). As a consequence of embedding \eqref{july20}, the sequence converges uniformly to $\bfu$, and hence $\bfu \in C^0(\Omega)$.
\end{proof}

\begin{proof}[Proof of Theorem \ref{continuity}] \emph{Part} I.
Since $\Omega$ is bounded, $ E^1_0X(\Omega)= E^1_bX(\Omega)$. Thus, if $\bfu \in E^1_0X(\Omega)$, then $\bfu_e \in E^1X_e(\rn)$ and has bounded support in $\rn$. In particular, by property (P5) of the definition of  function norm,  $\bfu_e \in E^1_bL^1(\rn)$ as well. 
%since assumption \eqref{limitesigma} implies  \eqref{apr40} and hence \eqref{feb77}, we have that $\bfu_e \in E^1_bL^1(\rn)$. 
Let us denote $\bfu_e$ simply by $\bfu$ and $X_e(\rn)$ by $X(\rn)$ in the remaining part of this proof.
By Lemma \ref{reprrn}, we have that
\begin{equation}\label{july26}
\bfu (x) = \int _{\R^n} K(x,y) \ep (\bfu)(y)\dy \quad \hbox{for a.e. $x \in \R^n$,}
\end{equation}
where the kernel $K: \rn \times \rn \to \R^{n\times n}$ satisfies the inequality 
$$ |K(x,y)|\leq c |x-y|^{1-n} \quad \hbox{for $x \neq y$},$$
for some constant $c$. Also, one can verify that
{\color{black} $$ |K (x+h,y)-K(x,y)|\leq c'
|h||x-y|^{-n} \quad \hbox{for $h \in \rn$ and $|x-y|\geq
3|h|$,}$$}
for some constant $c'$.
\iffalse

% By definition of this space, we may assume that $\bfu$ is defined in the whole of $\R^n$, and $\bfu \in E^1_0X(\R^n)$. Assume that $\Omega$ is bounded. \todo{We have to check if this assumption is really necessary} Hence, the extension of $\bfu$ has compact support $\Omega$ in $\R^n$.  
{\color{black} As a next step way represent $\bfu$ as a weakly singular integral of $\ep(\bfu)$. Since {\color{black} $\bfu$ is smooth ????} we have
\begin{align*}
\Delta\bfu =\Div \big(\mathcal A\ep(\bfu)\big),
\end{align*}
where the linear mapping $\mathcal A:\R^{n\times n}\rightarrow R^{n\times n}$ is given by
$\mathcal A \bfB=2\bfB-\Big(\frac{1}{2}+\frac{1}{n}\Big)\mathrm{tr}\bfB\,\mathbb I$.
Solving Laplace equation in $\R^n$ we obtain
\begin{align*}
\bfu(x)=-\frac{1}{n(n-2)\omega_n}\int_{\R^n}\frac{1}{|x-y|^{n-2}}\Div_y \big(\mathcal A\ep(\bfu)\big)(y)\dy=\frac{1}{n\omega_n}\int_{\R^n}\mathcal A\ep(\bfu)(y)\frac{x-y}{|x-y|^{n}}\dy.
\end{align*}
Consequently, we have
\begin{equation}\label{july26}
\bfu (x) = \int _{\R^n} K(x,y) \ep (\bfu)(y)\dy \quad \hbox{for $x \in \R^n$,}
\end{equation}
where $K(x,y)$ is a kernel with values in $\mathcal L(\R^{n\times n},\R^n)$ such that
$$ |K(x,y)|\leq C |x-y|^{1-n} \quad \hbox{for $x \neq y$},$$
and
$$ |K (x+h,y)-K(x,y)|\leq C
|h||x-y|^{-n} \quad \hbox{for $h \in \rn$ and $|x-y|\geq
3|h|$.}$$}
\todo{We have to check these properties of K}

\fi
Fix any $x, h \in \rn$.  Therefore,
 \begin{align}\label{july27}
\quad |\bfu(x+h)-\bfu(x)| &\leq 
\left| \int_{\{y:\,|x-y|< 3|h|\}}(K
(x+h,y)-K (x,y))\ep(\bfu)(y)\dy\right|\\ \nonumber & 
\quad +\,\left| \int_{\{y:\,|x-y|\geq 3|h|\}}(K
(x+h,y)-K (x,y))\ep(\bfu)(y)\dy\right|\nonumber \\
\nonumber &\leq  c\int_{\{y:\,|x-y|< 4|h|\}}\frac{|\ep(\bfu)(y+h)|}{|x-y|^{n-1}}\dy + c\int_{\{y:\,|x-y|< 3|h|\}}\frac{|\ep(\bfu)(y)|}{|x-y|^{n-1}}\dy\\ \nonumber &  \quad +\, c'\,|h|\int_{\{y:\,|x-y|\geq
3|h|\}}\frac{|\ep(\bfu)(y)|}{|x-y|^{n}}\,\chi_{\Omega}(y)\dy\nonumber
\\
              %&\leq &C\int_{\{y:\,|x-y|< 4|h|\}}\frac{|\nabla^m
%              u(y)|}{|x-y|^{n-m}}\,\chi_{U} (y)\,dy\nonumber\\
%              & & +\, C\,|h|\int_{\{y:\,|x-y|\geq 4|h|\}}\frac{|\nabla^m
%              u(y)|}{|x-y|^{n-m+1}}\,\chi_{U}(y)\,dy\nonumber\\
& \leq 2c\,\|\ep(\bfu)\|_{X(\R^n)}\left\|\frac{1}{|x-y|^{n-1}}\:\chi_{\{y:\, |x-y|<
              4|h|\}}(y)\right\|_{X^{'}(\R^n)}\nonumber\\
              & +\,c'\,|h|\,\|\ep(\bfu)\|_{X(\R^n)}\left\|\frac{\chi_{\Omega}(y)}{|x-y|^{n}}\:\chi_{\{y:\,
              |x-y|\geq 3|h|\}}(y)\right\|_{X^{'}(\R^n)}.\nonumber
\end{align}
Assume, for a moment, that
\begin{equation}\label{july28}
|h|^n \leq |\Omega|.
\end{equation}
%for some positive constant $c$ to be chosen later.
Given any measurable set $E\subset \rn$, define $\widehat E$ as the annulus
$$\widehat E = \{y \in \rn : 3|h| \leq |x-y| \leq r(E)\},$$
where $r(E)$ is such that $|\widehat E| = |E|$. In particular, if
$E=\Omega$, then, by \eqref{july28},
\begin{equation}\label{july29}
\omega _n r(\Omega)^n = |\Omega| + \omega _n 3^n |h|^n \leq (1+ \omega _n
3^n)|\Omega|.
\end{equation}
Define the functions $v, w : \rn \to [0, \infty)$ as
\begin{equation*}
v(y)=\frac{\chi_{\Omega}(y)}{|x-y|^{n}}\:\chi_{\{y:\,
              |x-y|\geq 3|h|\}}(y) \quad \text{and} \quad 
%              \end{equation*}
%              and \begin{equation*}
              w(y)= \frac{\chi_{\widehat \Omega}(y)}{|x-y|^{n}}\:\chi_{\{y:\,
              |x-y|\geq 3|h|\}}(y)
% \bigg(= \frac{1}{|x-y|^{n}}\:\chi_{\{y:\,
%              3|h| \leq |x-y| \leq r(\Omega) \}}(y) \bigg)
 \quad \text{for $y \in \rn$.}
\end{equation*}
Note that, by the definition of $\widehat \Omega$,
$$w(y) =   \frac{1}{|x-y|^{n}}\:\chi_{\{y:\,
              3|h| \leq |x-y| \leq r(\Omega) \}}(y)  \quad \text{for $y \in \rn$.}
$$ 
Furthermore,
for any measurable set $E \subset \Omega$,
$$v = w \,\, \hbox{in  $E \cap \widehat  E$,}\qquad \min _{\widehat E \setminus
 E} w \geq \max _{E \setminus \widehat  E} v, \quad \text{and} \quad |\widehat  E \setminus
 E| = |E \setminus \widehat  E|. $$ Thus, for any set $E$ of this kind,
 \begin{eqnarray}\label{july30}
\int _E v(y)\dy =   \int _{E \cap {\widehat  E}} v(y)\dy +
\int _{E \setminus {\widehat  E}} v(y)\dy
%\\
%&\leq  & 
\leq \int _{E \cap \widehat  E} w(y)\dy + \int _{ \widehat E
\setminus E} w(y)\dy = \int _{ \widehat  E} w(y)\dy\,. 
\end{eqnarray}
The inequality $ \int _E v(y)\dy \leq  \int _{ \widehat  E}
w(y)\dy$
 clearly continues to hold even if
$E\nsubseteq \Omega$, since $v=0$ in $E \setminus \Omega$. Hence, 
\begin{equation}\label{july31}
v^{**}(s) \leq w^{**}(s) \quad \hbox{for $s>0$}
\end{equation}
owing  to property \eqref{c15bis}. By inequality \eqref{july31}, {\color{black} property \eqref{hardy},} and equations \eqref{july28} and \eqref{july29}, there
exists a constant $C=C(n)$ such that
 \begin{align}\label{july32}
& \left\|\frac{\chi_{\Omega}(y)}{|x-y|^{n}}\: \chi _{\{y:\,
               3|h|\leq |x-y|\}}(y)\right\|_{X^{'}(\R^n)}   \leq
              \left\|\frac{1}{|x-y|^{n}}\:\chi_{\{y:\,
               3|h|\leq |x-y| \leq r(\Omega)\}}(y)\right\|_{X^{'}(\R^n)}
 \\ \nonumber & \quad \quad  = \omega _n\left\|r^{-1}\chi_{(\omega_n 3^n|h|^n, \,\,\omega
_n r(\Omega)^n)}(r)\right\|_{{X}'(0,\infty)}
  \leq  \omega _n \left\|r^{-1}\chi_{(\omega_n 3^n|h|^n,\,\, C
|\Omega|)}(r)\right\|_{{X}'(0,\infty)}.
\end{align}
On the other hand,
\begin{equation}\label{july33}
\left\|\frac{1}{|x-y|^{n-1}}\:\chi_{\{y:\, |x-y|\leq
              4|h|\}}(y)\right\|_{X^{'}(\R^n)}
= \omega _n^{\frac{1}{n'}}
\left\|r^{-\frac{1}{n'}}\,\chi_{(0,\,\, \omega_n
4^n|h|^n)}(r)\right\|_{{X}'(0,\infty)}.
\end{equation}
Combining  equations \eqref{july27}, \eqref{july32} and \eqref{july33} tell us that
\begin{eqnarray}\label{july34}
  |\bfu(x+h)-\bfu(x)|& \leq &C\,\|\ep(\bfu)\|_{X(\R^n)}\left\|r^{-\frac{1}{n'}}\,\chi_{(0, \omega_n 4^n|h|^n)}(r)\right\|_{{X}'(0,\infty)}\\ \nonumber
  & & +\, C\,|h|\,\|\ep(\bfu)\|_{X(\R^n)}\left\|r^{-1}\chi_{(\omega_n 3^n|h|^n,
  C
  |\Omega|)}(r)\right\|_{{X}'(0,\infty)}
  \end{eqnarray}
for some constant $C=C(n)$.
Hence, 
%by Proposition \ref{equiv},
\begin{equation}\label{july35}
  |\bfu(x+h)-\bfu(x)|\leq
c\,\|\ep(\bfu)\|_{X(\R^n)}\left(\vartheta_{X}(|h|)+\varrho_{X}(|h|)\right)
\end{equation}
for some constant $c$, independent of $\bfu$, provided that $|h|$ is
sufficiently small, independently of $\bfu$. Inequality \eqref{july35}
clearly continues to hold, for a suitable constant $c$, even if assumption \eqref{july28} is dropped,
%=C(\Omega
%)$,
for any $h$ such that $x, x+h \in \Omega $. Embedding \eqref{cont1} is thus established.
\\
In order to prove the necessity of condition \eqref{limitesigma} and the optimality of the space $C^{\sigma_X}(\Omega)$ in inequality \eqref{cont1}, assume that $E^1_0X(\Omega ) \to C^\sigma (\Omega)$ for some
modulus of continuity $\sigma$.
Thus,  there
exists a constant $c$ such that
\begin{eqnarray}\label{c18}
\mathop{\sup}\limits_{\tiny{\begin{array}{c}\scriptstyle x,y\in \Omega\\
\scriptstyle x\neq
y\end{array}}}\frac{\left|\bfu(x)-\bfu(y)\right|}{\sigma (|x-y|)}%&\leq &
%c\|u\|_{W^mX(\Omega)}\nonumber\\
\leq  c\,\|\ep(\bfu)\|_{X(\Omega)}
\end{eqnarray}
for every $\bfu \in E^1_0X(\Omega)$. We may suppose,  without loss of
generality, that $B\subset\Omega$,  where $B$ is the ball centered at  the origin, with $|B|=1$.
Given a nonnegative function $f \in L^\infty (0,\infty)$,
vanishing outside $(0, 1)$, let $\bfu: \Omega \to \R^n$ be the function
defined as
\begin{equation}\nonumber
\bfu(x) =
\begin{cases}
\Big(\int_{\omega_n |x|^n}^1f(r)
r^{-\frac{1}{n'}}\dr, 0, \dots , 0 \Big)
 & \hbox{if \,$x\in B$}
\\
 (0, \dots , 0) &\hbox{if\, $x\in\Omega \setminus B$.}
\end{cases}
\end{equation}
One can verify that $\bfu \in W^1_0X(\Omega)\subset E^1_0L^1(\Omega)$,   and that $|\nabla \bfu (x)|= c f(\omega_n |x|^n)$ a.e. in $\Omega$ for a suitable constant $c=c(n)$.  Hence, $\ep(\bfu)^*(s) \leq |\nabla \bfu|^* (s) \leq c f^*(s)$ for $s\geq 0$, and
\begin{equation}\label{c7}
\|\ep(\bfu)\|_{X(\Omega)}\leq c \|f\|_{{X}(0,\infty)}.
\end{equation}
Inequalities  \eqref{c18} and \eqref{c7} enable one  to deduce that
%and Proposition
%\ref{equiv} 
\begin{eqnarray} \label{1101}
c&\geq& \mathop{\sup}\limits_{\bfu\in\,
E^1_0X(\Omega)}\frac{\left|\bfu(x)- \bfu(0)\right|}{\sigma
(|x|)\|\ep(\bfu)\|_{X(\Omega)}}
%\\ \nonumber &\geq &
\geq c'\mathop{\sup}\limits_{\begin{tiny}
                      \begin{array}{c}
                       {f\in L^\infty (0, \infty ) }\\
                        f=0 \,{\rm in} \,(\omega_n|x|^n , \infty )
                      \end{array}
                      \end{tiny}}\frac{\int_0^{\omega_n|x|^n}f(r)\,r^{-\frac{1}{n'}}\dr}{\sigma
(|x|)\,\|f\|_{{X} (0,\infty)}}
\\ \nonumber &= &
 c'\mathop{\sup}\limits_{\begin{tiny}
                      \begin{array}{c}
                       {f\in {X} (0, \infty ) }\\
                        f=0 \,{\rm in} \,(\omega_n|x|^n , \infty )
                      \end{array}
                      \end{tiny}}\frac{\int_0^{\omega_n|x|^n}f(r)\,r^{-\frac{1}{n'}}\dr}{\sigma
(|x|)\,\|f\|_{{X}(0,\infty)}}%\\ \nonumber &=&
=c'\frac{\|r^{-\frac{1}{n'}}\,\chi_{(0,
\omega_n\,|x|^n)}(r)\|_{{X} '(0,\infty)}}{ \sigma
(|x|)}
%\\ \nonumber &\geq & 
\geq c''\,\frac{\vartheta_{X}(|x|)}{\sigma
(|x|)}
% \qquad \quad \hbox{if $|x|$ is sufficently small,}
\end{eqnarray}
for positive constants $c, c', c''$, provided that $|x|$ is sufficiently small. 
Note that the first
equality follows via   property  (P3) of the definition of function 
norm, on replacing any nonnegative unbounded function $f \in
{X}(0,\infty)$, vanishing in $(\omega_n|x|^n , \infty )$,
by $\max \{ f , t\}$, and then letting $t \to \infty$.
\par\noindent Next, given any function $f$ as above and any matrix $\bfQ \in \setR^{n\times n}_{\rm skew}$,
consider  the function $\bfu: \Omega \to \R^n$ given by
$$\bfu(x) =
\begin{cases}
\bfQ \,x \int _{\omega _n |x|^n}^1 f(r) r^{-1}  \dr & \hbox{if $x \in B$} \\
0 & \hbox{if $x \in \Omega \setminus B$.}
\end{cases}$$
%such that $\abs{\bfQ}=1$. 
One has that $\bfu$ is a weakly
differentiable function, and
\begin{align*}
  \ep(\bfu)(x) &=\frac{\bfQ x\otimes ^{\rm sym} x}{|x|^2}n f(\omega _n |x|^n) \quad \text{for a.e. $x \in B$. }
%  \\
%  \nabla \bfu(x) &= \bfQ \rho(\abs{x})+\frac{\bfQ x\otimes x}{|x|^2}\rho'(\abs{x})\abs{x}
\end{align*}
%Here, $\otimes ^{\rm sym}$ denotes
%the symmetric part of the tensor product of two vectors in $\rn$.
Hence,
 $ \abs{\ep(\bfu)(x)} \leq c n f(\omega _n |x|^n)$
%  \\
%  \rho(\abs{x}) & \leq  \abs{\nabla \bfu (x)} +
%  |\rho'(\abs{x})|\abs{x}= \abs{\nabla \bfu (x)} + h(\omega _n |x|)
for some constant $c=c(\bfQ)$ and for a.e. $x \in B$. Thereby,  there
exists a constant  $c=c(n, \bfQ)$ such that
$$ \|\ep(\bfu)\|_{X(\Omega)}\leq c\|f\|_{{X}(0,\infty)}.$$
Assume now, in addition, that the first column of $\bfQ$ agrees with $(0,1,0, \dots, 0)^T$.%\textcolor{black}{Ths implies $|\bfQ|\geq\sqrt{2}$!} 
Thus,   $$\bfu (x_1, 0, \dots, 0) =   (0,x_1,0, \dots, 0)^T \int_{\omega _n |x_1|^n}^1 f(r) r^{-1}  \dr  \qquad \text{ if $|x_1|\leq \omega_n^{-\frac 1n}$.}
$$
 Moreover,  $\bfu(0)=0$. 
Hence,
$$ |\bfu(x_1,0,\ldots,0)-\bfu(0,0,\ldots,0)|= |x_1|
\int_{\omega_n|x_1|^n}^1 f(r) r^{-1}\dr
\quad \hbox{if $|x_1| \leq \omega _n^{-1/n}$.} $$ 
Therefore, there exist
positive constants $c, c', c'', c'''$ such that
\begin{align}\label{1102}
c&\geq \mathop{\sup}\limits_{u\in\,
E^1_0X(\Omega)}\frac{\left|\bfu(x_1,0\ldots,0)-\bfu(0,0,\ldots,0)\right|}{\sigma(|x_1|)\|\ep(\bfu)\|_{X(\Omega)}}
%\\ \nonumber &\geq &
\geq c'\mathop{\sup}\limits_{\begin{tiny}
                      \begin{array}{c}
                       {f\in L^\infty (0, \infty ) }\\
                        f=0 \,{\rm in} \,(1 , \infty )
                      \end{array}
                      \end{tiny}}\frac{|x_1|\,\int_{\omega_n|x_1|^n}^1f(r)\,r^{-1}\dr}{\sigma(|x_1|)\,\|f\|_{{X}(0,\infty)}}
\\ \nonumber &= 
c'\mathop{\sup}\limits_{\begin{tiny}
                      \begin{array}{c}
                       {f\in {X} (0, \infty ) }\\
                        f=0 \,{\rm in} \,(1 , \infty )
                      \end{array}
                      \end{tiny}}\frac{|x_1|\,\int_{\omega_n|x_1|^n}^1f(r)\,r^{-1}\dr}{\sigma(|x_1|)\,\|f\|_{{X}(0,\infty)}}
%\\ \nonumber &=&
= c''\frac{\,|x_1|\,\|r^{-1}\chi_{(\omega_n|x_1|^n,1)}(r)\|_{
{X} '(0,\infty)}}{\sigma (|x_1|)}
%\\  \nonumber &\geq &
\geq c''' \,\frac{\varrho_{X}(|x_1|)}{\sigma (|x_1|)}
 \end{align}
if $|x_1|$ is sufficiently small.
 Note that the first equality holds by the same argument exploited
 in the first equality in \eqref{1101}.
%and that the last
% inequality holds owing to Proposition \ref{equiv}.
Combining equations \eqref{1101} and \eqref{1102} implies that there exists
a constant $c$ such that
$$\sigma _{X} (r) \leq c \,\sigma (r) \quad \hbox{if $r$ is sufficiently small.}
%
%\hbox{for $r \in (0, {\rm diam} (\Omega ))$.}
$$ Hence,
\eqref{limitesigma} follows, and the embedding $C^{\sigma
_{X}}(\Omega ) \to C^{\sigma}(\Omega )$ holds.
\\ \emph{Part} II. The proof of inequality \eqref{cont1'} can be accomplished along the same lines as that given for \eqref{cont1}. This is possible since,  by Theorem \ref{C1} there exists a   linear bounded extension operator  $\mathscr{E}_\Omega : E^1X(\Omega) \to E^1X_e(\rn)$. Moreover, since $\Omega$ is bounded, after multiplying $\mathscr{E}_\Omega$ by a function $\rho \in C^\infty_0(\rn)$ such that $\rho=1$ in $\Omega$, one obtains a  linear  bounded extension operator from $E^1X(\Omega)$ into  $ E^1_bX_e(\rn)$.
\end{proof}

\section{Symmetric gradient Orlicz-Sobolev spaces}\label{secorlicz}
%\textcolor{black}{Here we have $n\geq2$ everywhere}

Having the general results established in Sections \ref{sobemb} and \ref{cont} at our disposal, 
we are now in a position to prove sharp embeddings   for the Orlicz-Sobolev spaces $E^{1}_0L^A(\Omega)$ and $E^{1}L^A(\Omega)$ on open sets $\Omega$ with finite measure, and for their counterparts on $\rn$.
\par Consider  first the case when $\Omega$ is an open set in $\rn$ with $|\Omega|<\infty$.  Owing to Theorem \ref{reduction}, the optimal Orlicz target space for Sobolev embeddings of the spaces $E^{1}_0L^A(\Omega)$ and $E^{1}L^A(\Omega)$ is the same as that for embeddings of  $W^{1}_0L^A(\Omega)$ and $W^{1}L^A(\Omega)$.  The latter was exhibited in \cite{Cianchi_CPDE} (and in \cite{Ci1} in an equivalent form), and is determined by the Young function $A_n$ defined as follows.
Assume that 
$A$ satisfies the condition
\begin{equation}\label{orlicz1}
\int _0 \bigg(\frac t{A(t)}\bigg)^{\frac {1}{n-1}} \dt < \infty.
\end{equation}
Notice that such a condition   is not a restriction, since  we are assuming that
$|\Omega|< \infty$. Indeed, owing to the characterization \eqref{equivorlicz} of equivalent Orlicz function norms, the Young function $A$ can be replaced, if necessary, by a Young function
equivalent near infinity, which renders \eqref{orlicz1} true, and leaves the spaces $E^{1}_0L^A(\Omega)$ and $E^{1}L^A(\Omega)$ unchanged (up to equivalent
norms).
Define the function $H: [0, \infty ) \to [0, \infty
)$ as
\begin{equation}\label{orlicz2}
H(t) = \bigg(\int _0^t \bigg(\frac s{A(s)}\bigg)^{\frac
{1}{n-1}} \ds\bigg)^{\frac {1}{n'}} \quad \text{for $t\geq 0$.}
\end{equation}
Then  $A_n$ is the Young function given  by
\begin{equation}\label{E:A}
A_n(t) =  A(H^{-1}(t)) \quad
\text{for $t\geq 0$.}
\end{equation}
Here, $H^{-1}$ denotes the classical inverse of $H$ if 
\begin{equation}\label{orlicz1bis}
\int ^\infty \bigg(\frac t{A(t)}\bigg)^{\frac {1}{n-1}} \dt = \infty,
\end{equation}
whereas it has to be understood as the generalized left-continuous inverse of $H$ if 
\begin{equation}\label{orlicz1conv}
\int ^\infty \bigg(\frac t{A(t)}\bigg)^{\frac {1}{n-1}} \dt < \infty.
\end{equation}
In the latter case, $H^{-1}(t)$ diverges to infinity as $t$ tends to $(\int _0^\infty (\frac s{A(s)})^{\frac
{1}{n-1}} \ds)^{\frac {1}{n'}}$,
and $A_n(t)$ has to be interpreted as $\infty$ for $t$ larger than this value. In particular, under the current assumption that $|\Omega|<\infty$,  if condition \eqref{orlicz1conv} is in force, then 
\begin{equation}\label{LAn=inf}
L^{A_n}(\Omega) = L^\infty(\Omega),
\end{equation}
up to equivalent norms.

\begin{theorem}{\rm{\bf [Optimal Orlicz target for Orlicz-Sobolev embeddings on open sets] }}\label{orlicz}
Let $\Omega$ be an open set   in $\R^n$  with $|\Omega|<\infty$.   

\vspace{0.1cm}
\par\noindent 
{\bf Part I [Embeddings for  $E^1_0L^A(\Omega)$]}
 The embedding $E^{1}_0L^A(\Omega) \to L^{A_n}(\Omega)$ holds, and  there exists a constant $c=c(n, A, |\Omega|)$ such that 
\begin{equation}\label{orlicz4}
\|\bfu\|_{L^{A_n}(\Omega)} \leq c \|\ep(\bfu)\|_{L^A(\Omega)}
\end{equation}
for every $\bfu \in E^{1}_0L^A(\Omega)$.  Moreover, the target space in inequality \eqref{orlicz4} 
is optimal among all Orlicz spaces. In particular, if $A$ satisfies condition  \eqref{orlicz1}, then the constant $c$ in inequality \eqref{orlicz4} depends only on $n$.
%
%
%E^{1,A}_0(\Omega) \to \begin{cases}
%L^{A_{n}}(\Omega) & \text{if   the integral \eqref{orlicz1bis} diverges,} \\
%L^\infty (\Omega)  & \text{if   the
%integral \eqref{orlicz1bis} converges.}
%\end{cases}
%\end{equation}

\vspace{0.1cm}
\par\noindent 
{\bf Part II [Embeddings for  $E^1L^A(\Omega)$]}
Assume, in addition, that $\Omega$   is an $(\varepsilon , \delta)$-domain. Then  $E^{1}L^A(\Omega) \to L^{A_n}(\Omega)$, and  there
 there exists a constant $c=c(A, \Omega)$ such that 
\begin{equation}\label{orlicz4'}
\|\bfu\|_{L^{A_n}(\Omega)} \leq c \|\bfu\|_{E^1L^A(\Omega)}
\end{equation}
for every $\bfu \in E^{1}L^A(\Omega)$.
Moreover, 
the target space in   inequality \eqref{orlicz4'}
is optimal among all Orlicz spaces.  In particular, if $A$ satisfies condition  \eqref{orlicz1}, then the constant $c$ in inequality  \eqref{orlicz4'} depends only on $\Omega$.
\end{theorem}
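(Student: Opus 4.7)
My plan is to derive both parts of the theorem as a direct consequence of the reduction principle (Theorem \ref{reduction}) combined with the sharp Orlicz-Sobolev embedding results for the full gradient from \cite{Ci1, Cianchi_CPDE}. The architecture of the argument is identical for Parts I and II; only the relevant part of Theorem \ref{reduction} is invoked. As a preliminary reduction, I would dispose of the integrability assumption \eqref{orlicz1}. Since $|\Omega|<\infty$ and two Young functions equivalent near infinity induce equivalent Orlicz function norms (cf.\ \eqref{equivorlicz}), $A$ can be replaced by a Young function satisfying \eqref{orlicz1} without altering $E^1_0L^A(\Omega)$ or $E^1L^A(\Omega)$ up to equivalent norms. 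This ensures that $H$ in \eqref{orlicz2} and hence $A_n$ in \eqref{E:A} are well defined. I would separately handle the borderline case \eqref{orlicz1conv}, in which $L^{A_n}(\Omega)=L^\infty(\Omega)$ up to equivalent norms; here the desired embedding is immediate from Corollary \ref{Linfa}, since $A$ satisfying \eqref{orlicz1conv} is precisely equivalent to condition \eqref{apr40} for $X=L^A$.

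For Part I, I would apply Part I of Theorem \ref{reduction} with $X(\Omega)=L^A(\Omega)$ and $Y(\Omega)=L^{A_n}(\Omega)$, which tells us that inequality \eqref{orlicz4} is equivalent to the full-gradient inequality $\|\bfu\|_{L^{A_n}(\Omega)}\leq c\|\nabla\bfu\|_{L^A(\Omega)}$ for every $\bfu\in W^1_0L^A(\Omega)$, with constants depending on each other and on $n$. The latter inequality, together with the optimality of $L^{A_n}(\Omega)$ as Orlicz target, is exactly the content of the results of \cite{Ci1,Cianchi_CPDE}. To transfer optimality from the full-gradient side to the symmetric-gradient side, suppose that $L^B(\Omega)$ is any Orlicz space with $E^1_0L^A(\Omega)\hookrightarrow L^B(\Omega)$. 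By the equivalence of conditions (i)-(iii) in Theorem \ref{reduction}, the Hardy inequality \eqref{july10} then holds with $Y=L^B$, so $W^1_0L^A(\Omega)\hookrightarrow L^B(\Omega)$, and the optimality on the full-gradient side forces $L^{A_n}(\Omega)\hookrightarrow L^B(\Omega)$. Part II is dispatched identically, invoking Part II of Theorem \ref{reduction} in place of Part I, together with the full-gradient Orlicz-Sobolev theorem on $(\varepsilon,\delta)$-domains; the extension property needed for the full-gradient result on this class of domains was established in \cite{Jo}.

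The refined statement about the dependence of the constants---only on $n$ in Part I and only on $\Omega$ in Part II, when \eqref{orlicz1} holds---will follow from tracking the constant $c_3$ in the Hardy inequality \eqref{july10} for the Orlicz pair $(L^A,L^{A_n})$. Indeed, that constant is essentially the norm of the one-dimensional integral operator $f\mapsto\int_s^{|\Omega|}f(r)r^{-1+1/n}\,\mathrm{d}r$ acting from $L^A(0,|\Omega|)$ into $L^{A_n}(0,|\Omega|)$, whose sharp estimate depending only on $n$ (and, for Part II, on the extension constants of $\Omega$) is exactly what is proved in \cite{Cianchi_CPDE}; plugging this into the quantitative version of Theorem \ref{reduction} yields the asserted dependence. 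I do not anticipate any genuine obstacle here: the symmetric-gradient-specific content has already been absorbed into Theorem \ref{reduction}, and the Orlicz-Sobolev statement is then inherited, both existentially and quantitatively, from the well-established full-gradient theory.
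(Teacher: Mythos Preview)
Your proposal is correct and follows essentially the same route as the paper: both proofs reduce the symmetric-gradient statement to the known full-gradient Orlicz--Sobolev theory via Theorem~\ref{reduction}. The only cosmetic difference is that the paper invokes condition~(iii) of Theorem~\ref{reduction} directly (citing the one-dimensional Hardy inequality \eqref{orlicz5} from \cite{Cianchi_CPDE} and the Orlicz optimality of $L^{A_n}(0,|\Omega|)$ from \cite{Ci_aniso}), whereas you pass through condition~(ii); since (ii) and (iii) are equivalent by Theorem~\ref{reduction} itself, the two arguments are interchangeable, and your tracking of constants and treatment of optimality are both sound.
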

\begin{proof} By Theorem \ref{reduction}, inequalities \eqref{orlicz4} and  \eqref{orlicz4'} are   consequences of  the inequality
\begin{equation}\label{orlicz5}
\bigg\|\int_s^{|\Omega|} f (r)r^{-1+\frac 1{n'}}\dr \bigg\|_{L^{A_n}(0, |\Omega|)} \leq  c\|f\|_{L^A(0, |\Omega|)}
\end{equation}
for some constant $c$ and for every $f \in L^A(0, |\Omega|)$.
%when the  integral \eqref{orlicz1bis} diverges, and of the inequality
%\begin{equation}\label{orlicz6}
%\bigg\|\int_s^{|\Omega|} \phi (r)r^{-1+\frac 1{n'}}\, dr \bigg\|_{L^\infty(0, |\Omega|)} \leq  C\|\phi\|_{L^A(0, |\Omega|)}
%\end{equation}
%when the  integral \eqref{orlicz1bis} converges. 
This inequality follows from \cite[Inequality (2.7)]{Cianchi_CPDE}. An alternative version of inequality \eqref{orlicz5}, with $A_n$ replaced  by an equivalent Young function, is proved in \cite[Lemma 1]{Ci1}.  Hence, via \cite[Lemma 2]{Ci_aniso}, one deduces   that the  space 
$L^{A_n}(0, |\Omega|)$ is optimal in \eqref{orlicz5} among all Orlicz spaces.
 %(the Orlicz space $L^\infty(0, |\Omega|)$ is trivially optimal in \eqref{orlicz6}). 
By  Theorem \ref{reduction} again, this ensures that the target spaces in \textup{(}i\textup{)} and \textup{(}ii\textup{)} are optimal among all Orlicz spaces. 
\end{proof}

\begin{remark}\label{integralform}
{\rm
Inequalities \eqref{orlicz4} and \eqref{orlicz4'} of Theorem \ref{orlicz} can be equivalently formulated in an integral form, provided that the function $A$ satisfies condition \eqref{orlicz1}. These formulations can be of use in view of applications to the analysis of systems of partial differential equations, and read as follows.
\\
Assume that $\Omega$ is as in Part I. Then,  there exists a constant $c=c(n)$ such that
\begin{equation}\label{mar15}
\int_{\Omega}A _n\Bigg(\frac{ |\bfu|}{c\,
\big(\int_{\Omega}A(|\ep(\bfu)|)\dy\big)^{\frac 1{n}}}\Bigg)\dx \leq
\int_{\Omega}A(|\ep(\bfu)|)\dx
\end{equation}
for every $\bfu \in E^1_0L^A(\Omega)$.
\\ Under the assumptions on $\Omega$ of Part II, 
 there exists a constant $c=c(\Omega)$ such that
\begin{equation}\label{mar16}
\int_{\Omega}A _n\Bigg(\frac{ |\bfu|}{c\,
\big(\int_{\Omega}A(|\bfu|)\dy + \int_{\Omega}A(|\ep(\bfu)|)\dy\big)^{\frac 1{n}}}\Bigg)\dx \leq
\int_{\Omega}A(|\bfu|)\dx + \int_{\Omega}A(|\ep(\bfu)|)\dx
\end{equation}
for every $\bfu \in E^1L^A(\Omega)$.
\\ Inequality \eqref{mar15} can be deduced as follows.   Set  $M=\int_{\Omega}A(|\ep(\bfu)|)\dx$. Of course, we may assume that this quantity is finite, otherwise inequality \eqref{mar15} holds trivially. Define the Young function $A_M$ as $A_M(t) = A(t)/M$ for $t \geq 0$, and let $(A_M)_n$ be the function defined as in \eqref{E:A}, with $A$ replaced by $A_M$. One can verify that 
$(A_M)_n(t)= \frac 1M A\big(\frac t{M^{1/n}}\big)_n $ for $t \geq 0$. 
Since, under assumption \eqref{orlicz1}, the constant $c$ in inequality \eqref{orlicz4} is independent of $A$,   this inequality still holds for $\bfu$, with $A$ and $A_n$ replaced by $A_M$ and $(A_M)_n$, respectively. By the very definition of Luxemburg norm, $\|\ep(\bfu)\|_{L^{A_M}(\Omega)} \leq 1$. Therefore, $\|\bfu\|_{L^{(A_M)_n}(\Omega)} \leq c$, and hence  inequality \eqref{mar15} follows from the expression of $(A_M)_n$ and the definition of Luxemburg norm again.
\\ An analogous argument enables one to deduce inequality \eqref{mar16} from \eqref{orlicz4'}.
}
\end{remark}

\begin{example}\label{ex1}{\rm
Let $\Omega$ be an open set in $\rn$ such that $|\Omega|<\infty$. Consider the special family of Orlicz spaces $L^p(\log L)^\alpha(\Omega)$, where either $p = 1$ and $\alpha\geq 0$, or $p > 1$ and $\alpha\in \R$, also called Zygmund spaces.
%
%
%. They are associated with a Young function equivalent to $t^p \log^\alpha (c+t)$ near infinity.
\\ Theorem \ref{orlicz} enables  us to deduce that  
\begin{equation}\label{mar23}
 E^1_0L^p ({\rm log} L)^\alpha (\Omega) \to
\begin{cases}
L^{\frac{np}{n-p}}(\log L)^{\frac{n\alpha}{n-p}}(\Omega)  & \text{if $1 \leq p <n$} \\
 \exp L^{\frac n{n-1-\alpha}} (\Omega)& \text{if $p= n$ and $\alpha < n-1$} \\
 \exp \exp L^{\frac n{n-1}} (\Omega)& \text{if $p= n$ and $\alpha = n-1$}
\\
L^{\infty} (\Omega)& \text{if either $p= n$ and $\alpha >
n-1$, or $p> n$} \,,
\end{cases}
\end{equation}
all the target spaces being optimal in the class of Orlicz spaces. 
\\ The same embeddings hold, with   $E^1_0L^p ({\rm log} L)^\alpha (\Omega)$ replaced by $ E^1L^p ({\rm log} L)^\alpha (\Omega)$, provided that $\Omega$ is an $(\varepsilon, \delta)$-domain.}
 \end{example}

  \begin{example}
\label{ex2}{\rm
Let $\Omega$ be an open set in $\rn$ such that $|\Omega|<\infty$.
Then, one can infer from  Theorem \ref{orlicz} that
\begin{equation*}
E^1_0L^p (\log \log  L)^\alpha (\Omega) \to
\begin{cases}
L^{\frac{np}{n-p}}(\log \,\log L)^{\frac{n\alpha}{n-p}} (\Omega) & \text{if $1 \leq p <n$}   \\
  \exp \big((\log L)^{\frac{\alpha}{n-1}}\big)(\Omega)
  & \text{if $p= n$}
\\
L^{\infty} (\Omega)& \text{if  $p> n$} \,.
\end{cases}
\end{equation*}
 Moreover, the target spaces are optimal among all Orlicz spaces. Analogous conclusions hold when  $\Omega$ is an $(\varepsilon, \delta)$-domain and $E^1_0L^p ({\log\log} L)^\alpha (\Omega)$ is replaced by $ E^1L^p ({\log\log} L)^\alpha (\Omega)$.}
 \end{example}

Assume that the Young function $A$ fulfills condition  \eqref{orlicz1bis}. 
Although the target space in embeddings \eqref{orlicz4} and \eqref{orlicz4'} of Theorem \ref{orlicz} is
optimal in the family of Orlicz spaces,   it can still be improved if the
class of admissible targets is enlarged to include all
rearrangement-invariant spaces. The
optimal rearrangement-invariant target space is an Orlicz-Lorentz space $L(\widehat A, n)(\Omega)$, whose norm is defined as follows. Let $a$ be the function appearing in \eqref{young}, and let $\widehat A$ be the function 
defined  by
\begin{equation}\label{G}
\widehat A(t) = \int _0^t \widehat a (\tau) \,\mathrm{d}\tau\quad \text{for $t\geq 0$},
\end{equation}
where 
where $\widehat a$ is the non-decreasing, left-continuous function in $[0,
\infty )$ obeying
\begin{equation}\label{G1}
 \widehat a^{-1}(t) =  \bigg(\int _{a^{-1}(t)}^{\infty}\bigg(\int _0^s \bigg(\frac{1}{a(r)}\bigg)^{\frac{1}{n-1}}
 \dr\bigg)^{-n}\frac{\ds}{a(s)^{\frac{n}{n-1}}}\bigg)^{\frac{1}{1-n}}\quad{\rm for}\,\,\,
 t> 0.
\end{equation}
Then, according to \eqref{016}, $L(\widehat A, n)(\Omega)$ denotes the space associated with the rearrangement-invariant function norm given by
\begin{equation}\label{016mar}
\|f\|_{L(\widehat  A, n)(\Omega)} = \big\|s^{-\frac 1n} f^* (s)\big\|_{L^{\widehat A}(0, |\Omega|)}
\end{equation}
for $f \in \Mpl (0, |\Omega|)$. In view of Theorem \ref{reduction}, $L(\widehat A, n)(\Omega)$  is the same optimal rearrangement-invariant target space, identified in \cite{Ci3},   for embeddings of the spaces $W^1_0L^A(\Omega)$ and $W^1L^A(\Omega)$.

\begin{theorem}{\rm{\bf [Optimal rearrangement-invariant target for Orlicz-Sobolev embeddings on open sets]}}\label{orliczri}
Let $\Omega$ be an open set   in $\R^n$ with $|\Omega|<\infty$.   Assume that $A$ is a Young
function satisfying condition   \eqref{orlicz1bis}.

\vspace{0.1cm}
\par\noindent 
{\bf Part I [Embeddings for  $E^1_0L^A(\Omega)$]} The embedding $E^{1}_0L^A(\Omega) \to L(\widehat A, n)(\Omega)$ holds, and  there exists a constant $c=c(n, A, |\Omega|)$ such that
\begin{equation}\label{orliczri1}
\|\bfu\|_{L(\widehat A, n)(\Omega)} \leq c \|\ep(\bfu)\|_{L^A(\Omega)}
\end{equation}
for every $\bfu \in E^{1}_0L^A(\Omega)$.
The target space $L(\widehat A, n)(\Omega)$ in inequality \eqref{orliczri1}  
is optimal among all rearrangement-invariant spaces. 
In particular, if $A$ satisfies condition  \eqref{orlicz1}, then the constant $c$ in inequality \eqref{orliczri1} depends only on $n$.

\vspace{0.1cm}
\par\noindent 
{\bf Part II [Embeddings for  $E^1L^A(\Omega)$]} Assume, in addition, that $\Omega$   is an $(\varepsilon , \delta)$-domain. Then $E^{1}L^A(\Omega) \to L(\widehat A, n)(\Omega)$, and
 there exists a constant $c=c(A, \Omega)$ such that 
\begin{equation}\label{orliczri4'}
\|\bfu\|_{L(\widehat A, n)(\Omega)} \leq c \|\bfu\|_{E^1L^A(\Omega)}
\end{equation}
for every $\bfu \in E^{1}L^A(\Omega)$.
The target space $L(\widehat A, n)(\Omega)$ in inequality \eqref{orliczri4'}
is optimal among all rearrangement-invariant spaces. 
In particular, if $A$ satisfies condition  \eqref{orlicz1}, then the constant $c$  in inequality \eqref{orliczri4'} depends only on $\Omega$.
\end{theorem}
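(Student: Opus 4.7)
The approach is to reduce everything to the one-dimensional Hardy type inequality \eqref{july10} by invoking the reduction principle, Theorem \ref{reduction}, with the special choice $X(\Omega)=L^A(\Omega)$. By Part I of that theorem, the embedding $E^1_0L^A(\Omega) \to Y(\Omega)$ holds with some rearrangement-invariant target $Y(\Omega)$ if and only if
\begin{equation}\label{plan:hardy}
\bigg\|\int_s^{|\Omega|} f(r)\, r^{-1+\frac{1}{n}}\dr \bigg\|_{Y(0,|\Omega|)} \leq c\,\|f\|_{L^A(0,|\Omega|)}
\end{equation}
for every non-increasing $f:(0,|\Omega|)\to [0,\infty)$, with the constants related by a dependence only on $n$. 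Under the hypothesis \eqref{orlicz1bis}, the Hardy inequality \eqref{plan:hardy} has been analyzed in \cite{Ci3}, where it is shown that \eqref{plan:hardy} holds with $Y(\Omega)=L(\widehat A,n)(\Omega)$, the space built upon the function norm \eqref{016mar} with $\widehat A$ defined via \eqref{G}--\eqref{G1}, and furthermore that this target is the smallest possible within the class of rearrangement-invariant spaces. Thus, combining this result with Theorem \ref{reduction}, Part I, immediately yields Part I of the statement, namely inequality \eqref{orliczri1}.

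For the sharpness claim in Part I, I would argue by contraposition. Suppose $E^1_0L^A(\Omega) \to Y(\Omega)$ for some rearrangement-invariant space $Y(\Omega)$; then Theorem \ref{reduction}, Part I, forces \eqref{plan:hardy} with this same $Y$, and the optimality result from \cite{Ci3} produces the embedding $L(\widehat A,n)(\Omega) \to Y(\Omega)$, as desired. The dependence of the constant $c$ only on $n$ (when the stronger condition \eqref{orlicz1} is assumed) mirrors the dependence in the Hardy inequality \eqref{plan:hardy}, which is scale-invariant in $A$: starting from an arbitrary $A$ satisfying \eqref{orlicz1}, one can apply the argument used in Remark \ref{integralform} (replacing $A$ by $A_M$ for a suitable normalization constant $M$) to reduce to the normalized form.

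Part II is handled by the parallel procedure, now invoking Part II of Theorem \ref{reduction} under the additional $(\varepsilon,\delta)$-assumption on $\Omega$. The characterization \eqref{july10} is exactly the same one-dimensional Hardy inequality, so the target space produced by \cite{Ci3} is again $L(\widehat A,n)(\Omega)$. The dependence of the constant $c$ on $\Omega$ (rather than on $n$ alone) comes from Theorem \ref{reduction}, Part II, where the extension and approximation results of Sections \ref{smoothapprox}--\ref{extension} enter; these encode the geometry of $\Omega$ and are responsible for the domain-dependence of the constant. Optimality among rearrangement-invariant spaces follows by the same contrapositive argument as in Part I.

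The hard part is conceptual rather than computational: it lies in recognizing that the reduction principle of Theorem \ref{reduction} makes the passage from symmetric gradients to full gradients free of charge, so that the entire optimal target analysis can be imported verbatim from the full-gradient Orlicz-Sobolev theory of \cite{Ci3}. Once this identification is in place, there is nothing further to prove: the Orlicz-Lorentz function norm $\|\cdot\|_{L(\widehat A, n)}$ defined by \eqref{016mar}--\eqref{G1} is, by construction, the associate of the norm that makes the Hardy operator $f \mapsto \int_s^{|\Omega|} f(r) r^{-1+1/n}\dr$ bounded from $L^A$ with optimal target, and the proof is complete.
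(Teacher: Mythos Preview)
Your proposal is correct and follows essentially the same route as the paper: invoke Theorem \ref{reduction} to reduce both Part I and Part II to the one-dimensional Hardy inequality \eqref{july10} with $X=L^A$, and then cite \cite{Ci3} for the fact that this inequality holds with $Y=L(\widehat A,n)$ and that this target is optimal among all rearrangement-invariant spaces. Your contrapositive argument for optimality and your remarks on the constant dependence are reasonable elaborations, but they do not constitute a different approach.
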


\begin{proof} Owing to Theorem \ref{reduction}, the proof of inequalities \eqref{orliczri1} and \eqref{orliczri4'}  is reduced to showing that   the inequality
\begin{equation}\label{orliczri2}
\bigg\|\int_s^{|\Omega|} f (r)r^{-1+\frac 1{n'}}\dr \bigg\|_{L(\widehat A, n)(0, |\Omega|)} \leq  C\|f\|_{L^A(0, |\Omega|)}
\end{equation}
holds for every   $f\in L^A(0, |\Omega|)$, and that  the space 
$L(\widehat A, n)(0, |\Omega|)$ is the optimal target in this inequality among all rearrangement-invariant spaces. 
These facts follow from  \cite[Inequality (3.1) and Proof of Theorem 1.1]{Ci3}.
\end{proof}

\begin{example}\label {ex3} {\rm
Let  $\Omega$ be an open set in $\rn$ with $|\Omega|<\infty$,  and let $p$ and $\alpha$  be as in
Example \ref{ex1}. Then Theorem \ref{orliczri} enables one to show that
\begin{equation}\label{mar23bis}
 E^1_0L^p ({\rm log} L)^\alpha (\Omega) \to
\begin{cases}
L^{\frac{np}{n-p}, p; \frac \alpha p} (\Omega)  & \text{if $1 \leq p < n$}, \\
 L^{\infty, n;-1+\frac{\alpha}{n}} (\Omega)& \text{if $p= n$ and $\alpha < n-1$}, \\
 L^{\infty,n;-\frac{1}{n},-1} (\Omega)& \text{if $p= n$ and $\alpha = n-1$},
\end{cases}
\end{equation}
up to equivalent norms, and that the target spaces are optimal
among all rearrangement-invariant spaces. Notice that the target spaces in embeddings \eqref{mar23bis} are of Lorentz-Zygmund type, whereas the target space in inequality \eqref{orliczri1} is an Orlicz-Lorentz space. The equivalence of the norms in  these two families of spaces, for the specific values of the parameters $p, \alpha, n$ appearing in equation \eqref{mar23bis}, follows via variants of the arguments of \cite[Lemma 6.2, Chapter 4]{BS}.
\\ If, in addition, $\Omega$   is  an $(\varepsilon, \delta)$-domain, then embedding \eqref{mar23bis} also holds with $E^1_0L^p ({\rm log} L)^\alpha (\Omega)$  replaced by $E^1L^p ({\rm log} L)^\alpha (\Omega)$. }
\end{example}

The next couple of theorems is devoted to optimal target spaces for Sobolev embeddings on the whole of $\rn$. The former deals with the space $E^{1}_0L^A(\rn)$, the latter with $E^{1}L^A(\rn)$. Both results provide optimal targets in the class of Orlicz spaces and in that  of rearrangement-invariant spaces.

\begin{theorem}{\rm{\bf [Optimal Orlicz-Sobolev embeddings  on $\rn$, case of $E^{1}_0L^A(\rn)$] }}\label{orliczrn} 
Assume that $A$ is a Young function satisfying condition \eqref{orlicz1}.

\vspace{0.1cm}
\par\noindent 
 {\bf Part I [Optimal Orlicz target space]}
The embedding $E^{1}_0L^A(\rn) \to L^{A_n}(\rn)$ holds, and there exists a constant $c=c(n)$ such that 
\begin{equation}\label{orliczrn0}
\|\bfu\|_{L^{A_n}(\rn)} \leq c \|\ep(\bfu)\|_{L^A(\rn)}
\end{equation}
for every $\bfu \in E^{1}_0L^A(\rn)$. The target space in \eqref{orliczrn0}  
is optimal among all Orlicz spaces.

\vspace{0.1cm}
\par\noindent 
 {\bf Part II [Optimal rearrangement-invariant target space]}
\\ {\rm (i)}  Assume that $A$ satisfies condition \eqref{orlicz1bis}.  Then $E^{1}_0L^A(\rn) \to L(\widehat A, n)(\rn)$, and there exists a constant $c=c(n)$ such that  
%
%E^{1,A}_0(\Omega) \to \begin{cases}
%L^{A_{n}}(\Omega) & \text{if   the integral \eqref{orlicz1bis} diverges,} \\
%L^\infty (\Omega)  & \text{if   the
%integral \eqref{orlicz1bis} converges.}
%\end{cases}
%\end{equation}
\begin{equation}\label{orliczrn1}
\|\bfu\|_{L(\widehat A, n)(\rn)} \leq c \|\ep(\bfu)\|_{L^A(\rn)}
\end{equation}
for every $\bfu \in E^{1}_0L^A(\rn)$. The target space in \eqref{orliczrn1}  
is optimal among all rearrangement-invariant spaces. 
\\ {\rm (ii)}
Assume that $A$ satisfies condition \eqref{orlicz1conv}. Let $B$
be a Young function such that
\begin{equation}\label{mar32}
B(t) \,\, \textup{is equivalent to} \,\, \begin{cases} \widehat A(t) &\textup{near}\ 0
\\  \infty &\textup{near infinity},
\end{cases}
\end{equation}
and let $\nu : (0, \infty)\to (0, \infty)$ be the function defined as
\begin{equation}\label{mar33}
\nu(s)= \min\{s^{-\frac 1n}, 1\} \quad \text{for $s >0$.}
%
%\begin{cases} s^{-\frac 1n} &\textup{if}\;\; s\in (0, 1)
%\\
%1 &\textup{if}\;\; s\in [1, \infty).
%\end{cases}
\end{equation}
Then 
 $E^{1}_0L^A(\rn) \to L^B(\nu)(\rn)$, and there exists a constant $c=c(n,B)$ such that  
%
%E^{1,A}_0(\Omega) \to \begin{cases}
%L^{A_{n}}(\Omega) & \text{if   the integral \eqref{orlicz1bis} diverges,} \\
%L^\infty (\Omega)  & \text{if   the
%integral \eqref{orlicz1bis} converges.}
%\end{cases}
%\end{equation}
\begin{equation}\label{orliczrn1conv}
\|\bfu\|_{L^B(\nu)(\rn)} \leq c \|\ep(\bfu)\|_{L^A(\rn)}
\end{equation}
for every $\bfu \in E^{1}_0L^A(\rn)$. Moreover, $L^B(\nu)(\rn)=L(\widehat A, n)(\rn) \cap L^\infty(\rn)$ (up to equivalent norms), and it
is the optimal target space  in \eqref{orliczrn1conv}  
among all rearrangement-invariant spaces. 
\end{theorem}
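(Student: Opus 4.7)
The three claims all concern embeddings of $E^1_0L^A(\mathbb{R}^n)$ into a rearrangement-invariant (or Orlicz) target space $Y(\mathbb{R}^n)$. Theorem \ref{reductionrn} Part I tells us that any such embedding is equivalent to the one-dimensional Hardy inequality \eqref{feb26} with $X(0,\infty)=L^A(0,\infty)$, and hence is also equivalent to the full-gradient embedding $W^1_0L^A(\mathbb{R}^n)\to Y(\mathbb{R}^n)$. My plan is therefore to transfer the question to the level of \eqref{feb26} and apply the (already available) Orlicz/rearrangement-invariant analysis of Hardy inequalities. In particular, the optimal Orlicz target and the optimal rearrangement-invariant target in \eqref{feb26} with $X=L^A$ are, respectively, $L^{A_n}$ and the Orlicz-Lorentz space $L(\widehat A,n)$ (or its suitable modification under \eqref{orlicz1conv}). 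Because the constant $c_1$ in Theorem \ref{reductionrn} depends only on the constant $c_3$ of \eqref{feb26} and on $n$, the dependence of the constants in the three inequalities will be precisely as stated.

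\textbf{Part I.} I will quote the Hardy inequality
\[
\bigg\|\int_s^{\infty} f(r)\,r^{-1+\frac{1}{n}}\dr\bigg\|_{L^{A_n}(0,\infty)} \leq c\,\|f\|_{L^{A}(0,\infty)},
\]
valid for every $f\in L^A(0,\infty)$ under \eqref{orlicz1}, with constant depending only on $n$; this is (a global version of) the inequality of \cite[Lemma 1]{Ci1}, see also \cite[Theorem 4.1]{Cianchi_CPDE}. Inserting it into Theorem \ref{reductionrn} yields \eqref{orliczrn0}. Optimality inside the Orlicz class follows from the argument of \cite[Lemma 2]{Ci_aniso}: any Young function $B$ for which the Hardy inequality holds must majorize $A_n$ near infinity, and via Theorem \ref{reductionrn} this passes to a necessary condition on an Orlicz target in \eqref{orliczrn0}.

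\textbf{Part II(i).} Under \eqref{orlicz1bis}, the function $\widehat{A}$ defined by \eqref{G}-\eqref{G1} is well defined as a Young function, and the Orlicz-Lorentz Hardy inequality of \cite[Inequality (3.1)]{Ci3},
\[
\bigg\|\int_s^{\infty} f(r)\,r^{-1+\frac{1}{n}}\dr\bigg\|_{L(\widehat{A},n)(0,\infty)} \leq c\,\|f\|_{L^{A}(0,\infty)},
\]
holds with $c=c(n)$. Moreover, the same reference shows that $\|\cdot\|_{L(\widehat A,n)(0,\infty)}$ is the optimal rearrangement-invariant target norm in \eqref{feb26} with $X=L^A$. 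Plugging both facts into the reduction principle Theorem \ref{reductionrn} gives \eqref{orliczrn1} and its optimality.

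\textbf{Part II(ii).} When \eqref{orlicz1conv} holds, the fundamental function of $L^A$ satisfies condition \eqref{jan76} locally but functions in $E^1_0L^A(\mathbb{R}^n)$ are, in addition, globally bounded, as one sees by combining the local estimate from Part II(i) with the $L^{n,1}$-type estimate underlying Corollary \ref{Linfa} (here \eqref{orlicz1conv} is exactly the Orlicz version of \eqref{apr40}). The plan is to:
\begin{enumerate}
\item[(a)] use Theorem \ref{optrn} Part II to identify the optimal r.i.\ target as $((L^A_r)_1)_e(\mathbb{R}^n)\cap L^A(\mathbb{R}^n)$, which behaves locally like $L(\widehat A,n)$ and globally includes an $L^\infty$ constraint coming from \eqref{orlicz1conv};
\item[(b)] translate this intersection into the weighted-Orlicz description $L^B(\nu)(\mathbb{R}^n)$, by noting that the Luxemburg norm \eqref{feb85} with $A$ replaced by $B$ (which equals $\widehat A$ near $0$ and $\infty$ near infinity) and weight $\nu(s)=\min\{s^{-1/n},1\}$ splits into the part $s<1$ (producing an $L(\widehat A,n)$ contribution on $\mathbb{R}^n$) and the part $s\geq 1$ (producing an essentially $L^\infty$ bound, because $B=\infty$ off a bounded interval forces $f^*$ to be bounded);
\item[(c)] deduce \eqref{orliczrn1conv} by summing the two Hardy inequalities --- the one for the $L(\widehat A,n)$ part in Part II(i), and the trivial bound $\bigl\|\chi_{(1,\infty)}\int_s^\infty f(r)r^{-1+1/n}\dr\bigr\|_{L^\infty}\leq c\|f\|_{L^A}$ from \eqref{orlicz1conv}.
\end{enumerate}
Optimality then follows again by invoking Theorem \ref{optrn} Part II, since the target described in (a) is optimal in the class of rearrangement-invariant spaces.

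\textbf{Main obstacle.} The bookkeeping of Part II(ii) is the delicate piece: one has to check that the modification \eqref{mar32}-\eqref{mar33} of the Orlicz-Lorentz weight indeed reproduces, up to equivalence of norms, the intersection $L(\widehat A,n)(\mathbb{R}^n)\cap L^\infty(\mathbb{R}^n)$, and that this equivalence is compatible with the optimality statement coming from Theorem \ref{optrn}. The constants in the Hardy inequality of \cite[Inequality (3.1)]{Ci3} also have to be tracked carefully, since they enter the final constant $c=c(n,B)$ in \eqref{orliczrn1conv}. Once this identification is in hand, Parts I and II(i) are essentially immediate consequences of the reduction principle and the one-dimensional results established earlier in the paper and in \cite{Ci1,Ci3,Cianchi_CPDE}.
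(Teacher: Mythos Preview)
Your treatment of Parts I and II(i) is essentially the paper's argument: reduce to the one-dimensional Hardy inequality \eqref{feb26} via Theorem~\ref{reductionrn}, Part I, and then quote the relevant inequalities (and their optimality) from \cite{Ci1,Cianchi_CPDE,Ci3}.

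For Part II(ii), however, there is a genuine error in step~(a). You invoke Theorem~\ref{optrn}, Part II, to identify the optimal target as $((L^A_r)_1)_e(\mathbb{R}^n)\cap L^A(\mathbb{R}^n)$, but Part II of that theorem concerns $E^{1}X(\mathbb{R}^n)$, not $E^{1}_0X(\mathbb{R}^n)$. The optimal rearrangement-invariant target for $E^{1}_0L^A(\mathbb{R}^n)$ is the space $X_1(\mathbb{R}^n)$ of Part I, built via \eqref{Z} with $|\Omega|=\infty$; the two optimal spaces $X_1(\mathbb{R}^n)$ and $X_{1,\mathbb{R}^n}(\mathbb{R}^n)$ are different in general (compare Examples~\ref{ex4} and~\ref{ex5}). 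Consequently the identification you propose in (b) does not describe the correct object, and the optimality claim you draw from (a) is unsupported. Also, the bound in (c) that you call ``trivial'' is not: finiteness of $\bigl\|r^{-1/n'}\chi_{(1,\infty)}\bigr\|_{L^{\widetilde A}}$ requires precisely condition~\eqref{orlicz1} (via the equivalence $\int_0(t/A(t))^{1/(n-1)}dt<\infty \Leftrightarrow \int_0 \widetilde A(t)t^{-1-n'}dt<\infty$), so it is not a consequence of \eqref{orlicz1conv} alone.

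The paper avoids all of this: for Part II it simply observes that inequalities \eqref{orliczrn1} and \eqref{orliczrn1conv}, together with the optimality of their targets and the norm equivalence $L^B(\nu)(\mathbb{R}^n)=L(\widehat A,n)(\mathbb{R}^n)\cap L^\infty(\mathbb{R}^n)$, are already established for the full gradient in \cite[Theorem~1.1 and Proposition~2.1]{Ci3}; one application of Theorem~\ref{reductionrn}, Part I, transfers everything to the symmetric-gradient side. Even if you repair the citation in (a) to Part I, steps (b)--(c) would amount to re-proving those results of \cite{Ci3}; it is cleaner to quote them and apply the reduction principle once.
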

 
\begin{proof} \emph{Part} I. As a consequence of Theorem \ref{reductionrn}, Part II, inequality \eqref{orliczrn0} is equivalent to the inequality
\begin{equation}\label{mar10}
\bigg\|\int_s^{\infty} f (r)r^{-1+\frac 1{n'}}\dr \bigg\|_{L^{A_n}(0, \infty)} \leq  C\|f\|_{L^A(0, \infty)}
\end{equation}
for every non-increasing function $f: (0, \infty) \to [0, \infty)$. Inequality \eqref{mar10} and the optimality of the Orlicz target space $L^{A_n}(0, \infty)$ follow from the results  mentioned  with regard to inequality \eqref{orlicz5} in the proof of Theorem \ref{orlicz}. The fact that $L^{A_n}(0, \infty)$ is the optimal Orlicz target space in  \eqref{mar10} implies that $L^{A_n}(\rn)$ is the optimal target space in \eqref{orliczrn1}.
\\ \emph{Part} II. Inequalities \eqref{orliczrn1} and \eqref{orliczrn1conv}, and the optimality of the target spaces, are established, with $\ep(\bfu)$ replaced by $\nabla \bfu$, in \cite[Theorem 1.1]{Ci3}. Note that the assertion about the latter inequality also relies upon \cite[Proposition 2.1]{Ci3}.  Owing to Theorem \ref{reductionrn}, Part I, 
the same conclusions continue to hold for inequalities \eqref{orliczrn1} and \eqref{orliczrn1conv} in the present form, namely with $\ep(\bfu)$ on the right-hand side.
\end{proof}

\begin{example}\label{ex4}{\rm
Consider a Young function $A$ such that
\begin{equation}\label{dec250}
A(t) \,\, \text{is equivalent to} \,\, \begin{cases} t^{p_0} (\log \frac 1t)^{\alpha_0} & \quad \text{near zero}
\\
t^p  (\log t)^\alpha & \quad \text{near infinity,}
\end{cases}
\end{equation}
where either $p_0>1$ and $\alpha_0 \in \R$, or $p_0=1$ and $\alpha_0 \leq 0$, and either $p>1$ and $\alpha \in \R$ or $p=1$ and $\alpha \geq 0$.  
\\ The function $A$ satisfies assumption \eqref{orlicz1} if
%\begin{equation}\label{dec251}
%\text{either $1\leq p< n$ and $\alpha$ is as above, or $p=n$ and $\alpha \leq n -1$.}
%\end{equation}
%and satisfies assumption \eqref{E:0''} if
\begin{equation}\label{dec252}
\text{either $1\leq p_0<n$ and $\alpha_0$ is as above, or $p_0=n$ and $\alpha_0 > n -1$.}
\end{equation}
%%
%
%
%
%Let $A$ be a Young function as in \eqref{dec250}. Assume that the parameters $p$, $p_0$, $\alpha$ and $\alpha_0$ fulfill conditions \eqref{dec251} and \eqref{dec252}.
 Theorem \ref{orliczrn}, Part I, then tells us that  
\begin{equation}\label{mar27}
E^1_0L^A(\rn) \to L^{A_n}(\rn),
\end{equation}
 where
\begin{equation}\label{mar26}
A_{n}(t) \,\, \text{is equivalent to} \,\,  \begin{cases} t^{\frac {n{p_0}}{n-{p_0}}} (\log \frac 1t)^{\frac {n\alpha_0}{n-{p_0}}} & \quad \text{ if $1\leq {p_0}< n$ }
\\
e^{-t^{-\frac{n}{\alpha_0 +1-n}}} & \quad  \text{if ${p_0}=n$ and $\alpha_0 > n -1$}
\end{cases} \quad \quad \text{near zero,}
\end{equation}
and
\begin{equation}\label{dec256}
A_{n}(t) \,\, \text{is equivalent to} \,\, \begin{cases} t^{\frac {np}{n-p}} (\log t)^{\frac {n\alpha}{n-p}} & \quad  \text{ if $1\leq p< n$ }
\\
e^{t^{\frac{n}{n-\alpha -1}}}&  \quad \text{if  $p=n$ and $\alpha < n-1$}
\\
e^{e^{t^{\frac n{n-1}}}} &  \quad \text{if  $p=n$ and $\alpha = n -1$}
\\ \infty &  \quad \text{otherwise}
\end{cases} \quad \quad \text{near infinity.}
\end{equation}
Moreover, the target space in  inequality \eqref{mar27} is optimal among all Orlicz spaces.
\\ 
Next, one can verify that
\begin{equation}\label{dec253}
\widehat A(t)\,\, \text{is equivalent to} \,\,  \begin{cases} t^{p_0}(\log \frac 1t)^{\alpha_0} & \quad \text{ if $1\leq {p_0}< n$ }
\\
t^{n}  (\log \frac 1t)^{\alpha_0 -n} & \quad  \text{if ${p_0}=n$ and $\alpha_0> n -1$}
\end{cases} \quad\quad  \text{near zero,}
\end{equation}
and
\begin{equation}\label{dec254}
\widehat A(t) \,\, \text{is equivalent to} \,\,  \begin{cases} t^p (\log t)^\alpha & \quad  \text{ if $1\leq p< n$ }
\\
t^{n}  (\log t)^{\alpha - n} &  \quad \text{if  $p=n$ and $\alpha <n -1$}
\\
t^{n}  (\log t)^{-1} (\log (\log t))^{-n}&  \quad \text{if  $p=n$ and $\alpha = n -1$}
\end{cases} \quad\quad \text{near infinity.}
\end{equation}
By Theorem \ref{orliczrn}, Part II one thus has that, if either $1\leq p< n$, or $p=n$ and $\alpha \leq n -1$,  then 
\begin{equation}\label{mar30} 
E^{1}_0L^A(\rn) \to L(\widehat A, n)(\rn),
\end{equation}
where $\widehat A$ obeys  \eqref{dec253} and \eqref{dec254}, whereas if either $p>n$,  or $p=n$ and $\alpha > n -1$, then 
\begin{equation}\label{mar31}  E^{1}_0L^A(\rn) \to L^B(\nu)(\rn),
\end{equation}
where 
$$\nu(s)= \min\{s^{-\frac 1n}, 1\} \quad \text{for $s >0$,}$$
and 
\begin{equation*}
B(t)\,\, \text{is equivalent to} \,\,  \begin{cases} t^{p_0}(\log \frac 1t)^{\alpha_0} & \quad \text{ if $1\leq {p_0}< n$ }
\\
t^{n}  (\log \frac 1t)^{\alpha_0 -n} & \quad  \text{if ${p_0}=n$ and $\alpha_0> n -1$}
\end{cases} \quad \text{near zero,}
\end{equation*}
and 
\begin{equation*}
B(t) \,\, \text{is equivalent to} \,\, 
\infty \quad \text{near infinity}\,.
\end{equation*}
Moreover, the target space is optimal among all rearrangement-invariant spaces both in \eqref{mar30} and in \eqref{mar31}.
%In particular, the   choices ${p_0}=p<\tfrac ns$ and $\alpha_0 =\alpha=0$ yield $\widehat A(t) =t^p$, and inequalities \eqref{E:3bis} and \eqref{E:3} recover (apart from the specific form of the constant involved) \cite[Inequality (3)]{MaSh1}.
}
\end{example}

\begin{theorem}{\rm{\bf [Optimal Orlicz-Sobolev embeddings  on $\rn$, case of $E^{1}L^A(\rn)$ ] }}\label{orliczrnfull}
Let  $A$ be any Young function.

\vspace{0.1cm}
\par\noindent 
 {\bf Part I [Optimal Orlicz target space]}
Let $\overline {A}_n$
be a Young function such that
\begin{equation}\label{mar35}
\overline {A}_n(t)  \,\, \textup{is equivalent to} \,\, \begin{cases} A(t) &\textup{near}\ 0
\\
A_{n}(t) &\textup{near infinity}\,.
\end{cases}
\end{equation}
 Then $E^{1}L^A(\rn) \to L^{\overline {A}_n}(\rn)$, and there exists a constant $c=c(n, A)$ such that 
\begin{equation}\label{orliczrn2}
\|\bfu\|_{L^{\overline {A}_n}(\rn)} \leq c \|\bfu\|_{E^1L^A(\rn)}
\end{equation}
for every $\bfu \in E^{1}L^A(\rn)$. The target space in \eqref{orliczrn2}  
is optimal among all Orlicz spaces. 
\\ In particular, if the function $A$ satisfies condition \eqref{orlicz1conv}, then $A_{n}(t)= \infty$ near infinity, and the target space in \eqref{orliczrn2}  
is also optimal among all rearrangement-invariant spaces.

\vspace{0.1cm}
\par\noindent 
 {\bf Part II [Optimal rearrangement-invariant target space]} Assume that $A$ is a Young function satisfying condition \eqref{orlicz1bis}.
 Let $D$
be a Young function such that
\begin{equation}\label{mar36}
D(t) \,\, \textup{is equivalent to} \,\, \begin{cases} A(t) &\textup{near}\ 0
\\  \widehat  A(t) &\textup{near infinity}\,
\end{cases}
\end{equation}
and let $\varpi: (0, \infty)\to (0, \infty)$ be the function defined as
\begin{equation}\label{mar37}
\varpi(s)= \max\{s^{-\frac 1n}, 1\} \quad \text{for $s >0$.}
%
%\begin{cases} s^{-\frac 1n} &\textup{if}\;\; s\in (0, 1)
%\\
%1 &\textup{if}\;\; s\in [1, \infty).
%\end{cases}
\end{equation}
Then $E^{1}L^A(\rn) \to\Lambda ^D(\varpi)(\rn)$, and there exists a constant $c=c(n,D)$ such that
\begin{equation}\label{orliczrn3}
\|\bfu\|_{\Lambda ^D(\varpi)(\rn)} \leq c \|\bfu\|_{E^1L^A(\rn)}
\end{equation}
for every $\bfu \in E^{1}L^A(\rn)$.
The target space in \eqref{orliczrn3} is  optimal among all
rearrangement-invariant spaces.
\end{theorem}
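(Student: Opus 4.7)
My plan is to deduce both parts from Theorem~\ref{optrn} Part II, which provides the optimal rearrangement-invariant target for $E^1L^A(\rn)$, namely
\begin{equation*}
X_{1, \rn}(\rn) = ((L^A_r)_1)_e(\rn) \cap L^A(\rn),
\end{equation*}
combined with the identification of the restricted optimal target $(L^A_r)_1$ on a bounded reference interval via the Orlicz-Sobolev results already proved in Section~\ref{secorlicz}. The first step is to apply Theorem~\ref{orlicz} with $\Omega=(0,1)$ (equivalently, to the one-dimensional inequality \eqref{orlicz5}) to conclude that $(L^A_r)_1(0,1) \hookrightarrow L^{A_n}(0,1)$, with $L^{A_n}$ optimal among Orlicz spaces on $(0,1)$. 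Under the additional assumption \eqref{orlicz1bis} of Part II, Theorem~\ref{orliczri} sharpens this to $(L^A_r)_1(0,1) \approx L(\widehat A, n)(0,1)$, with function norm $\|s^{-1/n}f^*(s)\|_{L^{\widehat A}(0,1)}$, optimal among all rearrangement-invariant norms.

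For Part II I would then verify the equivalence
\begin{equation*}
\|\bfu\|_{X_{1,\rn}(\rn)} = \|\bfu^*\|_{(L^A_r)_1(0,1)} + \|\bfu\|_{L^A(\rn)} \approx \|\varpi(s)\bfu^*(s)\|_{L^D(0,\infty)}.
\end{equation*}
The first equality is just the definition \eqref{may15} of $X_{1,\rn}$ together with that of $((L^A_r)_1)_e(\rn)$ recalled in Subsection~\ref{ri}; replacing $(L^A_r)_1(0,1)$ by its explicit description then gives the local term $\|s^{-1/n}\bfu^*(s)\|_{L^{\widehat A}(0,1)}$. The second equivalence is obtained by splitting the Luxemburg integral defining $\|\varpi \bfu^*\|_{L^D}$ at $s=1$: on $(0,1)$ one has $\varpi(s)=s^{-1/n}$, so $\varpi(s)\bfu^*(s)$ lies in the regime where $D(t)\approx \widehat A(t)$ (large $t$), and the piece matches the Orlicz--Lorentz norm $\|s^{-1/n}\bfu^*(s)\|_{L^{\widehat A}(0,1)}$; on $(1,\infty)$ one has $\varpi(s)=1$ and $\bfu^*(s)$ is small, so $D(\bfu^*(s))\approx A(\bfu^*(s))$ and the piece matches $\|\bfu^*\|_{L^A(1,\infty)}$, which is in turn equivalent to the $L^A$-tail contribution already present on the left. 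Replacements of $A$ and $\widehat A$ by Young functions equivalent to them near $0$ or near infinity, as permitted by \eqref{equivorlicz}, absorb all constants. The optimality of $\Lambda^D(\varpi)(\rn)$ among rearrangement-invariant targets is then inherited from that of $X_{1, \rn}(\rn)$.

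For Part I, the same splitting at $s = 1$ yields $X_{1,\rn}(\rn) \hookrightarrow L^{\overline A_n}(\rn)$ at the Orlicz level, upon replacing the Orlicz--Lorentz norm by the coarser Orlicz norm $\|\cdot\|_{L^{A_n}(0,1)}$ via the inclusion relations for Orlicz--Lorentz functionals recalled in Subsection~\ref{orlicz&lorentz}; this works unconditionally on $A$ because the relevant local inequality \eqref{orlicz5} holds without assumption \eqref{orlicz1bis}. Orlicz optimality of $L^{\overline A_n}(\rn)$ is then transferred from the corresponding optimality for $W^1L^A(\rn)$ via the reduction principle Theorem~\ref{reductionrn}: sharpness of Orlicz targets is a property of the underlying one-dimensional Hardy inequality and is therefore shared by the symmetric-gradient and the full-gradient embeddings. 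In the borderline case \eqref{orlicz1conv} one has $A_n\equiv\infty$ near infinity, $L^{A_n}(0,1)$ collapses to $L^\infty(0,1)$, and $L^{\overline A_n}(\rn)=L^\infty(\rn)\cap L^A(\rn)$ up to equivalent norms is automatically optimal among all rearrangement-invariant spaces as well. The main obstacle I anticipate is precisely the Young-function bookkeeping required to identify $\Lambda^D(\varpi)(\rn)$ and $L^{\overline A_n}(\rn)$ with the abstract intersection $X_{1, \rn}(\rn)$, since Luxemburg-type norms on $(0,\infty)$ entangle the behavior of the driving Young function at both endpoints; the saving grace is that the change of weight in $\varpi$ and the cutoff between local and global regimes both occur at $s=1$, matching exactly the definitions of the restricted and extended function norms of Subsection~\ref{ri}.
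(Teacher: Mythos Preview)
Your approach is genuinely different from the paper's. The paper's proof is a two-line argument: invoke the reduction principle (Theorem~\ref{reductionrn}, Part~II) to reduce inequalities~\eqref{orliczrn2} and~\eqref{orliczrn3} to the corresponding full-gradient inequalities for $W^1L^A(\rn)$, and then cite \cite[Theorems~3.5 and~3.9]{ACPS} for those. You instead go through Theorem~\ref{optrn}, Part~II, which hands you the abstract optimal target $X_{1,\rn}(\rn)=((L^A_r)_1)_e(\rn)\cap L^A(\rn)$, and then try to identify this intersection concretely with $L^{\overline A_n}(\rn)$ (at the Orlicz level) and with $\Lambda^D(\varpi)(\rn)$ (at the rearrangement-invariant level).

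Your route is sound in outline, but the step you correctly flag as ``the main obstacle'' is a real gap in your argument as written. The heuristic ``on $(0,1)$ the argument $\varpi(s)\bfu^*(s)$ is large, so $D\approx\widehat A$; on $(1,\infty)$ it is small, so $D\approx A$'' does not hold pointwise: nothing prevents $s^{-1/n}\bfu^*(s)$ from being small on part of $(0,1)$ or $\bfu^*(s)$ from exceeding the threshold where $D$ switches regimes on part of $(1,\infty)$. The Luxemburg integral entangles the two regimes across both $s$-intervals, so splitting at $s=1$ does not automatically decouple the Young-function behavior. Making this rigorous requires an argument of the type carried out in \cite{ACPS}, which is precisely what the paper's proof cites rather than reproves. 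What your approach buys is that it stays longer inside the paper's own machinery (Theorems~\ref{optrn}, \ref{orlicz}, \ref{orliczri}); what it costs is having to redo the identification that \cite{ACPS} already provides. The paper's route is shorter because the reduction principle transfers both the embedding and its optimality in one stroke.
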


\begin{proof} \emph{Part} I. As a consequence of Theorem \ref{reductionrn}, Part I,  inequality \eqref{orliczrn2} is equivalent to the same inequality, with the space $E^1L^A(\rn)$ replaced by  $W^1L^A(\rn)$. The fact that the inequality for the latter space holds, and that $L^{\overline {A}_n}(\rn)$ is the optimal Orlicz target  space is the content of \cite[Theorem 3.5]{ACPS}.
\\ \emph{Part} II. 
Inequality \eqref{orliczrn3} is equivalent to the same inequality, with the space $E^1L^A(\rn)$ replaced by  $W^1L^A(\rn)$. The fact that the inequality for the latter space holds, and that $\Lambda ^D(\varpi)(\rn)$ is the optimal rearrangement-invariant target   space is the content of \cite[Theorem 3.9]{ACPS}.
\end{proof}

\begin{example}\label{ex5}{\rm
Let $A$ be a Young function as in \eqref{dec250}, but now with $p_0$ and $\alpha_0$ non-necessarily fulfilling conditions \eqref{dec252}.
Theorem \ref{orliczrnfull}, Part I,  yields
\begin{equation}\label{mar40}
E^1L^A(\rn) \to L^{\overline A_n}(\rn),
\end{equation}
 where
\begin{equation}\label{mar41}
\overline A_{n}(t)\,\, \text{is equivalent to} \quad  t^{p_0} (\log \tfrac 1t)^{\alpha_0}   \quad \text{near zero},
\end{equation}
and 
\begin{equation}\label{mar42}
\overline A_{n}(t)\,\, \text{is equivalent to}
\begin{cases} t^{\frac {np}{n-p}} (\log t)^{\frac {n\alpha}{n-p}} & \quad  \text{ if $1\leq p< n$ }
\\
e^{t^{\frac{n}{n-\alpha -1}}}&  \quad \text{if  $p=n$ and $\alpha < n-1$}
\\
e^{e^{t^{\frac n{n-1}}}} &  \quad \text{if  $p=n$ and $\alpha = n -1$}
\\ \infty &  \quad \text{otherwise}
\end{cases} \quad \text{near infinity.}
\end{equation}
The target space $ L^{\overline A_n}(\rn)$ in  embedding  \eqref{mar40} is optimal among all Orlicz spaces.
\\ 
 In the last case of \eqref{mar42},  the target space $ L^{\overline A_n}(\rn)$  in  embedding  \eqref{mar40}  is also optimal among all rearrangement-invariant spaces. Otherwise,
 Theorem \ref{orliczrnfull}, Part  II, implies that
\begin{equation}\label{mar43}
E^1L^A(\rn) \to \Lambda ^{D}(\varpi)(\rn),
\end{equation}
where 
$$\varpi(s)= \max\{s^{-\frac 1n}, 1\} \quad \text{for $s >0$,}$$
and 
\begin{equation}\label{mar44}
D(t)\,\, \text{is equivalent to} \quad  t^{p_0} (\log \tfrac 1t)^{\alpha_0}  \quad  \text{near zero},
\end{equation}
and 
\begin{equation}\label{mar45}
D(t) \,\, \textup{is equivalent to} \,\, \begin{cases} t^p (\log t)^\alpha & \quad  \text{ if $1\leq p< n$ }
\\
t^{n}  (\log t)^{\alpha - n} &  \quad \text{if  $p=n$ and $\alpha <n -1$}
\\
t^{n}  (\log t)^{-1} (\log (\log t))^{-n}&  \quad \text{if  $p=n$ and $\alpha = n -1$}
\end{cases} \quad  \text{near infinity.}
\end{equation}
Furthermore, the target space $\Lambda ^{D}(\varpi)(\rn)$  in embedding \eqref{mar43} is optimal among all rearrangement-invariant spaces.
}
\end{example}

We conclude by describing symmetric gradient Orlicz-Sobolev embeddings into spaces of uniformly continuous functions. This is the subject of Theorem \ref{orliczcont} below. A couple of interesting features are worth pointing out in this connection. First,  for this class of spaces condition \eqref{limitesigma} is fulfilled whenever \eqref{apr40} is fulfilled. This fact implies that an embedding  of a symmetric gradient Orlicz-Sobolev space into a space $C^\sigma (\Omega)$ holds, for a suitable modulus of continuity $\sigma$, whenever the embedding into $L^\infty (\Omega)$ (or, equivalently, into $C^0(\Omega)$) holds. As mentioned in Remark \ref{notunif}, this need not  be the case for symmetric gradient Sobolev spaces associated with more general rearrangement-invariant function norms. Second, the optimal target space $C^\sigma (\Omega)$ for an embedding of a symmetric gradient Orlicz-Sobolev space  can be essentially larger than that for the full gradient Orlicz-Sobolev space associated with the same Young function -- see Examples \ref{Linf} and \ref{exp}. As already emphasized above, this gap is absent when dealing with rearrangement-invariant target spaces.
\par The optimal modulus of continuity of functions in the spaces $E^1_0X(\Omega)$ and $E^1X(\Omega)$ is defined in terms of the behaviour near infinity of the function 
%$\xi_A$ and $\eta_A$ defined  as follows.
%Define 
$\xi_A : (0, \infty) \to [0, \infty
)$, given by
\begin{equation}\label{4.2}
\xi_A (t) = t^{n'} \int _t^\infty \frac {\widetilde A
(\tau )}{\tau ^{1+ n'}}\, \mathrm{d}\tau \quad \hbox{for $t>0$,}
\end{equation}
and  of the function $\eta_A : (0, \infty) \to [0,
\infty )$, given by
\begin{equation}\label{4.3}
\eta_A (t) = t \int _0^t \frac {\widetilde A (\tau
)}{\tau ^{2}}\, \mathrm{d}\tau \quad \hbox{for $t>0$.}
\end{equation}
One can verify that $\xi_A$ and $\eta_A$ are, in fact, Young functions.
\par\noindent
Let us notice that the
 convergence of the integral on the right-hand side of \eqref{4.2}
is equivalent to condition \eqref{4.1} appearing in Theorem \ref{orliczcont} -- see e.g. \cite[Lemma 2.3]{Ci3}. Moreover, since we are dealing with bounded sets $\Omega$,
we may always assume, without loss of generality, that the
integral in \eqref{4.3}   is
convergent. Indeed, $A$ can be replaced, if necessary, by another
Young  function which is equivalent to $A$  near infinity and makes
the relevant integral converge. Such a
replacement results in the same symmetric gradient Orlicz-Sobolev spaces, up to
equivalent norms. The modulus of continuity defined via $\eta_A$ is also not affected (up to equivalence) by this replacement, inasmuch as  it only depends on $\eta_A$ through its  behaviour near infinity.

\begin{theorem}\label{orliczcont}{\rm{\bf [Optimal target space of uniformly continuous functions for Orlicz-Sobolev embeddings]}}
 Let $\Omega$ be an open bounded subset of $\R^n$, and let $A$ be
a Young function.

\vspace{0.1cm}
\par\noindent 
{\bf Part I [Embeddings for  $E^1_0X(\Omega)$]}
\\ (i) The embedding $E^1_0L^A(\Omega) \to C^0(\Omega)$ holds, namely there exists a constant $c$ such that
\begin{equation}\label{jan25}
\|\bfu\|_{C^0(\Omega)} \leq c \|\ep(\bfu)\|_{L^A(\Omega)}  
\end{equation}
for every $\bfu \in E^1_0L^A(\Omega)$
if and only if
\begin{equation}\label{4.1}
\int ^\infty
\bigg(\frac t{A(t)}\bigg)^{\frac 1{n-1}}dt < \infty\,.
\end{equation}
\\(ii) Assume that \eqref{4.1} is in force, and
define
\begin{equation}\label{4.5}
\sigma _{A} (r) =
   \frac { r^{1-n}}{\xi_A ^{-1}(r^{-n})} + \frac { r^{1-n}}{\eta_A
  ^{-1}(r^{-n})}\quad \hbox{for $r>0$.}
\end{equation}
Then $\sigma _{A}$ is a modulus of
continuity, and $E^{1}_0L^A(\Omega ) \to C^{\sigma_ {A}}(\Omega )$, namely there exists a constant $c$ such that
\begin{equation}\label{4.4} 
\|\bfu\|_{C^{\sigma_ {A}}(\Omega )} \leq c \|\ep(\bfu)\|_{L^A(\Omega)}
\end{equation}
for every function $\bfu \in
E^{1}_0L^A(\Omega )$.
 \\ The space $C^{\sigma_ {A}}(\Omega )$ is optimal,
in the sense that, if there exists a modulus of continuity $\sigma
$ such that inequality \eqref{4.4} holds with  $C^{\sigma_ {A}}(\Omega )$ replaced by $C^{\sigma}(\Omega )$,
 then     $C^{\sigma_ {A}}(\Omega ) \to
C^{\sigma}(\Omega )$.

\vspace{0.1cm}
\par\noindent 
{\bf Part II [Embeddings for  $E^1X(\Omega)$]}
Assume that $\Omega$ is a bounded $(\varepsilon, \delta)$-domain. Then   conclusions analogous to those of Part I hold, with $ E^{1}_0L^A(\Omega )$ replaced by $ E^{1}L^A(\Omega )$ and $\|\ep(\bfu)\|_{L^A(\Omega)}$ replaced by $\|\bfu\|_{E^1L^A(\Omega)}$.
%\par\noindent
%Conversely, if there exists a modulus of continuity $\sigma$ such
%that
%\begin{equation}\label{3.5new}
%W^{m,A}(\Omega ) \to C_{\sigma }(\Omega ),
%\end{equation}
% then either
% \eqref{4.1} or \eqref{4.1'} holds.
\end{theorem}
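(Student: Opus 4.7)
The plan is to derive Theorem \ref{orliczcont} as a specialization of the general results Theorem \ref{Linfty} and Theorem \ref{continuity} to the Orlicz case $X(\Omega) = L^A(\Omega)$. For Part I(i), I would apply Theorem \ref{Linfty}, which tells us that the embedding $E^1_0L^A(\Omega) \to C^0(\Omega)$ is equivalent to the condition \eqref{apr40}, namely $\|r^{-1/n'}\|_{(L^A)'(0, |\Omega|)} < \infty$. Using the equivalence \eqref{assorlicz} between the associate norm of an Orlicz norm and the Orlicz norm built upon the Young conjugate, this translates to $\|r^{-1/n'}\|_{L^{\widetilde A}(0, |\Omega|)} < \infty$. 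A standard computation based on the Luxemburg norm definition, via the change of variables $\tau = \lambda^{-1} r^{-1/n'}$, yields that this condition is equivalent to \eqref{4.1}, completing Part I(i).

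For Part I(ii), the core technical step is to show that the abstractly defined function $\sigma_{L^A} = \vartheta_{L^A} + \varrho_{L^A}$ from \eqref{july23mar} is equivalent near zero to the function $\sigma_A$ defined in \eqref{4.5}. Specifically, I would prove
$$
\vartheta_{L^A}(s) \approx \frac{s^{1-n}}{\xi_A^{-1}(s^{-n})} \quad \text{and} \quad \varrho_{L^A}(s) \approx \frac{s^{1-n}}{\eta_A^{-1}(s^{-n})}
$$
near zero, where $\xi_A$ and $\eta_A$ are as in \eqref{4.2}--\eqref{4.3}. For $\vartheta_{L^A}$, one computes $\|r^{-1/n'}\chi_{(0,s^n)}\|_{L^{\widetilde A}_e(0,\infty)}$ by unwinding the Luxemburg norm: the condition $\int_0^{s^n} \widetilde A\bigl(\lambda^{-1} r^{-1/n'}\bigr)\,dr \le 1$, after the change of variables $\tau = \lambda^{-1} r^{-1/n'}$, becomes a condition of the form $\lambda^{n'} \int_{\lambda^{-1} s^{-n/n'}}^\infty \widetilde A(\tau)\tau^{-1-n'}\,d\tau \le c$, and the defining identity $\xi_A(t) = t^{n'}\int_t^\infty \widetilde A(\tau)\tau^{-1-n'}\,d\tau$ identifies $\lambda$ with $s^{1-n}/\xi_A^{-1}(s^{-n})$ up to constants. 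An analogous, slightly simpler computation, exploiting the monotonicity of $\widetilde A(\tau)/\tau$, identifies $\varrho_{L^A}(s)$ with $s^{1-n}/\eta_A^{-1}(s^{-n})$. Once these equivalences are established, condition \eqref{limitesigma} is automatic: \eqref{4.1} forces $\xi_A^{-1}(s^{-n})$ (and likewise $\eta_A^{-1}(s^{-n})$) to grow fast enough as $s\to 0^+$ to ensure $\sigma_A(s)\to 0$. The fact that $\sigma_A$ is a modulus of continuity then follows from the general statement in Section \ref{cont}. Applying Theorem \ref{continuity} directly yields both the embedding \eqref{4.4} and its optimality.

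For Part II, I would simply repeat the same asymptotic computations together with Theorem \ref{continuity} Part II, which already encodes the use of the extension operator from Theorem \ref{C1} to transfer the statement from $E^1_0L^A$ to $E^1L^A$ on $(\varepsilon,\delta)$-domains. The sharpness assertion for Part II is automatic once one notes that the test functions used in the sharpness argument of Theorem \ref{continuity} (radial profiles and affine-radial combinations compactly supported in an interior ball $B\subset\Omega$) belong to $E^1_0L^A(\Omega)\subset E^1L^A(\Omega)$, so the lower bound on any admissible modulus of continuity carries over.

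The main obstacle I anticipate is the precise asymptotic identification of $\vartheta_{L^A}$ and $\varrho_{L^A}$ with the quantities $s^{1-n}/\xi_A^{-1}(s^{-n})$ and $s^{1-n}/\eta_A^{-1}(s^{-n})$. These require a careful Luxemburg-norm calculation and the verification that the integral conditions emerging from this calculation are equivalent, up to absolute constants, to the implicit definitions of $\xi_A$ and $\eta_A$ in \eqref{4.2}--\eqref{4.3}, together with the standard two-sided control $A(\widetilde A'(t)) \approx \widetilde A(t) \approx t\widetilde A'(t)$ needed to replace $\widetilde A$ by integrals of itself divided by powers. Analogous computations for Sobolev embeddings driven by the full gradient appear in \cite{cianchi-randolfi IUMJ}, and the arguments there transfer verbatim to the present setting, because Theorems \ref{Linfty} and \ref{continuity} reduce the symmetric-gradient problem to exactly the same one-dimensional objects built from the function norm $\|\cdot\|_{(L^A)'(0,|\Omega|)}$.
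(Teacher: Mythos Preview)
Your proposal is correct and follows essentially the same approach as the paper: specialize Theorems \ref{Linfty} and \ref{continuity} to $X(\Omega)=L^A(\Omega)$, and then carry out the Luxemburg-norm computations identifying $\vartheta_{L^A}$, $\varrho_{L^A}$ with the expressions built from $\xi_A$, $\eta_A$, exactly as in \cite[Theorems 6.1 and 6.2]{cianchi-randolfi IUMJ}. The paper itself omits the details and simply refers to those computations, so your outline is in fact more explicit than the paper's own proof.
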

Theorem \ref{orliczcont} can be deduced as special cases of Theorems \ref{Linfty} and  \ref{continuity}, with $X(\Omega)= L^A(\Omega)$, via  analogous arguments and formulas as in the proofs of \cite[Theorems 6.1 and 6.2]{cianchi-randolfi IUMJ}. The details are omitted, for brevity.
%\todo[inline]{A: we have to decide if include a sketch of the proof}

\iffalse

\begin{remark}\label{LinfC0}
{\rm In the light of Theorem \ref{orliczcont}, in the territory of symmetric gradient Orlicz-Sobolev spaces $ E^{1}_0L^A(\Omega )$ and $ E^{1}L^A(\Omega )$, an estimate for the modulus of continuity of their members,  depending only on $A$ and also on $\Omega$ in the latter case, is available whenever the space is just embedded into $C^0(\Omega)$ (or, equivalently, into $L^\infty(\Omega)$). Such an embedding holds if $A$ grows   fast enough  near infinity for condition \eqref{4.1} to hold. The picture is  different in general rearrangement-invariant spaces, as pointed out in Remark \ref{notunif}.
%For instance, the borderline space $L^{n,1}(\Omega)$ is such that $E^1_0L^{n,1}(\Omega) \to C^0(\Omega)$, but, by Theorem \ref{continuity}, $E^1_0L^{n,1}(\Omega) \nsubseteq C^\sigma(\Omega)$ for every 
  % modulus of continuity $\sigma$,  since condition \eqref{limitesigma} fails if $X(\Omega)=L^{n,1}(\Omega)$.
%
%
%  no modulus of continuity $\sigma$ exists such that   $E^1_0L^{n,1}(\Omega) \to C^\sigma(\Omega)$ holds, since condition \eqref{limitesigma} fails if $X=L^{n,1}$.
}
\end{remark}
\fi
\iffalse

%
%\begin{proof}
%TO BE INSERTED
%\end{proof}
\begin{remark}\label{integrals}
{\rm  The convergence of the integral appearing in \eqref{4.2}
is equivalent to \eqref{4.1} -- see e.g. \cite[Lemma 2.3]{Ci3}. %Moreover, condition
%\eqref{4.1'} is equivalent to the fact that $\widetilde A (t) <
%\infty$ for every $t$, and hence to
%\par\noindent
Note also that, since we are dealing with bounded sets $\Omega$,
we may always assume, without loss of generality, that the
integral in \eqref{4.3}   is
convergent. Indeed, $A$ can be replaced, if necessary, by another
Young  function which behaves like  $A$  near infinity and makes
the relevant integral converge. Such a
replacement results in the same Orlicz-Sobolev space, up to
equivalent norms.
}
\end{remark}

\fi

\bigskip
\par
The following examples are stated for spaces of the form $E^1_0L^A(\Omega)$. The same conclusions hold with $E^1_0L^A(\Omega)$ replaced by $E^1L^A(\Omega)$, provided that $\Omega$ is a bounded $(\varepsilon, \delta)$-domain.

\begin{example}\label{Linf}
{\rm Let $\Omega$ be a bounded open subset of $\R^n$. An application of Theorem \ref{orliczcont}, Part I (ii), tells us that
\begin{equation}\label{jan26}
E^1_0L^\infty(\Omega) \to C^{\sigma_{\infty}} (\Omega),
\end{equation}
where 
\begin{equation}\label{aug200}\sigma_{\infty} (r) \approx r\log(1/r) \quad \text{as $r\to 0^+$.}
\end{equation}
Moreover, the modulus of continuity $\sigma_\infty$ is optimal in \eqref{jan26}. 
\\ In this case $A(t)=\chi _{(1, \infty)}(t)\infty$ for $t\geq 0$, whence $\widetilde A(t)=t$ for $t\geq 0$. Thus, $\xi_A(t) \approx t$ near infinity, and $\eta _A(t)\approx t\log t$ near infinity, and equation \eqref{aug200} follows via \eqref{4.5}.
Note that  the behavior of $\sigma_{\infty}(r)$ as $r \to 0^+$ is dictated by the second addend on the right-hand side of equation \eqref{4.5}.
\\ By contrast, the stronger embedding 
\begin{equation}\label{jan26bis}
W^1_0L^\infty(\Omega) \to C^{0,1} (\Omega)
\end{equation}
is well known to hold, where $C^{0,1} (\Omega)$ stands for the space of Lipschitz continuous functions on $\Omega$.
}
\end{example}

\begin{example}\label{exp}
{\rm  Let $\Omega$ be a bounded open subset of $\R^n$.  From Theorem \ref{orliczcont}, Part I (ii), one can deduce  that, given $\beta >0$,
\begin{equation}\label{aug1}
E^{1}_0\exp L^\beta(\Omega) \to C^{\sigma_{\beta}} (\Omega),
\end{equation}
where 
\begin{equation}\label{aug203}\sigma_{\beta} (r) \approx r\big(\log(1/r)\big)^{1+\frac 1 \beta} \quad \text{as $r\to 0^+$.}
\end{equation}
Also, the modulus of continuity $\sigma_\beta$ is optimal in \eqref{aug1}. 
\\ Here, one has that $A(t) \approx e^{t^\beta}$ near infinity, and hence condition \eqref{4.1} is fulfilled for every $\beta >0$. Moreover, $\widetilde A(t) \approx t(\log t)^{\frac 1\beta}$ near infinity, thus $\xi _A(t) \approx  t(\log t)^{\frac 1\beta}$ and $\eta _A(t) \approx  t(\log t)^{1+\frac 1\beta}$  near infinity. Equation \eqref{aug203} follows from \eqref{4.5}.
The behavior of $\sigma_{\beta}(r)$ as $r \to 0^+$ is dictated by the second addend on the right-hand side of equation \eqref{4.5}.
\\ If the full gradient Orlicz-Sobolev space $W^{1}_0\exp L^\beta(\Omega)$ replaces $E^{1}_0\exp L^\beta(\Omega)$ in \eqref{aug1}, then a parallel embedding holds with a smaller target space. Indeed, one has the stronger embedding
\begin{equation}\label{aug1bis}
W^{1}_0\exp L^\beta(\Omega) \to C^{\sigma} (\Omega),
\end{equation}
where $\sigma (r) \approx r\big(\log(1/r)\big)^{\frac 1 \beta}$ as $r\to 0^+$, and it is the optimal modulus of continuity in \eqref{aug1bis} -- see \cite[Example 6.8]{cianchi-randolfi IUMJ}.
}
\end{example}

\begin{example}\label{Lnlog}
{\rm 
 Let $\Omega$ be a bounded open subset of $\R^n$, and let $\alpha \in \R$. An application of Theorem \ref{orliczcont}, Part I (i),  tells us that
\begin{equation}\label{jan27}
E^{1}_0L^n (\log L)^\alpha (\Omega) \to C^0 (\Omega)
\end{equation}
if and only if $\alpha >n-1$. Moreover, Part I (ii) of the same theorem ensures that, if this is the case, then 
\begin{equation}\label{jan28}
E^{1}_0L^n (\log L)^\alpha (\Omega)  \to C^{\sigma_{n,\alpha} }(\Omega),
\end{equation}
where 
\begin{equation}\label{aug201}\sigma_{n, \alpha} (r) \approx( \log(1/r))^{\frac {n-1-\alpha}n} \quad \text{as $r\to 0^+$.}
\end{equation}
Finally, the modulus of continuity $\sigma_{n, \alpha}$ is optimal in \eqref{jan28}. 
\\  In this instance, the function $A(t)\approx t^n (\log t)^\alpha$ near infinity. Hence, condition \eqref{4.1} is fulfilled if and only if $\alpha >n-1$. Moreover, 
 $\widetilde A(t) \approx t^{n'} (\log t)^{-\frac {\alpha}{n-1}}$ near infinity. As a consequence, $\xi_{A}(t) \approx t^{n'}(\log t)^{1-\frac {\alpha}{n-1}}$ and $\eta_A(t) \approx t^{n'}(\log t)^{-\frac {\alpha}{n-1}}$ near infinity. Hence, equation \eqref{aug201} follows from \eqref{4.5}.
Note that  the behavior of $\sigma_{n, \alpha}(r)$ as $r \to 0^+$ is dictated by the first addend on the right-hand side of equation \eqref{4.5}.
\\ Unlike embeddings \eqref{jan26} and \eqref{aug1},
replacing the space $E^{1}_0L^n (\log L)^\alpha (\Omega)$ by $W^{1}_0L^n (\log L)^\alpha (\Omega)$ in \eqref{jan28} does not allow for any improvement of the target space \cite[Example 6.7]{cianchi-randolfi IUMJ}. This is due to the fact that, as a consequence of a version of Korn's inequality in Orlicz spaces \cite{Ckorn,DRS, Fu2},  $E^{1}_0L^n (\log L)^\alpha (\Omega)=W^{1}_0L^n (\log L)^\alpha (\Omega)$, up to equivalent norms.
}
\end{example}

\bigskip
\par\noindent
{\bf Acknowledgements}.  The second author  is
grateful to  M.Bul\'i\v{c}ek and J.M\'alek  for bringing the problem of symmetric gradient Orlicz-Sobolev embeddings  to his attention. He is also obliged to P.Hajlasz for stimulating discussions.
\\ The authors wish to thank the refereee for his valuable comments.

\section*{Compliance with Ethical Standards}\label{conflicts}
\subsection*{Funding}

This research was partly funded by:

\begin{enumerate}
\item Research Project 201758MTR2  of the Italian Ministry of Education, University and
Research (MIUR) Prin 2017 ``Direct and inverse problems for partial differential equations: theoretical aspects and applications'';
\item GNAMPA of the Italian INdAM -- National Institute of High Mathematics
(grant number not available).
\end{enumerate}

\subsection*{Conflict of Interest}

The authors declare that they have no conflict of interest.

\end{document}

\bibitem{Aubin}
\by{\name{T.}{Aubin}} \paper{Probl\`emes isop\'erimetriques et
espaces de Sobolev} \jour{J. Diff. Geom.} \vol{11} \yr{1976}
\pages{573--598}
\endpaper

\bibitem{BS}
\by{\name{C.}{Bennett}\et\name{R.}{Sharpley}}
\book{Interpolation of Operators}
\publ{Pure and Applied Mathematics Vol.\ 129, Academic Press,
Boston 1988}
\endbook

\bibitem{BW}
\by{\name{H.}{Br\'ezis}\et\name{S.}{Wainger}}
\paper{A~note on limiting cases of Sobolev embeddings and
convolution inequalities}
\jour{Comm.\ Partial Diff.\ Eq.}
\vol{5}
\yr{1980}
\pages{773--789}
\endpaper

\bibitem{CPSS}
\by{\name{M.}{Carro}, \name{L.}{Pick}, \name{J.}{Soria}\et\name{V.}{Stepanov}}
\paper{On embeddings between classical Lorentz spaces}
\jour{Math.\ Ineq.\ Appl}
\vol{4}
\yr{2001}
\pages{397--428}
\endpaper

\bibitem{CW}
\by{\name{F.}{Catrina}  \et\name{Z.-Q.}{Wang}}
\paper{ On the Caffarelli-Kohn-Nirenberg inequalities: sharp constants,
existence (and nonexistence), and symmetry of extremal functions}
\jour{Comm. Pure Appl.
Math.}
\vol{54}
\yr{2001}
\pages{229--258}
\endpaper

\bibitem{CM}
\by{\name{P.}{Cavaliere}  \et\name{Z.}{Mihula}}
\paper{Compactness of Sobolev-type trace operators}
\jour{Nonlinear Anal.}
\vol{183}
\yr{2019}
\pages{43--69}
\endpaper

\bibitem{Cianchi-Duke}
\by{\name{A.}{Cianchi}} \paper{Second-order derivatives and
rearrangements} \jour{Duke \ Math.\ J.} \vol{105} \yr{2000}
\pages{355--385}
\endpaper

%\bibitem{Cianchiibero}
%\by{\name{A.}{Cianchi}} \paper{Optimal Orlicz-Sobolev embeddings}
%\jour{Rev. Mat. Iberoamericana} \vol{20} \yr{2004}
%\pages{427--474}
%\endpaper

\bibitem{Cianchi_AMPA}
\by{\name{A.}{Cianchi}} \paper{Symmetrization and second-order Sobolev inequalities}
\jour{Ann. Mat. Pura Appl.} \vol{183} \yr{2004}
\pages{45--77}
\endpaper

%
%\bibitem{Cianchi-MZ}
%\by{\name{A.}{Cianchi}}
%\paper{Orlicz-Sobolev boundary trace embeddings} \jour{Math. Zeit.} \vol{266} \yr{2010} \pages{431--449}
%\endpaper

\bibitem{CKP}
\by{\name{A.}{Cianchi}, \name{R.}{Kerman} \et\name{L.}{Pick}}
\paper{Boundary trace inequalities and rearrangements} \jour{J. Anal. Math.} \vol{105} \yr{2008} \pages{ 241--265}
\endpaper

\bibitem{CP1998}
\by{\name{A.}{Cianchi}\et\name{L.}{Pick}}
\paper{Sobolev embeddings into $BMO$, $VMO$ and $L_\infty$}
\jour{Ark.\ Mat.}
\yr{1998}
\vol{36}
\pages{317--340}
\endpaper

\bibitem{CP2010}
\by{\name{A.}{Cianchi} \et\name{L.}{Pick}}
\paper{An optimal endpoint trace embedding} \jour{Ann. Inst. Fourier (Grenoble)} \vol{60} \yr{2010} \pages{ 939--951}
\endpaper

\bibitem{CP-Trans}
\by{\name{A.}{Cianchi} \et\name{L.}{Pick}}
\paper{Optimal Sobolev trace embeddings} \jour{Trans. Amer. Math. Soc.}
\vol{368} \yr{2016} \pages{8349--8382}
\endpaper

\bibitem{CPS}
\by{\name{A.}{Cianchi},
\name{L.}{Pick}\et\name{L.}{Slav\'{\i}kov\'a}} \paper{Higher-order
Sobolev  embeddings and isoperimetric inequalities} \jour{Adv.\
Math.} \vol{273} \yr{2015} \pages{568--650}
\endpaper

\bibitem{CPS_appl}
\by{\name{A.}{Cianchi},
\name{L.}{Pick}\et\name{L.}{Slav\'{\i}kov\'a}} \paper{Orlicz and Lorentz Sobolev  inequalities with measures} \jour{preprint}
\endprep

\bibitem{CR}
\by{\name{A.}{Cianchi} \et
\name{M.}{Randolfi}} \paper{On the modulus of continuity   of weakly differentiable functions} \jour{Indiana Univ. Math. J.} \vol{60} \yr{2011} \pages{1939--1973}
\endpaper

\bibitem{costea-mazya}
\by {\name{S.}{Costea}\et\name{V.G.}{Maz'ya}} \book {Conductor inequalities and criteria for Sobolev-Lorentz two-weight inequalities}
\publ{(English summary) Sobolev spaces in mathematics. II, 103–121,
Int. Math. Ser. (N. Y.), 9, Springer, New York, 2009}
\endbook

\bibitem{CR-1}
\by {\name{G.P.}{Curbera}\et\name{W.J.}{Ricker}}
\paper{Can optimal rearrangement invariant Sobolev imbeddings be further extended?}
\jour{Indiana Univ. Math. J.}
\vol{56}
\yr{2007}
\pages{1479--1497}
\endpaper

\bibitem{CR-2}
\by {\name{G.P.}{Curbera}\et\name{W.J.}{Ricker}}
\paper{Optimal domains for the kernel operator associated with Sobolev's inequality}
\jour{Studia Math.}
\vol{158}
\yr{2003}
\pages{131--152}
\endpaper

\bibitem{DS}
\by {\name{R.A.}{De Vore}\et\name{K.}{Scherer}}
\paper{Interpolation of linear operators on Sobolev spaces}
\jour{Ann.\ of Math.}
\vol{109}
\yr{1979}
\pages{583--599}
\endpaper

\bibitem{DEFT}
\by {\name{J.}{Dolbeault}, \name{M.J.}{Esteban} \name{S.}{Filippas} \et\name{S.}{Tertikas}}
\paper{Rigidity results with applications to
best constants and symmetry of Caffarelli-Kohn-Nirenberg and logarithmic Hardy inequalities}
\jour{Calc. Var. Partial Differ. Equations}
\vol{54}
\yr{2015}
\pages{2465--2481}
\endpaper

\bibitem{DEL}
\by {\name{J.}{Dolbeault}, \name{M.J.}{Esteban}\et\name{M.}{Loss}}
\paper{Rigidity versus symmetry breaking via nonlinear flows on cylinders and Euclidean spaces}
\jour{Invent. Math.}
\vol{206}
\yr{2016}
\pages{397--440}
\endpaper
%
%\bibitem{DELL}
%\by {\name{J.}{Dolbeault}, \name{M.J.}{Esteban}, \name{A.}{Laptev}\et\name{M.}{Loss}}
%\paper{One-dimensional Gagliardo-Nirenberg-Sobolev inequalities: remarks on duality and flows}
%\jour{ J. Lond. Math. Soc.}
%\vol{90}
%\yr{2014}
%\pages{525–-550}
%\endpaper
%
%\color{black}

\bibitem{EOP}
\by{\name{W.D.}{Evans}, \name{B.}{Opic}\et\name{L.}{Pick}}
\paper{Interpolation of integral operators on scales of
generalized Lorentz--Zygmund spaces}
\jour{Math.\ Nachr.}
\yr{1996}
\vol{182}
\pages{127--181}
\endpaper

\bibitem{FF}
\by{\name{H.}{Federer}\et \name{W.}{Fleming}} \paper{Normal and
integral currents} \jour{Annals of Math.} \vol{72} \yr{1960}
\pages{458--520}
\endpaper

\bibitem{FS}
\by{\name{V.}{Felli}\et \name{M.}{Schneider}} \paper{Perturbation results of critical elliptic equations of
Caffarelli-Kohn-Nirenberg type} \jour{J. Differ. Equations} \vol{191} \yr{2003}
\pages{121--142}
\endpaper

\bibitem{GMNO}
\by{\name{A.}{Gogatishvili}, \name{S.}{Moura}, \name{J.}{Neves}\et\name{B.}{Opic}}
\paper{Embeddings of Sobolev-type spaces into generalized H\"older spaces involving k-modulus of smoothness}
\jour{Ann. Mat. Pura Appl.}
\vol{194}
\yr{2015}
\pages{425--450}
\endpaper

\bibitem{GNO}
\by{\name{A.}{Gogatishvili}, \name{J.}{Neves}\et\name{B.}{Opic}}
\paper{Characterization of embeddings of
Sobolev-type spaces into generalized H\"older spaces defined by $L^p$-modulus of
smoothness}
\jour{J. Funct. Anal.}
\vol{276}
\yr{2019}
\pages{636--657}
\endpaper

\bibitem{Holik}
\by{\name{M.}{Hol\'ik}}
\paper{Reduction theorems for Sobolev embeddings into the spaces of H\"older, Morrey and Campanato type}
\jour{Math.\ Nachr.}
\vol{289}
\yr{2016}
\pages{1626--1635}
\endpaper

\bibitem{H}
\by{\name{T.}{Holmstedt}}
\paper{Interpolation of quasi-normed spaces}
\jour{Math.\ Scand.}
\vol{26}
\yr{1970}
\pages{177--199}
\endpaper

\bibitem{KP}
\by{{R. }{Kerman} \et\name{L.}{Pick}}
\paper{Optimal Sobolev embeddings} \jour{Forum Math.} \vol{18} \yr{2006} \pages{535--570}
\endpaper

REDUCTION PRINCIPLE FOR A CERTAIN CLASS OF KERNEL-TYPE
OPERATORS
DALIMIL PESA

%\bibitem{FF}
%\by{\name{H.}{Federer}\et \name{W.}{Fleming}} \paper{Normal and
%integral currents} \jour{Annals of Math.} \vol{72} \yr{1960}
%\pages{458--520}
%\endpaper

\bibitem{FuscoLionsSbordone}
\by{\name{N.}{Fusco}, \name{P.-L.}{Lions}\et\name{C.}{Sbordone}}
\paper{Sobolev imbedding theorems in borderline cases}
\jour{Proc. Amer. Math. Soc.}
\vol{124, 2}
\yr{1996}
\pages{561--565}
\endpaper

\bibitem{GOP}
\by{\name{A.}{Gogatishvili}, \name{B.}{Opic}\et\name{L.}{Pick}}
\paper{Weighted inequalities for Hardy-type operators involving suprema}
\jour{Collect. Math.}
\vol{57, 3}
\yr{2006}
\pages{227--255}
\endpaper

%\bibitem{HKK}
%\by{\name{P.}{Haj\l asz}, \name{M. V.}{Korobkov}\et\name{J.}{Kristensen}}
%\paper{A bridge between Dubovitskiĭ-Federer theorems and the coarea formula}
%\jour{J. Funct. Anal.}
%\vol{272}
%\yr{2017}
%\pages{1265--1295}
%\endpaper

\bibitem{H}
\by{\name{T.}{Holmstedt}}
\paper{Interpolation of quasi-normed spaces}
\jour{Math.\ Scand.}
\vol{26}
\yr{1970}
\pages{177--199}
\endpaper

\bibitem{KP}
\by{\name{R.}{Kerman} \et\name{L.}{Pick}}
\paper{Optimal Sobolev embeddings} \jour{Forum Math.} \vol{18} \yr{2006} \pages{535--570}
\endpaper

%\bibitem{KT}
%\by{\name{D. Q.}{Khai} \et\name{N. M.}{Tri}}
%\paper{Well-posedness for the Navier-Stokes equations with data in homogeneous Sobolev-Lorentz spaces} \jour{Nonlinear Anal.} \vol{449} \yr{2017} \pages{130--145}
%\endpaper

\bibitem{KK}
\by{\name{J.}{Kristensen} \et\name{M. V.}{Korobkov}}
\paper{The trace theorem, the Luzin N - and Morse-Sard properties for the sharp case of Sobolev-Lorentz mappings}
\jour{J. Geom. Anal.}
\vol{28} \yr{2018} \pages{2834--2856}
\endpaper
%
%\bibitem{MP}
%\by{\name{J.}{Mal\'y} \et\name{L.}{Pick}}
%\paper{An elementary proof of sharp Sobolev embeddings} \jour{Proc. Amer. Math. Soc.} \vol{130} \yr{2002} \pages{no. 2, 555--563}
%\endpaper

\bibitem{MaPe}
\by{\name{L.}{Maligranda} \et\name{L.-E.}{Persson}}
\paper{Generalized duality of some Banach function spaces} \jour{Nederl. Akad. Wetensch.
Indag. Math.} \vol{51} \yr{1989} \pages{323--338}
\endpaper

%\bibitem{Mazya543}
%\by {\name{V. G.}{Maz'ya}} \book {Certain integral inequalities for functions of many variables}
%\publ{Problems in Mathematical Analysis 3 LGU, Leningrad, 1972, 33--68 (in Russian). English translation: J. Sov. Math. 1 (1973), 205--234}
%\endbook

%\bibitem{Mazya548}
%\by {\name{V. G.}{Maz'ya}} \book {Capacity-estimates for ``fractional'' norms}
%\publ{Zap. Nauchn. Semin. Leningr. Otd. Mat. Int. Steklova 70 (1977), 161--168 (in Russian). English translation: J. Sov.\ Math.\ 23 (1983), 1997--2003}
%\endbook
%
\bibitem{Ma1960}
\by {\name{V. G.}{Maz'ya}} \paper {Classes of regions and imbedding
theorems for function spaces} \jour {Dokl. Akad. Nauk. SSSR}
\vol{133} \yr{1960} \pages{527--530 (Russian); English translation:
Soviet Math. Dokl. {\bf 1} (1960), 882--885}
\endpaper

\bibitem{Ma1961}
\by {\name{V. G.}{Maz'ya}} \paper {On p-conductivity and theorems on
embedding certain functional spaces into a C-space} \jour {Dokl.
Akad. Nauk. SSSR} \vol{140} \yr{1961} \pages{299--302 (Russian);
English translation: Soviet Math. Dokl. {\bf 3} (1962)}
\endpaper

\bibitem{Mabook}
\by{\name{V. G.}{Maz'ya}} \book{Sobolev spaces with applications to
elliptic partial differential equations} \publ{Springer, Berlin,
2011}
\endbook

\bibitem{Moser}
\by {\name{J.}{Moser}} \paper {A sharp form of an inequality by
Trudinger} \jour {Indiana Univ. Math. J.} \vol{20} \yr{1971}
\pages{1077--1092}
\endpaper

\bibitem{Musil-dissert}
\by {\name{V.}{Musil}}
\book {Classical operators of harmonic analysis in Orlicz spaces}
\publ{Doctoral Thesis, Prague, 2018}
\endbook

\bibitem{ONeil}
\by{\name{R.}{O'Neil}}
\paper{Convolution operators and L(p,q) spaces}
\jour{Duke Math. J.}
\yr{1963}
\vol{30}
\pages{129--142}
\endpaper

\bibitem{glz}
\by{\name{B.}{Opic}\et\name{L.}{Pick}}
\paper{On generalized Lorentz-Zygmund spaces}
\jour{Math.\ Ineq. Appl.}
\yr{1999}
\vol{2}
\pages{391--467}
\endpaper

\bibitem{Peetre}
\by{\name{J.}{Peetre}}
\paper{Espaces d'interpolation et th\'eor\'eme de Soboleff}
\jour{Ann. Inst. Fourier (Grenoble)}
\yr{1966}
\vol{16, 1}
\pages{279--317}
\endpaper

\bibitem{PKJF}
\by{\name{L.}{Pick}, \name{A.}{Kufner}, \name{O.}{John} \et \name{S.}{Fu\v c\'{\i}k}}
\book{Function Spaces, Volume 1}
\publ{2nd Revised and Extended Edition, De Gruyter Series in Nonlinear Analysis and Applications 14, De Gruyter, Berlin 2013}
\endbook